%% LyX 1.6.3 created this file.  For more info, see http://www.lyx.org/.
%% Do not edit unless you really know what you are doing.
\documentclass[12pt,english]{amsart}
\usepackage[T1]{fontenc}
\usepackage[latin9]{inputenc}
\pagestyle{headings}
\setcounter{tocdepth}{1}
\setlength{\parskip}{\medskipamount}
\setlength{\parindent}{0pt}
\usepackage{color}
\usepackage{babel}

\usepackage{amsthm}
\usepackage{amstext}
\usepackage{graphicx}
\usepackage{amssymb}
\usepackage{esint}
\usepackage[unicode=true, 
 bookmarks=true,bookmarksnumbered=false,bookmarksopen=false,
 breaklinks=false,pdfborder={0 0 1},backref=false,colorlinks=true,pdfpagemode=FullScreen]
 {hyperref}
\hypersetup{pdftitle={Normal Functions},
 pdfauthor={Kerr and Pearlstein}}
 
\makeatletter
%%%%%%%%%%%%%%%%%%%%%%%%%%%%%% Textclass specific LaTeX commands.
\numberwithin{equation}{section} %% Comment out for sequentially-numbered
\numberwithin{figure}{section} %% Comment out for sequentially-numbered
\theoremstyle{plain}
\theoremstyle{plain}
\newtheorem{thm}{Theorem}
  \theoremstyle{plain}
  \newtheorem{conjecture}[thm]{Conjecture}
  \theoremstyle{definition}
  \newtheorem{defn}[thm]{Definition}
  \theoremstyle{plain}
  \newtheorem{prop}[thm]{Proposition}
 \theoremstyle{definition}
  \newtheorem{example}[thm]{Example}
  \theoremstyle{remark}
  \newtheorem{rem}[thm]{Remark}
  \theoremstyle{remark}
  \newtheorem{notation}[thm]{Notation}
  \theoremstyle{plain}
  \newtheorem{lem}[thm]{Lemma}
  \theoremstyle{plain}
  \newtheorem{cor}[thm]{Corollary}

%%%%%%%%%%%%%%%%%%%%%%%%%%%%%% User specified LaTeX commands.
\usepackage{amsmath}
\usepackage{amsfonts}
\usepackage{amssymb}
\usepackage{babel}
\input xy
\xyoption{all}

\def\FF{\mathbb{F}}
\def\CC{\mathbb{C}}
\def\RR{\mathbb{R}}
\def\NN{\mathbb{N}}

\def\GG{\mathbb{G}}
\def\QQ{\mathbb{Q}}

\def\ZZ{\mathbb{Z}}
\def\PP{\mathbb{P}}

\def\VV{\mathbb{V}}
\def\HH{\mathbb{H}}

\def\A{\mathcal{A}}

\def\D{\mathcal{D}}

\def\E{\mathcal{E}}
\def\F{\mathcal{F}}

\def\H{\mathcal{H}}

\def\J{\mathcal{J}}
\def\K{\mathcal{K}}
\def\L{\mathcal{L}}
\def\M{\mathcal{M}}

\def\T{\mathcal{T}}

\def\V{\mathcal{V}}
\def\W{\mathcal{W}}
\def\X{\mathcal{X}}

\def\Z{\mathcal{Z}}

\def\e{\epsilon}

\def\sZ{\mathfrak{Z}}

\def\ANF{\text{ANF}}
\def\AJ{AJ}
\def\sing{\text{sing}}
\def\IH{\mathit{IH}}
\def\MHM{\text{MHM}}
\def\MHS{\text{MHS}}
\def\pd{\partial}
\def\AH{\A \H}
\def\DbMHM{D^b \MHM}
\def\Hom{Hom}

\def\DbMHS{D^b \MHS}
\def\zl{\Z_0}
\def\pd{\partial}
\def\Dmod{\mathcal{D}}
\def\Mmod{\mathcal{M}}
\def\Sbar{\bar{S}}
\def\shO{\mathcal{O}}
\def\shH{\mathcal{H}}
\def\shHO{\shH_{\shO}}

\def\TZ{T_{\ZZ}}
\DeclareMathOperator{\Spec}{Spec}
\DeclareMathOperator{\Sym}{Sym}
\def\Jbar{\bar{J}}

\makeatother

\begin{document}

\title[Normal Functions]{An Exponential History of Functions with 
Logarithmic Growth}

\author[Kerr and Pearlstein]{Matt Kerr and Gregory Pearlstein}
\begin{abstract}
We survey recent work on normal functions, including limits and singularities
of admissible normal functions, the Griffiths-Green approach to the
Hodge conjecture, algebraicity of the zero-locus of a normal function,
N\'eron models, and Mumford-Tate groups. Some of the material and
many of the examples, esp. in $\S\S5-6$, are original.
\end{abstract}
\maketitle
In a talk on the theory of motives, A. A. Beilinson remarked that
according to his time-line of results, advances in the (relatively
young) field were apparently a logarithmic function of $t$; hence,
one could expect to wait 100 years for the next significant milestone.
Here we allow ourselves to be more optimistic: following on a drawn-out
history which begins with Poincar\'e, Lefschetz, and Hodge, the theory
of \emph{Normal Functions} reached maturity in the programs of Bloch,
Griffiths, Zucker, and others. But the recent blizzard of results
and ideas, inspired by works of M. Saito on admissible normal functions,
and Green and Griffiths on the Hodge Conjecture, has been impressive
indeed. In addition to further papers of theirs, significant progress
has been made in work of P. Brosnan, F. Charles, H. Clemens, H. Fang,
J. Lewis, R. Thomas, Z. Nie, C. Schnell, C. Voisin, A. Young, and
the authors --- much of this in the last 4 years. This seems like
a good time to try to summarize the state of the art and speculate
about the future, barring (say) 100 more results between the time
of writing and the publication of this volume.

In the classical algebraic geometry of curves, Abel's theorem and
Jacobi inversion articulate the relationship (involving rational integrals)
between configurations of points with integer multiplicities, or zero-cycles,
and an abelian variety known as the Jacobian of the curve: the latter
algebraically parametrizes the cycles of degree 0 modulo the subgroup
arising as divisors of meromorphic functions. Given a family $\X$
of algebraic curves over a complete base curve $S$, with smooth fibers
over $S^{*}$ ($S$ minus a finite point set $\Sigma$ over which
fibers have double point singularities), Poincar\'e \cite{P1,P2}
defined \emph{normal functions} as holomorphic sections of the corresponding
family of Jacobians over $S$ which behave {}``normally'' (or {}``logarithmically'')
in some sense near the boundary. His main result, which says essentially
that they parametrize $1$-dimensional cycles on $\X$, was then used
by Lefschetz (in the context where $\X$ is a pencil of hyperplane
sections of a projective algebraic surface) to prove his famous $(1,1)$
theorem for algebraic surfaces \cite{L}. This later became the basis
for the Hodge conjecture, which says that certain \emph{topological-analytic}
invariants of an \emph{algebraic} variety must come from \emph{algebraic}
subvarieties:
\begin{conjecture}
\label{conj de Hodge}For a smooth projective complex algebraic variety
$X$, with $\mathit{Hg}^{m}(X)_{\QQ}$ the classes in $H_{sing}^{2m}(X_{\CC}^{an},\QQ)$
of type $(m,m)$, and $CH^{m}(X)$ the {}``Chow group'' of codimension-$m$
algebraic cycles modulo rational equivalence, the fundamental class
map $CH^{m}(X)\otimes\QQ\to\mathit{Hg}^{m}(X)_{\QQ}$ is surjective.
\end{conjecture}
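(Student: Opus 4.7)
The Hodge conjecture is one of the Clay Millennium Problems and is open beyond a short list of special cases (divisors via Lefschetz $(1,1)$, abelian fourfolds of specific types, some complete intersections, etc.), so any honest ``proof proposal'' must be understood as a sketch of a program rather than a strategy likely to succeed in full. The approach I would adopt is the one this paper was written to survey: the \emph{Griffiths--Green program} that reduces the conjecture to an existence statement about singularities of admissible normal functions.

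First I would reduce to the essential case. Using strong Lefschetz, the Lefschetz $(1,1)$ theorem, and the standard inductive arguments, it suffices to prove surjectivity of the cycle class map onto the primitive part of $\mathit{Hg}^{m}(X)_{\QQ}$ when $2m=\dim X$. Fix a very ample line bundle $L$ and form the universal family $\X\to\PP^{N}$ of hyperplane sections in $|L|$, smooth over a Zariski open $U\subset\PP^{N}$ with discriminant complement $\Sigma$. The vanishing cohomology of the fibers supports a polarized variation of Hodge structure of weight $2m-1$, and hence an associated family of intermediate Jacobians over $U$.

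Second, given a primitive Hodge class $\zeta\in\mathit{Hg}^{m}(X)_{\QQ,\text{prim}}$, I would attach to it a normal function $\nu_{\zeta}$ on $U$ whose value at $s\in U$ is the Deligne extension class associated to the restriction $\zeta|_{X_{s}}$, which is cohomologous to zero on each smooth hyperplane section by primitivity. By M.~Saito's theory, $\nu_{\zeta}$ is admissible along $\Sigma$, so it acquires well-defined \emph{singularities} in the local cohomology of boundary strata. These singularities are topological refinements of $\zeta$, and, by the work of Thomas (together with the algebraicity of the zero locus of a normal function discussed later in this survey), their nonvanishing at some boundary point would produce an algebraic cycle in $\X$ whose fiberwise Abel--Jacobi class realizes $\zeta$ in a form from which $\zeta$ itself can be recovered.

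Third, and this is the crux, one must show that after replacing $L$ by a sufficiently high power $L^{\otimes k}$ the singularity locus of $\nu_{\zeta}$ is nonempty. Equivalently, there must exist a point of $\Sigma$ where the monodromy logarithm applied to a lift of $\zeta$ across the boundary is nonzero modulo the image of the nearby weight filtration. The hard part will be precisely this singularity existence step: it is an infinitesimal/asymptotic positivity statement about the variation of Hodge structure on vanishing cycles, and no technique is presently known for forcing it in general. The subsequent sections of this paper (on limits and singularities of admissible normal functions, N\'eron models, Mumford--Tate groups, and the Green--Griffiths invariant) develop exactly the machinery in which this obstruction can be formulated precisely and, with luck, eventually overcome; I would expect any plausible attack to be an elaboration of that machinery rather than a shortcut around it.
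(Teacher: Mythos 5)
The statement you are asked to prove is the Hodge Conjecture itself; it is stated in the paper as Conjecture \ref{conj de Hodge} precisely because no proof exists, so there is no argument in the paper to compare yours against. Your proposal is candid about this, and what you outline is an accurate summary of the Griffiths--Green program that the paper surveys in $\S3$: reduce to primitive middle-dimensional classes, form $\nu_{\zeta}$ on $|L^{\otimes k}|\setminus\hat{X}$, and try to produce a nontorsion singularity along $\hat{X}$. The paper's Theorem \ref{thm GGbnfp} shows this is an \emph{equivalence} with the Hodge conjecture (the forward direction uses Thomas's result plus injectivity of $\beta_{p}$ on $\mathrm{im}(\alpha_{p})$, and the reverse direction recovers a cycle only by inducting on the Hodge conjecture in lower even dimensions via a spread/Lefschetz-pencil argument), so the reduction is a reformulation rather than progress toward a proof.

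The one genuine gap is exactly the one you name --- forcing a singularity to exist --- and it is the entire content of the conjecture. You should be aware that the paper shows this step is worse than merely ``not yet known'': the examples of M.~Saito and Pearlstein recalled in $\S2.12$(iii) and $\S5.4$ exhibit variations of Hodge structure over $(\Delta^{*})^{2}$ for which the group of putative singularity classes (rational $(0,0)$ classes in local intersection cohomology) is nonzero yet \emph{no} admissible normal function realizes a nontrivial class, the obstruction coming from the differential equation imposed by horizontality. So ``asymptotic positivity of the VHS on vanishing cycles'' is not automatic. The discussion in $\S5.5$ argues that for ODP degenerations the obstruction reduces to surjectivity of a single logarithmic Kodaira--Spencer map (related to nonvanishing of Yukawa-type couplings), which is why the program is not dead; but turning that into an existence proof is open, and your proposal, as you acknowledge, does not supply it.
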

Together with a desire to learn more about the structure of Chow groups
(the Bloch-Beilinson conjectures reviewed in $\S5$), this can be
seen as the primary motivation behind all the work described (as well
as the new results) in this paper. In particular, in $\S1$ (after
mathematically fleshing out the Poincare-Lefschetz story) we describe
the attempts to directly generalize Lefschetz's success to higher-codimension
cycles which led to Griffiths's Abel-Jacobi map (from the codimension
$m$ cycle group of a variety $X$ to its $m^{\text{th}}$ {}``intermediate''
Jacobian), horizontality and variations of mixed Hodge structure,
and S. Zucker's {}``Theorem on Normal Functions''. As is well-known,
the breakdown (beyond codimension 1) of the relationship between cycles
and (intermediate) Jacobians, and the failure of the Jacobians to
be algebraic, meant that the same game played in 1 parameter would
not work outside very special cases. 

It has taken nearly three decades to develop the technical underpinnings
for a study of normal functions over a \emph{higher} dimensional base
$S$: Kashiwara's work on admissible variations of mixed Hodge structure
\cite{K}, M. Saito's introduction of mixed Hodge modules \cite{S4},
multivariable nilpotent and $SL_{2}$ orbit theorems (\cite{KNU},\cite{Pe2}),
and so on. And then in 2006, Griffiths and Green had a fundamental
idea tying the Hodge conjecture to the presence of \emph{nontorsion
singularities} --- nontrivial invariants in local intersection cohomology
--- for multiparameter normal functions arising from Hodge classes
on algebraic varieties \cite{GG}. We describe their main result and
the follow-up work \cite{BFNP} in $\S3$. Prior to that the reader
will need some familiarity with the boundary behavior of {}``admissible''
normal functions arising from higher codimension algebraic cycles.
The two principal invariants of this behavior are called \emph{limits}
and \emph{singularities}, and we have tried in $\S2$ to give the
reader a geometric feel for these through several examples and an
explanation of the precise sense in which the limit of Abel-Jacobi
invariants (for a family of cycles) is again some kind of Abel-Jacobi
invariant. In general throughout $\S\S$1-2 (and $\S$4.5-6) normal
functions are {}``of geometric origin'' (arise from cycles), whereas
in the remainder the formal Hodge-theoretic point of view dominates
(though Conjecture \eqref{conj de Hodge} is always in the background).
We should emphasize that the first two sections are intended for a
broad audience, while the last four are of a more specialized nature;
one might say that the difficulty level increases exponentially.

The transcendental (non-algebraic) nature of intermediate Jacobians
means that even for a normal function of geometric origin, algebraicity
of its vanishing locus (as a subset of the base $S$), let alone its
sensitivity to the field of definition of the cycle, is not a foreordained
conclusion. Following a review of Schmid's nilpotent and $SL_{2}$
orbit theorems (which lie at the heart of the limit mixed Hodge structures
introduced in $\S2$), in $\S4$ we explain how generalizations of
those theorems to mixed Hodge structures (and multiple parameters)
have allowed complex algebraicity to be proved for the zero-loci of
{}``abstract'' admissible normal functions \cite{BP1,BP2,BP3,S5}.
We then address the field of definition in the geometric case, in
particular the recent result of Charles \cite{Ch} under a hypothesis
on the VHS underlying the zero-locus, the situation when the family
of cycles is algebraically equivalent to zero, and what all this means
for filtrations on Chow groups. Another reason one would want the
zero-locus to be algebraic is that the Griffiths-Green normal function
attached to a nontrivial Hodge class can then be shown, by an observation
of C. Schnell, to have a singularity in the intersection of the zero-locus
with the boundary $\Sigma\subset S$ (though this intersection could
very well be empty). 

Now,\emph{ a priori}, admissible normal functions (ANF's) are only
horizontal and holomorphic sections of a Jacobian bundle over $S\backslash\Sigma$
which are highly constrained along the boundary. Another route (besides
orbit theorems) that leads to algebraicity of their zero-loci is the
construction of a {}``N\'eron model'' --- a partial compactification
of the Jacobian bundle satisfying a Hausdorff property (though not
a complex analytic space in general) and graphing admissible normal
functions over all of $S$. N\'eron models are taken up in $\S5$;
as they are better understood they may become useful in defining global
invariants of (one or more) normal functions. However, unless the
underlying variation of Hodge structure (VHS) is a nilpotent orbit
the group of components of the N\'eron model (i.e., the possible
singularities of ANF's at that point) over a codimension$\geq2$ boundary
point remains mysterious. Recent examples of M. Saito \cite{S6} and
the second author \cite{Pe3} show that there are analytic obstructions
which prevent ANF's from surjecting onto (or even mapping nontrivially
to) the putative singularity group for ANF's (rational $(0,0)$ classes
in the local intersection cohomology). At first glance this appears
to throw the existence of singularities for Griffiths-Green normal
functions (and hence the Hodge conjecture) into serious doubt, but
in $\S5.5$ we show that this concern is probably ill-founded.

The last section is devoted to a discussion of Mumford-Tate groups
of mixed Hodge structures (introduced by Y. Andr\'e \cite{An}) and
variations thereof, in particular those attached to admissible normal
functions. The motivation for writing this section was again to attempt
to {}``force singularities to exist'' via conditions on the normal
function (e.g., involving the zero-locus) which maximize the monodromy
of the underlying local system inside the M-T group; we were able
to markedly improve Andr\'e's maximality result (but not to produce
singularities). Since the general notion of (non)singularity of a
VMHS at a boundary point is defined here (in $\S6.3$), which generalizes
the notion of singularity of a normal function, we should point out
that there is another sense in which the word {}``singularity''
is used in this paper. The \textquotedbl{}singularities'' of a \emph{period
mapping associated to a VHS or VMHS} are points where the connection
has poles or the local system has monodromy (i.e. $\Sigma$ in the
above notation), and at which one must compute a limit mixed Hodge
structure (LMHS). These contain the {}``singularities of the VMHS'',
nearly always as a \emph{proper} subset; indeed, pure VHS never have
singularities (in the sense of $\S6.3$), though their corresponding
period mappings do.

This paper has its roots in the first author's talk at a conference
in honor of Phillip Griffiths's 70th birthday at the IAS, and the
second author's talk at MSRI during the conference on the topology
of stratified spaces to which this volume is dedicated. The relationship
between normal functions and stratifications occurs in the context
of mixed Hodge modules and the Decomposition Theorem \cite{BBD},
and is most explicitly on display in the construction of the multivariable
N\'eron model in $\S5$ as a topological group whose restrictions
to the strata of a Whitney stratification are complex Lie groups.
We want to thank the conference organizers and Robert Bryant for doing
an excellent job at putting together and hosting a successful interdisciplinary
meeting blending (amongst other topics) singularities and topology
of complex varieties, $L^{2}$ and intersection cohomology, and mixed
Hodge theory, all of which play a role below. We are indebted to Patrick
Brosnan, Phillip Griffiths, and James Lewis for helpful conversations
and sharing their ideas. We also want to thank heartily both referees
as well as Chris Peters, whose comments and suggestions have made
this a better paper.

One observation on notation is in order, mainly for experts: in order
to clarify the distinction in some places between monodromy weight
filtrations arising in LMHS and weight filtrations postulated as part
of the data of an admissible variation of mixed Hodge structure (AVMHS),
the former are always denoted $M_{\bullet}$ (and the latter $W_{\bullet}$)
in this paper. In particular, for a degeneration of (pure) weight
$n$ HS with monodromy logarithm $N$, the weight filtration on the
LMHS is written $M(N)_{\bullet}$ (and centered at $n$). While perhaps
nontraditional, this is consistent with the notation $M(N,W)_{\bullet}$
for relative weight monodromy filtrations for (admissible) degenerations
of MHS. That is, when $W$ is {}``trivial'' ($W_{n}=\H$, $W_{n-1}=\{0\}$)
it is simply omitted.

Finally, we would like to draw attention to the interesting recent
article \cite{Gr4} of Griffiths which covers ground related to our
$\S\S2-5$, but in a complementary fashion that may also be useful
to the reader.

\tableofcontents{}

\section{Prehistory and Classical Results}

The present chapter is not meant to be heroic, but merely aims to
introduce a few concepts which shall be used throughout the paper.
We felt it would be convenient (whatever one's background) to have
an up-to-date, {}``algebraic'' summary of certain basic material
on normal functions and their invariants in one place. For background
or further (and better, but much lengthier) discussion of this material
the reader may consult the excellent books \cite{Le1} by Lewis and
\cite{Vo2} by Voisin, as well as the lectures of Green and Voisin
from the {}``Torino volume'' \cite{GMV} and the papers \cite{Gr1},
\cite{Gr2}, \cite{Gr3} of Griffiths.

Even experts may want to glance this section over since we have included
some bits of recent provenance: the relationship between log-infinitesimal
and topological invariants, which uses work of M. Saito; the result
on inhomogeneous Picard-Fuchs equations, which incorporates a theorem
of M\"uller-Stach and del Angel; the important example of Morrison
and Walcher related to open mirror symmetry; and the material on $K$-motivation
of normal functions (cf. $\S\S1.3,1.7$), which will be used in $\S\S$
2 and 4.

Before we begin, a word on the \emph{currents} that play a r\^ole
in the bullet-train proof of Abel's Theorem in $\S1.1$. These are
differential forms with distribution coefficients, and may be integrated
against $C^{\infty}$ forms, with exterior derivative $d$ defined
by {}``integration by parts''. They form a complex computing $\CC$-cohomology
(of the complex manifold on which they lie) and include $C^{\infty}$chains
and log-smooth forms. For example, for a $C^{\infty}$ chain $\Gamma$,
the delta current $\delta_{\Gamma}$ has the defining property $\int\delta_{\Gamma}\wedge\omega=\int_{\Gamma}\omega$
for any $C^{\infty}$ form $\omega$. (For more details, see Chap.
3 of \cite{GH}.)

\subsection{Abel's Theorem}

Our (historically incorrect) story begins with a divisor $D$ of degree
zero on a smooth projective algebraic curve $X/\CC$; the associated
analytic variety $X^{an}$ is a Riemann surface. (Except when explicitly
mentioned, we continue to work over $\CC$.) Writing $D=\sum_{_{\text{finite}}}n_{i}p_{i}\in Z^{1}(X)_{\text{hom}}$
($n_{i}\in\ZZ$ such that $\sum n_{i}=0$, $p_{i}\in X(\CC)$), by
Riemann's Existence Theorem one has a meromorphic $1$-form $\hat{\omega}$
with $Res_{p_{i}}(\hat{\omega})=n_{i}$ ($\forall i$). Denoting by
$\{\omega_{1},\ldots,\omega_{g}\}$ a basis for $\Omega^{1}(X)$,
consider the map \begin{equation}
\xymatrix{ Z^1(X)_{\text{hom}} \ar [r] \ar @/^2pc/ [rr]^{\widetilde{AJ}} & \frac{\Omega^1(X)^{\vee}}{\int_{H_1(X,\ZZ)}(\cdot)} \ar [r]^{ev_{\left\{ \omega_i \right\} } \mspace{20mu}}_{\cong \mspace{20mu}} & \frac{\CC^g}{\Lambda^{2g}} =: J^1(X) \\ D \ar @{|->} [r] & \int_{\Gamma} \ar @{|->} [r] & \left( \int_{\Gamma} \omega_1, \ldots, \int_{\Gamma} \omega_g \right) } \\
\end{equation} where $\Gamma\in C_{1}(X^{an})$ is any chain with $\partial\Gamma=D$

$\mspace{100mu}$\includegraphics[scale=0.5]{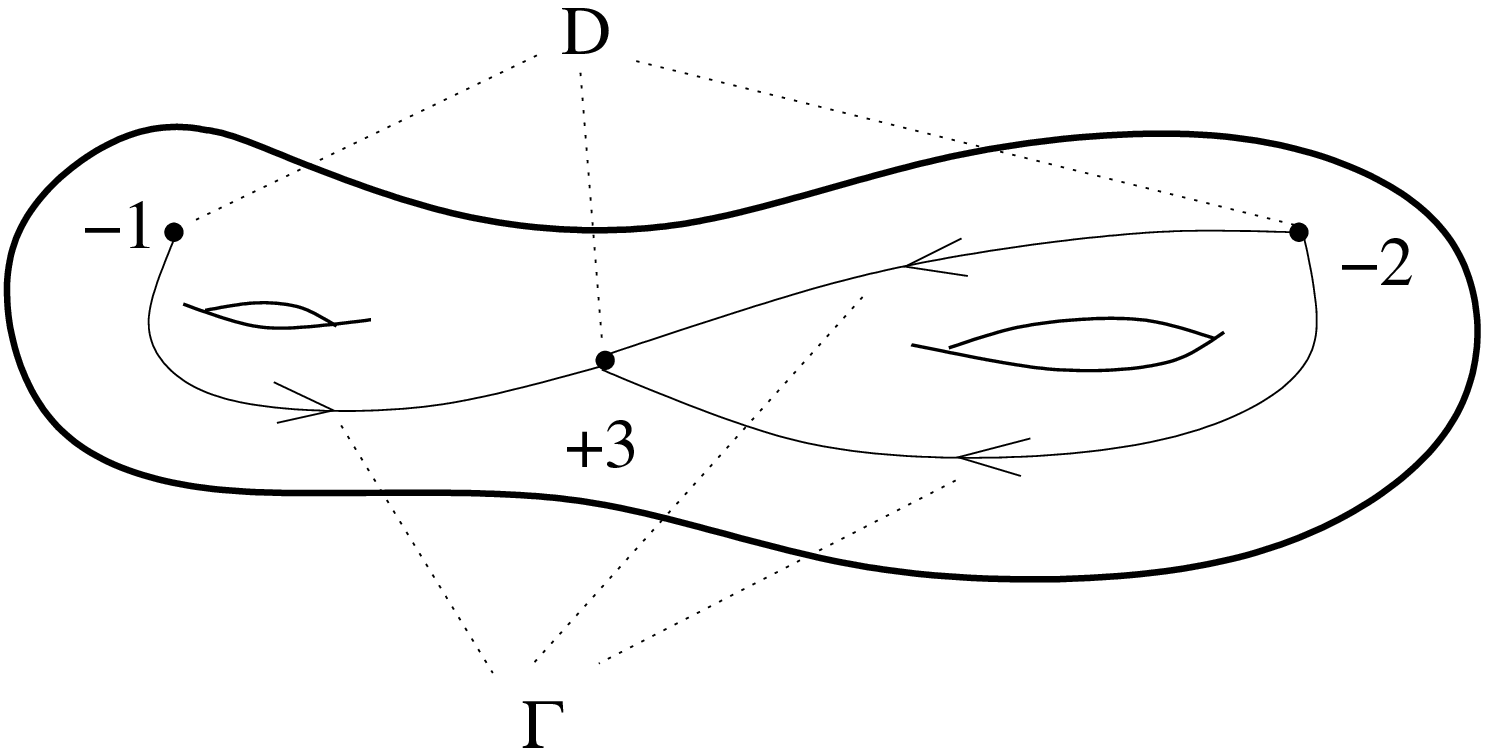}

and $J^{1}(X)$ is the\emph{ Jacobian} of $X$. The $1$-current $\kappa:=\hat{\omega}-2\pi i\delta_{\Gamma}$
is closed; moreover, if $\widetilde{AJ}(D)=0$ then $\Gamma$ may
be chosen so that all $\int_{\Gamma}\omega_{i}=0$ $\implies$ $\int_{X}\kappa\wedge\omega_{i}=0$.
We can therefore smooth $\kappa$ in its cohomology class to $\omega=\kappa-d\eta$
($\omega\in\Omega^{1}(X)$; $\eta\in D^{0}(X)=$0-currents), and \begin{equation}\label{eqn abel1}
f:= \exp \left\{ \int \left( \hat{\omega}-\omega \right) \right\} \\
\end{equation}
\begin{equation}
\label{eqn abel2}= e^{2\pi i \int \delta_{\Gamma}} e^{\eta} \\
\end{equation} is single-valued (though possibly discontinuous) by (\ref{eqn abel2})
and meromorphic (though possibly multivalued) by (\ref{eqn abel1}).
Locally at $p_{i}$, $e^{\int\frac{n_{i}}{z}dz}=Cz^{n_{i}}$ has the
right degree; and so the divisor of $f$ is precisely $D$. Conversely,
if for $f\in\CC(X)^{*}$, $D=(f)=f^{-1}(0)-f^{-1}(\infty)$, then
$t\mapsto\int_{f^{-1}(\overrightarrow{0.t})}(\cdot)$ induces a holomorphic
map $\PP^{1}\to J^{1}(X)$. Such a map is necessarily constant (say,
to avoid pulling back a nontrivial holomorphic $1$-form), and by
evaluating at $t=0$ one finds that this constant is zero. So we have
proved part (i) of
\begin{thm}
\label{thm Abel}(i) \emph{{[}Abel{]}} Writing $Z^{1}(X)_{\text{rat}}$
for the divisors of functions $f\in\CC(X)^{*}$, $\widetilde{AJ}$
descends to an injective homomorphism of abelian groups\[
CH^{1}(X)_{\text{hom}}:=\frac{Z^{1}(X)_{\text{hom}}}{Z^{1}(X)_{\text{rat}}}\overset{AJ}{\longrightarrow}J^{1}(X).\]

(ii) \emph{{[}Jacobi Inversion{]}} $AJ$ is surjective; in particular,
fixing $q_{1},\ldots,q_{g}\in X(\CC)$ the morphism $Sym^{g}X\to J^{1}(X)$
induced by $p_{1}+\cdots+p_{g}\mapsto\int_{\partial^{-1}(\sum p_{i}-q_{i})}(\cdot)$
is birational.
\end{thm}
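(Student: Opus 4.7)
The plan for part (i) is to expand the narrative in the excerpt into three discrete checks. First, $\widetilde{AJ}$ is well-defined and a group homomorphism because any two chains $\Gamma,\Gamma'$ satisfying $\partial\Gamma=\partial\Gamma'=D$ differ by a closed $1$-cycle, whose periods against $\{\omega_{j}\}$ lie in the lattice $\int_{H_{1}(X,\ZZ)}(\cdot)$ defining $J^{1}(X)$; additivity in $D$ is immediate from additivity of chains. Second, for the inclusion $Z^{1}(X)_{\text{rat}}\subseteq\ker\widetilde{AJ}$, given $D=(f)$ with $f\in\CC(X)^{*}$ nonconstant, I view $f$ as a branched cover $X\to\PP^{1}$ and consider the assignment $t\mapsto\widetilde{AJ}(f^{-1}(t)-f^{-1}(0))$; this is a holomorphic map $\PP^{1}\to J^{1}(X)$, which is constant since $J^{1}(X)$ is a complex torus, and evaluation at $t=0$ gives $0$. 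Third, the converse $\ker\widetilde{AJ}\subseteq Z^{1}(X)_{\text{rat}}$ is precisely the smoothing argument reproduced in the excerpt: the meromorphic $1$-form $\hat{\omega}$ with prescribed residues is corrected within its cohomology class to a smooth form $\omega$, and $f=\exp\{\int(\hat{\omega}-\omega)\}$ is then single-valued (by the $2\pi i\ZZ$ periods of $\int\delta_{\Gamma}$) and meromorphic (by holomorphy of $\hat{\omega}-\omega$ away from $\operatorname{supp}D$), with local residue computation giving $(f)=D$.

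For part (ii), the plan is to study the holomorphic map $\phi:\Sym^{g}X\to J^{1}(X)$ defined by $\sum p_{i}\mapsto AJ\bigl(\sum(p_{i}-q_{i})\bigr)$, between two compact connected complex manifolds of dimension $g$, and to show it is both surjective and generically one-to-one. The key local computation: at a point $(p_{1},\ldots,p_{g})$ with the $p_{i}$ pairwise distinct, take local parameters $z_{i}$ near $p_{i}$ and use the coordinates $(\int\omega_{1},\ldots,\int\omega_{g})$ on $J^{1}(X)$; then the Jacobian of $\phi$ is the $g\times g$ matrix $\bigl(\omega_{j}(p_{i})/dz_{i}\bigr)_{i,j}$. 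This matrix is nonsingular for a generic choice of $(p_{1},\ldots,p_{g})$, because the evaluation map $H^{0}(\Omega^{1}_{X})\to\bigoplus_{i}T^{*}_{p_{i}}X$ between two $g$-dimensional spaces is generically an isomorphism (it degenerates only on a proper closed subvariety of $\Sym^{g}X$). Hence $\phi$ is a local biholomorphism at a generic point, so its image is open; by compactness the image is also closed, so $\phi$ surjects onto $J^{1}(X)$, establishing Jacobi inversion.

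Birationality then follows by a generic injectivity argument. If $\phi(D)=\phi(D')$ for generic $D=\sum p_{i}$ and $D'=\sum p_{i}'$ in $\Sym^{g}X$, then part (i) yields $D-D'=(f)$ for some nonzero $f\in\CC(X)^{*}$, i.e., $D$ and $D'$ are linearly equivalent effective divisors of degree $g$. But for generic $D$, Riemann-Roch yields $h^{0}(\O(D))=1$ (since $K_{X}-D$ has negative degree generically), so the only effective divisor in $|D|$ is $D$ itself, forcing $D=D'$. The main obstacle in the plan is the nondegeneracy step in the differential computation — this is where the equality $\dim H^{0}(\Omega^{1}_{X})=g$ is used decisively, a consequence of Riemann-Roch lying just outside the currents-and-residues toolkit that powers part (i). The same Riemann-Roch input also underlies the generic nonspeciality needed for birationality, so both pieces of (ii) rest on this single structural fact about the curve.
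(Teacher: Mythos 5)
Your part (i) is essentially the paper's own argument: the same two inclusions, proved the same way (the constant holomorphic map $\PP^{1}\to J^{1}(X)$ for $Z^{1}(X)_{\text{rat}}\subseteq\ker\widetilde{AJ}$, and the current-smoothing construction of $f=\exp\{\int(\hat{\omega}-\omega)\}$ for the reverse inclusion); you merely make the well-definedness check explicit. For part (ii) you go beyond the paper, which states Jacobi inversion without proof (remarking only that algebraicity of $J^{1}(X)$ is implicit in it); your argument is the standard one via the differential of $\phi:\Sym^{g}X\to J^{1}(X)$, and it is essentially sound, but two steps need repair. First, the parenthetical ``since $K_{X}-D$ has negative degree generically'' is false for $g\geq2$: $\deg(K_{X}-D)=2g-2-g=g-2$ regardless of $D$, so degree reasons alone do not kill $h^{0}(K_{X}-D)$. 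The correct input is Riemann--Roch together with the \emph{generic vanishing} of $h^{0}(K_{X}-D)$, which is precisely the generic injectivity of the evaluation map $H^{0}(\Omega^{1}_{X})\to\oplus_{i}T^{*}_{p_{i}}X$ that you already invoked in the differential computation (and which one proves by stripping base points of $|K_{X}|$ one at a time and noting the condition is open). So the fact you need is available, but the reason you give for it at this point is wrong. Second, ``its image is open'' is not justified as stated: only the image of the locus where $d\phi$ is invertible is open, and a closed set containing a nonempty open set need not be everything. The clean finish is that $\phi(\Sym^{g}X)$ is a compact analytic subvariety (proper mapping theorem) of dimension $g$ inside the connected $g$-dimensional torus $J^{1}(X)$, hence equals it. With these two local fixes your proof of (ii) is complete, and it supplies exactly the Riemann--Roch input that the paper's ``currents'' treatment of (i) deliberately avoids.
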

Here $\partial^{-1}D$ means any $1$-chain bounding on $D$. Implicit
in (ii) is that $J^{1}(X)$ is an (abelian) algebraic variety; this
is a consequence of ampleness of the theta line bundle (on $J^{1}(X)$)
induced by the polarization\[
Q:\, H^{1}(X,\ZZ)\times H^{1}(X,\ZZ)\to\ZZ\]
(with obvious extensions to $\QQ$, $\RR$, $\CC$) defined equivalently
by cup product, intersection of cycles, or integration $(\omega,\eta)\mapsto\int_{X}\omega\wedge\eta$.
The ampleness boils down to the \emph{second Riemann bilinear relation},
which says that $iQ(\cdot,\bar{\cdot})$ is positive definite on $\Omega^{1}(X)$.

\subsection{Normal functions}

We now wish to vary the Abel-Jacobi map in families. Until $\S2$,
all our normal functions shall be over a curve $S$. Let $\X$ be
a smooth projective surface, and $\bar{\pi}:\X\to S$ a (projective)
morphism which is 

(a) smooth off a finite set $\Sigma=\{s_{1},\ldots,s_{e}\}\subset S$,
and

(b) locally of the form $(x_{1},x_{2})\mapsto x_{1}x_{2}$ at singularities
(of $\bar{\pi}$).

Write $X_{s}:=\bar{\pi}^{-1}(s)$ ($s\in S$) for the fibres. The
singular fibres $X_{s_{i}}$ ($i=1,\ldots,e$) then have only nodal
(ordinary double point) singularities

$\mspace{150mu}$\includegraphics[scale=0.4]{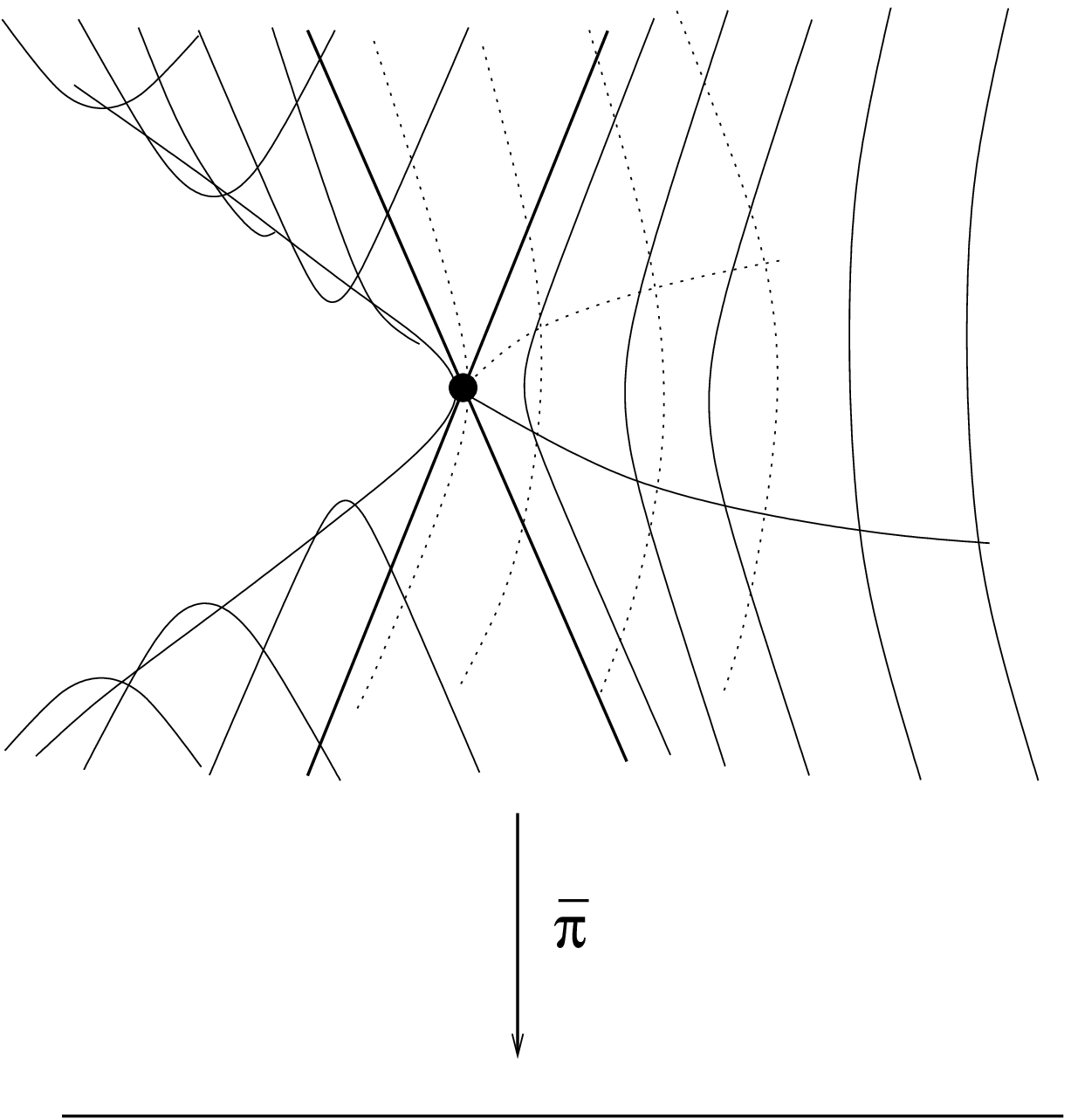}

and writing $\X^{*}$ for their complement we have $\pi:\X^{*}\to S^{*}:=S\backslash\Sigma$.
Fixing a general $s_{0}\in S^{*}$, the local monodromies $T_{s_{i}}\in$\linebreak$Aut\left(H^{1}(X_{s_{0}},\ZZ)=:\HH_{\ZZ,s_{0}}\right)$
of the local system $\HH_{\ZZ}:=R^{1}\pi_{*}\ZZ_{\X^{*}}$ are then
computed by the Picard-Lefschetz formula \begin{equation}\label{eqn PL}
(T_{s_i}-I)\gamma = \sum_j (\gamma \cdot \delta_j)\delta_j . \\
\end{equation} Here $\{\delta_{j}\}$ are the Poincar\'e duals of the (quite possibly
non-distinct) vanishing cycle classes $\in\ker\left\{ H_{1}(X_{s_{0}},\ZZ)\to H_{1}(X_{s_{i}},\ZZ)\right\} $
associated to each node on $X_{s_{i}}$; we note $(T_{s_{i}}-I)^{2}=0$.
For a family of elliptic curves, (\ref{eqn PL}) is just the familiar
\emph{Dehn twist:}

\includegraphics[scale=0.5]{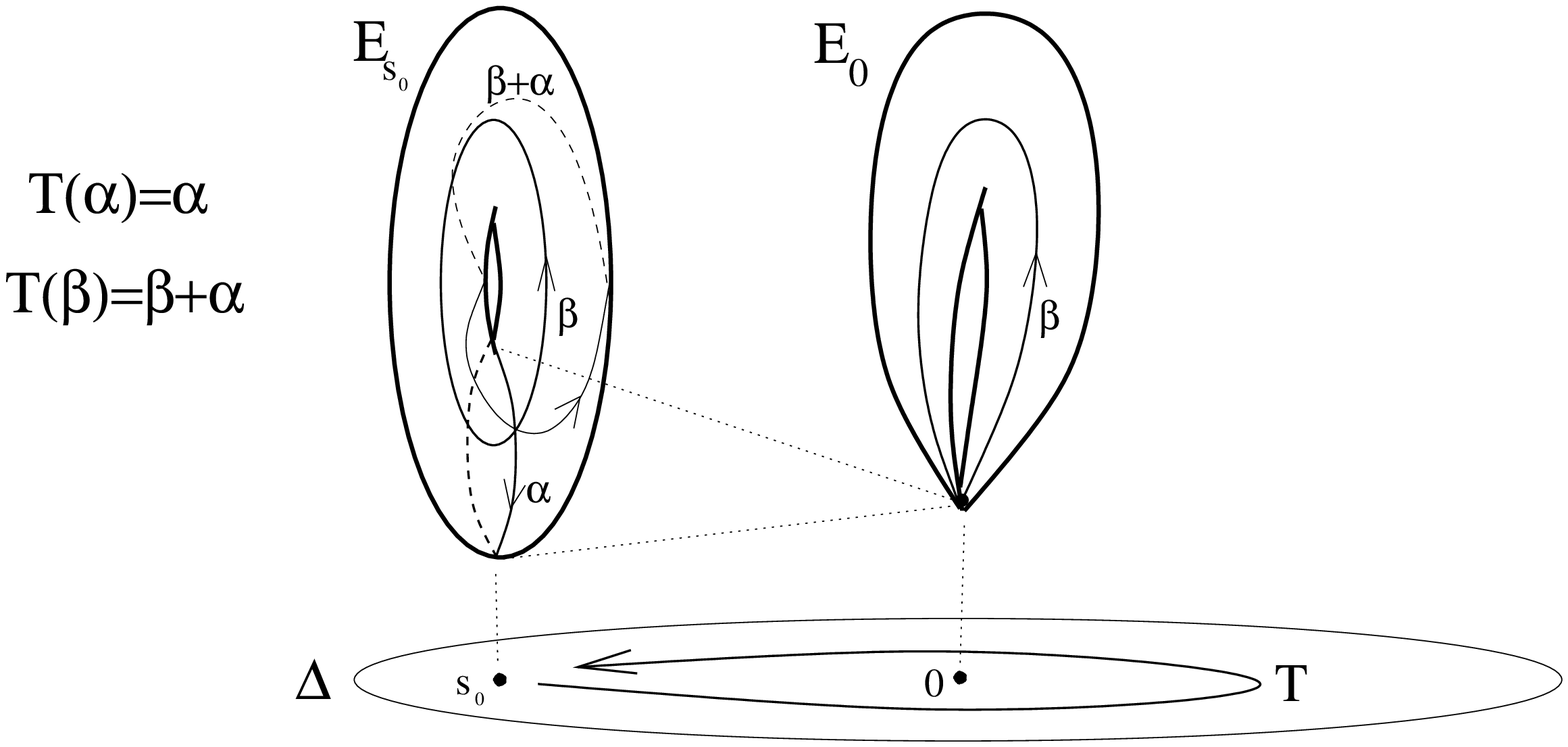}

(For the reader new to such pictures, the two crossing segments in
the previous {}``local real'' picture become the two touching {}``thimbles'',
i.e. a small neighborhood of the singularity in $E_{0}$, in this
diagram.)

Now, in our setting, the bundle of Jacobians $\J:=\bigcup_{s\in S^{*}}J^{1}(X_{s})$
is a complex (algebraic) manifold. It admits a partial compactification
to a fiber space of complex abelian Lie groups, by defining $J^{1}(X_{s_{i}}):=\frac{H^{0}(\omega_{X_{s_{i}}})}{\text{im}\{H^{1}(X_{s_{i}},\ZZ)\}}$
($\omega_{x_{s}}=$ dualizing sheaf) and $\J_{e}:=\cup_{s\in S}J^{1}(X_{s})$.
(How this is topologized will be discussed in a more general context
in $\S5$.) The same notation will denote their sheaves of sections,
\begin{equation}\label{eqn ses1} 
0 \to \HH_{\ZZ} \to \F^{\vee} \to \J \to 0 \mspace{50mu} (\text{on}\, \, S^*) \\
\end{equation}
\begin{equation}
\label{eqn ses2}0 \to \HH_{\ZZ,e} \to (\F_e)^{\vee} \to \J_e  \to 0 \mspace{50mu} (\text{on}\, \, S) \\
\end{equation} with $\F:=\pi_{*}\omega_{\X/S}$, $\F_{e}:=\bar{\pi}_{*}\omega_{\X/S}$,
$\HH_{\ZZ}=R^{1}\pi_{*}\ZZ$, $\HH_{\ZZ,e}=R^{1}\bar{\pi}_{*}\ZZ$.
\begin{defn}
A \emph{normal function} (NF) is a holomorphic section (over $S^{*}$)
of $\J$. An \emph{extended (or Poincar\'e) normal function} (ENF)
is a holomorphic section (over $S$) of $\J_{e}$. A NF is \emph{extendable}
if it lies in $\text{im}\{H^{0}(S,\J_{e})\to H^{0}(S^{*},\J)\}$.
\end{defn}
Next consider the long-exact cohomology sequence (sections over $S^{*}$)
\begin{equation}\label{eqn les1} 
0 \to H^0(\HH_{\ZZ}) \to H^0(\F^{\vee}) \to H^0(\J) \to H^1(\HH_{\ZZ}) \to H^1(\F^{\vee}) ; \\
\end{equation} the \emph{topological invariant} of a normal function $\nu\in H^{0}(\J)$
is its image $[\nu]\in H^{1}(S^{*},\HH_{\ZZ})$. It is easy to see
that the restriction of $[\nu]$ to $H^{1}(\Delta_{i}^{*},\HH_{\ZZ})$
($\Delta_{i}$ a punctured disk about $s_{i}$) computes the local
monodromy $(T_{s_{i}}-I)\tilde{\nu}$ (where $\tilde{\nu}$ is a multivalued
local lift of $\nu$ to $\F^{\vee}$), modulo the monodromy of topological
cycles. We say that $\nu$ is locally liftable if all these restrictions
vanish, i.e. if $(T_{s_{i}}-I)\tilde{\nu}\in\text{im}\{(T_{s_{i}}-I)\HH_{\ZZ,s_{0}}\}$.
Together with the assumption that as a (multivalued, singular) {}``section''
of $\F_{e}^{\vee}$, $\tilde{\nu}_{e}$ has at worst logarithmic divergence
at $s_{i}$ (the {}``logarithmic growth'' in the title), this is
equivalent to extendability.

\subsection{Normal functions of geometric origin}

Let $\sZ\in Z^{1}(\X)_{prim}$ be a divisor properly intersecting
fibres of $\bar{\pi}$ and avoiding its singularities, and which is
\emph{primitive} in the sense that each $Z_{s}:=\sZ\cdot X_{s}$ ($s\in S^{*}$)
is of degree 0. (In fact, the intersection conditions can be done
away with, by moving the divisor in a rational equivalence.) Then
$s\mapsto AJ(Z_{s})$ defines a section $\nu_{\sZ}$ of $\J$, and
it can be shown that a multiple $N\nu_{\sZ}=\nu_{N\sZ}$ of $\nu_{\sZ}$
is always extendable. One says that $\nu_{\sZ}$ itself is \emph{admissible}.

Now assume $\bar{\pi}$ has a section $\sigma:S\to\X$ (also avoiding
singularities) and consider the analogue of (\ref{eqn les1}) for
$\J_{e}$\[
0\to\frac{H^{0}(\F_{e}^{\vee})}{H^{0}(\HH_{\ZZ,e})}\to H^{0}(\J_{e})\to\ker\left\{ H^{1}(\HH_{\ZZ,e})\to H^{1}(\F_{e}^{\vee})\right\} \to0.\]
With a bit of work, this becomes \begin{equation}\label{eqn ses3}
\xymatrix{ 0 \to J^1(\X/S)_{fix} \ar [r] & \text{ENF} \ar [r]^{\left[ \cdot \right] \mspace{50mu}} & \frac{Hg^1(\X)_{prim}}{\ZZ \left\langle [X_{s_0}] \right\rangle } \to 0} ,  \\
\end{equation} where the Jacobian of the fixed part $J^{1}(\X/S)_{fix}\hookrightarrow J^{1}(X_{s})$
($\forall s\in S$) gives a constant subbundle of $\J_{e}$ and the
primitive Hodge classes $Hg^{1}(\X)_{prim}$ are the $Q$-orthogonal
complement of a general fibre $X_{s_{0}}$ of $\bar{\pi}$ in $Hg^{1}(\X):=H^{2}(\X,\ZZ)\cap H^{1,1}(\X,\CC)$.
\begin{prop}
Let $\nu$ be an ENF.

(i) If $[\nu]=0$ then $\nu$ is a constant section of $\J_{fix}:=\bigcup_{s\in S}J^{1}(\X/S)_{fix}\subset\J_{e}$;

(ii) If $(\nu=)\nu_{\sZ}$ is of geometric origin, then $[\nu_{\sZ}]=\overline{[\sZ]}$
($[\sZ]=$ fundamental class);

(iii) \emph{{[}Poincar\'e Existence Theorem{]}} Every ENF is of geometric
origin.
\end{prop}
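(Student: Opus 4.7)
The plan is to prove the three parts in order. Parts (i) and (ii) are essentially bookkeeping with the short exact sequence (1.9) and the Leray spectral sequence, and (iii) then follows by combining (i), (ii), and the Lefschetz $(1,1)$-theorem applied to the smooth projective surface $\X$.

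For (i), the hypothesis $[\nu]=0$ and exactness of (1.9) place $\nu$ in the image of $J^1(\X/S)_{fix}$. Unwinding the identification $J^1(\X/S)_{fix}=H^0(S,\F_e^\vee)/H^0(S,\HH_{\ZZ,e})$ and recalling that this group embeds into $H^0(S,\J_e)$ by sending a fixed class to the same element of the stalk $J^1(X_s)$ at every $s\in S$ (the constant subbundle $\J_{fix}\subset\J_e$), we conclude immediately that $\nu$ is a constant section.

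For (ii), I would compute $[\nu_\sZ]$ from its definition as the image of $\nu_\sZ$ under the connecting map $H^0(\J_e)\to H^1(S,\HH_{\ZZ,e})$. A multivalued local lift of $s\mapsto AJ(Z_s)$ to $\F_e^\vee$ is $s\mapsto\int_{\Gamma_s}(\cdot)$ with $\partial\Gamma_s=Z_s$, and the monodromy of $\Gamma_s$ around loops in $S^*$ furnishes a $1$-cocycle representing $[\nu_\sZ]$. Independently, the Leray spectral sequence for $\bar{\pi}\colon\X\to S$ identifies (a piece of) $H^1(S,\HH_{\ZZ,e})$, after quotienting by the fibre class, with $Hg^1(\X)_{prim}/\ZZ\langle[X_{s_0}]\rangle$, and the image of $[\sZ]$ in $E_\infty^{1,1}$ is computed by the same chain-bounding recipe. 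Matching the two cocycles yields $[\nu_\sZ]=\overline{[\sZ]}$.

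For (iii), let $\nu$ be an arbitrary ENF with topological invariant $\overline{\alpha}\in Hg^1(\X)_{prim}/\ZZ\langle[X_{s_0}]\rangle$. Lift $\overline{\alpha}$ to a class $\alpha\in Hg^1(\X)_{prim}$, and invoke the Lefschetz $(1,1)$-theorem on the smooth projective surface $\X$ to write $\alpha=[\sZ]$ for some $\sZ\in Z^1(\X)_{prim}$ (subtracting a multiple of $[X_{s_0}]$ if needed). By (ii), $\nu-\nu_\sZ$ has trivial topological invariant, so by (i) equals a constant section attached to some $a\in J^1(\X/S)_{fix}$. Since $J^1(\X/S)_{fix}$ sits as an abelian subvariety of the Picard variety $J^1(\X)$, Jacobi inversion on the surface produces $a=AJ_\X(\sZ_0)$ for some $\sZ_0\in Z^1(\X)_{hom}$; naturality of Abel-Jacobi under restriction to fibres identifies $\nu_{\sZ_0}$ with this constant section. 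Thus $\nu=\nu_{\sZ+\sZ_0}$ is of geometric origin.

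The main obstacle is (iii), and specifically the appeal to the $(1,1)$-theorem to realise the Hodge lift of $[\nu]$ by a divisor. Historically the implication runs in the opposite direction---Lefschetz used Poincar\'e's existence theorem to deduce $(1,1)$ for surfaces---but having the modern Hodge-theoretic proof of $(1,1)$ in hand, it is legitimate to invoke it here; everything else reduces to tracking classes through long exact sequences and Leray.
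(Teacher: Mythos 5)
Your parts (i) and (ii) follow the paper's line, but (i) as written begs the question: you ``unwind the identification $J^{1}(\X/S)_{fix}\cong H^{0}(S,\F_{e}^{\vee})/H^{0}(S,\HH_{\ZZ,e})$'' and declare the section constant, whereas establishing that identification \emph{is} statement (i). The raw long exact sequence only tells you that $[\nu]=0$ forces a global lift $\tilde{\nu}\in H^{0}(S,\F_{e}^{\vee})$. To get constancy one must pair $\tilde{\nu}$ against sections $\omega_{1}(s),\dots,\omega_{g}(s)$ of $\F_{e}$ that are generically independent and apply Liouville's theorem (using compactness of $S$) to see the abelian integrals $\langle\tilde{\nu},\omega_{i}\rangle$ are constant; and to conclude that the constant value lies in the \emph{fixed} subtorus $J^{1}(\X/S)_{fix}\subset J^{1}(X_{s})$ one needs the theorem of the fixed part. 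Neither step appears in your write-up, and both are the actual content of (i).

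For (iii) your route is genuinely different from the paper's and, granting the exponential-sequence proof of Lefschetz $(1,1)$, it is logically sound: you produce a divisor $\sZ$ on the total space $\X$ with $[\sZ]$ lifting $[\nu]$ via $(1,1)$ applied to $\X$, reduce to the constant case by (i) and (ii), and then kill the constant using surjectivity of $AJ_{\X}$ on $CH^{1}(\X)_{hom}$ (i.e.\ $\mathrm{Pic}^{0}(\X)\cong J^{1}(\X)$, which surjects onto $J^{1}(\X/S)_{fix}$ by the global invariant cycle theorem). The paper instead performs ``Jacobi inversion with parameters'': it applies Theorem 1.2(ii) fibrewise with base points $q_{i}(s)=\sigma(s)$ over the generic point of $S$ and takes Zariski closure to manufacture $\sZ$ directly, with no appeal to $(1,1)$. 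The trade-off is exactly the one you flag: the paper's argument keeps the Poincar\'e existence theorem logically prior to $(1,1)$, so that $\S1.4$ can then \emph{derive} $(1,1)$ from it, while your argument consumes $(1,1)$ (for the surface $\X$ and, implicitly, for $\mathrm{Pic}^{0}=J^{1}$) as input and so cannot serve that purpose. Two small points you should still address: the divisor produced by $(1,1)$ must be moved by rational equivalence to meet fibres properly and miss the singularities of $\bar{\pi}$ before $\nu_{\sZ}$ is defined; and a priori $\nu_{\sZ}$ is only admissible (a multiple extends), so one must check that $[\nu_{\sZ}]=[\nu]$ with $\nu$ extendable forces $\nu_{\sZ}$ itself to extend before (i) can be applied to the difference.
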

We note that (i) follows from considering sections $\{\omega_{1},\ldots,\omega_{g}\}(s)$
of $\F_{e}^{\vee}$ whose restrictions to general $X_{s}$ are linearly
independent (such do exist), evaluating a lift $\tilde{\nu}\in H^{0}(\F_{e}^{\vee})$
against them, and applying Liouville's Theorem. The resulting constancy
of the abelian integrals, by a result in Hodge Theory (cf. end of
$\S1.6$), implies the membership of $\nu(s)\in\J_{fix}$. To see
(iii), apply {}``Jacobi inversion with parameters'' and $q_{i}(s)=\sigma(s)$
($\forall i$) over $S^{*}$ (really, over the generic point of $S$),
and then take Zariski closure.%
\footnote{Here the $q_{i}(s)$ are as in Theorem \ref{thm Abel}(ii) (but varying
with respect to a parameter). If at a generic point $\nu(\eta)$ is
a special divisor then additional argument is needed.%
} Finally, when $\nu$ is geometric, the monodromies of a lift $\tilde{\nu}$
(to $\F_{e}^{\vee}$) around each loop in $S$ (which determine $[\nu]$)
are just the corresponding monodromies of a bounding $1$-chain $\Gamma_{s}$
($\partial\Gamma_{s}=Z_{s}$) 

$\mspace{100mu}$\includegraphics[scale=0.4]{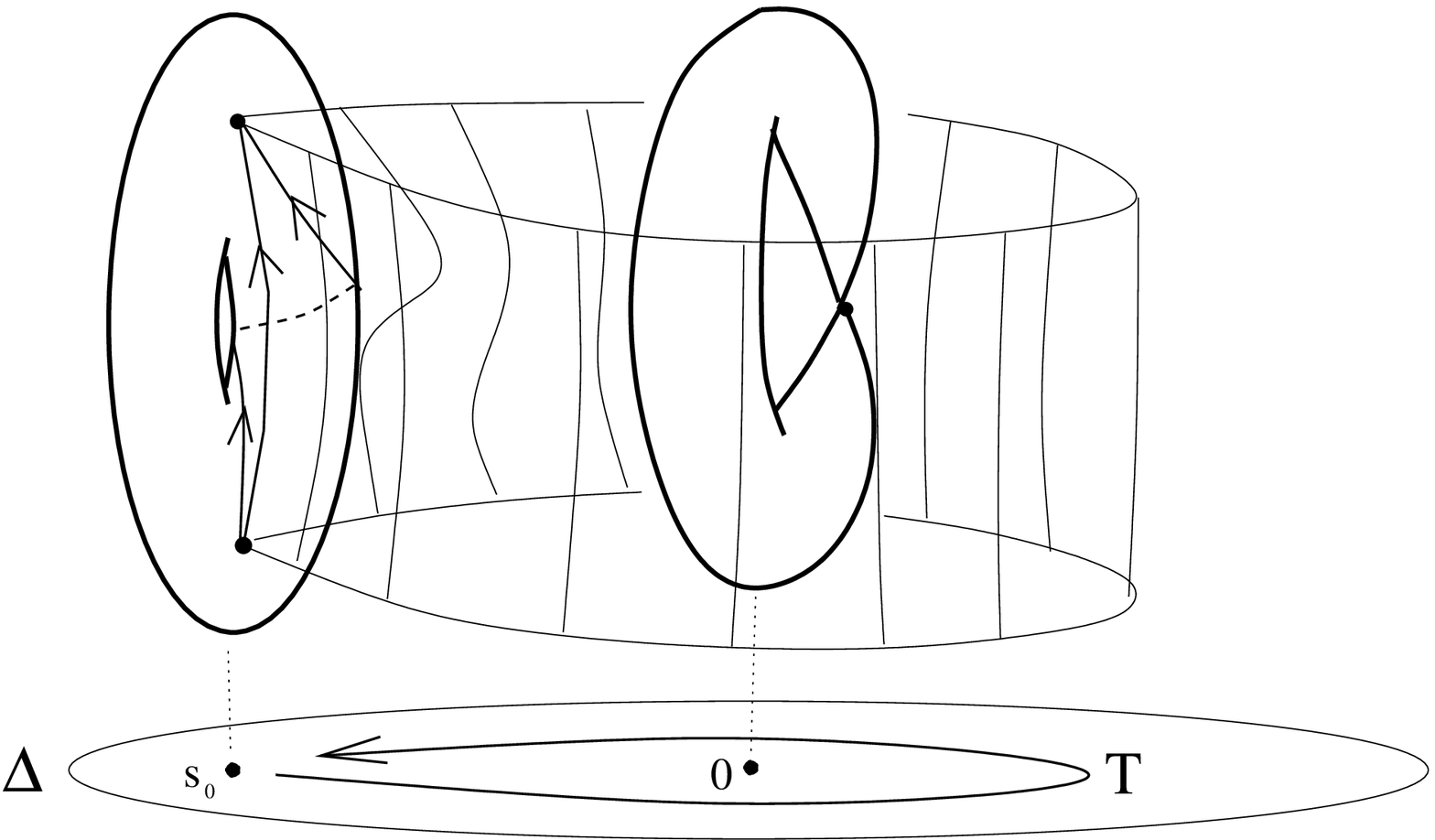}

which identify with the Leray $(1,1)$ component of $[\sZ]$ in $H^{2}(\X)$;
this gives the gist of (ii).

A normal function is said to be \emph{motivated over $K$} ($K\subset\CC$
a subfield) if it is of geometric origin as above, and if the coefficients
of the defining equations of $\sZ$, $\X$, $\bar{\pi}$, and $S$
belong to $K$.

\subsection{Lefschetz (1,1) Theorem}

Now take $X\subset\PP^{N}$ to be a smooth projective surface of degree
$d$, and $\{X_{s}:=X\cdot H_{s}\}_{s\in\PP^{1}}$ a \emph{Lefschetz
pencil} of hyperplane sections: the singular fibres have exactly one
(nodal) singularity. Let $\beta:\X\twoheadrightarrow X$ denote the
blow-up at the base locus $B:=\bigcap_{s\in\PP^{1}}X_{s}$ of the
pencil, and $\bar{\pi}:\,\X\to\PP^{1}=:S$ the resulting fibration.
We are now in the situation considered above, with $\sigma(S)$ replaced
by $d$ sections $E_{1}\amalg\cdots\amalg E_{d}=\beta^{-1}(B)$, and
fibres of genus $g={d-1 \choose 2}$; and with the added bonus that
there is no torsion in any $H^{1}(\Delta_{i}^{*},\HH_{\ZZ})$, so
that admissible $\implies$ extendable. Hence, given $Z\in Z^{1}(X)_{prim}$
($\deg(Z\cdot X_{s_{0}})=0$): $\beta^{*}Z$ is primitive, $v_{Z}:=\nu_{\beta^{*}Z}$
is an ENF, and $[v_{Z}]=\beta^{*}[Z]$ under $\beta^{*}:Hg^{1}(X)_{prim}\hookrightarrow\frac{Hg^{1}(\X)_{prim}}{\ZZ\left\langle \left[X_{s_{0}}\right]\right\rangle }$. 

If on the other hand we start with a Hodge class $\xi\in Hg^{1}(X)_{prim}$,
$\beta^{*}\xi$ is (by (\ref{eqn ses3}) + Poincar\'e existence)
the class of a geometric ENF $\nu_{\sZ}$; and $[\sZ]\equiv[\nu_{\sZ}]\equiv\beta^{*}\xi$
mod $\ZZ\left\langle \left[X_{s_{0}}\right]\right\rangle $ $\implies$
$\xi\equiv\beta_{*}\beta^{*}\xi\equiv[\beta_{*}\sZ=:Z]$ in $\frac{Hg^{1}(X)}{\ZZ\left\langle \left[X_{s_{0}}\right]\right\rangle }$
$\implies$ $\xi=[Z']$ for some $Z'\in Z^{1}(X)_{(prim)}$. This
is the gist of Lefschetz's original proof \cite{L} of
\begin{thm}
\label{thm lef11}Let $X$ be a (smooth projective algebraic) surface.
The fundamental class map $CH^{1}(X)\overset{[\cdot]}{\to}Hg^{1}(X)$
is (integrally) surjective.
\end{thm}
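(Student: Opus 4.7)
The plan is to reverse-engineer the discussion already set up in the excerpt: rather than going from a cycle to a normal function, I would start from a Hodge class and use the surjectivity of the topological invariant map to produce a normal function, then use Poincar\'e's Existence Theorem to realize it geometrically. First, since any class in $Hg^{1}(X)$ can be written (over $\ZZ$) as a primitive class plus an integer multiple of a hyperplane class $[H]=[X\cdot H_{s_{0}}]$, and $[H]$ itself is algebraic by construction, it suffices to show that every $\xi\in Hg^{1}(X)_{prim}$ is the fundamental class of an algebraic divisor. Fix a Lefschetz pencil $\{X_{s}\}_{s\in\PP^{1}}$ of hyperplane sections of $X$, let $\beta\colon\X\twoheadrightarrow X$ be the blow-up of the base locus, and let $\bar\pi\colon\X\to\PP^{1}=:S$ be the resulting fibration, which satisfies hypotheses (a), (b) of \S1.2.

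Given $\xi\in Hg^{1}(X)_{prim}$, I would consider $\beta^{*}\xi\in Hg^{1}(\X)_{prim}$ and its image in the quotient $Hg^{1}(\X)_{prim}/\ZZ\langle[X_{s_{0}}]\rangle$. The short exact sequence \eqref{eqn ses3} then exhibits this class as the topological invariant $[\nu]$ of some extended normal function $\nu\in\text{ENF}$ (we are using surjectivity of $[\cdot]$ in that sequence, which in turn comes from the vanishing/killing of classes against $\F_{e}^{\vee}$ together with the absence of torsion in $H^{1}(\Delta_{i}^{*},\HH_{\ZZ})$ for a Lefschetz pencil, so that admissibility implies extendability without passing to a multiple). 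Applying the Poincar\'e Existence Theorem (part (iii) of the proposition in \S1.3), I conclude that $\nu=\nu_{\sZ}$ for some $\sZ\in Z^{1}(\X)_{prim}$.

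Now I would push the cycle down: set $Z:=\beta_{*}\sZ\in Z^{1}(X)$. By part (ii) of the proposition, $[\nu_{\sZ}]=\overline{[\sZ]}$ in $Hg^{1}(\X)_{prim}/\ZZ\langle[X_{s_{0}}]\rangle$, hence $\beta^{*}\xi\equiv[\sZ]$ modulo $\ZZ\langle[X_{s_{0}}]\rangle$. Applying $\beta_{*}$ and using the projection formula $\beta_{*}\beta^{*}=\text{id}$ on $H^{2}(X)$, this yields $\xi\equiv[Z]$ in $Hg^{1}(X)/\ZZ\langle[H]\rangle$. Correcting by an integer multiple of $[H]=[H\cap X]$ (which is the fundamental class of a divisor) produces $Z'\in Z^{1}(X)$ with $\xi=[Z']$, completing the proof.

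The main obstacle is the surjectivity of the topological invariant map in \eqref{eqn ses3}, i.e., the assertion that \emph{every} primitive Hodge class on $\X$ (modulo $[X_{s_{0}}]$) is realized by an extended normal function. In the Lefschetz-pencil setting this reduces, via the long exact sequence \eqref{eqn les1} and its extended analogue, to showing that the obstruction in $H^{1}(S^{*},\F^{\vee})$ vanishes; here one invokes Zucker's theorem on the Hodge structure of the cohomology of $\HH_{\ZZ}$, or classically the fact that $S=\PP^{1}$ together with the Picard--Lefschetz description \eqref{eqn PL} of local monodromy forces the relevant obstruction to die. Everything else --- geometric realization by Poincar\'e existence, and the bookkeeping with $\beta^{*}$, $\beta_{*}$, and the hyperplane class --- is comparatively formal.
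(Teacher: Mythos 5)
Your argument is essentially identical to the paper's proof of Theorem \ref{thm lef11}: pull the primitive Hodge class back to the blown-up Lefschetz pencil, realize it via the surjectivity in (\ref{eqn ses3}) as the topological invariant of an extended normal function, invoke the Poincar\'e Existence Theorem to obtain $\sZ$, and push down by $\beta_{*}$, correcting by a multiple of the hyperplane class. The only inaccuracy is your opening reduction: for $\deg X=d>1$ an integral Hodge class $\xi$ need not split integrally as (primitive) $+\;\ZZ[H]$ (only $d\xi-(\xi\cdot[H])[H]$ is primitive), so the pencil argument as written captures $Hg^{1}(X)_{prim}+\ZZ[H]$ rather than all of $Hg^{1}(X)$ --- but this is exactly where the paper's own sketch stops as well, the full integral statement being deferred there to the exponential-sequence proof.
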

This continues to hold in higher dimension, as can be seen from an
inductive treatment with ENF's or (more easily) from the {}``modern''
treatment of Theorem \ref{thm lef11} using the exponential exact
sheaf sequence \[
0\to\ZZ_{X}\longrightarrow\mathcal{O}_{X}\overset{\tiny e^{2\pi i(\cdot)}}{\longrightarrow}\mathcal{O}_{X}^{*}\to0.\]
One simply puts the induced long-exact sequence in the form\[
0\to\frac{H^{1}(X,\mathcal{O})}{H^{1}(X,\ZZ)}\to H^{1}(X,\mathcal{O}^{*})\to\ker\left\{ H^{2}(X,\ZZ)\to H^{2}(X,\mathcal{O})\right\} \to0,\]
and interprets it as \begin{equation}
\xymatrix{0 \ar [r] & J^1(X) \ar [r] & \left\{ {\begin{matrix} {\text{holomorphic}} \\ {\text{line bundles}} \end{matrix} } \right\} \ar @{-->} [d] \ar [r] & Hg^1(X) \ar [r] & 0 \\ & & CH^1(X) \ar [ur] } \\
\end{equation} where the dotted arrow takes the divisor of a meromorphic section
of a given bundle. Existence of the section is a standard but nontrivial
result.

We note that for $\X\to\PP^{1}$ a Lefschetz pencil of $X$, in (\ref{eqn ses3})
$J^{1}(\X/\PP^{1})_{fix}=J^{1}(X):=\frac{H^{1}(X,\CC)}{F^{1}H^{1}(X,\CC)+H^{1}(X,\ZZ)}$,
which is zero if $X$ is a complete intersection; in that case $ENF$
is finitely generated and $Hg^{1}(\X)_{prim}\overset{\beta^{*}}{\hookrightarrow}ENF$.
\begin{example}
For $X$ a cubic surface $\subset\PP^{3}$, divisors with support
on the $27$ lines already surject onto $Hg^{1}(X)=H^{2}(X,\ZZ)\cong\ZZ^{7}$.
Differences of these lines generate all primitive classes, hence all
of $\text{im}(\beta^{*})$($\cong\ZZ^{6}$) in ENF($\cong\ZZ^{8}$).
Note that $\J_{e}$ is essentially an elliptic surface and ENF comprises
the (holomorphic) sections passing through the $\CC^{*}$'s over points
of $\Sigma$. There are no torsion sections.
\end{example}

\subsection{Griffiths's AJ map}

A $\ZZ$-Hodge structure (HS) of weight $m$ comprises a finitely
generated abelian group $H_{\ZZ}$ together with a descending filtration
$F^{\bullet}$ on $H_{\CC}:=H_{\ZZ}\otimes_{\ZZ}\CC$ satisfying $F^{p}H_{\CC}\oplus\overline{F^{m-p+1}H_{\CC}}=H_{\CC}$,
the \emph{Hodge filtration}; we denote the lot by $H$. Examples include
the $m^{\text{th}}$ (singular/Betti + de Rham) cohomology groups
of smooth projective varieties $/\CC$, with $F^{p}H_{dR}^{m}(X,\CC)$
being that part of the de Rham cohomology represented by $C^{\infty}$
forms on $X^{an}$ with \emph{at least} $p$ holomorphic differentials
wedged together in each monomial term. (These are forms of \emph{Hodge
type} $(p,m-p)+(p+1,m-p-1)+\cdots$; note that $H_{\CC}^{p,m-p}:=F^{p}H_{\CC}\cap\overline{F^{m-p}H_{\CC}}$.)
To accommodate $H^{m}$ of non-smooth or incomplete varieties, the
notion of a ($\ZZ$-)mixed Hodge structure (MHS) $V$ is required:
in addition to $F^{\bullet}$ on $V_{\CC}$, introduce a decreasing
\emph{weight filtration} $W_{\bullet}$ on $V_{\QQ}$ such that the
$\left(Gr_{i}^{W}V_{\QQ},\left(Gr_{i}^{W}(V_{\CC},F^{\bullet})\right)\right)$
are weight $i$ $\QQ$-HS. Mixed Hodge structures have Hodge and Jacobian
groups $Hg^{p}(V):=\ker\{V_{\ZZ}\oplus F^{p}W_{2p}V_{\CC}\to V_{\CC}\}$
(for for $V_{\ZZ}$ torsion-free becomes $V_{\ZZ}\cap F^{p}W_{2p}V_{\CC}$)
and $J^{p}(V):=\frac{W_{2p}V_{\CC}}{F^{p}W_{2p}V_{\CC}+W_{2p}V_{\QQ}\cap V_{\ZZ}}$,
with special cases $Hg^{m}(X):=Hg^{m}(H^{2m}X))$ and $J^{m}(X):=J^{m}(H^{2m-1}(X))$.
Jacobians of HS yield complex tori, and subtori correspond bijectively
to sub-HS. 

A \emph{polarization} of a HS $H$ is a morphism $Q$ of HS (defined
over $\ZZ$; complexification respects $F^{\bullet}$) from $H\times H$
to the trivial HS $\ZZ(-m)$ of weight $2m$ (and type $(m,m)$),
such that viewed as a pairing $Q$ is nondegenerate and satisfies
a positivity constraint generalizing that in $\S1.1$ (the \emph{second
Hodge-Riemann bilinear relation}). A consequence of this definition
is that under $Q$, $F^{p}$ is the annihilator of $F^{m-p+1}$ (the
\emph{first Hodge-Riemann bilinear relation} in abstract form). If
$X$ is a smooth projective variety of dimension $d$, $[\Omega]$
the class of a hyperplane section, write (for $k\leq d$, say) $H^{m}(X,\QQ)_{prim}:=\ker\{H^{m}(X,\QQ)\overset{\cup\Omega^{d-k+1}}{\longrightarrow}H^{2d-m+2}(X,\QQ)\}$.
This Hodge structure is then polarized by $Q(\xi,\eta):=(-1)^{{m \choose 2}}\int_{X}\xi\wedge\eta\wedge\Omega^{d-k}$,
$[\Omega]$ the class of a hyperplane section (obviously since this
is a $\QQ$-HS, the polarization is only defined $/\QQ$).

Let $X$ be a smooth projective $(2m-1)$-fold; we shall consider
some equivalence relations on algebraic cycles of codimension $m$
on $X$. Writing $Z^{m}(X)$ for the free abelian group on irreducible
(complex-)codimension $p$ subvarieties of $X$, two cycles $Z_{1},Z_{2}\in Z^{m}(X)$
are homologically equivalent if their difference bounds a $C^{\infty}$
chain $\Gamma\in C_{2m-1}^{top}(X^{an};\ZZ)$ (of real dimension $2m-1$).
Algebraic equivalence is generated by (the projection to $X$ of)
differences of the form $W\cdot(X\times\{p_{1}\})-W\cdot(X\times\{p_{2}\})$
where $C$ is an algebraic curve, $W\in Z^{m}(X\times C)$, and $p_{1},p_{2}\in C(\CC)$
(or $C(K)$ if we are working over a subfield $K\subset\CC$). Rational
equivalence is obtained by taking $C$ to be rational (i.e. $C\cong\PP^{1}$),
and for $m=1$ is generated by divisors of meromorphic functions.
We write $Z^{m}(X)_{rat}$ for cycles $\equiv_{rat}0$, etc; note
that $CH^{m}(X):=\frac{Z^{m}(X)}{Z^{m}(X)_{rat}}\supset CH^{m}(X)_{hom}:=\frac{Z^{m}(X)_{hom}}{Z^{m}(X)_{rat}}\supset CH^{m}(X)_{alg}:=\frac{Z^{m}(X)_{alg}}{Z^{m}(X)_{rat}}$
are proper inclusions in general. 

Now let $W\subset X\times C$ be an irreducible subvariety of codimension
$m$, with $\pi_{X}$,$\pi_{C}$ the projections from a desingularization
of $W$ to $X$ resp. $C$. If we put $Z_{i}:=\pi_{X_{*}}\pi_{C}^{*}\{p_{i}\}$,
then $Z_{1}\equiv_{alg}Z_{2}$ $\implies$ $Z_{1}\equiv_{hom}Z_{2}$,
which can be seen explicitly by setting $\Gamma:=\pi_{X_{*}}\pi_{C}^{*}(\overrightarrow{q.p})$
(so that $Z_{1}-Z_{2}=\partial\Gamma$). 

$\mspace{80mu}$\includegraphics[scale=0.5]{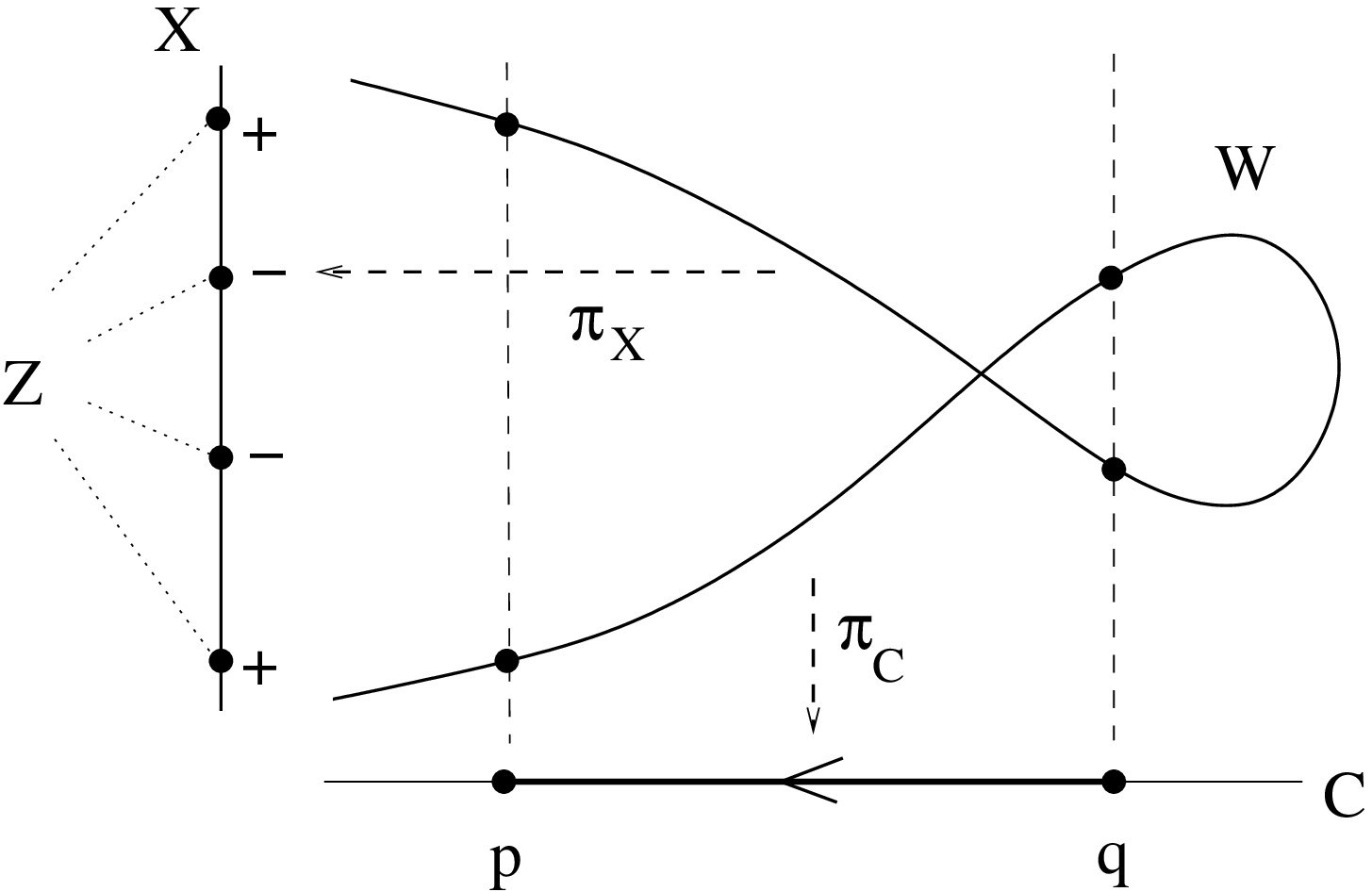}

Let $\omega$ be a $d$-closed form of Hodge type $(j,2m-j-1)$ on
$X$, for $j$ at least $m$. Consider $\int_{\Gamma}\omega=\int_{p}^{q}\kappa$,
where $\kappa:=\pi_{C_{*}}\pi_{X}^{*}\omega$ is a $d$-closed $1$-current
of type $(j-m+1,m-j)$ as integration along the $(m-1)$-dimensional
fibres of $\pi_{C}$ eats up $(m-1,m-1)$. So $\kappa=0$ unless $j=m$,
and by a standard regularity theorem in that case $\kappa$ is holomorphic.
In particular, if $C$ is rational, we have $\int_{\Gamma}\omega=0$.
This is essentially the reasoning behind the following result:
\begin{prop}
The Abel-Jacobi map \begin{equation}\label{eqn ajdef1} 
\xymatrix{CH^m(X)_{hom} \ar [r]^{AJ \mspace{70mu}} & \frac{\left( F^mH^{2m-1}(X,\CC) \right)^{\vee} }{\int_{H_{2m-1}(X,\ZZ)}(\cdot )} \cong J^m(X) } \\
\end{equation} induced by $Z=\partial\Gamma\mapsto\int_{\Gamma}(\cdot)$, is well-defined
and restricts to \begin{equation}\label{eqn ajdef2} 
\xymatrix{CH^m(X)_{alg} \ar [r]^{AJ_{alg} \mspace{180mu}} & \frac{F^mH^{2m-1}_{hdg}(X,\CC)}{\int_{H_{2m-1}(X,\ZZ)}(\cdot)} \cong J^m(H^{2m-1}_{hdg}(X)) =: J_h^m(X)
} \\
\end{equation} where $H_{hdg}^{2m-1}(X)$ is the largest sub-HS of $H^{2m-1}(X)$
contained (after $\otimes\CC$) in $H^{m-1,m}(X,\CC)\oplus H^{m,m-1}(X,\CC)$.
While $J^{m}(X)$ is in general only a complex torus, $J_{h}^{m}(X)$
is an abelian variety and defined (along with the point $AJ_{alg}(Z)$)
over the field of definition of $X$.\end{prop}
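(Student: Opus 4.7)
The plan is to treat the four assertions in order: well-definedness of $AJ$ on $CH^m(X)_{hom}$; its restriction to $CH^m(X)_{alg}$ with image in $J^m_h(X)$; algebraicity of $J^m_h(X)$; and descent to the field of definition $K$ of $X$.

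For well-definedness, by $E_1$-degeneration of the Hodge-to-de Rham spectral sequence, classes in $F^m H^{2m-1}(X,\CC)$ are represented by closed forms $\omega\in F^m\Omega_X^{2m-1}$, with indeterminacy $\omega\mapsto\omega+d\eta$ for $\eta\in F^m\Omega_X^{2m-2}$. Then $\int_\Gamma d\eta=\int_Z\eta$ vanishes by a type count: $\eta\in F^m$ of degree $2m-2$ has no $(m-1,m-1)$-component, and $[Z]$ of pure type $(m,m)$ only pairs nontrivially with $(m-1,m-1)$. Changing the bounding chain $\Gamma$ by a closed integral $(2m-1)$-cycle alters the functional by a period in $\int_{H_{2m-1}(X,\ZZ)}(\cdot)$. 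Vanishing on $Z^m(X)_{rat}$ is exactly the computation recorded just before the proposition with $C=\PP^1$: the current $\kappa=\pi_{C*}\pi_X^*\omega$ is either zero by Hodge type or a holomorphic $1$-current on $\PP^1$, hence zero, so $\int_{\overrightarrow{p_1 p_2}}\kappa=0$.

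For the restriction to algebraic equivalence with an arbitrary smooth curve $C$, take $Z=\pi_{X*}\pi_C^*(\{p_1\}-\{p_2\})$ and $\Gamma=\pi_{X*}\pi_C^*(\overrightarrow{p_1 p_2})$; then $\int_\Gamma\omega=\int_{p_1}^{p_2}\kappa$, and the same type computation shows $\kappa\neq 0$ forces $\omega$ to have an $(m,m-1)$-component. Hence $AJ_{alg}(Z)$ annihilates all $F^m$-summands of $H^{2m-1}(X,\CC)$ of Hodge type $(j,2m-1-j)$ with $j>m$, so it factors through the dual of $F^m H^{2m-1}_{hdg}(X,\CC)$ modulo periods, i.e.\ lands in $J^m_h(X)\subset J^m(X)$.

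For the third and fourth claims, $H^{2m-1}_{hdg}(X)$ is a polarized sub-Hodge structure of weight $2m-1$ and level $1$ (the intersection form on $X$ restricts to a polarization $Q$, and the Hodge-Riemann relations give a positive definite Riemann form), so $J^m_h(X)$ is a polarized complex torus, hence an abelian variety by Riemann/Chow. For the field of definition, the maximal level-$1$ sub-HS is characterized intrinsically inside $H^{2m-1}(X)$, which descends to $K$ together with its Hodge filtration by Grothendieck's algebraic de Rham comparison; consequently $J^m_h(X)$ and its polarization are defined over $K$. When $Z\in Z^m(X)_{alg}$ is defined over $K$, the witness $(W,C,p_1,p_2)$ can be chosen over $K$, and $p\mapsto AJ_{alg}(\pi_{X*}\pi_C^*(\{p\}-\{p_1\}))$ extends to a $K$-morphism $C\to J^m_h(X)$ (Abel-Jacobi in families), so $AJ_{alg}(Z)\in J^m_h(X)(K)$.

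The main obstacle is the descent in the last paragraph: making the transcendentally defined $J^m_h(X)$ into a $K$-abelian variety and producing the $K$-morphism $C\to J^m_h(X)$ rigorously requires either a Deligne-cohomological construction or appeal to Lieberman's theorem realizing the transcendental Abel-Jacobi on algebraically trivial cycles by an algebraic regular homomorphism into an abelian variety (the \textquotedbl{}algebraic part\textquotedbl{} of $J^m(X)$). The earlier parts reduce to Hodge-type bookkeeping plus the Hodge-Riemann relations.
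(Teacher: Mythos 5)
Your proposal follows essentially the same route as the paper's own (sketched) argument: the key step in both is the pushforward current $\kappa=\pi_{C*}\pi_X^*\omega$ on the parameter curve, which vanishes for Hodge type reasons unless $\omega$ is of type $(m,m-1)$ and is then holomorphic (hence zero when $C=\PP^1$), giving well-definedness on $CH^m(X)_{hom}$ and the factorization of $AJ_{alg}$ through $J^m_h(X)$; and the algebraicity of $J^m_h(X)$ via the Hodge--Riemann relations matches the paper's Remark \ref{rem AJ}(i). Your added bookkeeping for the choice of form representative and your candid flagging of the field-of-definition step (which the paper likewise does not prove, deferring in effect to Lieberman's result cited later in \S3.1) are both appropriate.
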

\begin{rem}
\label{rem AJ}(i) To see that $J_{h}^{m}(X)$ is an abelian variety,
one uses the Kodaira embedding theorem: by the Hodge-Riemann bilinear
relations, the polarization of $H^{2m-1}(X)$ induces a K?ler metric
$h(u,v)=-iQ(u,\bar{v})$ on $J_{h}^{m}(X)$ with rational K?ler class.

(ii) The mapping (\ref{eqn ajdef1}) is neither surjective nor injective
in general, and (\ref{eqn ajdef2}) is not injective in general; however,
(\ref{eqn ajdef2}) is conjectured to be surjective, and regardless
of this $J_{alg}^{m}(X):=\text{im}(AJ_{alg})\subseteq J_{h}^{m}(X)$
is in fact a sub-abelian-variety.

(iii) A point in $J^{m}(X)$ is naturally the invariant of an extension
of MHS \[
0\to(H=)H^{2m-1}(X,\ZZ(m))\to E\to\ZZ(0)\to0\]
(where the {}``twist'' $\ZZ(m)$ reduces weight by $2m$, to $(-1)$).
The invariant is evaluated by taking two lifts $\nu_{F}\in F^{0}W_{0}E_{\CC}$,
$\nu_{\ZZ}\in W_{0}E_{\ZZ}$ of $1\in\ZZ(0)$, so that $\nu_{F}-\nu_{\ZZ}\in W_{0}H_{\CC}$
is well-defined modulo the span of $F^{0}W_{0}H_{\CC}$ and $W_{0}H_{\ZZ}$
hence is in $J^{0}(H)\cong J^{m}(X)$. The resulting isomorphism $J^{m}(X)\cong Ext_{\text{MHS}}^{1}(\ZZ(0),H^{2m-1}(X,\ZZ(m)))$
is part of an extension-class approach to $AJ$ maps (and their generalizations)
due to Carlson \cite{Ca}.

(iv) The Abel-Jacobi map appears in \cite{Gr3}.
\end{rem}

\subsection{Horizontality}

Generalizing the setting of $\S1.2$, let $\X$ be a smooth projective
$2m$-fold fibred over a curve $S$ with singular fibres $\{X_{s_{i}}\}$
each of either

(i) \emph{NCD}(=normal crossing divisor) \emph{type}: locally $(x_{1},\ldots,x_{2m})\overset{\pi}{\longmapsto}\prod_{j=1}^{k}x_{j}$;
or

(ii) \emph{ODP}(=ordinary double point) \emph{type}: locally $(\underline{x})\longmapsto\sum_{j=1}^{2m}x_{j}^{2}$.

An immediate consequence is that all $T_{s_{i}}\in Aut\left(H^{2m-1}(X_{s_{0}},\ZZ)\right)$
are \emph{unipotent}: $(T_{s_{i}}-I)^{n}=0$ for $n\geq2m$ in case
(i) or $n\geq2$ in case (ii). (If all fibers are of NCD type, then
we say the family $\{X_{s}\}$ of $(2m-1)$-folds is \emph{semistable}.)

The Jacobian bundle of interest is $\J:=\bigcup_{s\in S^{*}}J^{m}(X_{s})\,(\supset\J_{alg})$.
Writing \[
\left\{ \F^{(m)}:=\RR^{2m-1}\pi_{*}\Omega_{\X^{*}/S^{*}}^{\bullet\geq m}\right\} \subset\left\{ \H:=\RR^{2m-1}\pi_{*}\Omega_{\X^{*}/S^{*}}^{\bullet}\right\} \supset\left\{ \HH_{\ZZ}:=R^{2m-1}\pi_{*}\ZZ_{\X^{*}}\right\} ,\]
and noting $\F^{\vee}\cong\frac{\H}{\F}$ via $Q:\,\H^{2m-1}\times\H^{2m-1}\to\mathcal{O}_{S^{*}}$,
the sequences (\ref{eqn ses1}) and (\ref{eqn les1}), as well as
the definitions of NF and topological invariant $[\cdot]$, all carry
over. A normal function of geometric origin, likewise, comes from
$\sZ\in Z^{m}(\X)_{prim}$ with $Z_{s_{0}}:=\sZ\cdot X_{s_{0}}\equiv_{hom}0$
(on $X_{s_{0}}$), but now has an additional feature known as \emph{horizontality},
which we now explain.

Working locally over an analytic ball $(s_{0}\in)\, U\subset S^{*}$,
let $\tilde{\omega}\in$$\Gamma(\X_{U},F^{m+1}\Omega_{\X^{\infty}}^{2m-1})$
be a {}``lift'' of $\omega(s)\in\Gamma(U,\F^{m+1})$, and $\Gamma_{s}\in C_{2m-1}^{top}(X_{s};\ZZ)$
be a continuous family of chains with $\partial\Gamma_{s}=Z_{s}$.
Let $P^{\e}$ be a path from $s_{0}$ to $s_{0}+\e$; then $\hat{\Gamma}^{\e}:=\bigcup_{s\in P^{\e}}\Gamma_{s}$
has boundary $\Gamma_{s_{0}+\e}-\Gamma_{s_{0}}+\bigcup_{s\in P^{\e}}Z_{s}$,
and \begin{equation}\label{eqn horiz} 
\begin{matrix}
{\left( \frac{\partial}{\partial s} \int_{\Gamma_s} \omega(s) \right)_{s=s_0} = \lim_{\e \to 0} \frac{1}{\e} \int_{\Gamma_{s_0+\e}-\Gamma_{s_0}}\tilde{\omega} = }
\\
{\lim_{\e \to 0}\frac{1}{\e} \left(  \int_{\partial \hat{\Gamma}^{\e}} \tilde{\omega} - \int_{s_0}^{s_0+\e} \int_{Z_s} \omega(s) \right) =}
\\
{ \int_{\Gamma_{s_0}} \left\langle \widetilde{d/dt}, d\tilde{\omega} \right\rangle - \int_{Z_{s_0}} \omega(s_0) } 
\end{matrix} \\
\end{equation} where $\pi_{*}\widetilde{d/dt}=d/dt$ (with $\widetilde{d/dt}$ tangent
to $\hat{\Gamma}^{\e}$, $\hat{Z}^{\e}$).

The Gauss-Manin connection $\nabla:\,\H\to\H\otimes\Omega_{S^{*}}^{1}$
differentiates the periods of cohomology classes (against topological
cycles) in families, satisfies Griffiths transversality $\nabla(\F^{m})\subset\F^{m-1}\otimes\Omega_{S^{*}}^{1}$,
and is computed by $\nabla\omega=[\left\langle \widetilde{d/dt},d\tilde{\omega}\right\rangle ]\otimes dt$.
Moreover, the pullback of any form of type $F^{m}$ to $Z_{s_{0}}$
(which is of dimension $m-1$) is zero, so that $\int_{Z_{s_{0}}}\omega(s_{0})=0$
and $\int_{\Gamma_{s_{0}}}\nabla\omega$ is well-defined. If $\tilde{\Gamma}\in\Gamma(U,\H)$
is any lift of $AJ(\Gamma_{s})\in\Gamma(U,\J)$, we therefore have\[
Q\left(\nabla_{d/dt}\tilde{\Gamma},\omega\right)=\frac{d}{ds}Q(\tilde{\Gamma},\omega)-Q(\tilde{\Gamma},\nabla\omega)\]
\[
=\frac{d}{ds}\int_{\Gamma_{s}}\omega-\int_{\Gamma_{s}}\nabla\omega\]
which is zero by (\ref{eqn horiz}) and the remarks just made. We
have shown that $\nabla_{d/dt}\tilde{\Gamma}$ kills $\F^{m+1}$,
and so $\nabla_{d/dt}\tilde{\Gamma}$ is a local section of $\F^{m-1}$.
\begin{defn}
A NF $\nu\in H^{0}(S^{*},\J)$ is \emph{horizontal} if for any local
lift $\tilde{\nu}\in\Gamma(U,\H)$, $\nabla\tilde{\nu}\in\Gamma(U,\F^{m-1}\otimes\Omega_{U}^{1})$.
Equivalently, if we set $\H_{hor}:=\ker\left(\H\overset{\nabla}{\to}\frac{\H}{\F^{m-1}}\otimes\Omega_{S^{*}}^{1}\right)\supset\F^{m}=:\F$,
$(\F^{\vee})_{hor}:=\frac{\H_{hor}}{\F}$, and $\J_{hor}:=\frac{(\F^{\vee})_{hor}}{\HH_{\ZZ}}$,
then $\text{NF}_{hor}:=H^{0}(S,\J_{hor})$.
\end{defn}
Much as an $AJ$ image was encoded in a MHS in Remark \ref{rem AJ}(ii),
we may encode horizontal normal functions in terms of variations of
MHS. A VMHS $\V/S^{*}$ consists of a $\ZZ$-local system $\VV$ with
an increasing filtration of $\VV_{\QQ}:=\VV_{\ZZ}\otimes_{\ZZ}\QQ$
by sub- local systems $W_{i}\VV_{\QQ}$, a decreasing filtration of
$\V_{(\mathcal{O})}:=\VV_{\QQ}\otimes_{\QQ}\mathcal{O}_{S^{*}}$ by
holomorphic vector bundles $\F^{j}(=\F^{j}\V)$, and a connection
$\nabla:\,\V\to\V\otimes\Omega_{S^{*}}^{1}$ such that (i) $\nabla(\VV)=0$,
(ii) the fibres $(\VV_{s},W_{\bullet},V_{s},F_{s}^{\bullet})$ yield
$\ZZ$-MHS, and (iii) {[}transversality{]} $\nabla(\F^{j})\subset\F^{j-1}\otimes\Omega_{S^{*}}^{1}$.
(Of course, a VHS is just a VMHS with one nontrivial $Gr_{i}^{W}\VV_{\QQ}$,
and $((\HH_{\ZZ},\H,\F^{\bullet}),\nabla)$ in the geometric setting
above gives one.) A horizontal normal function corresponds to an extension
\begin{equation}\label{eqn AJasExt} 
0 \to \overset{\Tiny \begin{matrix} {\text{wt. -1}} \\ {\text{VHS}} \end{matrix} \normalsize }{\overbrace{\H(m)}} \to \E \to \ZZ(0)_{S^*} \to 0 \\
\end{equation} {}``varying'' the setup of Remark \ref{rem AJ}(iii), with the
transversality of the lift of $\nu_{F}(s)$ (together with flatness
of $\nu_{\ZZ}(s)$) reflecting horizontality.
\begin{rem}
Allowing the left-hand term of (\ref{eqn AJasExt}) to have weight
less than $-1$ yields {}``higher'' normal functions related to
families of \emph{generalized} ({}``higher'') algebraic cycles.
These have been studied in \cite{DM1}, \cite{DM2}, and \cite{DK},
and will be considered in later sections.
\end{rem}
An important result on VHS over a smooth quasi-projective base is
that the global sections $H^{0}(S^{*},\VV)$ (resp. $H^{0}(S^{*},\VV_{\RR})$,
$H^{0}(S^{*},\VV_{\CC})$) span the $\QQ$-local system (resp. its
$\otimes\RR$, $\otimes\CC$) of a (necessarily constant) sub-VMHS
$\subset\V$, called the \emph{fixed part} $\V_{fix}$ (with constant
Jacobian bundle $\J_{fix}$).

\subsection{Infinitesimal invariant}

Given $\nu\in\text{NF}_{hor}$, the {}``$\nabla\tilde{\nu}$'' for
various local liftings patch together after going modulo $\nabla\F^{m}\subset\F^{m-1}\otimes\Omega_{S^{*}}^{1}$.
If $\nabla\tilde{\nu}=\nabla f$ for $f\in\Gamma(U,\F^{m})$, then
the alternate lift $\tilde{\nu}-f$ is flat, i.e. equals $\sum_{i}c_{i}\gamma_{i}$
where $\{\gamma_{i}\}\subset\Gamma(U,\VV_{\ZZ})$ is a basis and the
$c_{i}$ are complex constants. Since the composition ($s\in S^{*}$)
$H^{2m-1}(X_{s},\RR)\hookrightarrow H^{2m-1}(X_{s},\CC)\twoheadrightarrow\frac{H^{2m-1}(X_{s},\CC)}{F^{m}}$
is an isomorphism, we may take the $c_{i}\in\RR$, and then they are
unique in $\RR/\ZZ$. This implies that $[\nu]$ lies in the torsion
group $\ker\left(H^{1}(\HH_{\ZZ})\to H^{1}(\HH_{\RR})\right)$, so
that a multiple $N\nu$ lifts to $H^{0}(S^{*},\HH_{\RR})\subset\H_{fix}$.
This motivates the definition of an infinitesimal invariant \begin{equation}
\xymatrix{\delta \nu \in \HH^1 \left( S^*, \F^m \overset{\nabla}{\to} \F^{m-1}\otimes \Omega^1_{S^*} \right)  \ar @{=} [r]^{\mspace{90mu} \text{if}\, S^*}_{\mspace{90mu} \text{affine}} & H^0 \left( S,\frac{\F^{m-1}\otimes \Omega^1}{\F^m} \right)   } \\
\end{equation} as the image of $\nu\in H^{0}\left(S^{*},\frac{\H_{hor}}{\F}\right)$
under the connecting homomorphism induced by \begin{equation}
\tiny \xymatrix{
0 \to \text{Cone}\left( \F^m \overset{\nabla}{\to}\F^{m-1}\otimes \Omega^1 \right) [-1] \to \text{Cone}\left( \H \overset{\nabla}{\to} \H \otimes \Omega^1 \right) [-1] \to \frac{\H_{hor}}{\F} \to 0.} \\
\normalsize
\end{equation}
\begin{prop}
If $\delta\nu=0$, then up to torsion, $[\nu]=0$ and $\nu$ is a
(constant) section of $\J_{fix}$.
\end{prop}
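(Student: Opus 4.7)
The plan is to unwind the infinitesimal invariant locally to produce a flat lift of $\tilde{\nu}$ modulo $\F$, use the Hodge decomposition in weight $2m-1$ to realize this lift over the reals modulo $\ZZ$, and then invoke the fixed-part theorem recalled at the end of $\S1.6$.

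First I would fix an analytic ball $U \subset S^{*}$ over which $\VV_{\ZZ}$ is trivialized by a flat $\ZZ$-basis $\{\gamma_{i}\}$. Chasing $\delta\nu$ through the connecting homomorphism of the displayed cone triangle, $\delta\nu = 0$ forces the class of $\nabla\tilde{\nu}$ in $(\F^{m-1}\otimes\Omega^{1}_{S^{*}})/\nabla\F^{m}$ to vanish, so (shrinking $U$) there is $f \in \Gamma(U,\F^{m})$ with $\nabla f = \nabla\tilde{\nu}$. Then $\tilde{\nu}-f \in \Gamma(U,\H)$ is $\nabla$-flat, hence equals $\sum_{i} c_{i}\gamma_{i}$ for some $c_{i}\in\CC$. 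Next I would invoke the Hodge-theoretic fact that the composition $H^{2m-1}(X_{s},\RR) \hookrightarrow H^{2m-1}(X_{s},\CC) \twoheadrightarrow H^{2m-1}(X_{s},\CC)/F^{m}$ is an $\RR$-linear isomorphism (injectivity from $F^{m}\cap\overline{F^{m}}=0$ in weight $2m-1$, surjectivity by the $h^{p,q}=h^{q,p}$ symmetry), producing unique $c_{i}'\in\RR$ with $\sum c_{i}'\gamma_{i} \equiv \sum c_{i}\gamma_{i}$ modulo $\F$. Any two lifts of $\nu$ to $\H$ differ by a section of $\F+\HH_{\ZZ}$, so the $c_{i}'$ are canonical modulo $\ZZ$; this exhibits $\nu$ locally, and hence globally by uniqueness of the lift, as a section of the subsheaf $\HH_{\RR}/\HH_{\ZZ} \hookrightarrow \J$.

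To conclude, I would compare the connecting homomorphisms of the short exact sequences $0\to\HH_{\ZZ}\to\HH_{\RR}\to\HH_{\RR}/\HH_{\ZZ}\to 0$ and $0\to\HH_{\ZZ}\to\F^{\vee}\to\J\to 0$: the obvious morphism of short exact sequences makes the connecting maps commute, so $[\nu]$ lies in the image of $H^{0}(S^{*},\HH_{\RR}/\HH_{\ZZ})\to H^{1}(S^{*},\HH_{\ZZ})$, i.e.\ in $\ker(H^{1}(\HH_{\ZZ})\to H^{1}(\HH_{\RR}))$, which (as $H^{1}(S^{*},\HH_{\ZZ})$ is finitely generated) equals the torsion subgroup. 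Thus $N[\nu]=0$ for some positive integer $N$, and $N\nu$ lifts to some $\mu\in H^{0}(S^{*},\HH_{\RR})$. By the fixed-part theorem, $\mu$ is constant and lies in $\H_{fix}$, so that $N\nu = \mu \bmod \HH_{\ZZ}$ is a constant section of $\J_{fix}$, which is the proposition's statement \emph{up to torsion}.

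The step I expect to require the most care is the first: one must verify that the connecting map in the cone triangle really is the one that sends $\nu$ to $[\nabla\tilde{\nu}]$ modulo $\nabla\F^{m}$, so that $\delta\nu=0$ genuinely yields a flat local $\CC$-lift rather than a weaker infinitesimal statement. Once that identification is pinned down, the real-refinement via the Hodge decomposition and the reduction to a constant fixed-part section are both formal.
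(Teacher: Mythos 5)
Your proposal is correct and follows essentially the same route as the paper: vanishing of $\delta\nu$ gives a local flat lift $\tilde{\nu}-f=\sum c_{i}\gamma_{i}$, the weight-$(2m-1)$ isomorphism $H^{2m-1}(X_{s},\RR)\overset{\sim}{\to}H^{2m-1}(X_{s},\CC)/F^{m}$ refines the $c_{i}$ to real constants unique in $\RR/\ZZ$, whence $[\nu]$ is torsion, $N\nu$ lifts to $H^{0}(S^{*},\HH_{\RR})$, and the fixed-part theorem finishes. You have merely made explicit the identification of the connecting map and the comparison of exact sequences that the paper leaves implicit.
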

An interesting application to the differential equations satisfied
by normal functions is essentially due to Manin \cite{Ma}. For simplicity
let $S=\PP^{1}$, and suppose $\H$ is generated by $\omega\in H^{0}(S^{*},\F^{2m-1})$
as a $D$-module, with monic \emph{Picard-Fuchs operator} $F(\nabla_{\delta_{s}:=s\frac{d}{ds}})\in\CC(\PP^{1})^{*}[\nabla_{\delta_{s}}]$
killing $\omega$. Then its periods satisfy the homogeneous P-F equation
$F(\delta_{s})\int_{\gamma_{i}}\omega=0$, and one can look at the
multivalued holomorphic function $Q(\tilde{\nu},\omega)$ (where $Q$
is the polarization, and $\tilde{\nu}$ is a multivalued lift of $\nu$
to $\H_{hor}/\F$), which in the geometric case is just $\int_{\Gamma_{s}}\omega(s)$.
The resulting equation \begin{equation}
(2 \pi i)^m F(\delta_s)Q(\tilde{\nu},\omega) =: G(s) \\
\end{equation} is called the \emph{inhomogeneous Picard-Fuchs equation} of $\nu$.
\begin{prop}
\label{prop DEnf}(i) \cite{DM1} $G\in\CC(\PP^{1})^{*}$ is a rational
function holomorphic on $S^{*}$; in the $K$-motivated setting (taking
also $\omega\in H^{0}(\PP^{1},\bar{\pi}_{*}\omega_{\X/\PP^{1}})$,
and hence $F$, over $K$), $G\in K(\PP^{1})^{*}$.

(ii) \cite{Ma,Gr1} $G\equiv0$ $\iff$ $\delta\nu=0$.\end{prop}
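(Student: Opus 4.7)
The strategy rests on three tools: (a) flatness of the polarization, $dQ(v,w) = Q(\nabla v, w) + Q(v, \nabla w)$; (b) the first Hodge-Riemann relation for the weight $2m-1$ VHS $\H$, giving $F^p \perp_Q F^{2m-p}$ (so $F^m$ is its own $Q$-annihilator); and (c) the defining property $F(\nabla_{\delta_s})\omega = 0$. The central object is the multi-valued function $\bar\nu(s) := Q(\tilde\nu, \omega)$, for $\tilde\nu$ a local lift of $\nu$ to $\H_{hor}$, defined modulo $\F^m + \HH_\ZZ$.

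For (i), I first verify $G := F(\delta_s)\bar\nu$ is well-defined and single-valued. Changing $\tilde\nu$ by $\phi \in \F^m$ shifts $\bar\nu$ by $Q(\phi, \omega)$, which vanishes since $\omega \in F^{2m-1} \subseteq F^m$ and $F^m \perp_Q F^m$. Changing $\tilde\nu$ by a flat $\gamma \in \HH_\ZZ$ shifts $\bar\nu$ by the period $Q(\gamma, \omega) = \int_\gamma\omega$; flatness of $\gamma$ and $Q$ gives $\delta_s^k Q(\gamma,\omega) = Q(\gamma, \nabla_{\delta_s}^k\omega)$, so $F(\delta_s)\int_\gamma\omega = \int_\gamma F(\nabla_{\delta_s})\omega = 0$. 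Hence $G$ is holomorphic and single-valued on $S^*$. Admissibility of $\nu$ provides at most logarithmic growth at each $s_i \in \Sigma$, and since $F(\delta_s)$ is polynomial in $\delta_s = s\,d/ds$ with meromorphic coefficients whose poles lie in $\Sigma$, $G$ extends meromorphically to $\PP^1$, i.e., $G \in \CC(\PP^1)^*$. The $K$-motivated refinement follows from the standard fact that the Picard-Fuchs operator attached to a $K$-motivated VHS over $\PP^1_K$ lies in $K(\PP^1)[\delta_s]$, together with the observation that (in the geometric case) $G$ is computed from boundary terms of the shape $\int_{Z_s}\nabla_{\delta_s}^j\omega$ that inherit $K$-rationality from $\sZ,\omega,\bar\pi$.

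For the forward direction of (ii): if $\delta\nu = 0$, then $\nabla\tilde\nu \in \nabla\F^m$ locally, so some $\phi \in \F^m$ makes $\tilde\nu' := \tilde\nu - \phi$ flat. As $Q(\phi,\omega)=0$, $\bar\nu = Q(\tilde\nu', \omega)$; iterating flatness of $Q$ and of $\tilde\nu'$ yields $\delta_s^k\bar\nu = Q(\tilde\nu', \nabla_{\delta_s}^k\omega)$, hence $G = Q(\tilde\nu', F(\nabla_{\delta_s})\omega) = 0$.

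The converse is where I expect the main obstacle. If $G \equiv 0$, then $\bar\nu$ is a local solution of $F(\delta_s)y = 0$; its rank-$r$ solution space is spanned by period integrals $Q(\gamma_i, \omega)$ for a flat local basis $\{\gamma_i\}$, so $\bar\nu = Q(\tilde\nu_0,\omega)$ for some flat $\tilde\nu_0$. Setting $\psi := \tilde\nu - \tilde\nu_0 \in \H_{hor}$ gives $Q(\psi,\omega) = 0$; the task is to show $\psi \in \F^m$, which forces $\nabla\tilde\nu = \nabla\psi \in \nabla\F^m$ and hence $\delta\nu = 0$. My plan is to apply $\delta_s$ iteratively to $Q(\psi,\omega)=0$ and peel off terms using repeated Griffiths transversality $\nabla_{\delta_s}^l\psi \in \F^{m-l}$ together with HR-vanishing $F^p \perp_Q F^{2m-p}$; this should yield $Q(\psi, \nabla_{\delta_s}^k\omega) = 0$ for $k = 0, \ldots, m-1$, and---in the cyclic regime implicit in the hypothesis, where $\{\nabla_{\delta_s}^k\omega\}_{k=0}^{m-1}$ generate $\F^m$ as an $\O$-module---that $\psi \perp_Q \F^m$, whence $\psi \in \F^m$ by first HR. The genuine difficulty is controlling the interaction between the Hodge filtration and the cyclic $\D$-module basis in less favourable Hodge-number configurations, where the naive inductive vanishing must be supplemented by the finer bookkeeping of \cite{DM1} (and, on the Manin/Griffiths side, the extension-class interpretation of \eqref{eqn AJasExt}) to push through.
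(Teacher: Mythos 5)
The paper itself gives no proof of this Proposition (it is quoted from \cite{DM1} and \cite{Ma,Gr1}), so your argument must stand on its own. Part (i) and the forward implication of (ii) are correct and are the standard arguments: well-definedness and single-valuedness of $G$ follow from $Q(\F^m,\F^m)=0$ and the homogeneous equation for periods, moderate growth at $\Sigma$ then gives rationality, and a flat alternate lift kills $G$ when $\delta\nu=0$.

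The gap you flag in the converse of (ii) is genuine, but the repair is not the one you anticipate. Your induction gives $Q(\psi,\nabla_{\delta_s}^{k}\omega)=0$ only for $k\le m-1$ (for larger $k$ the cross term $Q(\nabla_{\delta_s}\psi,\nabla_{\delta_s}^{k-1}\omega)$ pairs $\F^{m-1}$ against $\F^{2m-k}\not\subseteq\F^{m+1}$ and no longer vanishes), and these $m$ sections span $\F^m$ only when $h^{p,2m-1-p}\le1$ for $p\ge m$ --- true in the mirror-quintic situation of Example \ref{ex MW}, but not a consequence of cyclicity (already for weight $1$ with $h^{1,0}=2$ it fails). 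Moreover no ``finer bookkeeping'' of the same local computation can close this: the implication $Q(\psi,\omega)=0\Rightarrow\psi\in\F^m+\ker\nabla$ is simply false for multivalued horizontal sections $\psi$, since the $Q$-annihilator of $\{\nabla_{\delta_s}^{k}\omega\}_{k\le m-1}$ is strictly larger than $\F^m$ in general. What rescues the statement is the global integral structure of $\nu$. From $G\equiv0$ you get $Q(\tilde{\nu}-\tilde{\nu}_0,\omega)\equiv0$ for some flat $\tilde{\nu}_0$; comparing the monodromy of the two sides around each $s_i$, and using that a \emph{flat} section $Q$-orthogonal to $\omega$ is orthogonal to every $\nabla_{\delta_s}^{k}\omega$ and hence zero (cyclicity plus nondegeneracy), one finds $(T_{s_i}-I)\tilde{\nu}\equiv(T_{s_i}-I)\tilde{\nu}_0$ modulo $\F^m$, i.e.\ the topological invariant $[\nu]$ maps to $0$ in $H^1(S^*,\HH_{\CC})$. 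One then concludes $\delta\nu=0$ from Theorem \ref{thm infinv}(ii) together with the injectivity statement in the Remark following it (this is where extendability/admissibility of $\nu$ is used). So either add the hypothesis that $\omega,\ldots,\nabla_{\delta_s}^{m-1}\omega$ generate $\F^m$ (under which your local argument is complete), or reroute the converse through $[\nu]$ as above.
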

\begin{example}
\label{ex MW}\cite{MW} The solutions to\[
(2\pi i)^{2}\left\{ \delta_{z}^{4}-5z\prod_{\ell=1}^{4}(5\delta_{z}+\ell)\right\} (\cdot)=-\frac{15}{4}\sqrt{z}\]
are the membrane integrals $\int_{\Gamma_{s}}\omega(s)$ for a family
of $1$-cycles on the mirror quintic family of Calabi-Yau $3$-folds.
(The family of cycles is actually only well-defined on the double-cover
of this family, as reflected by the $\sqrt{z}$.) What makes this
example particularly interesting is the {}``mirror dual'' interpretation
of the solutions as generating functions of open Gromov-Witten invariants
of a fixed Fermat quintic $3$-fold.
\end{example}
The horizontality relation $\nabla\tilde{\nu}\in\F^{m-1}\otimes\Omega^{1}$
is itself a differential equation, and the constraints it puts on
$\nu$ over higher-dimensional bases will be studied in $\S5.4-5$.

Returning to the setting described in $\S1.6$, there are \emph{canonical
extensions} $\H_{e},\F_{e}^{\bullet}$ of $\H,\F^{\bullet}$ across
the $s_{i}$ as holomorphic vector bundles resp. subbundles (reviewed
in $\S2$ below); e.g. if all fibres are of NCD type then $\F_{e}^{p}\cong\RR^{2m-1}\bar{\pi}_{*}\Omega_{\X/S}^{\bullet\geq p}(\log(\X\backslash\X^{*}))$.
Writing%
\footnote{Warning: while $\H_{e}$ has no jumps in rank, the stalk of $\HH_{\ZZ,e}$
at $s_{i}\in\Sigma$ is of strictly smaller rank than at $s\in S^{*}$.%
} $\HH_{\ZZ,e}:=R^{2m-1}\bar{\pi}_{*}\ZZ_{\X}$ and $\H_{e,hor}:=\ker\left\{ \H_{e}\overset{\nabla}{\to}\frac{\H_{e}}{\F_{e}^{m-1}}\otimes\Omega_{S}^{1}(\log\Sigma)\right\} $,
we have short exact sequences \begin{equation}
0 \to \HH_{\ZZ,e} \to \frac {\H_{e(,hor)}}{\F^m_e} \to \J_{e(,hor)} \to 0 \\
\end{equation} and set $\text{ENF}_{(hor)}:=H^{0}(S,\J_{e(,hor)})$.
\begin{thm}
\label{thm infinv}(i) $\sZ\in Z^{m}(\X)_{prim}$ $\implies$ $N\nu_{\sZ}\in\text{ENF}_{hor}$
for some $N\in\NN$; and

(ii) $\nu\in\text{ENF}_{hor}$ with $[\nu]$ torsion $\implies$ $\delta\nu=0$.\end{thm}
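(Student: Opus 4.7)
\textbf{Proof plan for Theorem \ref{thm infinv}.}

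For part (i), I would separate horizontality over $S^{*}$, extendability of a multiple, and extension of horizontality across $\Sigma$. Horizontality on $S^{*}$ is already contained in the computation (\ref{eqn horiz}): for a local lift $\tilde{\Gamma}$ of $\nu_{\sZ}$ and $\omega \in \F^{m+1}$, both $\tfrac{d}{ds}\int_{\Gamma_{s}}\omega$ and $\int_{\Gamma_{s}}\nabla\omega$ vanish by type considerations on the $(m-1)$-dimensional $Z_{s}$, so $Q(\nabla_{d/dt}\tilde{\Gamma},\omega)=0$, placing $\nabla_{d/dt}\tilde{\Gamma}\in \F^{m-1}$. Extendability of some $N\nu_{\sZ}$ comes from boundary analysis: around $s_{i}$, the local monodromy of a lift $\tilde{\nu}_{\sZ}$ is represented by the integer class $[(T_{s_{i}}-I)\Gamma_{s}]\in \HH_{\ZZ,s_{0}}$, which by Picard--Lefschetz (\ref{eqn PL}) lies in the span of vanishing cycles, and after clearing a finite denominator sits in $(T_{s_{i}}-I)\HH_{\ZZ,s_{0}}$. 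Combined with the classical logarithmic growth of period integrals $\int_{\Gamma_{s}}\omega$ near ODP/NCD fibers, this yields $N\nu_{\sZ}\in\text{ENF}$. Extension of horizontality across $\Sigma$ then follows from Griffiths transversality for the canonical extensions --- $\nabla\F_{e}^{p}\subset \F_{e}^{p-1}\otimes \Omega_{S}^{1}(\log\Sigma)$, a consequence of the nilpotent orbit theorem developed in $\S 2$ --- applied to the log-growth lift $\tilde{\nu}_{e}$.

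For part (ii), the strategy is to play two distinct lifts of $N\nu$ off each other. Since $[\nu]$ is torsion, $[N\nu]=0\in H^{1}(S^{*},\HH_{\ZZ})$ for some $N\in \NN$, so $N\nu$ has a global holomorphic lift $\tilde{\nu}\in H^{0}(S^{*},(\F^{\vee})_{hor})$. On the other hand, $[\nu]_{\RR}=0$ in the torsion-free group $H^{1}(S^{*},\HH_{\RR})$, so via the $C^{\infty}$ real bundle isomorphism $\HH_{\RR}\xrightarrow{\sim}\F^{\vee}$ coming from $H^{2m-1}(X_{s},\RR)\xrightarrow{\sim}H^{2m-1}(X_{s},\CC)/F^{m}$, the section $\nu$ admits a global real \emph{flat} lift $\nu_{\RR}\in H^{0}(S^{*},\HH_{\RR})\subset H^{0}(S^{*},\H)$. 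On a contractible Stein open $U\subset S^{*}$ (so $H^{1}(U,\F^{m})=0$), further lift $\tilde{\nu}|_{U}$ to a section $\sigma\in H^{0}(U,\H_{hor})$. The two sections $\sigma$ and $N\nu_{\RR}$ of $\H|_{U}$ descend to holomorphic lifts of $N\nu$ in $\F^{\vee}|_{U}$, so their images differ by a locally constant integer flat section $c\in \HH_{\ZZ,s_{0}}$; equivalently, $\sigma = N\nu_{\RR}+c+f$ in $\H|_{U}$ for some $f\in H^{0}(U,\F^{m})$. Applying $\nabla$ kills both $N\nu_{\RR}$ and $c$, leaving $\nabla\sigma = \nabla f \in \nabla\F^{m}$ locally. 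Hence $\delta(N\nu)=0$ in $\HH^{1}(\F^{m}\to \F^{m-1}\otimes\Omega^{1})$, and since this group is a $\CC$-vector space, $\delta\nu=0$.

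The main obstacle is squarely in part (i): the two boundary statements --- existence of an $N$ with $N\nu_{\sZ}\in \text{ENF}$, and preservation of horizontality across $\Sigma$ --- rest on Hodge-theoretic machinery (admissibility, logarithmic period growth, nilpotent orbit theorem) that this section only gestures at and that the paper develops in earnest in $\S 2$. Part (ii), by contrast, is a short sheaf-theoretic comparison, built on the observation that a torsion topological invariant simultaneously yields a holomorphic lift of $N\nu$ into $(\F^{\vee})_{hor}$ and a smooth real flat lift of $\nu$ into $\HH_{\RR}$; their difference on a simply-connected open is automatically constant-integer-plus-$\F^{m}$, which suffices.
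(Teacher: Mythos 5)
Your outline of part (i) tracks the paper: horizontality over $S^{*}$ is exactly the computation \eqref{eqn horiz}, and the two boundary statements (extendability of some $N\nu_{\sZ}$ via Picard--Lefschetz and logarithmic growth, transversality for the canonical extensions) are precisely what $\S\S1.3$ and $2$ supply; the paper gives no more detail here than you do.

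Part (ii), however, has a genuine gap at the step ``the section $\nu$ admits a global real \emph{flat} lift $\nu_{\RR}\in H^{0}(S^{*},\HH_{\RR})$.'' Vanishing of $[N\nu]$ in $H^{1}(S^{*},\HH_{\ZZ})$ gives a global \emph{holomorphic} lift of $N\nu$ to $\F^{\vee}$, and (via the $C^{\infty}$ isomorphism $\F^{\vee}\cong\HH_{\RR}\otimes C^{\infty}$) a global $C^{\infty}$ real lift, but not a section of the \emph{local system} $\HH_{\RR}$: the sheaf surjection $\F^{\vee}\to\J$ does not factor through the locally constant sheaf $\HH_{\RR}$, and a section of $\J$ admits a flat real lift, even locally, only if it is constant in the $C^{\infty}$ trivialization --- which is essentially the conclusion you are after (the discussion preceding the definition of $\delta\nu$ runs this implication in the \emph{opposite} direction: $\delta\nu=0$ produces the flat real lifts). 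Once $\nabla\nu_{\RR}\neq0$, the identity $\nabla\sigma=\nabla f$ collapses; the \v{C}ech bookkeeping that follows is fine, so this is the single point of failure. A symptom is that your argument never uses extendability across $\Sigma$, yet the statement is false for mere $\nu\in\text{NF}_{hor}$ with $[\nu]=0$: already for $m=1$ (where horizontality is vacuous) any holomorphic section of $\F^{\vee}$ over an affine $S^{*}$ not of the form ``flat plus $\F^{1}$'' yields such a $\nu$ with $\delta\nu\neq0$. The paper's route genuinely needs the hypothesis $\nu\in\text{ENF}_{hor}$: by M. Saito's result (Cor.~2 of \cite{S2}), $\delta\nu$ then lies in the image of $\HH^{1}\bigl(S,\F^{m}\overset{\nabla}{\to}\F_{e}^{m-1}\otimes\Omega_{S}^{1}(\log\Sigma)\bigr)$ --- hypercohomology over the \emph{compact} curve --- on which the natural map to $H^{1}(S^{*},\HH_{\CC})$ is injective; since that map sends $\delta\nu$ to $[\nu]_{\CC}=0$, one concludes $\delta\nu=0$.
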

\begin{rem}
(ii) is essentially a consequence of the proof of Cor. 2 in \cite{S2}.
For $\nu\in\text{ENF}_{hor}$, $\delta\nu$ lies in the subspace $\HH^{1}\left(S,\F^{m}\overset{\nabla}{\to}\F_{e}^{m-1}\otimes\Omega_{S}^{1}(\log\Sigma)\right)$,
the restriction of $\HH^{1}\left(S^{*},\F^{m}\overset{\nabla}{\to}\F^{m-1}\otimes\Omega_{S^{*}}^{1}\right)\to H^{1}(S^{*},\HH_{\CC})$
to which is injective.
\end{rem}

\subsection{The Hodge Conjecture?}

Putting together Theorem \ref{thm infinv}(ii) and Proposition \ref{prop DEnf},
we see that a horizontal ENF with trivial topological invariant lies
in $H^{0}(S,\J_{fix})=:J^{m}(\X/S)_{fix}$ (constant sections). In
fact, the long-exact sequence associated to (17) yields\[
0\to J^{m}(\X/S)_{fix}\to\text{ENF}_{hor}\overset{[\cdot]}{\to}\frac{Hg^{m}(\X)_{prim}}{\text{im}\left\{ Hg^{m-1}(X_{s_{0}})\right\} }\to0,\]
with $[\nu_{\sZ}]=\overline{[\sZ]}$ (if $\nu_{\sZ}\in\text{ENF}$)
as before. If $\X\overset{\bar{\pi}}{\to}\PP^{1}=S$ is a Lefschetz
pencil on a $2m$-fold $X$, this becomes \begin{equation}\label{eqn lefproof}
\small \xymatrix{
J^m(X) \ar @{^(->} [r] & \text{ENF}_{hor} \ar @{->>} [r]^{[\cdot ]\mspace{130mu}}_{(*)\mspace{130mu}} & Hg^m(X)_{prim} \oplus \ker \left\{ { \begin{matrix} {Hg^{m-1}(B)} \\ {\to Hg^m(X)} \end{matrix} } \right\} \\ 
 & CH^m(\X)_{prim} \ar [u]^{\nu_{(\cdot)}} \\
\ker([\cdot]) \ar [uu]^{AJ} \ar @{^(->} [r] & CH^m(X)_{prim} \ar @/_3pc/ [uu]_{v_{(\cdot)}} \ar [r]^{[\cdot]}_{(**)} \ar [u]^{\beta^*} & Hg^m(X)_{prim} \ar @{^(->} [uu]_{(\text{id.},0)}
} \\
\normalsize
\end{equation}where surjectivity of ({*}) is due to Zucker (cf. Theorems \ref{thm zuck1}-\ref{thm zuck2}
in $\S3$ below; his result followed on work of Griffiths and Bloch
establishing the surjectivity for \emph{sufficiently ample} Lefschetz
pencils). What we are after ($\otimes\QQ$) is surjectivity of the
fundamental class map ({*}{*}). This would clearly follow from surjectivity
of $\nu_{(\cdot)}$, i.e. a Poincar\'e existence theorem, as in $\S1.4$.
By Remark \ref{rem AJ}(ii) this cannot work in most cases; however
we do have
\begin{thm}
The Hodge Conjecture HC$(m,m)$ is true for $X$ if $J^{m}(X_{s_{0}})=J^{m}(X_{s_{0}})_{alg}$
for a general member of the pencil.\end{thm}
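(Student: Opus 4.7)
The plan is to push the argument of diagram~\eqref{eqn lefproof} through in higher codimension. Given $\xi\in Hg^m(X)_{prim}$, my goal is to produce $Z\in Z^m(X)_{prim}$ with $[Z]=\xi$. First I would use Zucker's surjectivity $(*)$ to choose an admissible horizontal extended normal function $\nu\in\text{ENF}_{hor}$ with topological invariant $[\nu]=(\xi,0)\in Hg^m(X)_{prim}\oplus\ker\{Hg^{m-1}(B)\to Hg^m(X)\}$. The heart of the matter is then to show that $\nu$ is of geometric origin modulo the constant subbundle $J^m(\X/S)_{fix}=J^m(X)$: that is, there exists $\sZ\in Z^m(\X)_{prim}$ with $\nu_{\sZ}\equiv\nu\pmod{J^m(X)}$.

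This ``higher--codimension Poincar\'e existence'' step is precisely where the hypothesis enters. By Remark~\ref{rem AJ}(i)--(ii), the equality $J^m(X_{s_0})=J^m(X_{s_0})_{alg}$ forces $J^m(X_{s_0})$ to be an abelian variety onto which $AJ_{alg}$ surjects; by the standard universal--cycle construction for $J^m_{alg}$ (spread out from $s_0$ and, if needed, restricted to a Zariski--open $U\subset S^*$) one obtains a smooth projective family of abelian varieties $T\to U$ together with a relative correspondence $\W\in Z^m(T\times_U\X|_U)$ such that for every $s\in U$ the fibrewise map $t\mapsto AJ(\W_{(t,s)})$ is a surjection $T_s\twoheadrightarrow J^m(X_s)$. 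Moreover, the hypothesis endows the whole bundle $\J|_U$ with an algebraic structure, and via GAGA on a smooth compactification (using that $\nu\in\text{ENF}_{hor}$ has controlled growth at $\Sigma$) the horizontal section $\nu|_U$ is then algebraic. Rigidity for maps of abelian varieties lets one lift $\nu|_U$ to an algebraic section $\sigma:U\to T$ with $AJ\circ\W\circ\sigma=\nu|_U$; setting $\sZ:=\overline{(\sigma\times\mathrm{id})^*\W}\in Z^m(\X)$ (Zariski closure) and subtracting a fibral correction to enforce primitivity yields the desired $\sZ\in Z^m(\X)_{prim}$. Then $\nu_{\sZ}$ and $\nu$ agree on a dense open subset of $S^*$, hence on all of $S^*$ up to an element of $J^m(X)$, which is killed after $\otimes\QQ$ (and may be absorbed by modifying $\sZ$).

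By the formula $[\nu_{\sZ}]=\overline{[\sZ]}$ established in \S1.3 (applied in the higher--codimension setting), the topological invariant of $\nu_{\sZ}$ records $[\sZ]\bmod\ZZ\langle[X_{s_0}]\rangle$ plus its component in $\ker\{Hg^{m-1}(B)\to Hg^m(X)\}$. Combining with $[\nu_{\sZ}]=[\nu]=(\xi,0)$ reads $[\sZ]\equiv\beta^*\xi\pmod{\ZZ\langle[X_{s_0}]\rangle}$; pushing forward under $\beta_*$ and using $\beta_*\beta^*=\mathrm{id}$ on $H^*(X)$ gives $\xi\equiv[\beta_*\sZ]$ modulo a multiple of the hyperplane section class, which is itself algebraic and can be corrected by adding an explicit algebraic cycle on $X$. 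The result is $Z\in Z^m(X)_{prim}$ with $[Z]=\xi$, proving $HC(m,m)$ for $X$.

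The principal obstacle is the geometrization step: fibrewise surjectivity of $AJ$ produces a cycle on each $X_s$, but these must be assembled into a single algebraic cycle on $\X$. The hypothesis plays a double role here --- it supplies both (a) the fibrewise surjectivity of $AJ_{alg}$, and (b), crucially, the algebraic structure on $\J|_{S^*}$ that makes a universal--correspondence plus rigidity argument available and allows the holomorphic horizontal section $\nu$ to be lifted algebraically. Without this hypothesis the transcendental nature of $J^m(X_{s_0})$ (Remark~\ref{rem AJ}(ii)) blocks the step, and one is left only with the generally proper inclusion $J^m_{alg}(X_{s_0})\subsetneq J^m(X_{s_0})$, insufficient to recover the image of a general ENF.
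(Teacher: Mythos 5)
Your proposal is correct and follows essentially the same route the paper intends: Zucker's surjectivity $(*)$ produces the normal function attached to $\xi$, the hypothesis $J^{m}(X_{s_{0}})=J^{m}(X_{s_{0}})_{alg}$ restores the ``Jacobi inversion with parameters'' / Poincar\'e-existence step (which Remark \ref{rem AJ}(ii) otherwise blocks), and $\beta_{*}\beta^{*}=\mathrm{id}$ converts the resulting cycle on $\X$ into one on $X$ representing $\xi$. The paper only gestures at the geometrization step (``as in $\S1.4$, then take Zariski closure''), so your explicit universal-correspondence, GAGA, and rigidity argument is a reasonable fleshing-out of the same idea rather than a different proof.
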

\begin{example}
\cite{Zu1} As $J^{2}=J_{alg}^{2}$ is true for cubic threefolds by
the work of Griffiths and Clemens \cite{GC}, HC$(2,2)$ holds for
cubic fourfolds in $\PP^{5}$.
\end{example}
The Lefschetz paradigm, of taking a $1$-parameter family of slices
of a primitive Hodge class to get a normal function and constructing
a cycle by Jacobi inversion, appears to have led us (for the most
part) to a dead end in higher codimension. A beautiful new idea of
Griffiths and Green, to be described in $\S3$, replaces the Lefschetz
pencil by a complete linear system (of higher degree sections of $X$)
so that $\dim(S)\gg1$, and proposes to recover algebraic cycles $dual$
to the given Hodge class from features of the (admissible) normal
function in codimension $\geq2$ on $S$.

\subsection{Deligne cycle-class}

This replaces the fundamental and $AJ$ classes by one object. Writing
$\ZZ(m):=(2\pi i)^{m}\ZZ$, define the Deligne cohomology of $X$
(smooth projective of any dimension) by $H_{\D}^{*}(X^{an},\ZZ(m)):=$\[
H^{*}\left(\text{Cone}\left\{ C_{top}^{\bullet}(X^{an};\ZZ(m))\oplus F^{m}\D^{\bullet}(X^{an})\to D^{\bullet}(X^{an})\right\} [-1]\right),\]
and $c_{\D}:CH^{m}(X)\to H_{\D}^{2m}(X,\ZZ(m))$ by $Z\mapsto(2\pi i)^{m}(Z_{top},\delta_{Z},0)$.
One easily derives the exact sequence\[
0\to J^{m}(X)\to H_{\D}^{2m}(X,\ZZ(m))\to Hg^{m}(X)\to0,\]
which invites comparison to the top row of (\ref{eqn lefproof}).

\section{Limits and Singularities of Normal Functions}

Focusing on the geometric case, we now wish to give the reader a basic
intuition for many of the objects --- singularities, N\'eron models,
limits of NF's and VHS --- which will be treated from a more formal
Hodge-theoretic perspective in later sections.%
\footnote{Owing to our desire to limit preliminaries and/or notational complications
here, there are a few unavoidable inconsistencies of notation between
this and later sections.%
} The first part of this section ($\S\S2.2-8$) considers a cohomologically
trivial cycle on a 1-parameter semistably degenerating family of odd-dimensional
smooth projective varieties. Such a family has two invariants {}``at''
the central singular fibre: 
\begin{itemize}
\item the limit of the Abel-Jacobi images of the intersections of the cycle
with the smooth fibres, and 
\item the Abel-Jacobi image of the intersection of the cycle with the singular
fibre. 
\end{itemize}
We define what these mean and explain the precise sense in which they
agree, which involves limit mixed Hodge strutures and the Clemens-Schmid
sequence, and links limits of $AJ$ maps to the Bloch-Beilinson regulator
on higher $K$-theory.

In the second part, we consider what happens if the cycle is only
assumed to be homologically trivial \emph{fibrewise}. In this case,
just as the fundamental class of a cycle on a variety must be zero
to define its $AJ$ class, the family of cycles has a singularity
class which must be zero in order to define the limit $AJ$ invariant.
Singularities are first introduced for normal functions arising from
families of cycles, and then in the abstract setting of admissible
normal functions (and higher normal functions). At the end we say
a few words about the relation of singularities to the Hodge conjecture,
their r\^ole in multivariable N\'eron models, and the analytic obstructions
to singularities discovered by M. Saito, topics which $\S\S$ 3, 5.1-2,
and 5.3-5 (respectively) will elaborate extensively upon.

We shall begin by recasting $c_{\D}$ from $\S1.9$ in a more formal
vein, which works $\otimes\QQ$. The reader should note that henceforth
in this paper, we have to introduce appropriate Hodge twists (largely
suppressed in $\S1$) into VHS, Jacobians, and related objects.

\subsection{AJ map}

As we saw in $\S1$, this is the basic Hodge-theoretic invariant attached
to a cohomologically trivial algebraic cycle on a smooth projective
algebraic variety $X/\CC$; say $\dim(X)=2m-1$. In the diagram below,
if $cl_{X,\QQ}(Z)=0$ then $Z=\partial\Gamma$ for $\Gamma$ (say)
a rational $C^{\infty}$ $(2m-1)$-chain on $X^{\text{an}}$, and
$\int_{\Gamma}\in(F^{m}H^{2m-1}(X,\CC))^{\vee}$ induces $AJ_{X,\QQ}(Z)$.
\begin{equation}\label{eqn analogydiag1} \small \xymatrix{ & \text{Hom}_{_{\text{MHS}}} \left( \QQ(0), H^{2m}(X,\QQ(m)) \right) \ar @{=} [r]  & (H^{2m}(X))^{(m,m)}_{\QQ} \\ CH^m(X) \ar [ur]^{cl_X} \ar [r] & \text{Ext}^1_{_{D^b\text{MHS}}} \left( \QQ(0), \K^{\bullet} [2m](m) \right) \ar [u] \\  \ker (cl_X) \ar @{^(->} [u] \ar [r]^{AJ_X \mspace{120mu}} & \text{Ext}^1_{_{\text{MHS}}} \left( \QQ(0), H^{2m-1}(X,\QQ(m)) \right) \ar @{=} [r] \ar [u] & J^m(X)_{\QQ} \cong \frac{\left( F^mH^{2m-1}_{\CC} \right)^{\vee}}{H^{2m-1}_{\QQ(m)}} } \\
\normalsize
\end{equation} The middle term in the vertical short-exact sequence, which is isomorphic
to Deligne cohomology and Beilinson's absolute Hodge cohomology $H_{\H}^{2m}(X^{\text{an}},\QQ(m))$,
can be regarded as the ultimate strange fruit of Carlson's work on
extensions of mixed Hodge structures. Here $\K^{\bullet}$ is a canonical
complex of MHS quasi-isomorphic (non-canonically) to $\oplus_{i}H^{i}(X)[-i]$,
constructed from two general configurations of hyperplane sections
$\{H_{i}\}_{i=0}^{2m-1}$, $\{\tilde{H}_{j}\}_{j=0}^{2m-1}$ of $X$.
More precisely, looking (for $|I|,\,|J|\,>\,0$) at the corresponding
{}``cellular'' cohomology groups \[
C_{H,\tilde{H}}^{I,J}(X):=H^{2m-1}(X\backslash\cup_{i\in I}H_{i},\cup_{j\in J}H_{j}\backslash\cdots;\QQ),\]
one sets \[
\K^{\ell}:=\oplus_{I,J:\,|I|-|J|=\ell-2m+1}C_{H,\tilde{H}}^{I,J}(X),\]
cf. \cite{RS}. (Ignoring the description of $J^{m}(X)$ and $AJ$,
and the comparisons to $c_{\D},H_{\D}$, all of this works for smooth
quasi-projective $X$ as well; the vertical short-exact sequence is
true even without smoothness.)

The reason for writing $AJ$ in this way is to make plain the analogy
to (\ref{eqn analogydiag2}) below. We now pass back to $\ZZ$-coefficients.

\subsection{AJ in degenerating families}

To let $AJ_{X}(Z)$ vary with respect to a parameter, consider a semistable
degeneration (SSD) over an analytic disk \begin{equation}
\xymatrix{\X^* \ar @{^(->} [r] \ar [d]^{\pi} & \X \ar [d]^{\bar{\pi}} & X_0 \ar [d] \ar @{=} [r] \ar @{_(->} [l]^{\imath_0} & \cup_i Y_i \\ \Delta^* \ar @{^(->} [r]^{\jmath} & \Delta & \left\{ 0 \right\} \ar @{_(->} [l]} \\
\end{equation} where $X_{0}$ is a reduced NCD with smooth irreducible components
$Y_{i}$, $\X$ is smooth of dimension $2m$, $\bar{\pi}$ is proper
and holomorphic, and $\pi$ is smooth. An algebraic cycle $\sZ\in Z^{m}(\X)$
properly intersecting fibers gives rise to a family\[
Z_{s}:=\sZ\cdot X_{s}\in Z^{m}(X_{s})\,,\,\,\,\,\,\,\,\, s\in\Delta.\]
Assume $0=[\sZ]\in H^{2m}(\X)$ {[}$\implies$ $0=[Z_{s}]\in H^{2m}(X_{s})${]};
then is there a sense in which \begin{equation}\label{eqn limAJ}
\lim_{s\to 0} AJ_{X_s}(Z_s) = AJ_{X_0}(Z_0) ? \\
\end{equation} (Of course, we have yet to say what either side means.)

\subsection{Classical example}

Consider a degeneration of elliptic curves $E_{s}$ which pinches
3 loops in the same homology class to points, yielding for $E_{0}$
three $\PP^{1}$'s joined at $0$ and $\infty$ (called a {}``N\'eron
$3$-gon'' or {}``Kodaira type $I_{3}$'' singular fiber). 

\includegraphics[scale=0.55]{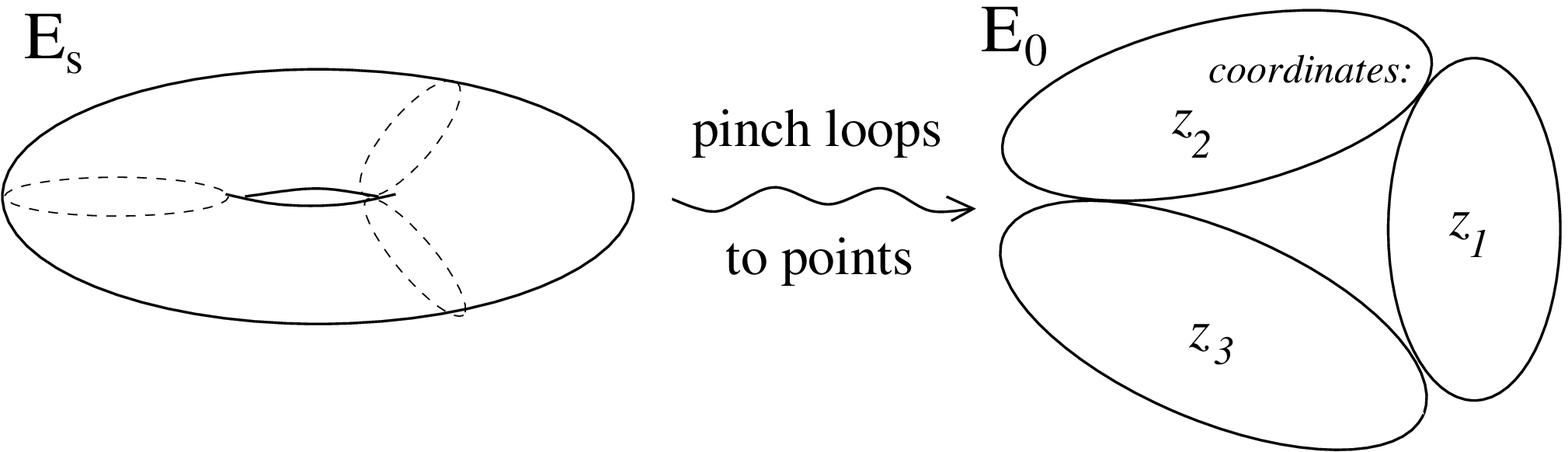}

Denote the total space by $\mathcal{E}\overset{\bar{\pi}}{\to}\Delta$.
One has a family of holomorphic $1$-forms $\omega_{s}\in\Omega^{1}(E_{s})$
limiting to $\{\text{dlog}(z_{j})\}_{j=1}^{3}$ on $E_{0}$; this
can be thought of as a holomorphic section of $R^{0}\bar{\pi}_{*}\Omega_{\mathcal{E}/\Delta}^{1}(\log E_{0})$.

There are two distinct possibilities for limiting behavior when $Z_{s}=p_{s}-q_{s}$
is a difference of points. (These do not include the case where one
or both of $p_{0}$, $q_{0}$ lies in the intersection of two of the
$\PP^{1}$'s, since in that case $\sZ$ is not considered to properly
intersect $X_{0}$.)

Case (I): 

\includegraphics[scale=0.55]{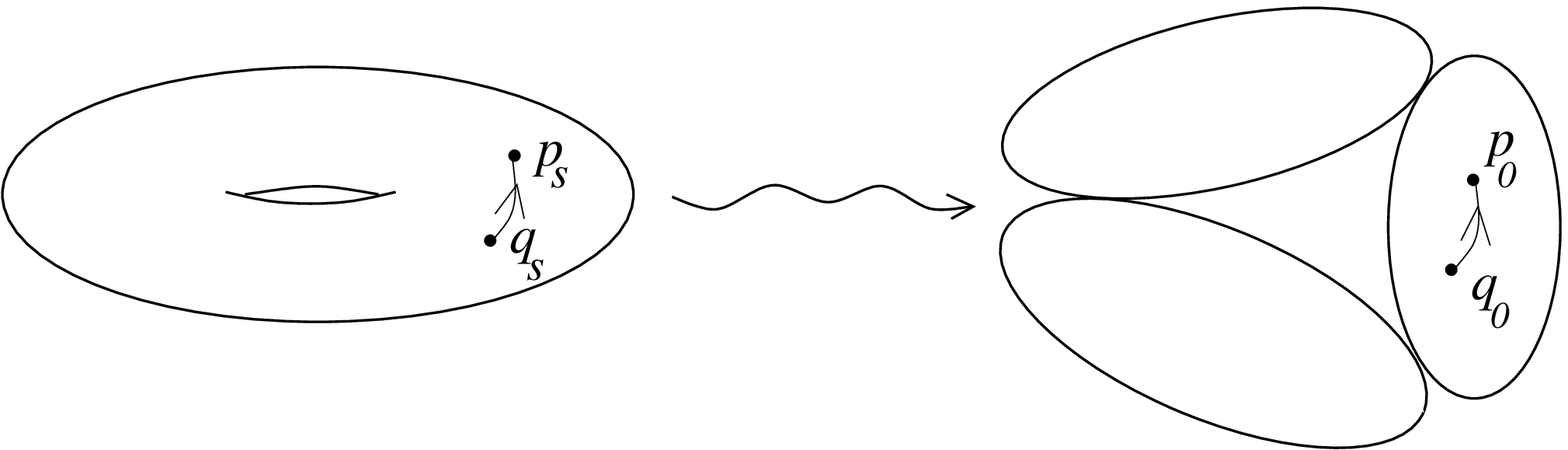}

Here $p_{0}$ and $q_{0}$ lie in the same $\PP^{1}$ (the $j=1$
component, say): in which case $AJ_{E_{s}}(Z_{s})=\int_{q_{s}}^{p_{s}}\omega_{s}\in\CC/\ZZ\left\langle \int_{\alpha_{s}}\omega_{s},\int_{\beta_{s}}\omega_{s}\right\rangle $
limits to $\int_{p_{0}}^{q_{0}}\text{dlog}(z_{1})=\log\left(\frac{z_{1}(p_{0})}{z_{1}(q_{0})}\right)\in\CC/2\pi i\ZZ$.\\
\\
Case (II):

\includegraphics[scale=0.55]{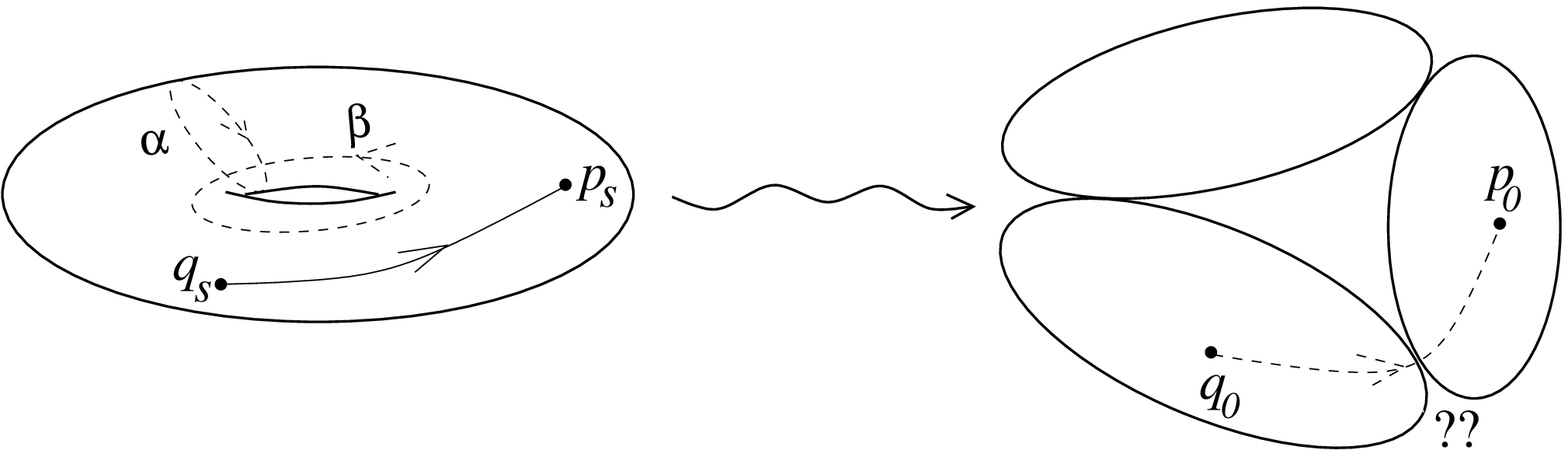}

In this case, $p_{0}$ and $q_{0}$ lie in different $\PP^{1}$ components,
in which case $0\neq[Z_{0}]\in H^{2}(X_{0})$ {[}$\implies$ $[\sZ]\neq0${]}
and we say that $AJ(Z_{0})$ is {}``obstructed''.

\subsection{Meaning of the LHS of (\ref{eqn limAJ})}

If we assume only that $0=[\sZ^{*}]\in H^{2m}(\X^{*})$, then \begin{equation}\label{eqn AJ(Zs)}
AJ_{X_s}(Z_s) \in J^m(X_s) \\
\end{equation} is defined for each $s\in\Delta^{*}$. We can make this into a horizontal,
holomorphic section of a bundle of intermediate Jacobians, which is
what we shall mean henceforth by a \emph{normal function} (on $\Delta^{*}$
in this case). 

Recall the ingredients of a variation of Hodge structure (VHS) over
$\Delta^{*}$\[
\H=((\HH,\H_{\mathcal{O}},\F^{\bullet}),\nabla)\,,\,\,\,\,\nabla\F^{p}\subset\F^{p-1}\otimes\Omega_{S}^{1}\,,\,\,\,\,0\to\HH\to\frac{\H_{\mathcal{O}}}{\F^{m}}\to\J\to0\]
where $\HH=R^{2m-1}\pi_{*}\ZZ(m)$ is a local system, $\H_{\mathcal{O}}=\HH\otimes_{\ZZ}\mathcal{O}_{\Delta^{*}}$
is {[}the sheaf of sections of{]} a holomorphic vector bundle with
holomorphic subbundles $\F^{\bullet}$, and these yield HS's $H_{s}$
fiberwise (notation: $H_{s}=(\HH_{s},H_{s(,\CC)},F_{s}^{\bullet})$).
Henceforth we shall abbreviate $\H_{\mathcal{O}}$ to $\H$. 

Then (\ref{eqn AJ(Zs)}) yields a section of the intermediate Jacobian
bundle\[
\nu_{\sZ}\in\Gamma(\Delta^{*},\J).\]
Any holomorphic vector bundle over $\Delta^{*}$ is trivial, each
trivialization inducing an extension to $\Delta$. The extensions
we want are the {}``canonical'' or {}``privileged'' ones (denoted
$(\cdot)_{e}$); as in $\S1.7$, we define an extended Jacobian bundle
$\J_{e}$ by \begin{equation}\label{eqn defJe} 0\to \jmath_*\HH \to \frac{\H_e}{\F^m_e}\to \J_e \to 0 .\\
\end{equation}
\begin{thm}
\cite{EZ} There exists a holomorphic $\bar{\nu}_{\sZ}\in\Gamma(\Delta,\J_{e})$
extending $\nu_{\sZ}$.
\end{thm}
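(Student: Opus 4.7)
The goal is to produce a single-valued holomorphic section of $\H_e/\F^m_e$ over $\Delta$ whose restriction to $\Delta^*$ agrees, modulo $\jmath_*\HH$, with a local lift of $\nu_{\sZ}$; reducing this section modulo $\jmath_*\HH$ via (\ref{eqn defJe}) will give the desired $\bar{\nu}_{\sZ}\in\Gamma(\Delta,\J_e)$. First I would work locally: on a sector $U\subset\Delta^*$ choose a continuous family of $C^\infty$ real chains $\Gamma_s\subset X_s$ with $\partial\Gamma_s=Z_s$, possible by the fiberwise vanishing $[Z_s^*]=0$. Then $\tilde\nu(s):=(2\pi i)^m\int_{\Gamma_s}(\cdot)$ is a holomorphic lift of $\nu_{\sZ}|_U$ from $\J$ to $\F^\vee\cong\H/\F^m$. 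Because $\sZ$ is defined on the whole total space $\X$ and meets $X_0$ properly, these $\Gamma_s$ may be chosen as the fibers of a global $C^\infty$ chain $\hat\Gamma\subset\X$ whose restriction $\Gamma_0\subset X_0$ is a normalized bounding chain for $Z_0$ on the NCD $X_0$ --- this is the geometric input that will control the behavior at $s=0$.

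Second, I would verify that the multivaluedness of $\tilde\nu$ is absorbed into $\jmath_*\HH$. Transporting $\Gamma_s$ around $0\in\Delta$ produces a new chain $T\Gamma_s$ with the same boundary $Z_s$, so $T\Gamma_s-\Gamma_s$ is a topological cycle; hence $(T-I)\tilde\nu\in\HH_{s_0}$ automatically for \emph{any} geometric normal function, confirming that $\nu_{\sZ}\in\Gamma(\Delta^*,\J)$ makes sense without further hypothesis, and showing that the obstruction to extending $\tilde\nu$ mod $\jmath_*\HH$ across $s=0$ is purely an analytic one concerning its growth rate.

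The key step is therefore to show $\tilde\nu$ extends as a section of the Deligne canonical extension $\H_e/\F^m_e$: equivalently, that for every local frame $\omega(s)$ of $\F^m_e$ (built from $R^{2m-1}\bar\pi_*\Omega^{\bullet\geq m}_{\X/\Delta}(\log X_0)$), the period $\int_{\Gamma_s}\omega(s)$ has at worst logarithmic growth as $s\to0$. Two ingredients combine to give this. (a) Horizontality of $\nu_{\sZ}$ (Theorem \ref{thm infinv}(i), after possibly passing to a multiple --- the ``admissibility'' of geometric NFs) forces $\nabla\tilde\nu$ to land in $\F^{m-1}_e\otimes\Omega^1_\Delta(\log\{0\})$, so the extended Gauss--Manin connection has a regular singular point at $0$ in the canonical extension; Griffiths transversality then propagates this regularity to $\tilde\nu$ itself. (b) Since $\Gamma_s$ is the restriction of the global chain $\hat\Gamma$, the integral $\int_{\Gamma_s}\omega(s)$ can be analyzed directly on the logarithmic de Rham complex of $(\X,X_0)$: one peels off a principal log term, expressed via the residues $\mathrm{Res}_{Y_i}\omega$ integrated against strata of $\Gamma_0$, from a bounded remainder with limit $\int_{\Gamma_0}\omega(0)$. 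The archetypal computation is Case (I) of \S2.3, where $\int_{q_s}^{p_s}\omega_s\to\log(z_1(p_0)/z_1(q_0))$. Combining (a), (b) and the monodromy control from the second step yields a section of $\H_e/\F^m_e$ on $\Delta$, which projects to the required $\bar\nu_{\sZ}\in\Gamma(\Delta,\J_e)$.

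The main obstacle is the asymptotic estimate (b): bounding period integrals when $\omega(s)$ carries log poles along $X_0$ and $\Gamma_s$ is allowed to approach $X_0^{\mathrm{sing}}$. This is the technical core of \cite{EZ} and is classically handled by an explicit real (or logarithmic) blow-up analysis together with a spectral-sequence argument on the logarithmic de Rham complex $\Omega^\bullet_{\X/\Delta}(\log X_0)$; in a modern reading it is a geometric instance of admissibility for the VMHS underlying the family of cycles, subsumed by M.\ Saito's mixed Hodge module formalism to be discussed later. Once this estimate is in place, every subsequent object in \S\S2--5 --- LMHS via Clemens--Schmid, limits of Abel--Jacobi maps, and N\'eron models --- is built on the foundation it provides.
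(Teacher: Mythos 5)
Your step (b) — the logarithmic growth estimate for $\int_{\Gamma_s}\omega(s)$ against the canonical extension — is indeed the technical core of \cite{EZ} and is acceptable as a sketch, but your second step contains a genuine gap, and it is not cosmetic. Showing that $T\Gamma_s-\Gamma_s$ is a topological cycle only gives $(T-I)\tilde\nu\in\HH_{\ZZ,s_0}$, which is automatic for \emph{any} normal function (it is just the statement that $\nu_{\sZ}$ is single-valued as a section of $\J=\F^{\vee}/\HH_{\ZZ}$). Extendability to $\J_e$ requires the strictly stronger condition $(T-I)\tilde\nu\in(T-I)\HH_{\ZZ,s_0}$: since $H^1(\Delta,\jmath_*\HH)=0$, an extension would lift to a single-valued section $v$ of $\H_e/\F^m_e$, and writing $v=\tilde\nu+\gamma$ with $\gamma$ a (multivalued) integral class and applying $T$ forces $(T-I)\tilde\nu=-(T-I)\gamma$. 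The discrepancy between the two conditions is exactly the singularity class in $G\cong\bigl(\text{Im}(T_{\QQ}-I)\cap\HH_{\ZZ}\bigr)/\text{Im}(T_{\ZZ}-I)$ of $\S2.9$, which is nonzero in general: in the N\'eron $3$-gon example of $\S2.3$, Case (II) (with $p_0$, $q_0$ on different components), the normal function is fiberwise of geometric origin yet does \emph{not} extend to $\J_e$; only $3\nu_{\sZ}$ does. So your assertion that the obstruction is ``purely analytic'' would prove a false statement.

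The repair is to actually use the hypothesis in force in $\S2.4$, namely $0=[\sZ^*]\in H^{2m}(\X^*)$ (cohomological triviality on the open total space, not merely fiberwise). Via the Leray spectral sequence for $\pi$, the topological invariant $[\nu_{\sZ}]\in H^1(\Delta^*,\HH_{\ZZ})$ is a graded piece of $[\sZ^*]$ and hence vanishes integrally; equivalently, bounding $\sZ^*$ by a $2m$-chain in $\X^*$ lets you choose the $\Gamma_s$ so that their monodromy lies in $(T-I)\HH_{\ZZ,s_0}$. With local liftability secured, your remaining steps do assemble into the intended proof; note that the growth estimate can alternatively be extracted from the regular-singular inhomogeneous Picard--Fuchs equation of $\S1.7$ rather than a direct blow-up analysis. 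One smaller circularity to avoid: Theorem \ref{thm infinv}(i) asserts membership in $\text{ENF}_{hor}$, i.e.\ it already presupposes the extension whose existence you are proving, so for horizontality of $\nu_{\sZ}$ over $\Delta^*$ you should cite the computation of $\S1.6$ instead.
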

Define $\lim_{s\to0}AJ_{X_{S}}(Z_{s}):=\bar{\nu}_{\sZ}(0)$ in $(\J_{e})_{0}$,
the fiber over $0$ of the Jacobian bundle. To be precise: since $H^{1}(\Delta,\jmath_{*}\HH)=\{0\}$,
we can lift the $\bar{\nu}_{\sZ}$ to a section of the middle term
of (\ref{eqn defJe}), i.e. of a vector bundle, evaluate at $0$,
then quotient by $(\jmath_{*}\HH)_{0}$.

\subsection{Meaning of the RHS of (\ref{eqn limAJ})}

Higher Chow groups\[
CH^{p}(X,n):=\frac{\left\{ \begin{array}{c}
\text{"admissible, closed" codimension p}\\
\text{algebraic cycles on }X\times\mathbb{A}^{n}\end{array}\right\} }{\text{"higher" rational equivalence}}\]
were introduced by Bloch to compute algebraic $K_{n}$-groups of $X$,
and come with {}``regulator maps'' $reg^{p,n}$ to generalized intermediate
Jacobians\[
J^{p,n}(X):=\frac{H^{2p-n-1}(X,\CC)}{F^{p}H^{2p-n-1}(X,\CC)+H^{2p-n-1}(X,\ZZ(p))}.\]
(Explicit formulas for $reg^{p,n}$ have been worked out by the first
author with J. Lewis and S. M\"uller-Stach in \cite{KLM}.) The singular
fiber $X_{0}$ has motivic cohomology groups $H_{\M}^{*}(X_{0},\ZZ(\cdot))$
built out of higher Chow groups on the substrata \[
Y^{[\ell]}:=\amalg_{|I|=\ell+1}Y_{I}:=\amalg_{|I|=\ell+1}(\cap_{i\in I}Y_{i})\]
(which yield a semi-simplicial resolution of $X_{0}$). Inclusion
induces\[
\imath_{0}^{*}:\, CH^{m}(\X)_{hom}\to H_{\M}^{2m}(X_{0},\ZZ(m))_{hom}\]
and we define $Z_{0}:=\imath_{0}^{*}\sZ$. The $AJ$ map\[
AJ_{X_{0}}:\, H_{\M}^{2m}(X_{0},\ZZ(m))_{hom}\to J^{m}(X_{0}):=\frac{H^{2m-1}(X_{0},\CC)}{\left\{ \begin{array}{c}
F^{m}H^{2m-1}(X_{0},\CC)+\\
H^{2m-1}(X_{0},\ZZ(m))\end{array}\right\} }\]
is built out of regulator maps on substrata, in the sense that the
semi-simplicial structure of $X_{0}$ induces {}``weight'' filtrations
$M_{\bullet}$ on both sides%
\footnote{For the advanced reader, we note that if $M_{\bullet}$ is Deligne's
weight filtration on $H^{2m-1}(X_{0},\ZZ(m))$, then $M_{-\ell}J^{m}(X_{0}):=Ext_{\text{MHS}}^{1}(\ZZ(0),M_{-\ell-1}H^{2m-1}(X_{0},\ZZ(m)))$.
The definition of the $M_{\bullet}$ filtration on motivic cohomology
is much more involved, and we must refer the reader to \cite[sec. III.A]{GGK}.%
} and\[
Gr_{-\ell}^{M}H_{\M}^{2m}(X_{0},\ZZ(m))_{hom}\overset{Gr_{-\ell}^{M}AJ}{\longrightarrow}Gr_{-\ell}^{M}J^{m}(X_{0})\]
boils down to\[
\{\text{subquotient of }CH^{m}(Y^{[\ell]},\ell)\}\overset{\mathit{reg}^{m,\ell}}{\longrightarrow}\{\text{subquotient of }J^{m,\ell}(Y^{[\ell]})\}.\]

\subsection{Meaning of equality in (\ref{eqn limAJ})}

Specializing (\ref{eqn defJe}) to $0$, we have \[
\left(\bar{\nu}_{\sZ}(0)\in\right)\, J_{lim}^{m}(X_{s}):=(\J_{e})_{0}=\frac{(\H_{e})_{0}}{(\F_{e}^{m})_{0}+(\jmath_{*}\HH)_{0}},\]
where $(\jmath_{*}\HH)_{0}$ are the monodromy invariant cycles (and
we are thinking of the fiber $(\H_{e})_{0}$ over $0$ as the limit
MHS of $\H$, see next subsection). H. Clemens \cite{Cl1} constructed
a retraction map $\mathfrak{r}:\X\twoheadrightarrow X_{0}$ inducing
\begin{equation}
\xymatrix{H^{2m-1}(X_0,\ZZ) \ar [rdddd]_{\mu} \ar [r]^{\mathfrak{r}^*} & H^{2m-1}(\X,\ZZ) \ar [d] \\ & \Gamma (\Delta^*,\HH) \ar [d] \\ & \Gamma(\Delta,\jmath_*\HH) \ar [d] \\ & (\jmath_*\HH)_0 \ar @{^(->} [d] \\ & H^{2m-1}_{lim}(X_s,\ZZ) }\\
\end{equation} (where $\mu$ is a morphism of MHS) which in turn induces\[
J(\mu):\, J^{m}(X_{0})\to J_{lim}^{m}(X_{s}).\]

\begin{thm}
\label{thm GGKmain}\cite{GGK} $\lim_{s\to0}AJ_{X_{s}}(Z_{s})=J(\mu)\left(AJ_{X_{0}}(Z_{0})\right).$
\end{thm}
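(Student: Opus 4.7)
The plan is to realize both sides of (\ref{eqn limAJ}) as images of a single ``universal'' extension-of-MHS class attached to the cycle $\sZ$ on the total space $\X$, and then to recover each side by a different functorial operation: pullback along $\imath_0$ for the RHS, specialization at $s=0$ along the canonical extension for the LHS. Since $[\sZ]=0$ in $H^{2m}(\X,\ZZ)$, the Beilinson absolute Hodge / Deligne cohomology $H_\H^{2m}(\X,\ZZ(m))$ (cf.\ \S1.9 and the vertical sequence in (\ref{eqn analogydiag1})) carries a class $c_\H(\sZ)$ whose image in $\mathrm{Ext}^1_{\MHS}(\ZZ(0), H^{2m-1}(\X,\ZZ(m)))$ is the extension I work with.

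By naturality under $\imath_0^*$, $\imath_0^* c_\H(\sZ) = c_\H(Z_0) \in H^{2m}_\H(X_0,\ZZ(m))$, whose projection onto the Jacobian summand $\mathrm{Ext}^1_{\MHS}(\ZZ(0), H^{2m-1}(X_0,\ZZ(m))) = J^m(X_0)$ is, by definition, $AJ_{X_0}(Z_0)$. For the LHS, the El Zein--Zucker extension $\bar\nu_{\sZ} \in \Gamma(\Delta,\J_e)$ is characterized by compatibility with the canonical extension $(\H_e,\F_e^\bullet)$, and unwinding the recipe just below (\ref{eqn defJe}) shows that $\bar\nu_{\sZ}(0) \in J^m_{lim}(X_s) = \mathrm{Ext}^1_{\MHS}(\ZZ(0), H^{2m-1}_{lim}(X_s,\ZZ(m)))$ is precisely the specialization at $s=0$ of the family of extensions obtained by restricting $c_\H(\sZ)$ to $\X^*$. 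Since the Clemens retraction $\mathfrak{r}: \X \twoheadrightarrow X_0$ is a strong deformation retract, the defining composition $H^{2m-1}(X_0,\ZZ) \xrightarrow{\mathfrak{r}^*} H^{2m-1}(\X,\ZZ) \to (\jmath_*\HH)_0 \hookrightarrow H^{2m-1}_{lim}(X_s,\ZZ)$ is exactly $\mu$; applying $\mathrm{Ext}^1_{\MHS}(\ZZ(0),-)$ to this chain then sends $AJ_{X_0}(Z_0)$ to $\bar\nu_{\sZ}(0)$, which is the desired equality.

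The hard part is verifying that the specialization of $c_\H(\sZ)$ along the \emph{canonical} extension of $(\H,\F^\bullet)$ truly coincides with $\bar\nu_{\sZ}(0)$ at the level of extension classes, not merely after projection to the weight-graded pieces. This forces one to match, via $\mu$, Deligne's weight filtration on $H^{2m-1}(X_0)$ --- built semi-simplicially from the resolution $Y^{[\ell]}$ and computed stratum-by-stratum by the regulators $\mathit{reg}^{m,\ell}$ on $CH^m(Y^{[\ell]},\ell)$ --- with the monodromy weight filtration $M(N)_\bullet$ on $H^{2m-1}_{lim}(X_s)$, and to show that each regulator class in $Gr^M_{-\ell} J^m(X_0)$ maps to the correct piece of $Gr^{M(N)} J^m_{lim}$ under $J(\mu)$. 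This comparison, essentially a refined Clemens--Schmid computation and the technical core of \cite{GGK}, is where the real work lies.
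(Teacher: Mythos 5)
The paper itself contains no proof of Theorem \ref{thm GGKmain}: it is quoted from \cite{GGK}, and \S\S 2.4--2.6 only explain what the two sides \emph{mean}. So your proposal can only be judged on its own terms, and on those terms it is a correct framing but not a proof. The functorial strategy --- realize everything through an absolute Hodge class attached to $\sZ$, recover the right-hand side by $\imath_0^*$ and the left-hand side by specializing along the canonical extension, and match the two through the Clemens retraction --- is indeed the shape of the argument in \cite{GGK}. But the two steps you defer in your final paragraph are not a technical verification appended to an otherwise complete argument; they \emph{are} the argument. Concretely: (a) the identification of $\bar\nu_{\sZ}(0)$ (the value of the El Zein--Zucker extension, produced by a holomorphicity/extendability theorem) with the extension class of the limit MHS of the admissible variation attached to $\sZ$ requires the nilpotent orbit theorem and the untwisting $\exp(-\tfrac{\log s}{2\pi i}N)$ of \S2.7, and is exactly where one must produce explicit lifts (membrane integrals of forms with log poles) rather than quote functoriality; and (b) the comparison of Deligne's semi-simplicial weight filtration on $H^{2m-1}(X_0)$ with $M(N)_\bullet$, plus the identification of $Gr^M_{-\ell}AJ_{X_0}$ with the regulators $\mathit{reg}^{m,\ell}$ on $CH^m(Y^{[\ell]},\ell)$, is the Clemens--Schmid computation that occupies most of \cite{GGK}. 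A proposal that reduces the theorem to ``the technical core of \cite{GGK}'' has not proved it.

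Two further points need care even at the level of the framing. First, the target $J^m_{lim}(X_s)=(\H_e)_0/\bigl((\F_e^m)_0+(\jmath_*\HH)_0\bigr)$ is \emph{not} $\mathrm{Ext}^1_{\MHS}(\ZZ(0),H^{2m-1}_{lim}(X_s,\ZZ(m)))$: the paper quotients only by the monodromy-invariant lattice $(\jmath_*\HH)_0$, not by the full untwisted lattice of the LMHS, so your Ext-theoretic description of where $\bar\nu_{\sZ}(0)$ lives is off by a (generally nontrivial) quotient; you must check that your ``specialization of $c_{\H}(\sZ)$'' is well defined modulo only the invariants, and is independent of the rescaling ambiguity $F^\bullet\mapsto e^{\lambda N}F^\bullet$ in the limit. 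Second, $\X$ is an analytic total space over a disk and $X_0$ is a singular NCD, so the cycle class $c_{\H}(Z_0)$ on the motivic cohomology $H^{2m}_{\M}(X_0,\ZZ(m))_{hom}$, and its compatibility with $\imath_0^*$, are themselves constructions of \cite{GGK} (built from the $K$-theory of the strata $Y^{[\ell]}$) rather than instances of the smooth quasi-projective formalism of \S2.1 that you invoke.
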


\subsection{Graphing normal functions}

On $\Delta^{*}$, let $T:\,\HH\to\HH$ be the counterclockwise monodromy
transformation, which is unipotent since the degeneration is semistable.
Hence the monodromy logarithm \[
N:=\log(T)=\sum_{k=1}^{2m-1}\frac{(-1)^{k-1}}{k}(T-I)^{k}\]
 is defined, and we can use it to {}``untwist'' the local system
$\otimes\QQ$:\[
\HH_{\QQ}\mapsto\tilde{\HH}_{\QQ}:=\exp\left(-\frac{\log s}{2\pi i}N\right)\HH_{\QQ}\hookrightarrow\H_{e}.\]
In fact, this yields a basis for, and defines, the privileged extension
$\H_{e}$. Moreover, since $N$ acts on $\tilde{\HH}_{\QQ}$, it acts
on $\H_{e}$, and therefore on $(\H_{e})_{0}=H_{lim}^{2m-1}(X_{s})$,
inducing a {}``weight monodromy filtration'' $M_{\bullet}$. Writing
$H=H_{lim}^{2m-1}(X_{s},\QQ(m))$, this is the unique filtration $\{0\}\subset M_{-2m}\subset\cdots\subset M_{2m-2}=H$
satisfying $N(M_{k})\subset M_{k-2}$ and $N^{k}:Gr_{-1+k}^{M}H\overset{\cong}{\to}Gr_{-1-k}^{M}H$
for all $k$. In general it is centered about the weight of the original
variation (cf. the convention in the Introduction).
\begin{example}
In the {}``Dehn twist'' example of $\S1.2$, $N=T-I$ (with $N(\alpha)=0$,
$N(\beta)=\alpha$) so that $\tilde{\alpha}=\alpha$, $\tilde{\beta}=\beta-\frac{\log s}{2\pi i}\alpha$
are monodromy free and yield an $\mathcal{O}_{\Delta}$-basis of $\H_{e}$.
We have $M_{-3}=\{0\}$, $M_{-2}=M_{-1}=\left\langle \alpha\right\rangle $,
$M_{0}=H$.\end{example}
\begin{rem}
Rationally, $\ker(N)=\ker(T-I)$ even when $N\neq T-I$.
\end{rem}
By \cite{Cl1}, $\mu$ maps $H^{2m-1}(X_{0})$ onto $\ker(N)\subset H_{lim}^{2m-1}(X_{s})$
and is compatible with the two $M_{\bullet}$'s; together with Theorem
\ref{thm GGKmain} this implies
\begin{thm}
$\lim_{s\to0}AJ_{X_{s}}(Z_{s})\in J^{m}\left(\ker(N)\right)\,(\subset J_{lim}^{m}(X_{s}))$.
{[}Here we really mean $\ker(T-I)$ so that $J^{m}$ is defined integrally.{]}
\end{thm}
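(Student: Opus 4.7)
The plan is to combine the two ingredients stated just before the theorem: Theorem~\ref{thm GGKmain}, which identifies the limit with $J(\mu)(AJ_{X_0}(Z_0))$, and Clemens's observation that $\mu$ factors through $\ker(N)$ as a morphism of MHS compatible with $M_\bullet$. The conclusion should then drop out from functoriality of the intermediate-Jacobian construction on (M)HS.

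First, I would invoke Theorem~\ref{thm GGKmain} to rewrite
\[
\lim_{s\to 0} AJ_{X_s}(Z_s) \;=\; \bar{\nu}_{\sZ}(0) \;=\; J(\mu)\bigl(AJ_{X_0}(Z_0)\bigr).
\]
This reduces the claim to showing that the image of $J(\mu)\colon J^m(X_0)\to J^m_{lim}(X_s)$ is contained in the sub-torus $J^m(\ker(T-I))$.

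Second, I would appeal to \cite{Cl1} (as recalled in the paragraph preceding the statement): $\mu$ is a morphism of MHS whose image lies in $\ker(N)\subset H^{2m-1}_{lim}(X_s)$, and it is compatible with the two weight filtrations $M_\bullet$. Thus $\mu$ factors as
\[
H^{2m-1}(X_0) \;\twoheadrightarrow\; \ker(N) \;\hookrightarrow\; H^{2m-1}_{lim}(X_s)
\]
in the category of (mixed) Hodge structures. Since the intermediate Jacobian $J^m(\cdot)$ is a functor on (appropriately twisted) MHS of the correct weight, applying it yields a factorization
\[
J^m(X_0) \;\longrightarrow\; J^m(\ker(N)) \;\hookrightarrow\; J^m_{lim}(X_s),
\]
so that $J(\mu)(AJ_{X_0}(Z_0)) \in J^m(\ker(N))$, which combined with step one gives the result.

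The only genuine subtlety, which is flagged by the bracketed clarification in the statement, is the passage from the rational filtration $N$ to an integrally defined sub-object: $J^m$ requires an integral lattice, whereas $N=\log T$ is a priori only defined $\otimes\QQ$. This is exactly resolved by the remark preceding the theorem that $\ker(N)=\ker(T-I)$ rationally; one therefore interprets the target as $J^m(\ker(T-I))$, which is a well-defined complex sub-torus of $J^m_{lim}(X_s)$ and contains the image of $J(\mu)$ by the same compatibility argument. No further technicalities appear to be required, so I do not expect a serious obstacle beyond this bookkeeping.
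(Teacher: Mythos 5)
Your proposal is correct and follows exactly the route the paper intends: Theorem~\ref{thm GGKmain} identifies the limit with $J(\mu)(AJ_{X_0}(Z_0))$, and Clemens's result that $\mu$ maps $H^{2m-1}(X_0)$ onto $\ker(N)$ compatibly with the weight filtrations, together with functoriality of $J^m$, gives the containment. Your handling of the integrality point via $\ker(N)=\ker(T-I)$ (rationally) is also precisely the intended bookkeeping, so there is nothing to add.
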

We remark that
\begin{itemize}
\item this was not visible classically for curves ($J^{1}(\ker(N))=J_{lim}^{1}(X_{s})$)
\item replacing $(\J_{e})_{0}$ by $J^{m}(\ker(N))$ yields $\J_{e}'$,
which is a {}``slit-analytic%
\footnote{that is, each point has a neighborhood of the form: open ball about
$\underline{0}$ in $\CC^{a+b}$ intersected with $((\CC^{a}\backslash\{\underline{0}\})\times\CC^{b})\cup(\{\underline{0}\}\times\CC^{c})$,
where $c\leq b$.%
} Hausdorff topological space'' ($\J_{e}$ is non-Hausdorff because
in the quotient topology there are nonzero points in $(\J_{e})_{0}$
that look like limits of points in the zero-section of $\J_{e}$,
hence cannot be separated from $0\in(\J_{e})_{0}$.%
\footnote{see the example before Theorem II.B.9 in \cite{GGK}. %
}) This is the correct extended Jacobian bundle for graphing {}``unobstructed''
(in the sense of the classical example) or {}``singularity-free''
normal functions. Call this the {}``pre-N\'eron-model''.
\end{itemize}

\subsection{Non-classical example}

Take a degeneration of Fermat quintic 3-folds\[
\X\,\,\,\,=\,\,\,\,\begin{array}{c}
\text{semi-stable}\\
\text{reduction}\end{array}\text{ of }\left\{ s\sum_{j=1}^{4}z_{j}^{5}=\prod_{k=0}^{4}z_{k}\right\} \,\,\,\,\subset\,\,\,\,\PP^{4}\times\Delta,\]
so that $X_{0}$ is the union of $5$ $\PP^{3}$'s blown up along
curves $\cong C=\{x^{5}+y^{5}+z^{5}=0\}$. Its motivic cohomology
group $H_{\M}^{4}(X_{0},\QQ(2))_{hom}$ has $Gr_{0}^{M}\cong$10 copies
of $Pic^{0}(C)$, $Gr_{-1}^{M}\cong$40 copies of $\CC^{*}$, $Gr_{-2}^{W}=\{0\}$,
and $Gr_{-3}^{M}\cong K_{3}^{ind}(\CC)$. One has a commuting diagram
\begin{equation}
\xymatrix{H^4_{\M}(X_0,\QQ(2))_{hom} \ar [r]^{\mspace{30mu} AJ_{X_0}} & J^2(X_0)_{\QQ} \ar @{=} [r] & J^2(\ker(N))_{\QQ} \\ K^{ind}_3(\CC) \ar [r]^{reg^{2,3}} \ar @{^(->} [u] & \CC/(2\pi i)^2 \QQ  \ar [r]^{\Im} \ar @{^(->} [u] & \RR } \\
\end{equation} and explicit computations with higher Chow precycles in \cite[sec. 4]{GGK}
lead to the result:
\begin{thm}
There exists a family of $1$-cycles $\sZ\in CH^{2}(\X)_{hom,\QQ}$
such that $Z_{0}\in M_{-3}H_{\M}^{4}$ and $\Im(AJ_{X_{0}}(Z_{0}))=D_{2}(\sqrt{-3})$
(where $D_{2}$ is the Bloch-Wigner function). 
\end{thm}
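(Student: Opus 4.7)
The plan is to work outward from the deepest piece of the weight filtration on $H^{4}_{\M}(X_{0},\QQ(2))_{hom}$. Since $Gr^{M}_{-3}\cong K_{3}^{ind}(\CC)$ and $\Im\circ reg^{2,3}$ on this factor is (up to a universal normalisation) the Bloch--Wigner function, it suffices to exhibit an explicit element $\xi\in K_{3}^{ind}(\CC)$ with $D_{2}(\xi)=D_{2}(\sqrt{-3})$ that is realised by a higher Chow precycle $\xi_{0}\in CH^{2}(Y^{[3]},3)$. Using the explicit formulas of Kerr--Lewis--M\"uller-Stach \cite{KLM} for the regulator, this amounts to writing down a tricycle with parameters in $\QQ(\sqrt{-3})$; the $\mu_{5}$-symmetric Fermat geometry of the boundary provides abundant coordinates in cyclotomic fields, and after semistable reduction the quadruple-intersection stratum $Y^{[3]}$ supports configurations on which such symbols can be placed with the correct arithmetic shape.

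Next, I would promote $\xi_{0}$ to an element of $H^{4}_{\M}(X_{0},\QQ(2))_{hom}$ lying in $M_{-3}$. The vanishing $Gr^{M}_{-2}=\{0\}$ stated in the passage removes a potential boundary obstruction from triple intersections, so after arranging that the semi-simplicial differential of $\xi_{0}$ vanishes on $Y^{[2]}$ (achievable by balancing boundaries of Milnor-type symbols across the $\binom{5}{3}=10$ triple components) one obtains a well-defined $Z_{0}\in M_{-3}H^{4}_{\M}$. The crucial step is then to lift $Z_{0}$ to a global $\sZ\in CH^{2}(\X)_{hom,\QQ}$ with $\imath_{0}^{*}\sZ=Z_{0}$: following the approach of \cite[\S 4]{GGK}, I would construct an algebraic family of precycles on $\X$ by deforming the configuration supporting $\xi_{0}$ into the general fibre, use the rational localisation sequence for higher Chow groups to control residual terms on $\X^{*}$, and clear denominators to obtain an honest cycle. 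Fibrewise homological triviality is forced by the fact that $\xi_{0}$ is already a boundary on each component $Y_{i}$ by construction.

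Finally, the conclusion follows by chasing the commuting diagram preceding the statement: $AJ_{X_{0}}(Z_{0})$ lands in $J^{2}(\ker(N))_{\QQ}$ because $N$ acts trivially on $Gr^{M}_{-3}$, and its imaginary part equals $\Im\circ reg^{2,3}(\xi)$, which is $D_{2}(\sqrt{-3})$ by design of $\xi$. The hard part, where I expect the bulk of the work to be hidden, is the middle paragraph: passing from a motivic-cohomology class on the degenerate fibre to a genuine algebraic $1$-cycle on the $4$-fold $\X$ is delicate, since the localisation sequence yields lifts only up to corrections whose control is not automatic. It is precisely the rigidity of the Fermat quintic geometry --- its many cyclotomic points and large automorphism group --- that makes the explicit construction in \cite{GGK} feasible and pins down the specific value $D_{2}(\sqrt{-3})$ rather than some other Bloch--Wigner value.
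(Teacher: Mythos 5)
The central problem is the direction of your construction. You begin with an element $\xi\in K_{3}^{ind}(\CC)$ having the desired Bloch--Wigner value, realize it as a higher Chow precycle on the deepest stratum $Y^{[3]}$, assemble a class $Z_{0}\in M_{-3}H_{\M}^{4}(X_{0},\QQ(2))_{hom}$, and only then try to produce $\sZ\in CH^{2}(\X)_{hom,\QQ}$ with $\imath_{0}^{*}\sZ=Z_{0}$. That last step is not a technical nuisance to be absorbed by ``the rational localisation sequence'' and ``deforming the configuration into the general fibre'': it is the entire content of the theorem. There is no general mechanism for lifting a class from the motivic cohomology of the singular fibre to the Chow group of the total space; the class you need is a \emph{restriction} $\imath_{0}^{*}\sZ$, and whether a given element of $M_{-3}H_{\M}^{4}(X_{0},\QQ(2))_{hom}$ lies in the image of $\imath_{0}^{*}$ on $CH^{2}(\X)_{hom,\QQ}$ is precisely the existence question being posed. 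Deforming a precycle supported on the point-strata into the nearby smooth quintics has no reason to close up into an algebraic $1$-cycle: the boundary corrections live on the whole semi-simplicial resolution, and arranging their cancellation is equivalent to the explicit construction you are trying to avoid. So the middle paragraph, which you yourself flag as the hard part, is a genuine gap rather than a deferred computation.

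The argument the paper invokes (the explicit computations of \cite[sec. 4]{GGK}) runs in the opposite direction, as the sentence following the theorem indicates: ``the family of cycles limits to a (nontrivial) higher cycle in a substratum of the singular fiber.'' One writes down an explicit family of $1$-cycles $\sZ$ on $\X$ from the start, supported on curves in the Fermat quintic pencil defined over cyclotomic fields and made fibrewise homologically trivial using the $(\ZZ/5\ZZ)^{3}$-symmetry; one then computes $Z_{0}=\imath_{0}^{*}\sZ$ through the semi-simplicial resolution of $X_{0}$, checks that its images in $Gr_{0}^{M}$ and $Gr_{-1}^{M}$ vanish so that $Z_{0}\in M_{-3}$, and finally evaluates $reg^{2,3}$ on the resulting class in $K_{3}^{ind}(\CC)$ via the formulas of \cite{KLM}. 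The value $D_{2}(\sqrt{-3})$ is the \emph{output} of that computation, not an input one gets to prescribe in advance. Your first and third paragraphs correctly describe the target group and the diagram chase at the end, but without an explicit $\sZ$ in hand the existence claim remains unproved.
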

Hence, $\lim_{s\to0}AJ_{X_{s}}(Z_{s})\neq0$ and so the general $Z_{s}$
in this family is not rationally equivalent to zero. The main idea
is that the family of cycles limits to a (nontrivial) $higher$ cycle
in a substratum of the singular fiber.

\subsection{Singularities in 1 parameter}

If only $[Z_{s}]=0$ ($s\in\Delta^{*}$), and $[\sZ^{*}]=0$ $fails$,
then\[
\lim_{s\to0}AJ\text{ is obstructed}\]
and we say $\bar{\nu}_{\sZ}(s)$ has a singularity (at $s=0$), measured
by the finite group\[
G\cong\frac{Im(T_{\QQ}-I)\cap\HH_{\ZZ}}{Im(T_{\ZZ}-I)}=\left\{ \begin{array}{c}
\ZZ/3\ZZ\text{ in the classical example}\\
\\(\ZZ/5\ZZ)^{3}\text{ in the non-classical ex.}\end{array}\right..\]
(The $(\ZZ/5\ZZ)^{3}$ is generated by differences of lines limiting
to distinct components of $X_{0}$.) The N\'eron model is then obtained
by replacing $J(\ker(N))$ (in the pre-N\'eron-model) by its product
with $G$ (this will graph $all$ admissible normal functions {[}defined
below{]}).

The next example demonstrates the {}``finite-group'' (or torsion)
nature of singularities in the 1-parameter case. In $\S2.10$ we will
see how this feature disappears when there are many parameters.
\begin{example}
\label{ex 1parSING}

Let $\xi\in\CC$ be general and fixed. Then \[
C_{s}=\{x^{2}+y^{2}+s(x^{2}y^{2}+\xi)=0\}\]
defines a family of elliptic curves (in $\PP^{1}\times\PP^{1}$) over
$\Delta^{*}$ degenerating to a N\'eron 2-gon at $s=0$. The cycle
\[
Z_{s}:=\left(i\sqrt{\frac{1+\xi s}{1+s}},1\right)-\left(-i\sqrt{\frac{1+\xi s}{1+s}},1\right)\]
is nontorsion, with points limiting to distinct components. 

\includegraphics[scale=0.5]{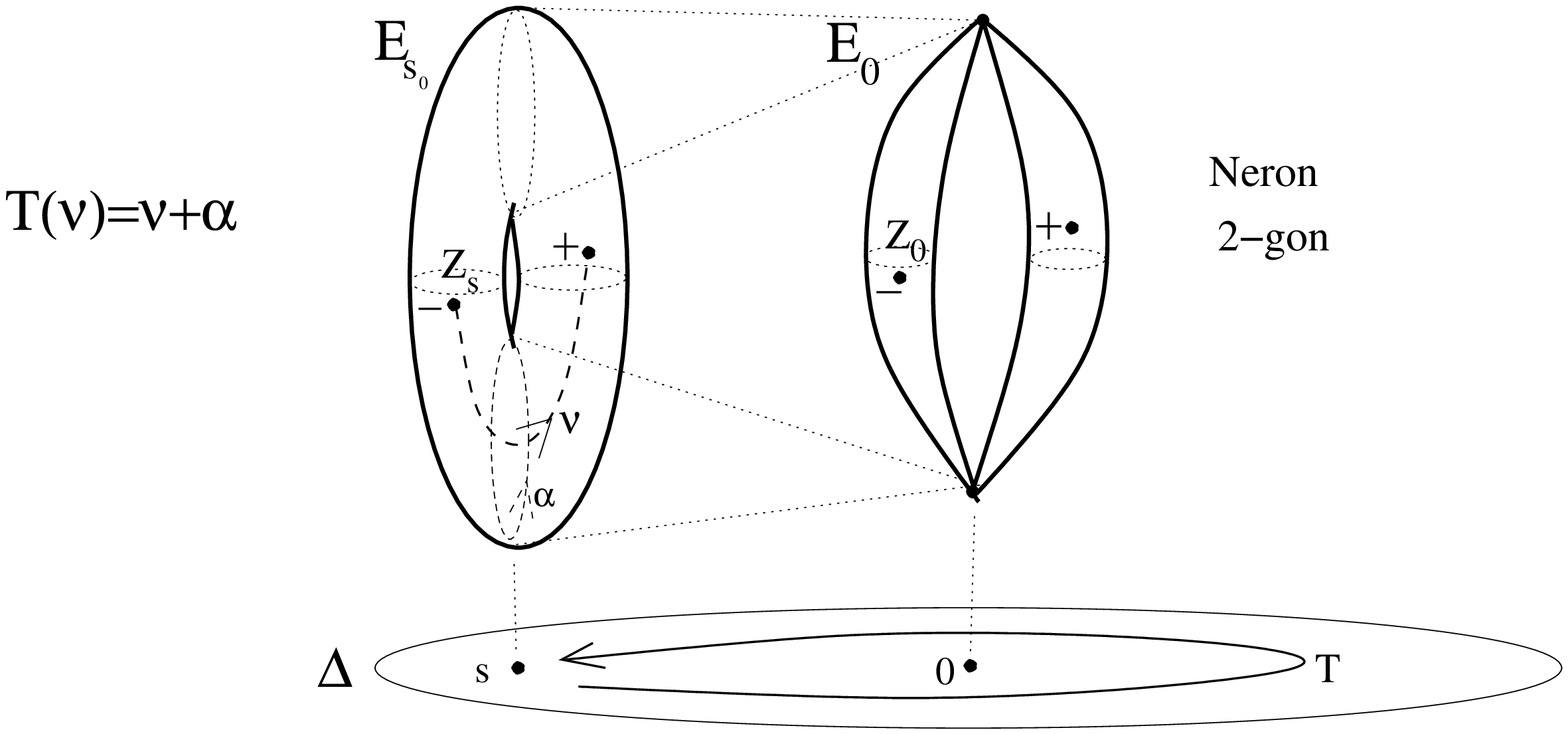}

Hence, $AJ_{C_{s}}(Z_{s})=:\nu(s)$ limits to the non-identity component($\cong\CC^{*}$)
of the N\'eron model. The presence of the non-identity component
removes the obstruction (observed in $\S2.3$ case (II)) to graphing
ANF's with singularities.\end{example}
\begin{itemize}
\item $\otimes\QQ$, we can {}``correct'' this: write $\alpha,\beta$
for a basis for $H^{1}(C_{s})$ and $N$ for the monodromy log about
$0$, which sends $\alpha\mapsto0$ and $\beta\mapsto2\alpha$. Since
$N(\nu)=\alpha=N(\frac{1}{2}\beta)$, $\nu-\frac{1}{2}\beta$ will
pass through the identity component($\cong\CC/\QQ(1)$ after tensoring
with $\QQ$, however).
\item Alternately, to avoid $\otimes\QQ$, one can add a $2$-torsion cycle
like \[
T_{s}:=(i\xi^{\frac{1}{4}},\xi^{\frac{1}{4}})-(-i\xi^{\frac{1}{4}},-\xi^{\frac{1}{4}}).\]

\end{itemize}

\subsection{Singularities in 2 parameters}
\begin{example}
\label{ex 2parSING}Now we will effectively allow $\xi$ (from the
last example) to vary: consider the smooth family \[
C_{s,t}:=\{x^{2}+y^{2}+sx^{2}y^{2}+t=0\}\]
over $(\Delta^{*})^{2}$. The degenerations $t\to0$ and $s\to0$
pinch physically distinct cycles in the same homology class to zero,
so that $C_{0,0}$ is an $I_{2}$; we have obviously that $N_{1}=N_{2}$
(both send $\beta\mapsto\alpha\mapsto0$). Take \[
Z_{s,t}:=\left(i\sqrt{\frac{1+t}{1+s}},1\right)-\left(-i\sqrt{\frac{1+t}{1+s}},1\right)\]
for our family of cycles, which splits between the two components
of the $I_{2}$ at $(0,0)$. 

\includegraphics[scale=0.5]{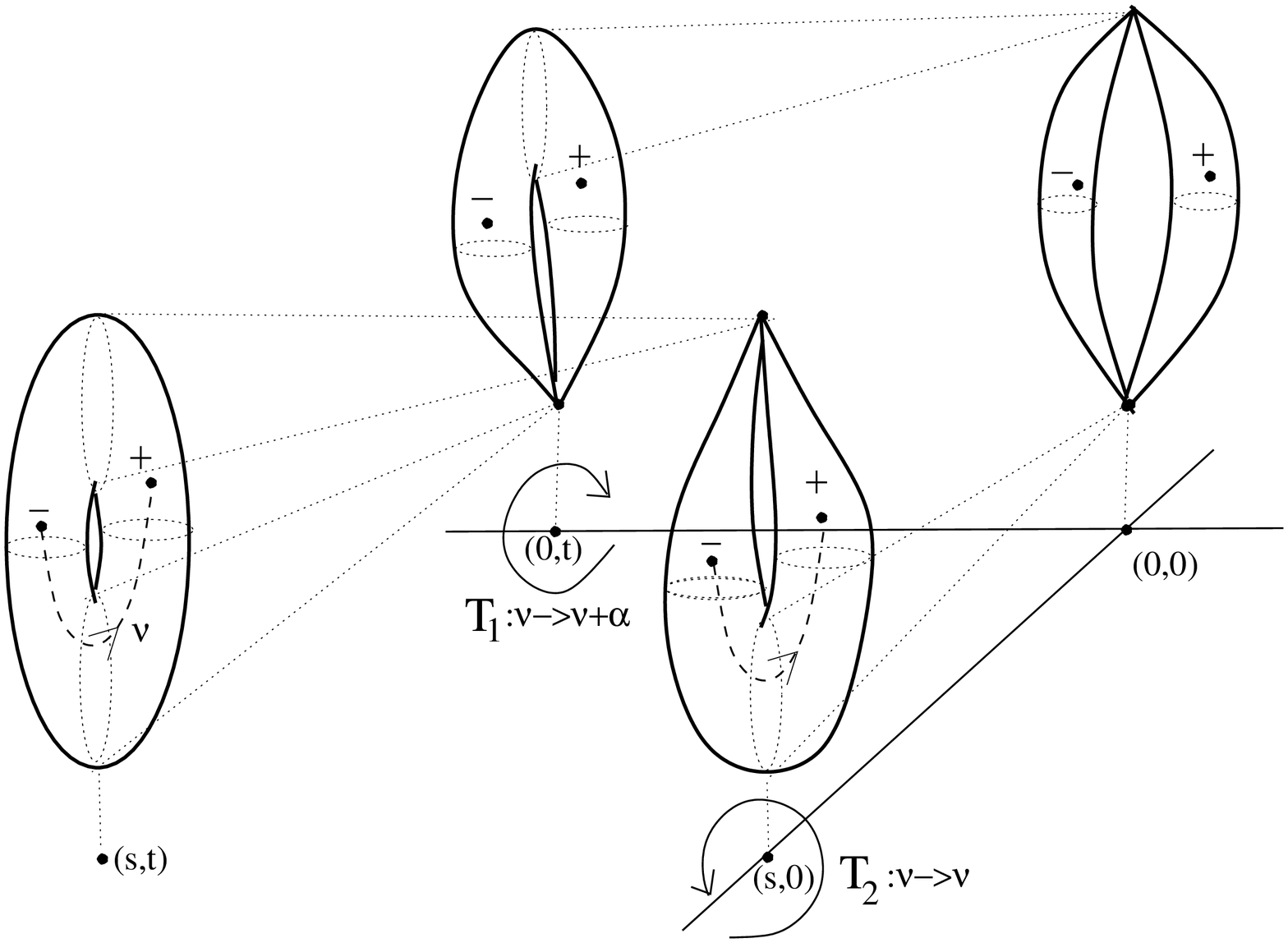}

Things go much more {}``wrong'' here --- here are 3 ways to see
this:\end{example}
\begin{itemize}
\item try to correct monodromy (as we did in Ex. \ref{ex 1parSING} with
$-\frac{1}{2}\beta$): $N_{1}(\nu)=\alpha$, $N_{1}(\beta)=\alpha$,
$N_{2}(\nu)=0$, $N_{2}(\beta)=\alpha$ $\implies$ impossible
\item in $T_{s}$ (from Ex. 1), $\xi^{\frac{1}{4}}$ becomes (here) $\left(\frac{t}{s}\right)^{\frac{1}{4}}$
--- so its obvious extension isn't well-defined. In fact, there is
NO $2$-torsion family of cycles with fiber over $(0,0)$ a difference
of two points in the two distinct components of $C_{0,0}$ (i.e.,
which limits to have the same cohomology class in $H^{2}(C_{0,0})$
as $Z_{0,0}$).
\item take the {}``motivic limit'' of $AJ$ at $t=0$: under the uniformization
of $C_{s,0}$ by \[
\PP^{1}\ni z\longmapsto\left(\frac{2z}{1-sz^{2}},\frac{2iz}{1+sz^{2}}\right),\]
\[
\left[\frac{i}{s}(1+\sqrt{1+s})\right]-\left[\frac{i}{s}(1-\sqrt{1+s})\right]\longmapsto Z_{s,0}.\]
Moreover, the isomorphism $\CC^{*}\cong K_{1}(\CC)\cong M_{-1}H_{\M}^{2}(C_{s,0},\ZZ(1))\,(\ni Z_{s,0})$
sends \[
\frac{1+\sqrt{1+s}}{1-\sqrt{1+s}}\in\CC^{*}\]
to $Z_{s,0}$, and at $s=0$ (considering it as a precycle in $Z^{1}(\Delta,1)$)
this obviously has a residue.
\end{itemize}
The upshot is that $nontorsion$ singularities appear in codimension
2 and up.

\subsection{Admissible normal functions}

We now pass to the abstract setting of a complex analytic manifold
$\bar{S}$ (for example a polydisk or smooth projective variety) with
Zariski open subset $S$, writing $D=\bar{S}\setminus S$ for the
complement. Throughout, we shall assume that $\pi_{0}(S)$ is finite
and $\pi_{1}(S)$ is finitely generated. Let $\V=(\VV,\V_{(\mathcal{O})},\F^{\bullet},W_{\bullet})$
be a variation of MHS over $S$.

\emph{Admissibility} is a condition which guarantees (at each $x\in D$)
a well-defined limit MHS for $\V$ up to the action $\F^{\bullet}\mapsto\exp(\lambda\log T)\F^{\bullet}$
($\lambda\in\CC$) of local unipotent monodromies $T\in\rho(\pi_{1}(U_{x}\cap S))$.
If $D$ is a divisor with local normal crossings at $x$, and $\V$
is admissible, then a choice of coordinates $s_{1},\ldots,s_{m}$
on an analytic neighborhood $U=\Delta^{k}$ of $x$ (with $\{s_{1}\cdots s_{m}=0\}=D$)
produces the LMHS $(\psi_{\underline{s}}\V)_{x}$. Here we shall only
indicate what admissibility, and this LMHS, is in two cases: variations
of pure HS, and generalized normal functions (cf. Definition \ref{def HNF}.

As a consequence of Schmid's nilpotent- and $SL_{2}$-orbit theorems,
pure variation is always admissible. If $\V=\H$ is a pure variation
in one parameter, we have (at least in the unipotent case) already
defined {}``$H_{lim}$'' and now simply replace that notation by
{}``$(\psi_{\underline{s}}\H)_{x}$''. In the multiple parameter
(or non-unipotent) setting, simply pull the variation back to an analytic
curve $\Delta^{*}\to(\Delta^{*})^{m}\times\Delta^{k-m}\subset S$
whose closure passes through $x$, and take the LMHS of that. The
resulting $(\psi_{\underline{s}}\H)_{x}$ is independent of the choice
of curve (up to the action of local monodromy mentioned earlier).
In particular, letting $\{N_{i}\}$ denote the local monodromy logarithms,
the weight filtration $M_{\bullet}$ on $(\psi_{\underline{s}}\H)_{x}$
is just the weight monodromy filtration attached to their sum $N:=\sum a_{i}N_{i}$
(where the $\{a_{i}\}$ are arbitrary positive integers). 

Now let $r\in\NN$.
\begin{defn}
\label{def HNF}A \emph{(higher) normal function} over $S$ is a VMHS
of the form $\V$ in (the short-exact sequence) \begin{equation}
0 \to \H \longrightarrow \V \longrightarrow \ZZ_S(0) \to 0 \\
\end{equation} where $\H$ is a {[}pure{]} VHS of weight $(-r)$ and the {[}trivial,
constant{]} variation $\ZZ_{S}(0)$ has trivial monodromy. (The terminology
{}``higher'' only applies when $r>1$.) This is equivalent to a
holomorphic, horizontal section of the generalized Jacobian bundle
$J(\H):=\frac{\H}{\F^{0}\H+\HH_{\ZZ}}$.\end{defn}
\begin{example}
Given a smooth proper family $\X\overset{\pi}{\to}S$, with $x_{0}\in S$.
A higher algebraic cycle $\sZ\in CH^{p}(\X,r-1)_{prim}:=\ker\{CH^{p}(\X,r-1)\to CH^{p}(X_{x_{0}},r-1)\to\mathit{Hg}^{p,r-1}(X_{x_{0}})\}$
yields a section of $J(R^{2p-r}\pi_{*}\CC\otimes\mathcal{O}_{S})=:\J^{p,r-1}$;
this is what we shall mean by a \emph{(higher) normal function of
geometric origin}.%
\footnote{Note that $\mathit{Hg}^{p,r-1}(X_{x_{0}})_{\QQ}:=H^{2p-r+1}(X_{x_{0}},\QQ(p))\cap F^{p}H^{2p-r+1}(X_{x_{0}},\CC)$
is actually zero for $r>1$, so that the {}``$prim$'' comes for
free for some multiple of $\sZ$.%
} (The notion of \emph{motivation over $K$} likewise has an obvious
extension from the classical $1$-parameter case in $\S1$.)

We now give the definition of admissibility for VMHS of the form in
Defn. \ref{def HNF} (but simplifying to $D=\{s_{1}\cdots s_{k}=0\}$),
starting with the local unipotent case. For this we need Deligne's
definition \cite{De1} of the $I^{p,q}(H)$ of a MHS $H$, for which
the reader may refer to Theorem \ref{thm Ipq's} (in $\S4$) below.
To simplify notation, we shall abbreviate $I^{p,q}(H)$ to $H^{(p,q)}$,
so that e.g. $H_{\QQ}^{(p,p)}=I^{p,p}(H)\cap H_{\QQ}$, and drop the
subscript $x$ for the LMHS notation.\end{example}
\begin{defn}
\label{def admissibility}Let $S=(\Delta^{*})^{k}$, $\V\in NF^{r}(S,\H)_{\QQ}$
(i.e., as in Definition \ref{def HNF}, $\otimes\QQ$), and $x=(\underline{0})$.

(I) {[}unipotent case{]} Assume the monodromies $T_{i}$ of $\HH$
are unipotent, so that the logarithms $N_{i}$ and associated monodromy
weight filtrations $M_{\bullet}^{(i)}$ are defined. (Note that the
$\{N_{i}\}$ resp. $\{T_{i}\}$ automatically commute since any local
system must be a representation of $\pi_{1}((\Delta^{*})^{k})$, an
abelian group.) We may {}``untwist'' the local system $\otimes\QQ$
via $\tilde{\VV}:=exp\left(\frac{-1}{2\pi\sqrt{-1}}\sum_{i}\log(s_{i})N_{i}\right)\VV_{(\QQ)}$,
and set $\V_{e}:=\tilde{\VV}\otimes\mathcal{O}_{\Delta^{k}}$ for
the Deligne extension. Then $\V$ is ($\bar{S}$-)admissible iff

$\quad$(a) $\H$ is polarizable

$\quad$(b) $\exists$ lift $\nu_{\QQ}\in(\tilde{\VV})_{0}$ of $1\in\QQ(0)$
such that $N_{i}\nu_{\QQ}\in M_{-2}^{(i)}(\psi_{\underline{s}}\H)_{\QQ}$
($\forall i$)

$\quad$(c) $\exists$ lift $\nu_{F}(s)\in\Gamma(\overline{S},\V_{e})$
of $1\in\QQ_{S}(0)$ such that $\nu_{F}|_{S}\in\Gamma(S,\F^{0})$.

(II) In general there exists a minimal finite cover $\zeta:\,(\Delta^{*})^{k}\to(\Delta^{*})^{k}$
(sending $\underline{s}\mapsto\underline{s}^{\underline{\mu}}$) such
that the $T_{i}^{\mu_{i}}$ are unipotent. $\V$ is admissible iff
$\zeta^{*}\V$ satisfies (a),(b),(c).
\end{defn}
The main result \cite{K,SZ} is then that $\V\in NF^{r}(S,\H)_{\bar{S}}^{ad}$
has well-defined $\psi_{\underline{s}}\V$, given as follows. On the
underlying rational structure $(\tilde{\VV})_{0}$ we put the weight
filtration $M_{i}=M_{i}\psi_{\underline{s}}\H+\QQ\left\langle \nu_{\QQ}\right\rangle $
for $i\geq0$ and $M_{i}=M_{i}\psi_{\underline{s}}\H$ for $i<0$;
while on its complexification ($\cong(\V_{e})_{0}$) we put the Hodge
filtration $F^{j}=F^{j}\psi_{\underline{s}}\H_{\CC}+\CC\left\langle \nu_{F}(0)\right\rangle $
for $j\leq0$ and $F^{j}=F^{j}\psi_{\underline{s}}\H$ for $j>0$.
(Here we are using the inclusion $\tilde{\HH}\subset\tilde{\VV}$,
and the content of the statement is that this actually does define
a MHS.) 

We can draw some further conclusions from (a)-(c). With some work,
from (I)(c) it follows that

(c') $\nu_{F}(0)$ gives a lift of $1\in\QQ(0)$ satisfying $N_{i}\nu_{F}(0)\in(\psi_{\underline{s}}\H)^{(-1,-1)};$ 

and one can also show that the $N_{i}\nu_{\QQ}\in M_{-2}(\psi_{\underline{s}}\H)_{\QQ}$
($\forall i$). Furthermore, if $r=1$ then each $N_{i}\nu_{\QQ}$
{[}resp. $N_{i}\nu_{F}(0)${]} belongs to the image under $N_{i}:\,\psi_{\underline{s}}\H\to\psi_{\underline{s}}\H(-1)$
of a rational {[}resp. type-$(0,0)${]} element. (To see this, use
the properties of $N_{i}$ to deduce that $im(N_{i})\supseteq M_{-r-1}^{(i)}$;
then note that for $r=1$ we have, from (b) and (c), $N_{i}\nu_{F}(0),\, N_{i}\nu_{\QQ}\in M_{-2}^{(i)}$.)

(III) The definition of admissibility over an arbitrary smooth base
$S$ together with good compactification $\bar{S}$ is then local,
i.e. reduces to the $(\Delta^{*})^{k}$ setting. Another piece of
motivation for the definition of admissibility is this, for which
we refer the reader to \cite[Thm. 7.3]{BZ}:
\begin{thm}
Any (higher) normal function of geometric origin is admissible.
\end{thm}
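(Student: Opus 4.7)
The plan is to verify the three admissibility conditions (a), (b), (c) of Definition \ref{def admissibility} for the VMHS
\[
0 \to \H \to \V \to \QQ_S(0) \to 0,
\]
with $\H = R^{2p-r}\pi_{*}\QQ_{\X}(p)$, attached to the primitive higher cycle $\sZ \in CH^{p}(\X,r-1)_{prim}$ by the extension-class formalism generalizing Remark \ref{rem AJ}(iii). By semistable reduction I pass to a ramified cover of $\bar{S}$ that makes all local monodromies unipotent, and then work locally in a polydisk coordinate neighborhood of a boundary point $x \in D$ on which $D$ is a normal crossing divisor; this reduces matters to the local unipotent case (I) of Definition \ref{def admissibility}.

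For condition (a), polarizability of $\H$ is the classical Hodge--Riemann polarization on the primitive cohomology of the smooth fibers of $\pi$; the $prim$ hypothesis on $\sZ$ guarantees that the extension $\V$ lives over a polarized sub-variation. For condition (c), use the Deligne extension $\H_e$, computed by the relative logarithmic de Rham complex $\bar{\pi}_{*}\Omega^{\bullet}_{\bar{\X}/\bar{S}}(\log)$, with the Hodge subbundles $\F^{\bullet}_e$ given by the naive filtration. A de Rham representative of $\sZ$, for instance via the currents of \cite{KLM}, produces a global section $\nu_{F}\in \Gamma(\bar{S},\V_e)$ lifting $1 \in \QQ_{\bar{S}}(0)$: it pairs against families of $F^{p-r+1}$-forms with at most logarithmic growth near $D$, which is exactly the statement that $\nu_F$ lies in $\F^{0}_e\V_e$ and, over $S$, in $\F^{0}$.

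Condition (b) is the crux, and I would attack it via M.~Saito's theory of mixed Hodge modules. The cycle $\sZ$, viewed as an element of absolute Hodge cohomology, represents an extension class of mixed Hodge modules on $\X$, i.e.\ a morphism $\QQ^{\mathrm{Hdg}}_{\X}(0) \to \QQ^{\mathrm{Hdg}}_{\X}(p)[2p-r+1]$ in $\DbMHM(\X)$. Applying the proper pushforward $\pi_{*}$, which preserves $\DbMHM$, yields the VMHS $\V$ as the smooth generic part of an object in $\DbMHM(S)$; by Saito's compatibility of the nearby-cycle functor with monodromy weight filtrations, $(\psi_{\underline{s}}\V)_x$ is a MHS, and the required containment $N_i\nu_{\QQ} \in M^{(i)}_{-2}(\psi_{\underline{s}}\H)_{\QQ}$ follows from weight considerations: $N_i$ strictly decreases weight by $2$, while the rational lift $\nu_{\QQ}$ of $1 \in \QQ(0)$ furnished by the nearby-cycle construction has weight~$0$ in $\psi_{\underline{s}}\V$, so $N_i\nu_{\QQ}$ lies in the weight-$(-2)$ piece of $\psi_{\underline{s}}\H$ (and in fact, by a Jacobson--Morozov argument, in the image of $N_i$ applied to a $(0,0)$ element, recovering (c')). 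The main obstacle is precisely the production and local monodromy control of the \emph{rational} lift $\nu_{\QQ}$, which is invisible in the transcendental de Rham picture of Step 2; in the classical case $r=1$ this was settled by El~Zein--Zucker and Steenbrink--Zucker (cf.\ \cite{EZ}, \cite{SZ}), and for $r \ge 1$ by \cite[Thm.~7.3]{BZ}, whose argument combines Saito's \MHM{} formalism with an explicit geometric construction of the rational lift on a semistable compactification of $\bar{\X}$ over $\bar{S}$.
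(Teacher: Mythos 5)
The paper itself gives no proof of this theorem --- it is stated as motivation for the definition of admissibility, with a pointer to \cite[Thm. 7.3]{BZ} --- so there is no internal argument to measure yours against. Your outline is the standard strategy and correctly identifies where the content lies, but as written it is a proof plan whose two hard steps are themselves outsourced to the literature. Concretely: for condition (c) of Definition \ref{def admissibility}, the assertion that the membrane integrals (or KLM-current pairings) representing $\nu_F$ have at most logarithmic growth near $D$, so that $\nu_F$ extends to a section of $\F^0_e\V_e$, is not a formality --- it is essentially the El Zein--Zucker extendability theorem \cite{EZ}, i.e., one of the two things being proved. For condition (b), your weight argument shows only that $N_i\nu_{\QQ}$ lies in the part of weight $\leq -2$ for the \emph{total} relative filtration $M(N,W)$ on $\psi_{\underline{s}}\V$; what (I)(b) demands is $N_i\nu_{\QQ}\in M^{(i)}_{-2}(\psi_{\underline{s}}\H)_{\QQ}$ for the monodromy weight filtration of \emph{each individual} $N_i$ (centered at $-r$ on the pure variation $\psi_{\underline{s}}\H$), and the existence of a single rational lift working simultaneously for all $i$ is precisely the existence of the relative monodromy filtration --- the substantive content of \cite{SZ}, \cite{K} and \cite[Thm. 7.3]{BZ}. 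So your proposal is an accurate map of where the difficulty sits, and is consistent with the paper (which proves nothing here), but it is not a self-contained proof; the crux is deferred to the same citation the paper uses.
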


\subsection{Limits and singularities of ANF's}

Now the idea of the {}``limit of a normal function'' should be to
interpret $\psi_{\underline{s}}\V$ as an extension of $\QQ(0)$ by
$\psi_{\underline{s}}\H$. The obstruction to being able to do this
is the singularity, as we now explain. All MHS in this section are
$\QQ$-MHS.

According to \cite[Cor. 2.9]{BFNP} $NF^{r}(S,\H)_{\bar{S}}^{ad}\otimes\QQ\cong Ext_{VMHS(S)_{\bar{S}}^{ad}}^{1}(\QQ(0),\H)$,
and one has an equivalence of categories $VMHS(S)_{\bar{S}}^{ad}\simeq MHM(S)_{\bar{S}}^{ps}$.
We want to push (in a sense canonically extend) our ANF $\V$ into
$\bar{S}$ and restrict the result to $x$. Of course, writing $\jmath:S\hookrightarrow\bar{S}$,
$\jmath_{*}$ is not right exact; so to preserve our extension, we
take the derived functor $R\jmath_{*}$ and land in the derived category
$D^{b}MHM(\bar{S})$. Pulling back to $D^{b}MHM(\{x\})\cong D^{b}MHS$
by $\imath_{x}^{*}$, we have defined an invariant $(\imath_{x}^{*}R\jmath_{*})^{Hdg}$:
\begin{equation}\label{eqn analogydiag2} \xymatrix{& & \text{Hom}_{_{\text{MHS}}} \left( \QQ(0), H^1\K^{\bullet} \right)   \\ NF^r(S,\H)  \ar [urr]^{sing_x} \ar [rr]_{(\imath^*_xR\jmath_*)^{\text{Hdg}} \mspace{120mu}} & & \text{Ext}^1_{_{D^b\text{MHS}}} \left( \QQ(0), \K^{\bullet} := \imath_x^* Rj_* \H \right) \ar [u] \\  \ker (sing_x) \ar @{^(->} [u] \ar [rr]^{lim_x \mspace{60mu}} & & \text{Ext}^1_{_{\text{MHS}}} \left( \QQ(0), H^0 \K^{\bullet} \right) \ar [u] } \\
\end{equation} where the diagram makes a clear analogy to (\ref{eqn analogydiag1}). 

For $S=(\Delta^{*})^{k}$ and $\HH_{\ZZ}$ unipotent \[
\K^{\bullet}\simeq\left\{ \psi_{\underline{s}}\H\overset{\oplus N_{i}}{\longrightarrow}\oplus_{i}\psi_{\underline{s}}\H(-1)\longrightarrow\oplus_{i<j}\psi_{\underline{s}}\H(-2)\longrightarrow\cdots\right\} ,\]
and \[
sing_{x}:\, NF^{r}((\Delta^{*})^{k},\H)_{\Delta^{k}}^{ad}\to(H^{1}\K^{\bullet})_{\QQ}^{(0,0)}\,\,\left(\cong{coker}(N)(-1)\text{ for k=1}\right)\]
is induced by $\V\mapsto\{N_{i}\nu_{\QQ}\}\equiv\{N_{i}\nu_{F}(0)\}$.
The limits, which are computed by\[
lim_{x}:\,\ker(sing_{x})\to J(\cap_{i}\ker(N_{i})),\]
more directly generalize the 1-parameter picture. The target $J(\cap\ker(N_{i}))$
is exactly what to put in over $\underline{0}$ to get the multivariable
pre-N\'eron-model.

We have introduced the general case $r\geq1$ because of interesting
applications of higher normal functions to irrationality proofs, local
mirror symmetry \cite{DK}. In case $r=1$ --- i.e. we are dealing
with classical normal functions --- we can replace $R\jmath_{*}$
in the above by perverse intermediate extension $\jmath_{!*}$ (which
by a lemma in \cite{BFNP} preserves the extension in this case, cf.
Thm. \ref{thm BFNP2.11} below). Correspondingly, $\K^{\bullet}$
is replaced by the local intersection cohomology complex \[
\K_{red}^{\bullet}\simeq\left\{ \psi_{\underline{s}}\H\overset{\oplus N_{i}}{\longrightarrow}\oplus_{i}\text{Im}(N_{i})(-1)\longrightarrow\oplus_{i<j}\text{Im}(N_{i}N_{j})(-2)\to\cdots\right\} ;\]
while the target for $lim_{x}$ is unchanged, that for $sing_{x}$
is reduced to $0$ if $k=1$ and \begin{equation}\label{eqn singtarget codim2}\left( \frac{\ker(N_1)\cap \text{im}(N_2)}{N_2(\ker N_1)} \right) ^{(-1,-1)}_{\QQ}\\
\end{equation} if $k=2$.

\subsection{Applications of singularities}

We hint at some good things to come:

(i) Replacing the $sing_{x}$-target (e.g., (\ref{eqn singtarget codim2}))
by actual $images$ of ANF's, and using their differences to glue
pre-N\'eron components together yields a generalized Neron model
(over $\Delta^{r}$, or $\bar{S}$ more generally) graphing ANF's.
Again over $x$ one gets an extension of a discrete (but not necessarily
finite) singularity group by the torus $J(\cap\ker(N_{i}))$. A. Young
\cite{Yo} did this for abelian varieties, then \cite{BPS} for general
VHS. This will be described more precisely in $\S5$.2.

(ii) (Griffiths and Green \cite{GG}) The Hodge Conjecture (HC) on
a $2p$-dimensional smooth projective variety $X$ is equivalent to
the following statement for \emph{each} primitive Hodge $(p,p)$ class
$\zeta$ and very ample line bundle $\L\to X$: there exists $k\gg0$
such that the natural normal function%
\footnote{cf. $\S\S3.2-3$, especially \eqref{eqn nf2}.%
} $\nu_{\zeta}$ over $|\L^{k}|\setminus\hat{X}$ (the complement of
the dual variety in the linear system) has a nontorsion singularity
at some point of $\hat{X}$. So, in a \emph{sense}, the analogue of
HC for $(\Delta^{*})^{k}$ is surjectivity of $sing_{x}$ onto $(H^{1}\K_{red}^{\bullet})_{\QQ}^{(0,0)}$,
and this $fails$:

(iii) (M. Saito \cite{S6}, Pearlstein \cite{Pe3}) Let $\H_{0}/\Delta^{*}$
be a VHS of weight $3$ rank $4$ with nontrivial Yukawa coupling.
Twisting it into weight $-1$, assume the LMHS is of type $II_{1}$:
$N^{2}=0$, $rk(Gr_{-2}^{M})=1$. Take for $\H/(\Delta^{*})^{2}$
the pullback of $\H_{0}$ by $(s,t)\mapsto st$. Then (\ref{eqn singtarget codim2})$\neq\{0\}=sing_{\underline{0}}\{NF^{1}((\Delta^{*})^{2},\H)_{\Delta^{2}}^{ad}\}.$
The obstruction to the existence of normal functions with nontrivial
singularity is analytic; and comes from a differential equation produced
by the horizontality condition (see $\S5.4-5$).

(iv) One can explain the meaning of the residue of the limit $K_{1}$
class in Example \ref{ex 2parSING} above: writing $\jmath^{1}:(\Delta^{*})^{2}\hookrightarrow\Delta^{*}\times\Delta$,
$\jmath^{2}:\Delta^{*}\times\Delta\hookrightarrow\Delta^{2}$, factor
$(\imath_{x}^{*}Rj_{!*})^{\text{Hdg}}$ by $(\imath_{x}^{*}R\jmath_{*}^{2})^{\text{Hdg}}\circ(\imath_{\Delta^{*}}^{*}\jmath_{!*}^{1})^{\text{Hdg}}$
(where the $\imath^{*}Rj_{*}^{2}$ corresponds to the residue). That
is, limit a normal function (or family of cycles) to a higher normal
function (or family of higher Chow cycles) over a codimension-$1$
boundary component; the latter can then have (unlike normal functions)
a singularity in codimension $1$ --- i.e. in codimension $2$ with
respect to the original normal function. 

This technique gives a quick proof of the existence of singularities
for the Ceresa cycle by limiting it to an Eisenstein symbol (see \cite{Co}
and the Introduction to \cite{DK}). Additionally, one gets a geometric
explanation of why one does not expect the singularities in (ii) to
be supported in high-codimension substrata of $\hat{X}$ (supporting
very degenerate hypersurfaces of $X$): along these substrata one
may reach (in the sense of (iv)) higher Chow cycles with rigid $AJ$
invariants, hence no residues. For this reason codimension $2$ tends
to be a better place to look for singularities than in much higher
codimension. These {}``shallow'' substrata correspond to hypersurfaces
with ordinary double points, and it was the original sense of \cite{GG}
that such points should trace out an algebraic cycle {}``dual''
to the original Hodge class, giving an \emph{effective} proof of the
HC.

\section{Normal Functions and the Hodge Conjecture}

In this section, we discuss the connection between normal functions
and the Hodge conjecture, picking up where $\S1$ left off. We begin
with a review of some properties of the Abel--Jacobi map. Unless otherwise
noted, all varieties are defined over $\CC$.

\subsection{Zucker's Theorem on Normal Functions}

Let $X$ be a smooth projective variety of dimension $d_{X}$. Recall
that $J_{h}^{p}(X)$ is the intermediate Jacobian associated to the
maximal rationally defined Hodge substructure $H$ of $H^{2p-1}(X)$
such that $H_{\CC}\subset H^{p,p-1}(X)\oplus H^{p-1,p}(X)$, and that
(by a result of Lieberman \cite{Li}) \begin{equation}\label{eqn lieberman}
\begin{matrix} {J^p(X)_{alg}=\text{im} \left\{ AJ_X:CH^p(X)_{alg}\to J^p(X) \right\}} \\ {\text{is a sub- abelian variety of }J^p(X)_h .} \end{matrix}  \\
\end{equation}
\begin{notation}
If $f:X\to Y$ is a projective morphism then $f^{sm}$ denotes the
restriction of $f$ to the largest Zariski open subset of $Y$ over
which $f$ is smooth. Also, unless otherwise noted, in this section,
the underlying lattice $\HH_{\ZZ}$ of every variation of Hodge structure
is assumed to be torsion free, and hence for a geometric family $f:X\to Y$,
we are really considering $\HH_{\ZZ}=(R^{k}f_{*}^{sm}\ZZ)/\{{\rm torsion}\}$. 
\end{notation}
As reviewed in $\S1$, Lefschetz proved that every integral $(1,1)$
class on a smooth projective surface is algebraic by studying Poincar?normal functions associated to such cycles. We shall begin here by
revisiting Griffiths's program (also recalled in $\S1$) to prove
the Hodge conjecture for higher codimension classes by extending Lefschetz's
methods: By induction on dimension, the Hodge conjecture can be reduced
to the case of middle dimensional Hodge classes on even dimensional
varieties \cite[Lec. 14]{Le1}. Suppose therefore that $X\subseteq\mathbb{P}^{k}$
is a smooth projective variety of dimension $2m$. Following \cite[Sec. 4]{Zu2},
let us pick a Lefschetz pencil of hyperplane sections of $X$, i.e.
a family of hyperplanes $H_{t}\subseteq\mathbb{P}^{k}$ of the form
$t_{0}w_{0}+t_{1}w_{1}=0$ parametrized by $t=[t_{0},t_{1}]\in\mathbb{P}^{1}$
relative to a suitable choice of homogeneous coordinates $w=[w_{0},\dots,w_{k}]$
on $\mathbb{P}^{k}$ such that: 
\begin{itemize}
\item For all but finitely many points $t\in\mathbb{P}^{1}$, the corresponding
hyperplane section of $X_{t}=X\cap H_{t}$ is smooth; 
\item The base locus $B=X\cap\{w\in\mathbb{P}^{k}\mid w_{0}=w_{1}=0\}$
is smooth; 
\item Each singular hyperplane section of $X$ has exactly one singular
point, which is an ordinary double point. 
\end{itemize}
Given such a Lefschetz pencil, let \[
Y=\{\,(x,t)\in X\times\mathbb{P}^{1}\mid x\in H_{t}\,\}\]
 and $\pi:Y\to\mathbb{P}^{1}$ denote projection onto the second factor.
Let $U$ denote the set of points $t\in\mathbb{P}^{1}$ such that
$X_{t}$ is smooth and $\mathcal{H}$ be the variation of Hodge structure
over $U$ with integral structure $\HH_{\ZZ}=R^{2m-1}\pi_{*}^{sm}\ZZ(m)$.
Furthermore, by Schmid's nilpotent orbit theorem \cite{Sc}, the Hodge
bundles $\mathcal{F}^{\bullet}$ have a canonical extension to a system
of holomorphic bundles $\mathcal{F}_{e}^{\bullet}$ over $\mathbb{P}^{1}$.
Accordingly, we have a short exact sequence of sheaves \begin{equation}
0 \to j_* \HH_{\ZZ} \to \H_e/\F^m_e \to \J^m_e \to 0 \\
\end{equation} where $j:U\to\mathbb{P}^{1}$ is the inclusion map. As before, let
us call an element $\nu\in H^{0}(\mathbb{P}^{1},\mathcal{J}_{e}^{m})$
a Poincar?normal function. Then, we have the following two results
\cite[Thms. 4.57, 4.17]{Zu2}, the second of which is known as {}``the
Theorem on Normal Functions'':
\begin{thm}
\label{thm zuck1}Every Poincar?normal function satisfies Griffiths
horizontality.
\end{thm}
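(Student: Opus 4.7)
The plan is to convert Griffiths horizontality into the vanishing of one explicit global section on $\PP^1$, and then exploit Schmid's extended transversality together with the positivity of Hodge bundles to force that vanishing.

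First I would construct an obstruction. Over a small disk $V \subset \PP^1$, pick a holomorphic lift of $\nu|_V$ to a section $\tilde\nu$ of $\H_e/\F^m_e$, then lift further to a (possibly multi-valued) section of $\H_e$. Apply $\nabla$, which on the Deligne canonical extension takes values in $\H_e \otimes \Omega^1_{\PP^1}(\log\Sigma)$. Schmid's nilpotent orbit theorem guarantees the extended transversality $\nabla \F^m_e \subset \F^{m-1}_e \otimes \Omega^1_{\PP^1}(\log\Sigma)$, so
\[
[\nabla\tilde\nu] \in (\H_e/\F^{m-1}_e)\otimes \Omega^1_{\PP^1}(\log\Sigma)
\]
is unchanged if $\tilde\nu$ is altered by a section of $\F^m_e$, and changes of multi-valued lift by an element of $j_*\HH_\ZZ$ are flat and hence killed by $\nabla$. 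The local classes therefore patch into a global section
\[
\omega_\nu \in H^0\bigl(\PP^1,\,(\H_e/\F^{m-1}_e)\otimes \Omega^1_{\PP^1}(\log\Sigma)\bigr),
\]
and horizontality of $\nu$ is equivalent to $\omega_\nu = 0$.

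Next, using the polarization $Q$ on $\H$ together with the first Hodge--Riemann relation $Q(\F^p,\F^{2m-p})=0$ (fiber weight $2m-1$), I would identify $\H_e/\F^{m-1}_e$ with the dual bundle $(\F^{m+1}_e)^{\vee}$, so that $\omega_\nu$ becomes an $\O_{\PP^1}$-linear map
\[
\phi_\nu : \F^{m+1}_e \longrightarrow \Omega^1_{\PP^1}(\log\Sigma),
\]
and the theorem reduces to showing $\phi_\nu = 0$. I would attempt this in two steps. At each $s_i \in \Sigma$ the residue $\mathrm{Res}_{s_i}\phi_\nu$ is controlled by the local monodromy logarithm $N_i$, which by Picard--Lefschetz has rank one with image spanned by the vanishing cycle $\delta_i$; combining this with the Hodge--Riemann relations for the limit mixed Hodge structure at $s_i$ should force each residue into a type that annihilates $F^{m+1}(\psi_s\H)$, so the residues all vanish. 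What remains is a morphism $\F^{m+1}_e \to \Omega^1_{\PP^1}$, and since $\PP^1$ has genus zero, $\deg \Omega^1_{\PP^1} = -2$, while the Fujita--Griffiths--Schmid semi-positivity of the Hodge bundles forces $\F^{m+1}_e$ to split as a direct sum of line bundles of non-negative degree on $\PP^1$. No nonzero morphism can exist, so $\phi_\nu = 0$.

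The step I expect to be the main obstacle is the residue analysis. Verifying that the Picard--Lefschetz form of $N_i$ really does kill $\mathrm{Res}_{s_i}\phi_\nu$ in the polarized quotient requires carefully unwinding the interaction between the Deligne canonical extension, the polarization, and the Hodge type of the vanishing cycle in each limit mixed Hodge structure. If this pointwise analysis proves too delicate, an alternative would be Zucker's $L^{2}$-cohomology route: identify sections of $\J_e^m$ modulo the fixed part with classes in the pure Hodge structure of weight $2m$ on $H^{2m}(Y)$ (via $L^2$-harmonic representatives on the smooth locus of $Y$), after which horizontality is immediate from the compatibility of the $L^2$ Hodge decomposition with the Gauss--Manin filtration.
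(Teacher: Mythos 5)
Your reduction of horizontality to the vanishing of $\omega_\nu$, the dualization via the first Hodge--Riemann relation to a map $\phi_\nu:\F^{m+1}_e\to\Omega^1_{\PP^1}(\log\Sigma)$, and the residue analysis are all sound; in fact the step you flag as the main obstacle goes through cleanly. For an ODP degeneration of the $(2m-1)$-fold $X_t$, Picard--Lefschetz gives $N_i=T_i-I$ of rank one with image spanned by the vanishing cocycle $\delta_i$, which spans $Gr^{M}_{2m-2}$ of the limit MHS and is therefore of type $(m-1,m-1)$; hence $\text{im}(N_i)\subset F^{m-1}_\infty$, which annihilates $F^{m+1}_\infty$ under $Q$, so $\mathrm{Res}_{s_i}\phi_\nu=0$. (For the record, the paper offers no proof of this statement but quotes it from \cite[Thm. 4.57]{Zu2}, so there is no internal argument to compare against.)

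The genuine gap is the final step. You need $\Hom(\F^{m+1}_e,\mathcal{O}_{\PP^1}(-2))=0$, i.e.\ every Grothendieck summand of $\F^{m+1}_e$ to have degree $\geq -1$, and you derive this from a claimed nefness of $\F^{m+1}_e$. But the Fujita--Kawamata--Zucker semipositivity theorem gives nefness only of the \emph{extremal} Hodge filtrand $\F^{2m-1}_e$; for intermediate $p$ the canonical extension $\F^{p}_e$ need not be nef (Griffiths's curvature formula is indefinite on the intermediate $H^{p,q}$, the graded pieces $\F^p_e/\F^{p+1}_e$ can have negative degree, and Peters's theorem yields only $\deg\F^p_e\geq 0$, which does not exclude negative summands --- indeed an extension of a negative line bundle by a nonnegative one on $\PP^1$ splits). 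Since $\F^{m+1}_e$ is extremal precisely when $m+1=2m-1$, your argument is complete for $m\leq 2$ but unjustified for $m\geq 3$, which is exactly the range where the theorem goes beyond the classical case. Closing the gap requires input from the Lefschetz-pencil geometry beyond the residue computation --- e.g.\ identifying the graded pieces of $\F^{\bullet}_e$ with direct images of relative logarithmic forms on the incidence variety and proving the needed $H^0$-vanishing there --- or some further constraint on $\phi_\nu$; the $L^2$ fallback you sketch conflates horizontality with Theorem \ref{thm zuck2} and is too vague to substitute.
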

$\vspace{-10mm}$
\begin{thm}
\label{thm zuck2}Every primitive integral Hodge class on $X$ is
the cohomology class of a Poincar?normal function. 
\end{thm}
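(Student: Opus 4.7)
\emph{Proof proposal.} My plan is to derive the theorem from the long exact sequence attached to the defining short exact sequence
\[
0 \to j_* \HH_\ZZ \to \H_e/\F^m_e \to \J^m_e \to 0
\]
and a Hodge-theoretic vanishing for the pullback of $\zeta$ to $Y$. Taking cohomology on $\PP^1$ yields
\[
H^0(\PP^1, \J^m_e) \xrightarrow{[\cdot]} H^1(\PP^1, j_*\HH_\ZZ) \xrightarrow{\partial} H^1(\PP^1, \H_e/\F^m_e),
\]
so a class in $H^1(\PP^1, j_*\HH_\ZZ)$ is the cohomology class of a Poincar\'e normal function if and only if it lies in $\ker(\partial)$. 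It thus suffices to build a natural ``topological class'' map $\Psi\colon \mathit{Hg}^m(X,\ZZ)_{\mathrm{prim}} \to H^1(\PP^1, j_*\HH_\ZZ)$, compatible with $\nu \mapsto [\nu]$ for geometric normal functions coming from primitive cycles, and to show that $\partial \circ \Psi = 0$.

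To construct $\Psi$, I would use the Leray spectral sequence for $\pi\colon Y \to \PP^1$, where $Y$ is the blow-up of the base locus. Since $\dim X_t = 2m - 1$, the sheaf $R^{2m}\pi_* \ZZ(m)$ vanishes, so the Leray filtration on $H^{2m}(Y, \ZZ(m))$ is the two-step filtration
\[
0 \to H^2(\PP^1, R^{2m-2}\pi_* \ZZ(m)) \to H^{2m}(Y, \ZZ(m)) \to H^1(\PP^1, R^{2m-1}\pi_* \ZZ(m)) \to 0.
\]
Pull $\zeta$ back to $\beta^* \zeta \in H^{2m}(Y, \ZZ(m))$ and project to the quotient. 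Primitivity of $\zeta$ (i.e.\ $\zeta \smile \Omega = 0$) eliminates the contribution of hyperplane-type classes from $L^2$, and the local invariant cycle theorem identifies the image with a class in the subgroup $H^1(\PP^1, j_*\HH_\ZZ) \subseteq H^1(\PP^1, R^{2m-1}\pi_*\ZZ(m))$, since primitive cohomology classes are monodromy-invariant at each Lefschetz degeneration. This defines $\Psi(\zeta)$; the compatibility $\Psi([Z]) = [\nu_{\beta^*Z}]$ for a primitive cycle representative $Z$ follows from unwinding the construction of $AJ$ in families, as in \S1.3--\S1.6.

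The heart of the argument is the vanishing $\partial \Psi(\zeta) = 0$. Since $\zeta$ is of Hodge type $(m,m)$, the pullback $\beta^*\zeta$ lies in $F^m H^{2m}(Y,\CC)$. Deligne's $E_2$-degeneration of the Leray spectral sequence for the smooth proper morphism $\pi^{sm}$ over $U$ is compatible with Hodge filtrations, so the image of $\beta^*\zeta$ in $H^1(U, \H)$ lies in $H^1(U, \F^m)$, hence dies in $H^1(U, \H/\F^m)$. Schmid's nilpotent orbit theorem provides the canonical extensions $\F^\bullet_e$ and ensures that this $F^m$-control persists globally on $\PP^1$: the relevant extension class in $H^1(\PP^1, \H_e/\F^m_e)$ is represented, over the canonical extension, by a form that is still of Hodge-filtration index $\geq m$, and therefore vanishes in the quotient. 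Combined with the long exact sequence, this produces the required Poincar\'e normal function $\nu_\zeta$ with $[\nu_\zeta] = \Psi(\zeta)$.

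The main obstacle I anticipate is precisely this compatibility of the Leray filtration with the Hodge filtration on the \emph{canonical extensions} across the singular fibres of the pencil: Deligne's degeneration and Hodge compatibility are statements over $U$, while $\partial$ lives on the compactified base $\PP^1$. Making the Hodge-theoretic vanishing descend to $\PP^1$ requires the asymptotic analysis of $\F^\bullet_e$ near each nodal fibre (via Schmid) and a verification that the local $M_\bullet$-graded pieces of the LMHS orthogonal to $F^m$ do not contribute to the obstruction. Once this technical compatibility is installed, the theorem follows formally from the long exact sequence together with the $(m,m)$-type of $\zeta$.
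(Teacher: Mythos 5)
The paper does not actually prove this statement: it quotes it directly from Zucker's paper (cited as [Zu2, Thm.\ 4.17]), so there is no in-text proof to compare against. Your strategy is exactly Zucker's: use the long exact sequence of $0\to j_*\HH_{\ZZ}\to\H_e/\F^m_e\to\J^m_e\to 0$, produce the class $\Psi(\zeta)\in H^1(\PP^1,j_*\HH_{\ZZ})$ via Leray, and kill the obstruction in $H^1(\PP^1,\H_e/\F^m_e)$ using the $(m,m)$-type of $\zeta$. Two points, however, need repair. First, $R^{2m}\pi_*\ZZ(m)$ does \emph{not} vanish: the fibres have dimension $2m-1$, so $H^{2m}(X_t)\neq 0$ in general (already for $m=1$ one has $R^2\pi_*\ZZ\cong\ZZ$ over $U$). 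What is true, and what you must actually use here, is that the image of $\beta^*\zeta$ in $H^0(\PP^1,R^{2m}\pi_*\QQ(m))$ vanishes because $\zeta$ is primitive: $\iota_{t*}\iota_t^*\zeta=\zeta\cup\Omega=0$, and $\iota_{t*}$ is injective on $H^{2m}(X_t,\QQ)$ by weak plus hard Lefschetz, whence $\zeta|_{X_t}=0$. Only then does $\beta^*\zeta$ land in the first Leray filtrand and define $\Psi(\zeta)$; the integral refinement requires the torsion-freeness built into the Lefschetz-pencil situation. (Also, the local invariant cycle theorem gives a surjection $R^{2m-1}\pi_*\QQ\twoheadrightarrow j_*\HH_{\QQ}$, not an inclusion of $H^1(\PP^1,j_*\HH_{\ZZ})$ as a subgroup of $H^1(\PP^1,R^{2m-1}\pi_*\ZZ(m))$ in the direction you wrote.)

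Second, and more seriously, the vanishing $\partial\Psi(\zeta)=0$ is not a formal consequence of Schmid's nilpotent orbit theorem. What is required is that $H^1(\PP^1,\H_e/\F^m_e)\cong H^1(\PP^1,j_*\HH_{\CC})/F^m$, where $F^{\bullet}$ is a Hodge filtration making $H^1(\PP^1,j_*\HH)$ a \emph{pure} Hodge structure in which $\Psi(\zeta)$ has type $(m,m)$ --- equivalently, that the hypercohomology of the logarithmic de Rham complex of the canonical extension, filtered by $\F^{\bullet}_e$, computes this Hodge filtration and degenerates at $E_1$. That purity and degeneration statement is precisely the main theorem of Zucker's paper, proved there by $L^2$-harmonic theory using Schmid's norm estimates near the punctures; the nilpotent orbit theorem supplies the canonical extension and the estimates, but not the conclusion. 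As written, your argument for the key step assumes the output of Zucker's main theorem in order to prove its corollary. Granting that input, the rest of your outline is correct and coincides with the standard proof.
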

The next step in the proof of the Hodge conjecture via this approach
is to show that for $t\in U$, the Abel--Jacobi map \[
AJ:CH^{m}(X_{t})_{hom}\to J^{m}(X_{t})\]
is surjective. However, for $m>1$ this is rarely true (even granting
the conjectural equality of $J^{m}(X)_{alg}$ and $J_{h}^{m}(X)$)
since $J^{m}(X_{t})\neq J_{h}^{m}(X_{t})$ unless $H^{2m-1}(X_{t},\CC)=H^{m,m-1}(X_{t})\oplus H^{m-1,m}(X_{t})$.
In plenty of cases of interest $J_{h}^{m}(X)$ is in fact trivial;
Theorem \ref{thm trivJH1} and Example \ref{ex trivJH2} below give
two different instances of this.
\begin{thm}
\label{thm trivJH1} \cite[Ex. 14.18]{Le1} If $X\subseteq\mathbb{P}^{k}$
is a smooth projective variety of dimension $2m$ such that $H^{2m-1}(X)=0$
and $\{X_{t}\}$ is a Lefschetz pencil of hyperplane sections of $X$
such that $F^{m+1}H^{2m-1}(X_{t})\neq0$ for every smooth hyperplane
section, then for generic $t\in U$, $J_{h}^{m}(X_{t})=0$.
\end{thm}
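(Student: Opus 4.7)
The goal is to show that the maximal $\QQ$-sub-Hodge structure $K \subseteq H^{2m-1}(X_t,\QQ)$ with $K_{\CC} \subseteq H^{m,m-1}(X_t) \oplus H^{m-1,m}(X_t)$, equivalently $F^{m+1}K_{\CC}=0$, is trivial for generic $t \in U$. The plan is to argue by contradiction: if $K_t \neq 0$ for $t$ in some dense Zariski open $V \subseteq U$, then after shrinking $V$ we may take $r := \dim_{\QQ} K_t$ to be constant on $V$.

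The main technical step is to show that $\{K_t\}_{t \in V}$ assembles into a sub-VHS of $\H|_V$. On a simply connected open $W \subseteq V$, trivialize $\HH_{\QQ}|_W$ as $H \times W$ with $H := H^{2m-1}(X_{t_0},\QQ)$. The Hodge filtration $F^{\bullet}\H_t$ varies holomorphically in $t$, so the complex span $K_{t,\CC} \subseteq H_{\CC}$ defines a holomorphic map $W \to \text{Gr}(r,H_{\CC})$. Since $K_t$ is $\QQ$-rational, this map lands in the countable, topologically discrete set of $\QQ$-rational $r$-planes of $H_{\CC}$, so it is locally constant on $W$. Hence $\{K_t\}$ is a sub-local system of $\HH_{\QQ}|_V$; since each fiber is by construction a sub-Hodge structure, the induced Hodge filtrations make it a sub-VHS. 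Because $V \hookrightarrow U$ is Zariski open, $\pi_1(V) \twoheadrightarrow \pi_1(U)$ is surjective, so this $\pi_1(V)$-subrepresentation is $\pi_1(U)$-stable, and the sub-VHS extends uniquely to a sub-VHS $\K \subseteq \H$ over all of $U$.

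Next, I would invoke the classical irreducibility of the monodromy representation on the vanishing cohomology of a Lefschetz pencil (via Picard--Lefschetz and transitivity of monodromy on vanishing cycles). The hypothesis $H^{2m-1}(X)=0$ eliminates the contribution of $X$ to the monodromy-fixed part of $H^{2m-1}(X_t,\QQ)$: by the invariant cycle theorem applied to $\pi : Y \to \PP^1$, combined with the blow-up decomposition $H^{2m-1}(Y) \cong H^{2m-1}(X) \oplus H^{2m-3}(B) = H^{2m-3}(B)$, this fixed part reduces to the image of the Gysin map $H^{2m-3}(B) \to H^{2m-1}(X_t)$. In the typical applications of the theorem --- e.g., when $X$ also satisfies $H^{2m-3}(X)=0$, in which case iterated Lefschetz hyperplane yields $H^{2m-3}(X) \cong H^{2m-3}(B)=0$ --- this image vanishes and the underlying local system of $\H$ is irreducible over $\QQ$. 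Then $\K \subseteq \H$ is forced to be either $0$ or all of $\H$. The first alternative contradicts $r \geq 1$; the second forces $F^{m+1}\H_t = F^{m+1}\K_t = 0$, contradicting the hypothesis $F^{m+1}H^{2m-1}(X_t) \neq 0$.

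The hard part is the fixed-part analysis: one must verify that the Gysin image of $H^{2m-3}(B)$ in $H^{2m-1}(X_t)$ either vanishes outright or contains no nonzero sub-Hodge structure of level $\leq 1$, so that the irreducibility argument applies cleanly. Under the auxiliary vanishing $H^{2m-3}(X)=0$ (implicit in Example 14.18 of \cite{Le1} and the standard applications) this is automatic, since it forces the entire fixed part to vanish; in complete generality one would have to perform a Hodge-type analysis of the Gysin-shifted $H^{2m-3}(B)$, using that its image contributes only to types $(p+1,q+1)$ with $p+q=2m-3$, and show that none of these types combine into a rational sub-HS of the required level.
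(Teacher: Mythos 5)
The paper offers no proof of this statement---it is quoted directly from \cite[Ex. 14.18]{Le1}---and your strategy (show the maximal level-$\leq 1$ rational sub-Hodge structure $K_t$ is flat over a generic locus, then kill it using irreducibility of the monodromy on vanishing cohomology together with $F^{m+1}H^{2m-1}(X_t)\neq 0$) is exactly the standard one. Moreover, the ``hard part'' you flag at the end is not actually a gap: hard Lefschetz on $X$ (of dimension $2m$) says cup product with the hyperplane class is injective on $H^{j}(X,\QQ)$ for $j\leq 2m-1$, so $H^{2m-3}(X)\hookrightarrow H^{2m-1}(X)=0$ and the auxiliary vanishing you wanted is automatic; weak Lefschetz for the smooth $(2m-2)$-dimensional complete intersection $B\subset X$ then gives $H^{2m-3}(B)\cong H^{2m-3}(X)=0$, so the entire invariant part $\mathrm{im}\{H^{2m-1}(Y)\to H^{2m-1}(X_t)\}$ vanishes, $H^{2m-1}(X_t,\QQ)$ equals the vanishing cohomology (nonzero by hypothesis), and the monodromy acts $\QQ$-irreducibly on it. Alternatively, the refined invariant cycle theorem for Lefschetz pencils identifies the invariants with the image of $H^{2m-1}(X)$ itself, bypassing $B$ entirely.

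The genuinely shaky step is earlier: the claim that $t\mapsto K_{t,\CC}$ is a \emph{holomorphic} map to the Grassmannian. $K_t$ is cut out by a maximality condition ranging over rational subspaces, not by any holomorphic construction, so neither the constancy of $r=\dim_{\QQ}K_t$ on a Zariski open set (after ``shrinking $V$'') nor even the continuity of $t\mapsto K_{t,\CC}$ is established; a priori the dimension could jump on a countable dense union of subvarieties, and the negation of ``generic'' only gives you a non-meager set, not a dense Zariski open one. The standard repair is a Baire category argument: after flat transport on a simply connected chart, each $K_{t,\QQ}$ is one of countably many rational subspaces of the reference fiber, so some fixed rational $S\neq 0$ is a level-$\leq 1$ sub-Hodge structure for $t$ in a set with a limit point; the closed analytic conditions defining that locus then hold identically, and the (finite-dimensional, hence finitely generated) span of the monodromy orbit of $S$ is a nonzero monodromy-stable rational subspace of level $\leq 1$ at a general point. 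Irreducibility forces this span to be all of $H^{2m-1}(X_t,\QQ)$, contradicting $F^{m+1}\neq 0$ exactly as you conclude. So the architecture of your proof is correct; it is this local-constancy step, not the fixed-part analysis, that requires the careful argument.
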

$\vspace{-10mm}$
\begin{thm}
If $J_{h}^{p}(X)=0$, then the image of $CH^{m}(W)_{hom}$ in $J^{p}(X)$
under the Abel--Jacobi map is countable.\end{thm}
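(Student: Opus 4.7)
The plan is to combine Lieberman's result (equation \eqref{eqn lieberman}) with a Chow-variety parametrization argument to show that the Abel--Jacobi image decomposes into countably many points. First I would observe that the hypothesis $J_h^p(X)=0$ forces $J^p(X)_{alg}=0$ by \eqref{eqn lieberman}, so that $AJ_X$ vanishes identically on $CH^p(X)_{alg}$. Consequently $AJ_X$ factors through the Griffiths group $CH^p(X)_{hom}/CH^p(X)_{alg}$, and the whole problem reduces to showing that this group has countable image in $J^p(X)$.

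Next I would parametrize homologically trivial cycles by Chow varieties. Any $Z\in Z^p(X)_{hom}$ can be written as a difference $Z=W_1-W_2$ of effective codimension-$p$ cycles of a common degree $e$, hence as a point of $\mathcal{C}_e:=C_{p,e}(X)\times C_{p,e}(X)$. Since $X$ is projective, each $C_{p,e}(X)$ is a projective scheme of finite type, so it has only finitely many irreducible components; as $e$ ranges over $\NN$, the total set of irreducible components of $\bigcup_e\mathcal{C}_e$ is countable. The topological cohomology class $[W_1]-[W_2]\in H^{2p}(X,\ZZ)$ is constant on each irreducible component $T$, so the locus where $W_1-W_2$ is homologically trivial is a union of whole components; it therefore suffices to bound the image of each such $T$ in $J^p(X)$.

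The key rigidity step is that on a single irreducible $T$, the map $t\mapsto AJ_X(W_1(t)-W_2(t))$ is \emph{constant}. Indeed, for any two points $t_0,t_1\in T(\CC)$, the cycles $(W_1(t_0)-W_2(t_0))$ and $(W_1(t_1)-W_2(t_1))$ are algebraically equivalent (they are fibers of a family parametrized by $T$, and one may connect $t_0$ to $t_1$ by an irreducible curve in $T$), so their difference lies in $CH^p(X)_{alg}$ and is annihilated by $AJ_X$ thanks to the first paragraph. Hence $AJ_X(T)$ is a single point of $J^p(X)$, and the total image is a countable union of points, i.e.\ countable.

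The main obstacle is really just the bookkeeping: one must ensure that every class in $CH^p(X)_{hom}$ is actually represented by a difference of effective cycles of equal degree (which requires fixing a very ample line bundle and possibly adding a common auxiliary effective cycle to $W_1$ and $W_2$), and that the word \textquotedbl{}algebraically equivalent\textquotedbl{} in the rigidity step is justified in the generality of an arbitrary irreducible base $T$ (for which it suffices to join two points of $T$ by the normalization of an irreducible curve and invoke the definition of algebraic equivalence from $\S 1.5$). Both steps are standard, so no analytic input beyond Lieberman's theorem is needed.
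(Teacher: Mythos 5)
Your proposal is correct and follows the same route as the paper: use Lieberman's result \eqref{eqn lieberman} to see that $J_{h}^{p}(X)=0$ kills $AJ_X$ on $CH^{p}(X)_{alg}$, so the image is bounded by the Griffiths group $CH^{p}(X)_{hom}/CH^{p}(X)_{alg}$, which is countable. The only difference is that the paper simply cites the countability of the Griffiths group as known, whereas you supply its standard proof (countably many irreducible components of Chow varieties, plus constancy of $AJ_X$ on each component via algebraic equivalence); that unpacking is accurate but not a genuinely different argument.
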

\begin{proof}
(Sketch) As a consequence of (\ref{eqn lieberman}), if $J_{h}^{p}(X)=0$
the Abel--Jacobi map vanishes on $CH^{p}(X)_{alg}$. Therefore, the
cardinality of the image of the Abel-Jacobi map on $CH^{p}(X)_{hom}$
is bounded by the cardinality of the Griffiths group $CH^{p}(X)_{hom}/CH^{p}(X)_{alg}$,
which is known to be countable.\end{proof}
\begin{example}
\label{ex trivJH2}Specific hypersurfaces with $J_{h}^{p}(X)=0$ were
constructed by Shioda \cite{Sh}: Let $Z_{m}^{n}$ denote the hypersurface
in $\mathbb{P}^{n+1}$ defined by the equation \[
\sum_{i=0}^{n+1}\, x_{i}x_{i+1}^{m-1}=0\qquad(x_{n+2}=x_{0})\]
 Suppose that $n=2p-1>1$, $m\geq2+3/(p-1)$ and \[
d_{0}=\{(m-1)^{n+1}+(-1)^{n+1}\}/m\]
 is prime. Then $J_{h}^{p}(Z_{m}^{n})=0$.
\end{example}

\subsection{Singularities of admissible normal functions}

In \cite{GG}, Griffiths and Green proposed an alternative program
for proving the Hodge conjecture by studying the singularities of
normal functions over higher dimensional parameter spaces. Following
\cite{BFNP}, let $S$ a complex manifold and $\mathcal{H}=(\HH_{\ZZ},\mathcal{F}^{\bullet}\mathcal{H}_{\mathcal{O}})$
be a variation of polarizable Hodge structure of weight $-1$ over
$S$. Then, we have the short exact sequence\[
0\to\HH_{\ZZ}\to\H/\F^{0}\to J(\H)\to0\]
of sheaves and hence an associated long exact sequence in cohomology.
In particular, the cohomology class ${\rm cl}(\nu)$ of a normal function
$\nu\in H^{0}(S,J(\mathcal{H}))$ is just the image of $\nu$ under
the connecting homomorphism \[
\partial:H^{0}(S,J(\mathcal{H}))\to H^{1}(S,\HH_{\ZZ}).\]

Suppose now that $S$ is a Zariski open subset of a smooth projective
variety $\bar{S}$. Then, the singularity of $\nu$ at $p\in\bar{S}$
is the quantity \[
\sigma_{\ZZ,p}(\nu)=\varinjlim_{p\in U}\,{\rm cl}(\nu|_{U\cap S})\in\varinjlim_{p\in U}\, H^{1}(U\cap S,\HH_{\ZZ})=(R^{1}j_{*}\HH_{\ZZ})_{p}\]
where the limit is taken over all analytic open neighborhoods $U$
of $p$, and $j:S\to\bar{S}$ is the inclusion map. The image of $\sigma_{\ZZ,p}(\nu)$
in cohomology with rational coefficients will be denoted $\text{sing}_{p}(\nu_{\zeta})$.
\begin{rem}
If $p\in S$ then $\sigma_{\ZZ,p}(\nu)=0$.\end{rem}
\begin{thm}
\cite{S1} Let $\nu$ be an admissible normal function on a Zariski
open subset of a curve $\bar{S}$. Then, $\sigma_{\ZZ,p}(\nu)$ is
of finite order for each point $p\in\bar{S}$.\end{thm}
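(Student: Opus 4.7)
The question is local at $p$, and if $p\in S$ the assertion is trivial, so assume $p\in \bar{S}\setminus S$ and fix a small coordinate disk $\Delta\ni p$ with $S\cap\Delta=\Delta^{*}$. Since $\Delta^{*}$ is a $K(\pi,1)$, the stalk at $p$ is
\[
(R^{1}j_{*}\HH_{\ZZ})_{p} \;\cong\; H^{1}(\Delta^{*},\HH_{\ZZ}) \;\cong\; \HH_{\ZZ,s_{0}}/(T-I)\HH_{\ZZ,s_{0}},
\]
where $T$ is the local monodromy. After replacing $\Delta^{*}$ by a finite cover $s\mapsto s^{\mu}$ (on which $T^{\mu}$ is unipotent, as guaranteed by admissibility part (II)), the singularity is multiplied by a nonzero integer, so torsion is preserved. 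Hence we may assume $T$ is unipotent, set $N:=\log T$, and replace $(T-I)$ by $N$ everywhere on the rational structure: $(T-I)\HH_{\QQ}=N\HH_{\QQ}$ and $(T-I)\tilde{\nu}\equiv N\tilde{\nu}$ modulo $N\HH_\QQ$, after absorbing the invertible factor $I+N/2+N^{2}/6+\cdots$.

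Next I invoke admissibility. By Definition~\ref{def admissibility}(I)(b), there exists a rational lift $\nu_{\QQ}\in(\tilde{\VV})_{0}$ of $1\in\QQ(0)$ with $N\nu_{\QQ}\in M_{-2}(\psi_{s}\H)_{\QQ}$, where $M_{\bullet}$ is the monodromy weight filtration centered at $-1$. The singularity in $H^{1}(\Delta^{*},\HH_{\QQ})$ is represented precisely by $(T-I)\nu_{\QQ}\in\HH_{\QQ}$ (this lies in $\HH$ because $T$ acts trivially on the quotient $\QQ(0)$). Thus my task reduces to showing $(T-I)\nu_{\QQ}\in N\HH_{\QQ}$.

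The key algebraic input is the inclusion $M_{-2}\psi_{s}\H\subseteq N(\psi_{s}\H)$, which is exactly the fact cited in the text right after Definition~\ref{def admissibility} (``$\mathrm{im}(N_{i})\supseteq M_{-r-1}^{(i)}$''). This follows from the Jacobson--Morozov/Lefschetz decomposition for the $\mathfrak{sl}_{2}$-action attached to $N$ on the pure Hodge structure $\psi_{s}\H$: every graded piece $Gr_{-1-k}^{M}\psi_{s}\H$ for $k\geq 1$ decomposes as $\bigoplus_{j\geq 0} N^{k+j}P_{-1+k+2j}^{M}$, so lies entirely in the image of $N$. Applying this with $r=1$ yields $u\in\psi_{s}\H_{\QQ}$ with $N\nu_{\QQ}=Nu$. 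Then $N(\nu_{\QQ}-u)=0$, so also $N^{k}(\nu_{\QQ}-u)=0$ for all $k\geq 1$, hence $(T-I)(\nu_{\QQ}-u)=0$ and
\[
(T-I)\nu_{\QQ} \;=\; (T-I)u \;\in\; (T-I)\HH_{\QQ}.
\]
Therefore the image of $\sigma_{\ZZ,p}(\nu)$ in $H^{1}(\Delta^{*},\HH_{\QQ})$ vanishes, i.e.\ $\sigma_{\ZZ,p}(\nu)$ is a torsion class.

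The only step that requires genuine input is the inclusion $M_{-2}\psi_{s}\H\subseteq N(\psi_{s}\H)$: everything else is bookkeeping around the definition of admissibility and the comparison between $T-I$ and $N$ on rational cohomology. I expect this $\mathfrak{sl}_{2}$-representation-theoretic step (which is where the purity and polarizability of $\H$ are essential) to be the only place where one must do more than chase definitions, and it is precisely the ingredient that fails for admissible variations of \emph{mixed} Hodge structure, explaining why the finiteness statement is special to the pure weight $-1$ (i.e.\ $r=1$) setting over a curve.
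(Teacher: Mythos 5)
Your proposal is correct and is essentially the paper's own argument: both proofs reduce to producing a rational lift of $1\in\QQ(0)$ annihilated by $T-I$, which forces $(T-I)e_{\ZZ}\in(T-I)H_{\QQ}$ and hence torsion of the class in $H_{\ZZ}/(T-I)H_{\ZZ}$. The only (cosmetic) difference is that you extract this lift from condition (I)(b) of the admissibility definition together with $\mathrm{im}(N)\supseteq M_{-2}$ --- precisely the alternative route flagged in the paper's footnote --- whereas the main text invokes the equivalent $N$-invariant splitting of $W$ from Steenbrink--Zucker.
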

\begin{proof}
By \cite{S1}, an admissible normal function $\nu:S\to J(\mathcal{H})$
is equivalent to an extension \begin{equation}
0 \to \H \to \V \to \ZZ(0) \to 0 \\
\end{equation} in the category of admissible variations of mixed Hodge structure.
By the monodromy theorem for variations of pure Hodge structure, the
local monodromy of $\V$ about any point $p\in\bar{S}-S$ is always
quasi-unipotent. Without loss of generality, let us assume that it
is unipotent and that $T=e^{N}$ is the local monodromy of $\V$ at
$p$ acting on some fixed reference fiber with integral structure
$V_{\ZZ}$. Then, due to the length of the weight filtration $W$,
the existence of the relative weight filtration of $W$ and $N$ is
equivalent to the existence of an $N$-invariant splitting of $W$
\cite[Prop. 2.16]{SZ}. In particular, let $e_{\ZZ}\in V_{\ZZ}$ project
to $1\in Gr_{0}^{W}\cong\ZZ(0)$. Then, by admissibility, there exists
an element $h_{\QQ}\in H_{\QQ}=W_{-1}\cap V_{\QQ}$ such that \[
N(e_{\ZZ}+h_{\QQ})=0\]
and hence $(T-I)(e_{\ZZ}+h_{\QQ})=0$.%
\footnote{Alternatively, one can just derive this from Defn. \eqref{def admissibility}(I).%
} Any two such choices of $e_{\ZZ}$ differ by an element $h_{\ZZ}\in W_{-1}\cap V_{\ZZ}$.
Therefore, an admissible normal function $\nu$ determines a class
\[
[\nu]=[(T-I)e_{\ZZ}]\in\frac{(T-I)(H_{\QQ})}{(T-I)(H_{\ZZ})}\]
 Tracing through the definitions, one finds that the left hand side
of this equation can be identified with $\sigma_{\ZZ,p}(\nu)$ whereas
the right hand side is exactly the torsion subgroup of $(R^{1}j_{*}\HH_{\ZZ})_{p}$. \end{proof}
\begin{defn}
\cite{BFNP} An admissible normal function $\nu$ defined on a Zariski
open subset of $\bar{S}$ is singular on $\bar{S}$ if there exists
a point $p\in\bar{S}$ such that $\text{sing}_{p}(\nu)\neq0$.
\end{defn}
Let $S$ be a complex manifold and $f:X\to S$ be a family of smooth
projective varieties over $S$. Let $\mathcal{H}$ be the variation
of pure Hodge structure of weight $-1$ over $S$ with integral structure
$\HH_{\ZZ}=R^{2p-1}f_{*}\ZZ(p)$. Then, an element $w\in J^{p}(X)\,(=J^{0}(H^{2p-1}(X,\ZZ(p))))$
defines a normal function $\nu_{w}:S\to J(\mathcal{H})$ by the rule
\begin{equation}\label{eqn nf1} 
\nu_w(s)=i_s^*(w) \\
\end{equation} where $i_{s}$ denote inclusion of the fiber $X_{s}=f^{-1}(s)$ into
$X$. More generally, let $H_{\mathcal{D}}^{2p}(X,\ZZ(p))$ denote
the Deligne cohomology of $X$, and recall that we have a short exact
sequence \[
0\to J^{p}(X)\to H_{\mathcal{D}}^{2p}(X,\ZZ(p))\to H^{p,p}(X,\ZZ(p))\to0\]
 Call a Hodge class \[
\zeta\in H^{p,p}(X,\ZZ(p)):=H^{p,p}(X,\CC)\cap H^{2p}(X,\ZZ(p))\]
 primitive with respect to $f$ if $i_{s}^{*}(\zeta)=0$ for all $s\in S$,
and let $H_{{\rm prim}}^{p,p}(X,\ZZ(p))$ denote the group of all
such primitive Hodge classes. Then, by the functoriality of Deligne
cohomology, a choice of lifting $\tilde{\zeta}\in H_{\mathcal{D}}^{2p}(X,\ZZ(p))$
of a primitive Hodge class $\zeta$ determines a map $\nu_{\tilde{\zeta}}:S\to J(\mathcal{H})$.
A short calculation (cf. \cite[Ch. 10]{CMP}) shows that $\nu_{\tilde{\zeta}}$
is a (horizontal) normal function over $S$. Furthermore, in the algebraic
setting (i.e. $X,S,f$ are algebraic), $\nu_{\tilde{\zeta}}$ is an
admissible normal function \cite{S1}. Let $\ANF(S,\mathcal{H})$
denote the group of admissible normal functions with underlying variation
of Hodge structure $\mathcal{H}$. By abuse of notation, let $J^{p}(X)\subset\ANF(S,\mathcal{H})$
denote the image of the intermediate Jacobian $J^{p}(X)$ in $\ANF(S,\mathcal{H})$
under the map $w\mapsto\nu_{w}$. Then, since any two lifts $\tilde{\zeta}$
of $\zeta$ to Deligne cohomology differ by an element of the intermediate
Jacobian $J^{p}(X)$, it follows that we have a well-defined map \begin{equation}\label{eqn nf2}
AJ: H^{p,p}_{prim}(X,\ZZ(p)) \to \ANF (S,\H)/J^p(X). \\
\end{equation}
\begin{rem}
We are able to drop the notation $NF(S,\H)_{\bar{S}}^{ad}$ used in
$\S2$, because in the global algebraic case it can be shown that
admissibility is independent of the choice of compactification $\bar{S}$.
\end{rem}

\subsection{The Main Theorem}

Returning to the program of Griffiths and Green, let $X$ be a smooth
projective variety of dimension $2m$ and $L\to X$ be a very ample
line bundle. Let $\bar{P}=|L|$ and \begin{equation}\label{eqn incidence}
\X = \left\{ (x,s)\in X\times \bar{P} \mid s(x)=0 \right\} \\
\end{equation}be the incidence variety associated to the pair $(X,L)$. Let $\pi:\mathcal{X}\to\bar{P}$
denote projection on the second factor, and let $\hat{X}\subset\bar{P}$
denote the dual variety of $X$ (i.e. the points $s\in\bar{P}$ such
that $X_{s}=\pi^{-1}(s)$ is singular). Let $\mathcal{H}$ be the
variation of Hodge structure of weight $-1$ over $P=\bar{P}-\hat{X}$
attached to the local system $R^{2m-1}\pi_{*}^{sm}\ZZ(m)$.

For a pair $(X,L)$ as above, an integral Hodge class $\zeta$ of
type $(m,m)$ on $X$ is primitive with respect to $\pi^{sm}$ if
and only if it is primitive in the usual sense of being annihilated
by cup product with $c_{1}(L)$. Let $H_{{\rm prim}}^{m,m}(X,\ZZ(m))$
denote the group of all such primitive Hodge classes, and note that
$H_{{\rm prim}}^{m,m}(X,\ZZ(m))$ is unchanged upon replacing $L$
by $L^{\otimes d}$ for $d>0$. Given $\zeta\in H_{{\rm prim}}^{m,m}(X,\ZZ(m))$
let \[
\nu_{\zeta}=\AJ(\zeta)\in\ANF(P,\mathcal{H})/J^{m}(X)\]
 be the associated normal function (\ref{eqn nf2}).
\begin{lem}
If $\nu_{w}:P\to J(\mathcal{H})$ is the normal function (\ref{eqn nf1})
associated to an element $w\in J^{m}(X)$ then $\sing_{p}(\nu_{w})=0$
at every point $p\in\hat{X}$. 
\end{lem}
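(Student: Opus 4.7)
The plan is to prove the stronger statement that the topological invariant $[\nu_w] := \partial(\nu_w) \in H^1(P, \HH_\ZZ)$ is already zero over all of $P$. This suffices because, by the very definition $\sigma_{\ZZ,p}(\nu_w) = \varinjlim_{p \in U} \mathrm{cl}(\nu_w|_{U \cap P})$, each $\mathrm{cl}(\nu_w|_{U \cap P})$ is the image of $[\nu_w]$ under restriction, so vanishing of $[\nu_w]$ forces vanishing of $\sigma_{\ZZ,p}(\nu_w)$ and a fortiori of $\sing_p(\nu_w)$, for every $p \in \bar P$---whether in $P$ or in $\hat X$.

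To trivialize $[\nu_w]$ it is enough to produce a single-valued lift of $\nu_w$ to $H^0(P, \H/\F^0)$. Pick any representative $\tilde w \in H^{2m-1}(X, \CC(m))$ of the Jacobian class $w$. Because $X$ itself does not vary with $s$, the assignment $s \mapsto i_s^* \tilde w$ defines a flat section of $R^{2m-1}\pi^{sm}_* \CC(m) = \HH_\CC$; in Leray-theoretic terms it is the image of $q^*\tilde w$ (with $q\colon\X^{sm}\to X$ the second projection from the incidence variety \eqref{eqn incidence}) under the edge map $H^{2m-1}(\X^{sm}, \CC(m)) \to H^0(P, \HH_\CC)$. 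Reducing modulo $\F^0$---legitimate since $i_s^*$ respects the Hodge filtration, so the $\F^0 H^{2m-1}(X, \CC(m))$-ambiguity in $\tilde w$ is killed---yields the desired section $\tilde\nu_w \in H^0(P, \H/\F^0)$ projecting to $\nu_w$.

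Feeding $\tilde\nu_w$ into the long exact cohomology sequence attached to $0 \to \HH_\ZZ \to \H/\F^0 \to J(\H) \to 0$ gives $\partial(\nu_w) = 0$, as required. In short, the proof collapses to one observation: a class pulled back from the total space $X$ automatically gives rise to a flat, globally defined lift on the whole parameter space $P$, killing the topological invariant universally. There is no real obstacle; the only inputs are the naturality of the Hodge filtration under the smooth closed embeddings $i_s$ and the functoriality of the connecting map $\partial$ with respect to open restriction in the base.
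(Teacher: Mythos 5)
Your proposal is correct, and it is the argument the paper leaves implicit (the lemma is stated without proof; the surrounding text only observes that passage to $\H_{van}$ annihilates $J^{m}(X)$): a class $w\in J^{m}(X)$ pulled back fibrewise admits the global flat lift $s\mapsto i_{s}^{*}\tilde{w}$ in $H^{0}(P,\H/\F^{0})$, so the connecting homomorphism kills $\nu_{w}$ already in $H^{1}(P,\HH_{\ZZ})$ and every local restriction, hence every $\sing_{p}$, vanishes. This proves the slightly stronger statement that $\nu_{w}$ is a section of the fixed part with trivial topological invariant, which is exactly what is needed for $\overline{\sing_{p}}$ to be well defined on $\ANF(P,\H)/J^{m}(X)$.
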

Accordingly, for any point $p\in\hat{X}$ we have a well defined map
\[
\overline{\text{sing}_{p}}:\ANF(P,\mathcal{H})/J^{m}(X)\to(R^{1}j_{*}\HH_{\QQ})_{p}\]
 which sends the element $[\nu]\in\ANF(P,\mathcal{H})/J^{m}(X)$ to
$\sing_{p}(\nu)$. In keeping with our prior definition, we say that
$\nu_{\zeta}$ is singular on $\bar{P}$ if there exists a point $p\in\hat{X}$
such that $\sing_{p}(\nu)\neq0$.
\begin{conjecture}
\label{conj HCv2}\cite{GG}\cite{BFNP} Let $L$ be a very ample
line bundle on a smooth projective variety $X$ of dimension $2m$.
Then, for every non-torsion class $\zeta$ in $H_{{\rm prim}}^{m,m}(X,\ZZ(m))$
there exists an integer $d>0$ such that $\AJ(\zeta)$ is singular
on $\bar{P}=|L^{\otimes d}|$. \end{conjecture}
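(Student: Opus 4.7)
By \cite{BFNP} the conjecture is equivalent to Conjecture \ref{conj de Hodge}, so any complete unconditional proof would establish the Hodge conjecture itself; the plan is therefore to separate the statement into two implications, carry out the formal one, and isolate the essential geometric content in the other. Fix $X$, $L$, and a non-torsion primitive class $\zeta \in H^{m,m}_{{\rm prim}}(X,\ZZ(m))$. For each $d$ set $\bar P_d := |L^{\otimes d}|$, let $\hat X_d \subset \bar P_d$ be the dual variety, and denote by $\nu_\zeta^{(d)} \in \ANF(\bar P_d \setminus \hat X_d, \H^{(d)})/J^m(X)$ the admissible normal function attached to any Deligne lift $\tilde\zeta$ of $\zeta$.

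\textbf{Reduction to ordinary double points.} The first step exploits R.\ Thomas's observation (the starting point of \cite{BFNP}) that for $d$ sufficiently large the smooth locus of $\hat X_d$ is Zariski dense in $\hat X_d$ and parametrizes hyperplane sections $X_p$ with exactly one ordinary double point. At such a $p$ the local monodromy of $\H^{(d)}$ transverse to $\hat X_d$ is a Picard--Lefschetz transvection along a single vanishing cycle $\delta_p \in H^{2m-1}(X_{s_0},\QQ(m))$, the limit mixed Hodge structure is elementary, and the target $(R^1 j_*\HH_\QQ)_p$ for $\mathrm{sing}_p$ becomes one-dimensional. A direct residue computation then identifies $\mathrm{sing}_p(\nu_\zeta^{(d)})$ with a rational intersection pairing between $\zeta$ and the vanishing thimble at $p$. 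Thus the conjecture reduces to showing that for $d \gg 0$ some ODP section $X_p$ yields a nonzero such pairing.

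\textbf{The two implications.} One direction is essentially formal: if $\zeta = [Z]$ for some $Z \in Z^m(X)$, Bertini applied to $|L^{\otimes d}|$ for $d \gg 0$ produces ODP sections whose node lies on $Z$ and meets it transversally, and the pairing at the node is computed locally to be a nonzero integer. The substantive direction is the converse: assuming $\mathrm{sing}_p(\nu_\zeta^{(d)}) = 0$ at every $p$ and every $d$, deduce that $\zeta$ is torsion. My plan here is to combine Schnell's algebraicity of the zero-locus of $\nu_\zeta^{(d)}$ (\S 4; \cite{BP1,BP2,BP3,S5}) with the result of \S 5 that a singularity-free admissible normal function extends globally as a section of the N\'eron model $\J_e$ over all of $\bar P_d$. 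Such extensions, as $d$ varies, should be compatible with the Veronese-type embeddings $|L^{\otimes d}| \hookrightarrow |L^{\otimes d + d'}|$, and I would argue that this compatibility across the entire tower of linear systems forces the underlying VMHS extension class to split, hence $\zeta$ to be torsion.

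\textbf{Main obstacle.} The splitting step in this last argument is where the genuine difficulty lies. A splitting of the VMHS extension $0 \to \H^{(d)} \to \V^{(d)} \to \ZZ(0) \to 0$ over $\bar P_d \setminus \hat X_d$ provides, via the fixed part of the underlying VHS (cf.\ end of \S 1.6), a flat Hodge-theoretic lift of $\zeta$ — and extracting an algebraic cycle from such a lift is exactly the content of the Hodge conjecture. Without a genuinely new geometric input (for instance a structure theorem classifying which configurations of vanishing thimbles on very ample sections can simultaneously annihilate a given primitive Hodge class), the argument cannot escape the circularity noted in \cite{BFNP}. A realistic partial goal is therefore to establish the conjecture on classes of $X$ for which the Hodge conjecture is known — abelian fourfolds of CM or Weil type, hyperk\"ahler fourfolds, or Shioda-type hypersurfaces as in Example \ref{ex trivJH2} — and to use the explicit LMHS at ODP points to extract an \emph{effective} lower bound on $d$.
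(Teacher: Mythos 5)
You have not proved the statement, but neither does the paper: this is Conjecture \ref{conj HCv2}, and the paper's only result about it is Theorem \ref{thm GGbnfp}, which establishes its \emph{equivalence} with the Hodge conjecture. Your proposal correctly identifies this and honestly concludes that any unconditional argument would be circular; that conclusion is the right one, and your suggestion to verify the conjecture on classes of varieties where the Hodge conjecture is known is a sensible partial goal. So there is no ``gap'' to repair in the usual sense --- the statement is simply not a theorem.

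That said, your sketch of the two directions of the equivalence drifts from what \cite{GG}, \cite{BFNP} and \cite{Th} actually do, in ways worth flagging. First, the direction ``HC $\implies$ singularity exists'' is not a Bertini argument: knowing $\zeta=[Z]$ and arranging a node of $X_p$ to lie on $Z$ does not by itself produce a nonzero pairing. What is needed is Thomas's theorem \cite{Th} that, granting the Hodge conjecture, $\zeta$ must restrict nontrivially to \emph{some} singular member of $|L^{\otimes d}|$ for $d\gg0$; one then converts this nontrivial restriction into a nonzero value of $\text{sing}_p(\nu_\zeta)$ via the injectivity of $\beta_p$ on $\mathrm{im}(\alpha_p)$ in diagram \eqref{eqn dottedBETAp}, which in turn rests on the perverse weak Lefschetz theorem and the non-vanishing of vanishing cycles ($E_{01}=0$). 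Second, the converse direction in the paper does not pass through N\'eron models, algebraicity of zero loci, or any splitting of the VMHS over the tower of linear systems: from $\text{sing}_p(\nu_\zeta)\neq0$ one deduces via the same diagram that $\zeta$ restricts nontrivially to $X_p$, and then an argument with a resolution $\tilde X_p\to X_p$, induction on even dimension, a Lefschetz pencil on $\tilde X_p$, and a spread/Hilbert-scheme argument produces an algebraic cycle pairing nontrivially with $\zeta$. Your proposed route (global extension over $\bar P_d$ for all $d$ forcing the extension class to split) is not how the equivalence is obtained, and, as you yourself observe, extracting a cycle from such a splitting is again the Hodge conjecture. If you want to salvage a precise unconditional statement from your outline, the correct target is Theorem \ref{thm GGbnfp}, not Conjecture \ref{conj HCv2}.
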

\begin{thm}
\label{thm GGbnfp}\cite{GG}\cite{BFNP}\cite{dCM} Conjecture \eqref{conj HCv2}
holds (for every even dimensional smooth projective variety) if and
only if the Hodge conjecture is true. 
\end{thm}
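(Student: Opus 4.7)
The plan is to prove the equivalence by identifying the singularity map $\sing_{p}(\nu_{\zeta})$ at $p\in\hat{X}$ with a Hodge-theoretic invariant carried by resolutions of strata of $\hat{X}$, following the strategy of \cite{BFNP}. The decomposition theorem \cite{dCM} (in Saito's Hodge-module form) applied to $\pi:\X\to\bar{P}$ expresses the perverse pushforward of the constant Hodge module on $\X$ as a sum of shifted IC complexes on $\bar{P}$ supported on the strata of a Whitney stratification of $\hat{X}$. By the MHM description of \S2.12, and using the lemma of \cite{BFNP} that for classical (i.e.\ $r=1$) normal functions one may replace $R\jmath_{*}$ by $\jmath_{!*}$ without losing singularity data, $\ANF(P,\H)/J^{m}(X)$ maps to $\mathrm{Ext}^{1}_{\MHM(\bar{P})}(\QQ(0),\jmath_{!*}\H)$ and $\sing_{p}(\nu_{\zeta})$ becomes the $(0,0)$-component of the stalk at $p$ of this extension. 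The decomposition then identifies this component with a group of Hodge classes on a smooth projective variety $\tilde{Y}_{p}$ (a resolution of a stratum of $\hat{X}$ through $p$) of dimension strictly less than $2m$.

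For the direction HC $\Rightarrow$ Conjecture \ref{conj HCv2}, suppose $\zeta=[Z]$ for $Z\in Z^{m}(X)$. For $d\gg0$ one may choose $p\in\hat{X}$ so that $X_{p}$ acquires a single ordinary double point at a prescribed $x_{0}\in|Z|$ whose local vanishing sphere meets $Z$ transversally; the Picard--Lefschetz formula \eqref{eqn PL} then computes $\sing_{p}(\nu_{\zeta})$ as the local intersection number of $Z$ with the vanishing cycle, nonzero by construction. Passing to a further multiple of $L$ if needed promotes this to a non-torsion singularity whenever $\zeta$ is non-torsion.

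For the direction Conjecture \ref{conj HCv2} $\Rightarrow$ HC, reduce as in \S3.1 to middle-dimensional Hodge classes on even-dimensional varieties and induct on $\dim X$, assuming HC below dimension $2m$. Given non-torsion primitive $\zeta$, the conjecture yields $d>0$ and $p\in\hat{X}$ with $\sing_{p}(\nu_{\zeta})\neq0$; by the identification of the first paragraph this is a non-zero Hodge class on $\tilde{Y}_{p}$, hence algebraic by the inductive hypothesis. Pushing the resulting cycle forward through the incidence correspondence $\X\to X$ manufactures an algebraic cycle on $X$; the ambiguity modulo $J^{m}(X)$ is absorbed exactly as in the Lefschetz-style argument recalled in \S1.8 (using surjectivity of the class map ENF $\to Hg^{m}(\X)_{prim}$ from \eqref{eqn ses3}), so its class equals $\zeta$.

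The main obstacle is the structural identification in the first paragraph: the replacement of $R\jmath_{*}$ by $\jmath_{!*}$ in the singularity computation is genuinely special to the classical $r=1$ case, depending delicately on the weight calculus in the limit MHS and failing for the higher normal functions of \S2.13. A second, analytic subtlety is that not every formal Hodge class on a stratum is realized as a singularity of an admissible normal function, owing to the obstructions of Saito and Pearlstein noted in \S2.13(iii); ensuring that $d\gg0$ nonetheless exposes \emph{every} primitive Hodge class on \emph{some} stratum --- rather than being trapped by such obstructions in a fixed linear system --- is where the very-ampleness hypothesis does real work, and is the reason the conjecture is stated with freedom to enlarge $d$.
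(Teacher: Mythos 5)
The central identification in your first paragraph is not the one the argument needs, and the error propagates into your ``Conjecture $\Rightarrow$ HC'' direction. The decomposition theorem applied to $\pi:\X\to\bar{P}$ does produce summands supported on strata of $\hat{X}$, but the whole point of Theorems \ref{thm pervWL} and \ref{thm vanishing-cycles} is that for $d\gg0$ those summands either vanish ($E_{ij}=0$ unless $ij=0$, and $E_{01}=0$ once all vanishing cycles are non-vanishing) or contribute nothing to a primitive class; the singularity is carried by $E_{00}=j_{!*}\H$, whose stalk at $p$ is a direct summand of $H^{2m}(X_p,\QQ(m))$ --- the cohomology of the singular \emph{fiber}, not of a resolution of a stratum of the dual variety (whose dimension, incidentally, need not be less than $2m$). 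What one actually establishes is the commutativity of diagram \eqref{eqn dottedBETAp}, i.e.\ $\sing_p(\nu_\zeta)=\beta_p(\alpha_p(\zeta))$ with $\alpha_p$ the restriction $\zeta\mapsto\zeta|_{X_p}$, together with injectivity of $\beta_p$ on $\mathrm{im}(\alpha_p)$; this injectivity, via $E_{01}=0$, is where the freedom to replace $L$ by $L^{\otimes d}$ does its real work --- not, as you suggest, in evading the Saito--Pearlstein obstructions, which concern surjectivity of $\sing_p$ onto its target and play no role in the equivalence being proved. A nonzero singularity therefore tells you exactly one thing: $\zeta|_{X_p}\neq0$.

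Given that, your mechanism for concluding HC --- realize the singularity class algebraically by induction and push it forward to a cycle on $X$ whose class \emph{is} $\zeta$ --- does not work. The surjectivity of $\text{ENF}\to Hg^m(\X)_{prim}$ in \eqref{eqn ses3} concerns topological invariants of normal functions (Zucker); converting a class back into a cycle requires Poincar\'e existence / Jacobi inversion, which fails for $m>1$ (Remark \ref{rem AJ}(ii)) --- precisely the dead end described in \S1.8. The correct route uses the cup-product criterion: by Poincar\'e duality and the Hodge--Riemann relations, HC for $X$ is equivalent to every rational $(m,m)$ class cupping nontrivially with some algebraic cycle class. From $\zeta|_{X_p}\neq0$ one passes to a resolution $f:\tilde{X}_p\to X_p$, shows $g^*\zeta\neq0$ by a weight argument, finds $\xi\in\mathit{Hg}^{m-1}(\tilde{X}_p)$ with $\xi\cup g^*\zeta\neq0$, and --- since $\tilde{X}_p$ has odd dimension $2m-1$ --- applies the inductive hypothesis not to $\tilde{X}_p$ but to a general hyperplane section of even dimension $2m-2$, spreading the resulting cycle over a pencil via weak Lefschetz to represent $\xi$; then $\zeta\cup g_*[W_j]\neq0$ for some algebraic $W_j$, which is what HC demands. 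Your ``HC $\Rightarrow$ Conjecture'' direction also needs repair: $Z_t$ and the vanishing sphere have real dimensions $2m-2$ and $2m-1$ inside the $(4m-2)$-dimensional $X_t$, so their ``local intersection number'' is undefined; what pairs with the vanishing cycle is the bounding membrane $\Gamma_t$, and arranging that this pairing (equivalently $\zeta|_{X_p}$) be nonzero for some nodal section is the genuine content of Thomas's theorem \cite{Th}, which is what the paper invokes at this step.
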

To outline the proof of Theorem \ref{thm GGbnfp}, observe that for
any point $p\in\hat{X}$, we have the diagram \begin{equation}\label{eqn dottedBETAp}\xymatrix{H^{m,m}_{prim}(X,\ZZ(m)) \ar [r]^{AJ \mspace{20mu}} \ar [d]_{\alpha_p} & \ANF(P,\H)/J^m(X) \ar [d]_{\overline{\text{sing}_p}} \\ H^{2m}(X_p,\QQ(m)) \ar @{..>} [r]^{\beta_p}_{??} & (R^1j_*\HH_{\QQ})_p } \\
\end{equation} where $\alpha_{p}:H_{prim}^{m,m}(X,\ZZ(m))\to H^{2m}(X_{p},\QQ(m))$
is the restriction map.

Suppose that there exists a map \begin{equation}
\beta_p: H^{2m}(X_p,\QQ(m)) \to (R^1j_*\HH_{\QQ})_p \\
\end{equation} which makes the diagram (\ref{eqn dottedBETAp}) commute, and that
after replacing $L$ by $L^{\otimes d}$ for some $d>0$ the restriction
of $\beta_{p}$ to the image of $\alpha_{p}$ is injective. Then,
existence of a point $p\in\hat{X}$ such that $\sing_{p}(\nu_{\zeta})\neq0$
implies that the Hodge class $\zeta$ restricts non-trivially to $X_{p}$.
Now recall that by Poincar?duality and the Hodge-Riemann bilinear
relations, the Hodge conjecture for a smooth projective variety $Y$
is equivalent to the statement that for every rational $(q,q)$ class
on $Y$ there exists an algebraic cycle $W$ of dimension $2q$ on
$Y$ such that $\gamma\cup[W]\neq0$. 

Let $f:\tilde{X}_{p}\to X_{p}$ be a resolution of singularities of
$X_{p}$ and $g=i\circ f$ where $i:X_{p}\to X$ is the inclusion
map. By a weight argument $g^{*}(\zeta)\neq0$, and so there exists
a class $\xi\in\mathit{Hg}^{m-1}(\tilde{X}_{p})$ with $\xi\cup\zeta\neq0$.
Embedding $\tilde{X}_{p}$ in some projective space, and inducing
on \emph{even} dimension, we can assume that the Hodge conjecture
holds for a general hyperplane section $\mathcal{Y}\overset{\mathcal{I}}{\hookrightarrow}\tilde{X}_{p}$.
This yields an algebraic cycle $\mathcal{W}$ on $\mathcal{Y}$ with
$[\mathcal{W}]=\mathcal{I}^{*}(\xi)$. Varying $\mathcal{Y}$ in a
pencil, and using weak Lefschetz, $\mathcal{W}$ traces out%
\footnote{more precisely, one uses here a spread or Hilbert scheme argument,
cf. for example the beginning of Chap. 14 of \cite{Le1}.%
} a cycle $W=\sum_{j}\, a_{j}W_{j}$ on $\tilde{X}_{p}$ with $[W]=\xi$,
so that $g^{*}(\zeta)\cup[W]\neq0$; in particular, $\zeta\cup g_{*}[W_{j}]\neq0$
for some $j$. 

Conversely, by the work of Thomas \cite{Th}, if the Hodge conjecture
is true then the Hodge class $\zeta$ must restrict non-trivially
to some singular hyperplane section of $X$ (again for some $L^{\otimes d}$
for $d$ sufficiently large). Now one uses the injectivity of $\beta_{p}$
on $\text{im}(\alpha_{p})$ to conclude that $\nu_{\zeta}$ has a
singularity.
\begin{example}
Let $X\subset\PP^{3}$ be a smooth projective surface. For every $\zeta\in H_{prim}^{1,1}(X,\ZZ(1))$,
there is a reducible hypersurface section $X_{p}\subset X$ and component
curve $W$ of $X_{p}$ such that $\deg(\zeta|_{W})\neq0$. (Note that
$\deg(\zeta|_{X_{p}})$ is necessarily $0$.) As the reader should
check, this follows easily from Lefschetz (1,1). Moreover (writing
$d$ for the degree of $X_{p}$), $p$ is a point in a codimension
$\geq2$ substratum $S'$ of $\hat{X}\subset\PP H^{0}(\mathcal{O}(d))$
(since fibers over codim. 1 substrata are irreducible), and $\text{sing}_{q}(\nu_{\zeta})\neq0$
$\forall q\in S'$.\end{example}
\begin{rem}
\label{rem BOUND}There is a central geometric issue lurking in Conj.
\ref{conj HCv2}: \emph{if the HC holds, and $L=\mathcal{O}_{X}(1)$}
(for some projective embedding of $X$)\emph{,} \emph{is there some
minimum $d_{0}$ -- uniform in some sense -- for which $d\geq d_{0}$
implies that $\nu_{\zeta}$ is singular?} In \cite{GG} it is established
that, at best, such a $d_{0}$ could only be uniform in moduli of
the pair $(X,\zeta)$. (For example, in the case $\dim(X)=2$, $d_{0}$
is of the form $C\times|\zeta\cdot\zeta|$, for $C$ a constant. Since
the self-intersection numbers of integral classes becoming Hodge in
various Noether-Lefschetz loci increase without bound, there is certainly
not any $d_{0}$ uniform in moduli of $X$.) Whether there is some
such {}``lower bound'' of this form remains an open question in
higher dimension.
\end{rem}

\subsection{Normal functions and intersection cohomology}

The construction of the map $\beta_{p}$ depends on the decomposition
theorem of Beilinson-Bernstein-Deligne \cite{BBD} and Morihiko Saito's
theory of mixed Hodge modules \cite{S4}. As first step in this direction,
recall \cite{CKS2} that that if $\mathcal{H}$ is a variation of
pure Hodge structure of weight $k$ defined on the complement $S=\bar{S}-D$
of a normal crossing divisor on a smooth projective variety $\bar{S}$
then \[
H_{(2)}^{\ell}(S,\HH_{\RR})\cong\IH^{\ell}(\bar{S},\HH_{\RR})\]
 where the left hand side is $L^{2}$-cohomology and the right hand
side is intersection cohomology. Furthermore, via this isomorphism
$\IH^{\ell}(\bar{S},\HH_{\CC})$ inherits a canonical Hodge structure
of weight $k+\ell$. 
\begin{rem}
If $Y$ is a complex algebraic variety then $\MHM(Y)$ is the category
of mixed Hodge modules on $Y$. The category $\MHM(Y)$ comes equipped
with a functor \[
rat:\MHM(Y)\to{\rm Perv}(Y)\]
 to the category of perverse sheaves on $Y$. If $Y$ is smooth and
$\V$ is a variation of mixed Hodge structure on $Y$ then $\V[d_{Y}]$
is a mixed Hodge module on $Y$, and $rat(\V[d_{Y}])\cong\VV[d_{Y}]$
is just the underlying local system $\VV$ shifted into degree $-d_{Y}$.
\end{rem}
If $Y^{\circ}$ is a Zariski open subset of $Y$ and $\mathcal{P}$
is a perverse sheaf on $Y^{\circ}$ then \[
\IH^{\ell}(Y,\mathcal{P})=\HH^{\ell-d_{Y}}(Y,j_{!*}\mathcal{P}[d_{Y}])\]
 where $j_{!*}$ is the middle extension functor \cite{BBD} associated
to the inclusion map $j:Y^{\circ}\to Y$. Likewise, for any point
$y\in Y$, the local intersection cohomology of $\mathcal{P}$ at
$y$ is defined to be \[
\IH^{\ell}(Y,\mathcal{P})_{y}=\HH^{k-d_{Y}}(\{y\},i^{*}j_{!*}\mathcal{P}[d_{Y}])\]
 where $i:\{y\}\to Y$ is the inclusion map. If $\mathcal{P}$ underlies
a MHM, the theory of MHM puts natural MHS on these groups, which in
particular is how the pure HS on $\IH^{\ell}(\bar{S},\HH_{\CC})$
comes about.
\begin{thm}
\label{thm BFNP2.11}\cite[Thm. 2.11]{BFNP} Let $\bar{S}$ be a smooth
projective variety and $\mathcal{H}$ be a variation of pure Hodge
structure of weight $-1$ on a Zariski open subset $S\subset\bar{S}$.
Then, the group homomorphism \[
{\rm cl}:\ANF(S,\mathcal{H})\to H^{1}(S,\HH_{\QQ})\]
 factors through $\IH^{1}(\bar{S},\HH_{\QQ})$.\end{thm}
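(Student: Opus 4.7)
The plan is to promote the extension of admissible variations of MHS that classifies $\nu$ to an extension of mixed Hodge modules on the compactification $\bar{S}$, apply Saito's intermediate extension $\jmath_{!*}$, and then read off the factorization by taking hypercohomology. Throughout, $d=\dim\bar{S}$.

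First, using the $r=1$ case of the equivalences recalled in $\S 2.12$ (i.e.\ $\ANF(S,\H)\otimes\QQ\cong\text{Ext}^1_{VMHS(S)^{ad}_{\bar{S}}}(\QQ(0),\H)$ together with Saito's realization $VMHS(S)^{ad}_{\bar{S}}\simeq\MHM(S)^{ps}_{\bar{S}}$), the normal function $\nu$ corresponds to a short exact sequence of polarizable mixed Hodge modules on $S$
$$0\to\H[d]\to\V[d]\to\QQ_S(0)[d]\to 0.$$
Applying $R\jmath_{*}$ and taking the $(1-d)$-th hypercohomology on $\bar{S}$ produces the connecting map whose value at $1\in H^0(\bar{S},\QQ)$ is, by Saito's comparison, precisely ${\rm cl}(\nu)\in H^1(S,\HH_\QQ)$. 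So everything reduces to showing this connecting map factors through $\IH^1(\bar{S},\HH_\QQ)=\HH^{1-d}(\bar{S},\jmath_{!*}\H[d])$.

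The key step is to verify that $\jmath_{!*}$, applied to the above sequence, is again short exact in the abelian category $\MHM(\bar{S})$, i.e.
$$0\to\jmath_{!*}\H[d]\to\jmath_{!*}\V[d]\to\QQ_{\bar{S}}(0)[d]\to 0.$$
In general $\jmath_{!*}$ is neither left nor right exact on $\MHM$, but here the weight hypothesis saves us: $\H$ is pure of weight $-1$ and $\QQ_S(0)$ is pure of weight $0$, so any kernel or cokernel in the middle could only be a sub-Hodge-module supported on $D=\bar{S}\setminus S$. Such a boundary piece would have to arise as a sub-quotient of both the nearby cycles $\psi_D\H$ (weights $\leq -1$) and a boundary contribution from $\QQ_{\bar{S}}(0)$ (weight $0$); the absence of morphisms between pure Hodge modules of different weights rules this out. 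This is essentially the lemma referenced as ``a lemma in \cite{BFNP}'', and it is the step that genuinely uses weight $-1$ (and hence fails for higher normal functions, $r>1$).

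Granting exactness, taking the long exact sequence in hypercohomology yields a connecting map
$$\partial:\;H^0(\bar{S},\QQ)\longrightarrow \HH^{1-d}(\bar{S},\jmath_{!*}\H[d])=\IH^1(\bar{S},\HH_\QQ),$$
and the canonical perverse-sheaf morphism $\jmath_{!*}\H[d]\hookrightarrow R\jmath_{*}\H[d]$ (injective in the perverse t-structure, cone supported on $D$) induces the natural map $\IH^1(\bar{S},\HH_\QQ)\to H^1(S,\HH_\QQ)$. By naturality of the connecting homomorphism applied to the evident commutative square of short exact sequences (the $\jmath_{!*}$ one mapping to the $R\jmath_{*}$ one), the image of $\partial(1)$ in $H^1(S,\HH_\QQ)$ is exactly ${\rm cl}(\nu)$. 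This exhibits the desired factorization. The main obstacle, as indicated, is pinning down the weight-theoretic exactness of $\jmath_{!*}$ on this particular extension; once that is in hand, the rest is formal manipulation of the six-term exact sequences in Saito's formalism.
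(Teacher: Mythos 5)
Your proposal follows essentially the same route as the paper's own proof: represent $\nu$ as an extension of $\QQ(0)$ by $\H$ in admissible VMHS/MHM, show that $\jmath_{!*}$ preserves this short exact sequence because the two weight-graded quotients are adjacent (this is exactly the content of the cited \cite[Lemma 2.18]{BFNP}, for which your weight heuristic is the right idea), and then read off the factorization through $\IH^{1}(\bar{S},\HH_{\QQ})$ from the long exact sequence in hypercohomology. The argument is correct as a sketch at the same level of detail as the paper's.
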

\begin{proof}
(Sketch) Let $\nu\in\ANF(S,\mathcal{H})$ be represented by an extension
\[
0\to\mathcal{H}\to\V\to\ZZ(0)\to0\]
in the category of admissible variations of mixed Hodge structure
on $S$. Let $j:S\to\bar{S}$ be the inclusion map. Then, because
$\V$ has only two non-trivial weight graded quotients which are adjacent,
it follows by \cite[Lemma 2.18]{BFNP} that \[
0\to j_{!*}\mathcal{H}[d_{S}]\to j_{!*}\mathcal{V}[d_{S}]\to\QQ(0)[d_{S}]\to0\]
 is exact in $\MHM(\bar{S})$.\end{proof}
\begin{rem}
In this particular context, $j_{!*}\mathcal{V}[d_{S}]$ can be described
as the unique prolongation of $\mathcal{V}[d_{S}]$ to $\bar{S}$
with no non-trivial sub or quotient object supported on the essential
image of the functor $i:\MHM(Z)\to\MHM(\bar{S})$ where $Z=\bar{S}-S$
and $i:Z\to\bar{S}$ is the inclusion map.
\end{rem}
In the local case of an admissible normal function on a product of
punctured polydisks $(\Delta^{*})^{r}$ with unipotent monodromy,
the fact that $\sing_{0}(\nu)$ (where $0$ is the origin of $\Delta^{r}\supseteq(\Delta^{*})^{r}$)
factors through the local intersection cohomology groups can be seen
as follows: Such a normal function $\nu$ gives a short exact sequence
of local systems \[
0\to\HH_{\QQ}\to\VV_{\QQ}\to\QQ(0)\to0\]
 over $(\Delta^{*})^{r}$. Fix a reference fiber $V_{\QQ}$ of $\VV_{\QQ}$
and let $N_{j}\in\Hom(V_{\QQ},V_{\QQ})$ denote the monodromy logarithm
of $\VV_{\QQ}$ about the $j^{\text{th}}$ punctured disk. Then \cite{CKS2},
we get a complex of finite dimensional vector spaces \[
B^{p}(V_{\QQ})=\bigoplus_{i_{1}<i_{2}<\cdots<i_{p}}\, N_{i_{1}}N_{i_{2}}\cdots N_{i_{p}}(V_{\QQ})\]
 with differential $d$ which acts on the summands of $B^{p}(V_{\QQ})$
by the rule \[
N_{i_{1}}\cdots\hat{N}_{i_{\ell}}\cdots N_{i_{p+1}}(V_{\QQ})\stackrel{(-1)^{\ell-1}N_{i_{\ell}}}{\to}N_{i_{1}}\cdots N_{i_{\ell}}\cdots N_{i_{p+1}}(V_{\QQ})\]
(and taking the sum over all insertions). Let $B^{*}(H_{\QQ})$ and
$B^{*}(\QQ(0))$ denote the analogous complexes attached to the local
systems $\HH_{\QQ}$ and $\QQ(0)$. By \cite{GGM}, the cohomology
of the complex $B^{*}(H_{\QQ})$ computes the local intersection cohomology
of $\HH_{\QQ}$. In particular, since the complexes $B^{*}(\QQ(0))$
and $B^{*}(H_{\QQ})$ sit inside the standard Koszul complexes which
compute the ordinary cohomology of $\QQ(0)$ and $H_{\QQ}$, in order
show that $\sing_{0}$ factors through $\IH^{1}(\HH_{\QQ})$ it is
sufficient to show that $\pd{\rm cl}(\nu)\in H^{1}((\Delta^{*})^{r},\HH_{\QQ})$
is representable by an element of $B^{1}(H_{\QQ})$. Indeed, let $v$
be an element of $V_{\QQ}$ which maps to $1\in\QQ(0)$. Then, \[
\pd\,{\rm cl}(\nu)=\pd1=[(N_{1}(v),\cdots,N_{r}(v))]\]
 By admissibility and the short length of the weight filtration, for
each $j$ there exists an element $h_{j}\in H_{\QQ}$ such that $N_{j}(h_{j})=N_{j}(v)$,
which is exactly the condition that \[
(N_{1}(v),\dots,N_{r}(v))\in B^{1}(V_{\QQ}).\]

\begin{thm}
\cite[Thm. 2.11]{BFNP} Under the hypothesis of Theorem \eqref{thm BFNP2.11},
for any point $p\in\bar{S}$ the group homomorphism $\sing_{p}:\ANF(S,\mathcal{H})\to(R^{1}j_{*}\HH_{\QQ})_{p}$
factors through the local intersection cohomology group $\IH^{1}(\HH_{\QQ})_{p}$.
\end{thm}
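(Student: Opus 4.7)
The strategy is to localize the proof of Theorem~\ref{thm BFNP2.11} at the point $p$. Since $\sing_{p}(\nu)=\varinjlim_{p\in U}{\rm cl}(\nu|_{U\cap S})$ by definition, it is enough to construct a stalk-level factorization: the natural morphism $j_{!*}\H[d_{\bar{S}}]\hookrightarrow Rj_{*}\H[d_{\bar{S}}]$ in $D^{b}\MHM(\bar{S})$ induces, upon applying $i_{p}^{*}$, a canonical map $\IH^{1}(\HH_{\QQ})_{p}\to (R^{1}j_{*}\HH_{\QQ})_{p}$, and I claim that $\sing_{p}$ factors through this.

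The main step is to apply $i_{p}^{*}$ to the short exact sequence
\[
0 \to j_{!*}\H[d_{\bar{S}}] \to j_{!*}\V[d_{\bar{S}}] \to \QQ(0)[d_{\bar{S}}] \to 0
\]
in $\MHM(\bar{S})$ produced during the proof of Theorem~\ref{thm BFNP2.11}. The associated long exact sequence of cohomology MHS yields a connecting homomorphism
\[
\pd:\QQ(0)=\HH^{-d_{\bar{S}}}(\{p\},i_{p}^{*}\QQ(0)[d_{\bar{S}}])\longrightarrow\HH^{1-d_{\bar{S}}}(\{p\},i_{p}^{*}j_{!*}\H[d_{\bar{S}}])=\IH^{1}(\HH_{\QQ})_{p},
\]
and the candidate factorization of $\sing_{p}$ is $\nu\mapsto\pd(1)$. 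A diagram chase comparing this connecting map to the analogous one built from $Rj_{*}$ in place of $j_{!*}$ (which by definition recovers $\sing_{p}$) then identifies its image in $(R^{1}j_{*}\HH_{\QQ})_{p}$ with $\sing_{p}(\nu)$. For an explicit check in the local NCD model $\bar{S}=\Delta^{r}$, $p=\underline{0}$, $D=\{s_{1}\cdots s_{r}=0\}$ with unipotent monodromy, I would reuse the Galligo-Granger-Maisonobe calculation sketched just before the theorem: admissibility (in the form of lifts $h_{j}\in H_{\QQ}$ with $N_{j}(h_{j})=N_{j}(v)$) shows that the cocycle $(N_{1}(v),\dots,N_{r}(v))$ representing $\pd\,{\rm cl}(\nu)$ already lies in the subcomplex $B^{\bullet}(H_{\QQ})\subset B^{\bullet}(V_{\QQ})$, which by \cite{GGM} computes $\IH^{\bullet}(\HH_{\QQ})_{p}$. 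The general case reduces to this by resolving $\bar{S}\setminus S$ near $p$ to a normal crossing divisor (noting that $j_{!*}$ and $\IH^{*}$ are unaffected by such boundary modifications), and by passing to the finite ramified cover of Definition~\ref{def admissibility}(II) to trivialize the non-unipotent part of local monodromy.

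The principal obstacle is the bookkeeping needed to verify that the factorization defined at the abstract $\MHM$-theoretic level is both canonical (independent of the chosen resolution and of the lift $v\in V_{\QQ}$ of $1\in\QQ(0)$) and compatible with the direct-limit definition of $\sing_{p}$. Both of these facts should follow from the defining characterization of $j_{!*}\H[d_{\bar{S}}]$ as the unique extension of $\H[d_{\bar{S}}]$ to $\bar{S}$ with no nontrivial sub- or quotient object supported on $\bar{S}\setminus S$: any two candidate factorizations would differ by a morphism out of $\QQ(0)[d_{\bar{S}}]$ whose image is supported on the boundary, and such morphisms are ruled out by minimality together with the functoriality of the connecting map $\pd$.
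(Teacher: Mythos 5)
Your proposal is correct and follows essentially the same route as the paper: the paper also derives the local factorization by applying $i_{p}^{*}$ to the short exact sequence $0\to j_{!*}\mathcal{H}[d_{S}]\to j_{!*}\mathcal{V}[d_{S}]\to\QQ(0)[d_{S}]\to0$ from Theorem \eqref{thm BFNP2.11}, and verifies it explicitly in the local unipotent NCD model by exhibiting the cocycle $(N_{1}(v),\dots,N_{r}(v))$ as an element of the subcomplex $B^{1}(H_{\QQ})$ computing $\IH^{1}(\HH_{\QQ})_{p}$ via \cite{GGM}, using admissibility to produce the $h_{j}$ with $N_{j}(h_{j})=N_{j}(v)$. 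Your additional remarks on canonicity via the minimality characterization of $j_{!*}$ and on reduction to the unipotent NCD case are consistent with, though not spelled out in, the paper's sketch.
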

To continue, we need to pass from Deligne cohomology to absolute Hodge
cohomology. Recall that $\MHM({\rm Spec}(\CC))$ is the category MHS
of graded-polarizable $\QQ$ mixed Hodge structures. Let $\QQ(p)$
denote the Tate object of type $(-p,-p)$ in MHS and $\QQ_{Y}(p)=a_{Y}^{*}\QQ(p)$
where $a_{Y}:Y\to{\rm Spec}(\CC)$ is the structure morphism. Let
$\QQ_{Y}=\QQ_{Y}(0)$.
\begin{defn}
Let $M$ be an object of $\MHM(Y)$. Then, \[
H_{\AH}^{n}(Y,M)=Hom_{\DbMHM}(\QQ_{Y},M[n])\]
 is the absolute Hodge cohomology of $M$.
\end{defn}
The functor $\text{rat}:\MHM(Y)\to{\rm Perv}(Y)$ induces a {}``cycle
class map'' \[
\text{rat}:H_{\AH}^{n}(Y,M)\to\HH^{n}(Y,rat(M))\]
 from the absolute Hodge cohomology of $M$ to the hypercohomology
of $rat(M)$. In the case where $Y$ is smooth and projective, $H_{\AH}^{2p}(Y,\QQ_{Y}(p))$
is the Deligne cohomology group $H_{\mathcal{D}}^{2p}(Y,\QQ(p))$
and $\text{rat}$ is the cycle class map on Deligne cohomology.
\begin{defn}
Let $\bar{S}$ be a smooth projective variety and $\V$ be an admissible
variation of mixed Hodge structure on a Zariski open subset $S$ of
$\bar{S}$. Then, \begin{eqnarray*}
\IH_{\AH}^{n}(\bar{S},\mathcal{V}) & = & \Hom_{\DbMHM(\bar{S})}(\QQ_{\bar{S}}[d_{S}-n],j_{!*}\V[d_{S}])\\
\IH_{\AH}^{n}(\bar{S},\mathcal{V})_{s} & = & \Hom_{\DbMHS}(\QQ[d_{S}-n],i^{*}j_{!*}\V[d_{S}])\end{eqnarray*}
 where $j:S\to\bar{S}$ and $i:\{s\}\to\bar{S}$ are inclusion maps. 
\end{defn}
The following lemma links absolute Hodge cohomology and admissible
normal functions:
\begin{lem}
\cite[Prop. 3.3]{BFNP} Let $\mathcal{H}$ be a variation of pure
Hodge structure of weight $-1$ defined on a Zariski open subset $S$
of a smooth projective variety $\bar{S}$. Then, $\IH_{\AH}^{1}(\bar{S},\H)\cong\ANF(S,\mathcal{H})\otimes\QQ$.
\end{lem}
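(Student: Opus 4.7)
The strategy is to rewrite both sides as $\text{Ext}^{1}$-groups in suitable categories of mixed Hodge modules, then match them via Saito's intermediate extension $j_{!*}$.

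First, by Cor.~2.9 of \cite{BFNP} (recalled in \S2.12),
\[
\ANF(S,\H)\otimes\QQ \;\cong\; \text{Ext}^{1}_{VMHS(S)^{ad}_{\bar S}}(\QQ_S(0),\,\H),
\]
so every element on the right is represented by an admissible extension $0\to\H\to\V\to\QQ_S(0)\to0$. Under the equivalence $VMHS(S)^{ad}_{\bar S}\simeq \MHM(S)^{ps}_{\bar S}$ (via the shift $[d_S]$ together with the MHM formalism), this datum is equivalent to an extension of $\QQ_S(0)[d_S]$ by $\H[d_S]$ in $\MHM(S)$.

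Second, unpacking the definition of absolute Hodge intersection cohomology and using the identity $\Hom_{D^b}(A[-1],B)=\text{Ext}^1(A,B)$ for $A,B$ in the heart,
\[
\IH^{1}_{\AH}(\bar S,\H) \;=\; \Hom_{\DbMHM(\bar S)}\!\bigl(\QQ_{\bar S}[d_S-1],\,j_{!*}\H[d_S]\bigr)
\;\cong\; \text{Ext}^{1}_{\MHM(\bar S)}\!\bigl(\QQ_{\bar S}[d_S],\,j_{!*}\H[d_S]\bigr);
\]
since $\bar S$ is smooth, $\QQ_{\bar S}[d_S]=j_{!*}(\QQ_S(0)[d_S])$. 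I would then construct the comparison map by applying $j_{!*}(-)[d_S]$ to the extension on $S$. By Lemma~2.18 of \cite{BFNP} (already invoked in the sketch of Thm.~\ref{thm BFNP2.11}), the fact that $\V$ has only two nontrivial, adjacent weight-graded quotients forces
\[
0\to j_{!*}\H[d_S]\to j_{!*}\V[d_S]\to \QQ_{\bar S}[d_S]\to 0
\]
to remain exact in $\MHM(\bar S)$. Functoriality then yields a natural homomorphism
\[
\Psi:\ \text{Ext}^{1}_{\MHM(S)^{ps}}\!\bigl(\QQ_S(0)[d_S],\,\H[d_S]\bigr)
\;\longrightarrow\; \text{Ext}^{1}_{\MHM(\bar S)}\!\bigl(\QQ_{\bar S}[d_S],\,j_{!*}\H[d_S]\bigr).
\]

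The remaining task, and the expected main obstacle, is bijectivity of $\Psi$. Injectivity is immediate from $j^{*}j_{!*}=\mathrm{id}$: restricting a pushed-forward extension to $S$ returns the original. Surjectivity is the subtler step. Given an extension with middle term $M\in\MHM(\bar S)$, one must show that $M=j_{!*}(j^{*}M)$; equivalently, that $M$ admits no nonzero sub- or quotient object supported on $Z:=\bar S\setminus S$. If $K\hookrightarrow M$ were such a subobject, then $K$ would map injectively into $\QQ_{\bar S}[d_S]$, since $j_{!*}\H[d_S]$ contains no subobject supported on $Z$ (it is itself a middle extension); but the same property for $\QQ_{\bar S}[d_S]$ forces $K=0$. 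A dual argument handles quotients. Hence $M=j_{!*}(j^{*}M)$, and writing $j^{*}M\cong\V[d_S]$ under the equivalence $\MHM(S)^{ps}_{\bar S}\simeq VMHS(S)^{ad}_{\bar S}$ produces an admissible variation $\V$ whose class is sent by $\Psi$ to $[M]$, providing the required inverse and completing the isomorphism.
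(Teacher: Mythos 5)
Your proof is correct, and it is essentially the argument behind the cited \cite[Prop. 3.3]{BFNP}: the survey itself states the lemma without proof, and you correctly assemble exactly the ingredients it recalls elsewhere (Cor.~2.9 of \cite{BFNP}, the equivalence $VMHS(S)^{ad}_{\bar S}\simeq\MHM(S)^{ps}_{\bar S}$, the exactness of the $j_{!*}$-sequence from Lemma~2.18, and the characterization of $j_{!*}$ by the absence of subobjects or quotients supported on $\bar S\setminus S$). The only step left tacit is that $j^{*}M$ is again a \emph{smooth} mixed Hodge module, i.e.\ lies in the essential image of $VMHS(S)^{ad}_{\bar S}$; this holds because an extension of shifted local systems in perverse sheaves on smooth $S$ is again a shifted local system (such extensions split on contractible opens, since $\mathrm{Ext}^1_{Sh(U)}(\QQ_U,\QQ_U^{\oplus n})=H^1(U,\QQ)^{\oplus n}=0$ there), and smooth polarizable mixed Hodge modules on $S$ correspond to admissible variations.
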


\subsection{Completion of diagram (\ref{eqn dottedBETAp})}

Let $f:X\to Y$ be a projective morphism between smooth algebraic
varieties. Then, by the work of Morihiko Saito \cite{S4}, there is
a direct sum decomposition \begin{equation}\label{eqn mhm1} 
f_* \QQ_X[d_X] = \bigoplus_i H^i\left( f_* \QQ_X  [d_X] \right) [-i] \\
\end{equation} in $\MHM(Y)$. Furthermore, each summand $H^{i}(f_{*}\QQ_{X}[d_{X}])$
is pure of weight $d_{X}+i$ and admits a decomposition according
to codimension of support: \begin{equation}\label{eqn mhm2} 
H^i \left( f_* \QQ_X [d_X] \right) [-i] = \oplus_j E_{ij}[-i], \\
\end{equation} i.e. $E_{ij}[-i]$ is a sum of Hodge modules supported on codimension
$j$ subvarieties of $Y$. Accordingly, we have a system of projection
operators (inserting arbitrary twists) \begin{eqnarray*}
 & \hphantom{a} & \oplus\,\Pi_{ij}:H_{\AH}^{n}(X,\QQ(\ell)[d_{X}])\stackrel{\cong}{\to}\oplus_{ij}\, H_{\AH}^{n-i}(Y,E_{ij}(\ell))\\
 & \hphantom{a} & \oplus\,\Pi_{ij}:H_{\AH}^{n}(X_{p},\QQ(\ell)[d_{X}])\stackrel{\cong}{\to}\oplus_{ij}\, H_{\AH}^{n-i}(Y,\iota^{*}E_{ij}(\ell))\\
 & \hphantom{a} & \oplus\,\Pi_{ij}:\HH^{n}(X,rat(\QQ(\ell)[d_{X}]))\stackrel{\cong}{\to}\oplus_{ij}\,\HH^{n-i}(Y,rat(E_{ij}(\ell)))\\
 & \hphantom{a} & \oplus\,\Pi_{ij}:\HH^{n}(X_{p},rat(\QQ(\ell)[d_{X}]))\stackrel{\cong}{\to}\oplus_{ij}\,\HH^{n-i}(Y,\iota^{*}rat(E_{ij}(\ell)))\end{eqnarray*}
 where $p\in Y$ and $\iota:\{p\}\to Y$ is the inclusion map.
\begin{lem}
\cite[Eqn. 4.12]{BFNP} Let $\mathcal{H}^{q}=R^{q}f_{*}^{sm}\QQ_{X}$
and recall that we have a decomposition \[
\H^{2k-1}=\H_{van}^{2k-1}\oplus\H_{fix}^{2k-1}\]
 where $\H_{fix}^{2k-1}$ is constant and $\H_{van}^{2k-1}$ has no
global sections. For any point $p\in Y$, we have a commutative diagram
\begin{equation}\label{eqn BFNPdiagram}\xymatrix{H^{2k}_{\AH}(X,\QQ(k)) \ar [d]^{i^*} \ar [r]_{\Pi \mspace{40mu}} & \ANF(Y^{sm},\H_{van}^{2k-1}(k)) \ar [d]^{i^*} \\ H^{2k}(X_p,\QQ(k)) \ar [r]_{\Pi} & \IH^1(\H^{2k-1}(k))_p } \\
\end{equation} where $Y^{sm}$ is the largest Zariski open set over which $f$ is
smooth and $\Pi$ is induced by $\Pi_{r0}$ for $r=2k-1-d_{X}+d_{Y}$.
\end{lem}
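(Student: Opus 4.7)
The plan is to derive both horizontal arrows $\Pi$ as projections onto a single summand of Saito's decomposition of $\K := Rf_\ast\QQ_X[d_X]$, to recognise both vertical arrows $i^\ast$ as incarnations of pullback along $\iota:\{p\}\hookrightarrow Y$, and then to deduce commutativity from the fact that $\iota^\ast$, being a triangulated functor on $D^b\MHM$, preserves the direct-sum decomposition term by term.

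First I would uniformise the two source groups. Using $\QQ_X = f^\ast\QQ_Y$ together with the adjunction $f^\ast \dashv Rf_\ast$,
\[
H^{2k}_{\AH}(X,\QQ(k)) \;=\; \Hom_{D^b\MHM(Y)}\bigl(\QQ_Y,\,\K(k)[2k-d_X]\bigr),
\]
and proper base change applied to the cartesian square cut out by $\iota$ yields
\[
H^{2k}(X_p,\QQ(k)) \;=\; \HH^{2k-d_X}\bigl(\{p\},\,\iota^\ast\K(k)\bigr).
\]
After these identifications the left-hand vertical $i_p^\ast$ is simply application of $\iota^\ast$ (composed with the rat-functor at the point). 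Inserting the decomposition $\K = \bigoplus_{i,j} E_{ij}[-i]$ and projecting to $(i,j)=(r,0)$ with $r = 2k-1-d_X+d_Y$, the shift identity $2k-d_X-r = 1-d_Y$ converts the top side into $\Hom_{D^b\MHM(Y)}\bigl(\QQ_Y[d_Y-1],\,E_{r0}(k)\bigr)$ and the bottom into $\HH^{1-d_Y}\bigl(\{p\},\,\iota^\ast E_{r0}(k)\bigr)$.

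Second, I would match these against the right-hand targets. Over $Y^{sm}$, smooth proper base change gives $\K_r|_{Y^{sm}} = \H^{2k-1}[d_Y]$, whence $E_{r0} = j_{!\ast}\H^{2k-1}[d_Y]$ by semisimplicity of pure Hodge modules ($j:Y^{sm}\hookrightarrow Y$). The orthogonal splitting $\H^{2k-1} = \H^{2k-1}_{\mathrm{van}} \oplus \H^{2k-1}_{\mathrm{fix}}$ propagates through $j_{!\ast}$ to $E_{r0} = E_{r0}^{\mathrm{van}} \oplus E_{r0}^{\mathrm{fix}}$. Projecting further to the vanishing summand identifies the top with
\[
\IH^1_{\AH}\bigl(Y,\,\H^{2k-1}_{\mathrm{van}}(k)\bigr) \;\cong\; \ANF\bigl(Y^{sm},\,\H^{2k-1}_{\mathrm{van}}(k)\bigr) \otimes \QQ
\]
by \cite[Prop.\ 3.3]{BFNP}, and the bottom with $\IH^1\bigl(\H^{2k-1}_{\mathrm{van}}(k)\bigr)_p$; this in turn equals $\IH^1(\H^{2k-1}(k))_p$, because on the smooth variety $Y$ the intermediate extension of the constant VHS $\H^{2k-1}_{\mathrm{fix}}$ is again constant, and its pullback to $\{p\}$ contributes nothing to $\HH^{1-d_Y}[d_Y] = H^1$ of a point. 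Under these identifications, the top and bottom $\Pi$'s are by definition projection onto the $(r,0)^{\mathrm{van}}$-component.

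Finally, commutativity is immediate from the functorial picture: both verticals are incarnations of $\iota^\ast$ on the common target $\K(k)$; because $\iota^\ast$ is triangulated on $D^b\MHM$, it respects the direct sum $\K = \bigoplus_{i,j} E_{ij}[-i]$ and the finer splitting $E_{r0} = E_{r0}^{\mathrm{van}}\oplus E_{r0}^{\mathrm{fix}}$ (this latter being already a decomposition in $\MHM(Y)$), and so it commutes with projection to the $(r,0)^{\mathrm{van}}$-summand. The right-hand vertical thus produced agrees with the singularity/local-$\IH^1$ map built in Theorem~\ref{thm BFNP2.11}. The main obstacle is bookkeeping — tracking the shifts $[d_X]$, $[d_Y]$, $[2k]$ and the Tate twist $(k)$ through adjunction, the decomposition theorem and proper base change — together with the mildly subtle point that the projection onto $E_{r0}$ (as opposed to all of $\K_r$) is precisely what allows the top row to land in $\IH^1_{\AH}$ and the bottom row to factor through local intersection cohomology rather than the full local cohomology $(R^1 j_\ast\HH_\QQ)_p$.
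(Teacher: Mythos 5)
Your argument is correct and follows exactly the route the paper sets up in $\S\S3.4$--$3.5$: rewrite both columns as $\Hom$'s out of (shifts and twists of) $\QQ$ into $\K=Rf_*\QQ_X[d_X]$ and its $\iota^*$-pullback, identify $E_{r0}$ with $j_{!*}(\H^{2k-1}[d_Y])$ via the strict-support decomposition and split off the vanishing part, and deduce commutativity from the fact that $\iota^*$ respects the object-level direct-sum decomposition and hence the projections $\Pi_{ij}$. The paper itself only cites \cite[Eqn. 4.12]{BFNP} for this lemma, and your reconstruction --- including the correct observation that the fixed part contributes nothing to $\IH^1(\cdot)_p$, which is why the bottom-right corner may be written with $\H^{2k-1}$ rather than $\H^{2k-1}_{van}$ --- is the intended one.
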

We now return to setting of Conjecture \ref{conj HCv2}: $X$ is a
smooth projective variety of dimension $2m$, $L$ is a very ample
line bundle on $X$ and $\mathcal{X}$ is the associated incidence
variety (\ref{eqn incidence}), with projections $\pi:\mathcal{X}\to\bar{P}$
and ${\rm pr}:\mathcal{X}\to X$. Then, we have the following {}``Perverse
weak Lefschetz theorem'':
\begin{thm}
\label{thm pervWL}\cite[Thm. 5.1]{BFNP} Let $\mathcal{X}$ be the
incidence variety associated to the pair $(X,L)$ and $\pi_{*}\QQ_{\mathcal{X}}=\oplus_{ij}\, E_{ij}$
in accord with (\ref{eqn mhm1}) and (\ref{eqn mhm2}). Then, \end{thm}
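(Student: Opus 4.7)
The strategy is to combine the Beilinson--Bernstein--Deligne decomposition theorem with classical weak Lefschetz applied fibrewise, exploiting the fact that the incidence variety $\X$ carries two useful projections: $\pi:\X\to\bar P$, whose smooth fibres over $P=\bar P\setminus\hat X$ are the smooth hyperplane sections $X_p$ of dimension $2m-1$; and the auxiliary projection $\mathrm{pr}:\X\to X$, which exhibits $\X$ as a Zariski-locally trivial $\PP^{N-2}$-bundle over $X$ (with $N=h^0(X,L)$), since $\X\subset X\times\bar P$ is cut out by the universal section of $L\boxtimes\O_{\bar P}(1)$. The tension between these two descriptions of $H^{*}(\X,\QQ)$ is what forces the perverse summands $E_{ij}$ in the decomposition $\pi_{*}\QQ_{\X}[d_{\X}]=\bigoplus_{ij}E_{ij}[-i]$ into a very restricted form.

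First I would restrict the decomposition to the open stratum $P$. Since $\pi^{sm}$ is smooth projective, Deligne's theorem degenerates the Leray spectral sequence and gives
\[
R\pi^{sm}_{*}\QQ_{\X}\;\cong\;\bigoplus_{q}\mathcal{H}^{q}[-q]
\qquad\text{in}\ D^{b}(P),
\]
with each $\mathcal{H}^{q}=R^{q}\pi^{sm}_{*}\QQ$ a polarizable VHS. Comparing with the restriction of the BBD decomposition to $P$ pins down $E_{i,0}|_{P}$ as the appropriate shift of $\mathcal{H}^{2m-1+i-d_{\bar P}}$ and forces $E_{ij}|_{P}=0$ for all $j\geq 1$ (by the codimension convention). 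This already isolates the ``variable'' content to codimension-zero summands on the smooth locus.

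Second, I would use $\mathrm{pr}:\X\to X$ together with the projective bundle formula to compute $H^{*}(\X,\QQ)\cong H^{*}(X,\QQ)\otimes H^{*}(\PP^{N-2},\QQ)$, and then match this against the global sections $\bigoplus_{ij}H^{*-i}(\bar P,E_{ij})$ extracted from the BBD decomposition. Combined with the splitting $\mathcal{H}^{2m-1}=\mathcal{H}^{2m-1}_{\mathrm{fix}}\oplus\mathcal{H}^{2m-1}_{\mathrm{van}}$ on $P$, semisimplicity of the decomposition theorem allows one to match each piece with a distinct summand: the constant (fixed) part is pulled back from $H^{*}(X)$ and accounts for certain constant summands $E_{ij}$; the vanishing part $\mathcal{H}^{2m-1}_{\mathrm{van}}$, being simple with no global sections, must extend to $\bar P$ as the middle extension $j_{!*}\mathcal{H}^{2m-1}_{\mathrm{van}}[d_{\bar P}]$ and gives the distinguished summand, whose stalks at $p\in\hat X$ compute the local intersection cohomology appearing in the target of $\overline{\mathrm{sing}_{p}}$.

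\textbf{The main obstacle} will be controlling the summands $E_{ij}$ with $j\geq 1$ supported on the (possibly very singular) strata of $\hat X$, since a priori the decomposition theorem allows arbitrary simple Hodge modules there. Here one needs an Artin-type vanishing statement after passing to transversal slices of a stratum of $\hat X$, coupled with a weight/parity argument to rule out unexpected subquotients, and a bookkeeping that all such deep-stratum summands are constant sheaves pulled back from $H^{*}(X)$ via $\mathrm{pr}$. Very-ampleness of $L$ (and, if needed, replacing $L$ by $L^{\otimes d}$ for $d\gg 0$ as in Remark~\ref{rem BOUND}) is what guarantees that transverse slices of $\hat X$ behave generically enough for this analysis to close, yielding the desired identification of the perverse constituents and, ultimately, the map $\beta_{p}$ completing diagram~(\ref{eqn dottedBETAp}).
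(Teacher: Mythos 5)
Your first step is sound: restricting the decomposition to $P$ and using Deligne's degeneration of the Leray spectral sequence identifies $E_{i0}|_{P}$ with $\H^{2m-1+i}[d_{\bar{P}}]$ and shows that the supports of the $E_{ij}$ with $j\geq 1$ lie in $\hat{X}$; classical weak Lefschetz on the smooth fibres then makes $\H^{2m-1+i}$ constant of rank $h^{2m-1+i}(X)$ for $i<0$. But the entire content of the theorem is the step you defer to ``the main obstacle''---showing that $^{p}H^{i}(\pi_{*}\QQ_{\X}[d_{\X}])$ has no constituents supported on $\hat{X}$ when $i\neq 0$---and neither tool you propose closes it. The global dimension count cannot work: the middle summand $E_{00}$ contains $j_{!*}(\H^{2m-1}_{van}[d_{\bar{P}}])$, whose hypercohomology is $\IH^{*}(\bar{P},\H^{2m-1}_{van})$, and this is not computable independently of the theorem; so one cannot verify that the ``required'' constituents already exhaust $H^{*}(\X,\QQ)\cong H^{*}(X)\otimes H^{*}(\PP^{d_{\bar{P}}-1})$ and leave no room for extras. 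The appeal to ``Artin-type vanishing on transversal slices of strata of $\hat{X}$'' would require control of the fibres of $\pi$ over $\hat{X}$, which are arbitrarily singular and about which very ampleness of $L$ tells you nothing stratum by stratum; and the suggestion that such deep-stratum summands are ``constant sheaves pulled back from $H^{*}(X)$'' cannot be right, since a nonzero summand supported on a positive-codimension subvariety is never a constant sheaf on $\bar{P}$.

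The missing idea---and the reason the result is called a \emph{weak Lefschetz} theorem---is to use the complement rather than the bundle structure: $U:=(X\times\bar{P})\setminus\X$ is \emph{affine over} $\bar{P}$, because its fibre over $p$ is $X\setminus X_{p}$, the complement of an ample divisor. Pushing the triangle $j_{!}\QQ_{U}\to\QQ_{X\times\bar{P}}\to\QQ_{\X}$ (or its dual Gysin triangle) down to $\bar{P}$ and applying Artin vanishing to the affine morphism $U\to\bar{P}$, one finds that for $i<0$ the perverse sheaf $^{p}H^{i}(\pi_{*}\QQ_{\X}[d_{\X}])$ is a subobject of a perverse cohomology of $Ra_{*}\QQ_{X\times\bar{P}}[d_{X}+d_{\bar{P}}]$ ($a$ the projection to $\bar{P}$), which is a \emph{constant} perverse sheaf $V\otimes\QQ_{\bar{P}}[d_{\bar{P}}]$. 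By semisimplicity it is a direct summand, hence itself constant, of rank equal to its generic rank $h^{2m-1+i}(X)$; this gives both assertions for $i<0$ in one stroke, and hard Lefschetz $E_{ij}\cong E_{-i,j}(-i)$ handles $i>0$. Note that no stratification of $\hat{X}$, no dimension count, and no replacement of $L$ by $L^{\otimes d}$ is needed for this statement (the power of $L$ only enters later, in the analysis of $E_{01}$ via non-vanishing of vanishing cycles).
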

\begin{itemize}
\item $E_{ij}=0$ unless $i\cdot j=0$.
\item $E_{i0}=H^{i}(X,\QQ_{X}[2m-1])\otimes\QQ_{\bar{P}}[d_{\bar{P}}]$.
for $i<0$.
\end{itemize}
Note that by hard Lefschetz, $E_{ij}\cong E_{-i,j}(-i)$ \cite{S4}.

To continue, recall that given a Lefschetz pencil $\Lambda\subset\bar{P}$
of hyperplane sections of $X$, we have an associated system of vanishing
cycles $\{\delta_{p}\}_{p\in\Lambda\cap\hat{X}}\subset H^{2m-1}(X_{t},\QQ)$
on the cohomology of the smooth hyperplane sections $X_{t}$ of $X$
with respect to $\Lambda$. As one would expect, the vanishing cycles
of $\Lambda$ are \emph{non-vanishing} if for some (hence all) $p\in\Lambda\cap\hat{X}$,
$\delta_{p}\neq0$ (in $H^{2m-1}(X_{t},\QQ)$). Furthermore, this
property depends only on $L$ and not the particular choice of Lefschetz
pencil $\Lambda$. This property can always be arranged by replacing
$L$ by $L^{\otimes d}$ for some $d>0$.
\begin{thm}
\label{thm vanishing-cycles} If all vanishing cycles are non-vanishing
then $E_{01}=0$. Otherwise, $E_{01}$ is supported on a dense open
subset of $\hat{X}$.
\end{thm}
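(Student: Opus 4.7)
The plan is to localize the support of $E_{01}$ to $\hat{X}$, invoke irreducibility of the dual variety to reduce to a dichotomy, and then compute the generic stalk of $E_{01}$ via the vanishing cycle functor.

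First, I would observe that $\mathrm{supp}(E_{01}) \subset \hat{X}$. Over the smooth locus $P = \bar{P}\setminus\hat{X}$ the smooth case of the decomposition theorem gives
\[
\pi_{*}\QQ_{\X}[d_{\X}]|_{P} \;=\; \bigoplus_{i}\bigl(R^{i}\pi^{sm}_{*}\QQ_{\X}\bigr)[d_{\X}-i],
\]
a direct sum of shifted local systems on $P$. Hence $E_{ij}|_{P} = 0$ for all $j\geq 1$, and in particular the semisimple perverse sheaf $E_{01}$ has support contained in $\hat{X}$.

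Second, because $L$ is very ample, $\hat{X}$ is irreducible (it is the image of the conormal projective bundle over $X$ under projection to $\bar{P}$). Therefore every simple summand of $E_{01}$ is the intersection cohomology sheaf of a local system on a dense open subset of $\hat{X}$, so either $E_{01}=0$ or $E_{01}\cong j^{\hat{X}}_{!*}\L[\dim\hat{X}]$ for a nonzero local system $\L$ on a dense open $\hat{X}^{\circ}\subset\hat{X}$; shrinking $\hat{X}^{\circ}$ we may further assume that $X_{p}$ has exactly one ODP for every $p\in\hat{X}^{\circ}$.

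Third, I would compute the generic stalk of $E_{01}$ at $p\in\hat{X}^{\circ}$. Pick a transverse local equation $f$ for $\hat{X}$ at $p$. Proper base change gives $\phi_{f}\pi_{*}\QQ_{\X} = \pi_{*}\phi_{f\circ\pi}\QQ_{\X}$. Since $f\circ\pi$ has an isolated $A_{1}$ singularity at the node $x_{0}\in X_{p}$ and is locally smooth elsewhere, $\phi_{f\circ\pi}\QQ_{\X}$ is a rank-one skyscraper at $(x_{0},p)$ concentrated in one degree (Milnor fiber $\simeq S^{2m-1}$), and pushing forward gives a rank-one skyscraper stalk at $p$. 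Applying $\phi_{f}$ to the decomposition $\pi_{*}\QQ_{\X}[d_{\X}] = \bigoplus E_{ij}[-i]$: the summands $E_{i0}$ (constant by Theorem~\ref{thm pervWL}) have no vanishing cycles, and after shrinking $\hat{X}^{\circ}$ the point $p$ avoids the supports of the codimension-$\geq 2$ summands $E_{0j}$ ($j\geq 2$); so the rank-one Milnor class at $p$ must split between $E_{00} = j^{\bar{P}}_{!*}\H^{2m-1}[d_{\bar{P}}]$ and $E_{01}$. By Picard--Lefschetz, the local monodromy $T$ of $\H^{2m-1}$ around $p$ acts by $\gamma\mapsto\gamma\pm(\gamma\cdot\delta)\delta$, so $T-I$ is rank one when $\delta\neq 0$ and zero when $\delta = 0$; a standard computation then gives $\phi_{f}E_{00}|_p$ of rank one in the non-vanishing case and zero in the vanishing case. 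Thus the Milnor contribution is entirely absorbed by $E_{00}$ precisely when $\delta\neq 0$ (forcing $E_{01}=0$), and otherwise survives entirely in $E_{01}$ (making $E_{01}$ nonzero on the dense open $\hat{X}^{\circ}$).

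The main obstacle is making the third step rigorous, in particular the claim that the rank-one Milnor class distributes as stated between $E_{00}$ and $E_{01}$ and is governed by the pairing with $\delta$. The cleanest way to carry this out is to base change everything along a Lefschetz pencil $\Lambda\subset\bar{P}$ meeting $\hat{X}^{\circ}$ transversally: for the one-parameter family $\X_\Lambda \to \Lambda$ the decomposition theorem admits a classical explicit description in terms of Deligne's global invariant cycle theorem and the vanishing cycles $\delta_p$ (as used by Zucker), and since $E_{01}$ is determined by its behavior at the generic point of $\hat{X}^{\circ}$ one may transport the one-parameter calculation back to $\bar{P}$.
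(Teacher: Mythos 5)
Your strategy is sound, and it is essentially the argument behind the cited result in \cite{BFNP} (the survey states this theorem without proof): localize $E_{01}$ to the irreducible hypersurface $\hat{X}$, conclude it is either zero or generically a nonzero local system, and then detect its generic rank by applying a vanishing-cycle functor to both sides of the decomposition, with Picard--Lefschetz deciding whether the one-dimensional Milnor class is absorbed by $E_{00}=j_{!*}\H[d_{\bar{P}}]$ (the rank of $T-I$ is one exactly when $\delta\neq 0$) or forced into $E_{01}$ (when $\delta=0$). Steps one and two are fine, and the rank bookkeeping in step three is legitimate: all the terms are perverse up to a common shift, so the contributions of the direct summands add without cancellation.

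The one claim that is wrong as written is that $f\circ\pi$ has an \emph{isolated} $A_1$ singularity at the node. It does not: a point $(x,q)$ is critical for $f\circ\pi$ precisely when $\mathrm{im}(d\pi_{(x,q)})\subseteq\ker(df_q)$, and for \emph{every} $q\in\hat{X}$ near $p$ the node $x_q$ of $X_q$ satisfies $\mathrm{im}(d\pi_{(x_q,q)})=T_q\hat{X}=\ker(df_q)$. Hence the critical locus of $f\circ\pi$ is the whole $(d_{\bar{P}}-1)$-dimensional section of nodes lying over $\hat{X}$ near $p$, and $\phi_{f\circ\pi}\QQ_{\X}$ is a shifted rank-one local system on that locus, not a skyscraper. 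Your conclusion survives because this locus meets $\pi^{-1}(p)$ only at $x_0$ and the singularity has transversal type $A_1$, so proper base change still yields a one-dimensional stalk for $\phi_f\pi_*\QQ_{\X}$ at $p$. Alternatively, your proposed restriction to a Lefschetz pencil $\Lambda$ through $p$ transverse to $\hat{X}$ genuinely produces an isolated nondegenerate critical point, and since a generic $\Lambda$ is normally nonsingular with respect to a Whitney stratification adapted to all the $E_{ij}$, each summand restricts to the intersection complex of its restricted data (up to shift), so the one-parameter computation does determine the generic rank of $E_{01}$. You need to say one of these two things explicitly; as written, the inference ``isolated singularity, hence skyscraper'' is false even though the number it produces is correct.
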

Using the Theorems \ref{thm pervWL} and \ref{thm vanishing-cycles},
we now prove that the following diagram commutes:

\begin{equation}\label{eqn PRVdiagram}\xymatrix{H^{2m}_{\D}(X,\ZZ(m))_{prim} \ar [r]_{AJ \mspace{20mu}} \ar [d]_{\text{pr}^*} & \ANF(P,\H)/J^m(X) \ar [d]^{\otimes \QQ} \\ H^{2m}_{\AH}(\X,\QQ(m)) \ar [r]_{\Pi \mspace{20mu}} & \ANF(P,\H_{van})\otimes\QQ . } \\
\end{equation}

where $H_{\D}^{2m}(X,\ZZ(m))_{prim}$ is the subgroup of $H_{\D}^{2m}(X,\ZZ(m))$
whose elements project to primitive Hodge classes in $H^{2m}(X,\ZZ(m))$,
and $\Pi$ is induced by $\Pi_{00}$ together with projection onto
$\H_{van}$. Indeed, by the decomposition theorem \begin{eqnarray*}
H_{\AH}^{2m}(\mathcal{X},\QQ(m)) & = & H_{\AH}^{1-d_{\bar{P}}}(\mathcal{X},\QQ(m)[2m+d_{\bar{P}}-1])\\
 & = & \bigoplus\, H_{\AH}^{1-d_{\bar{P}}}(\bar{P},E_{ij}(m)[-i]).\end{eqnarray*}

Let $\tilde{\zeta}\in H_{\D}^{2m}(X,\ZZ(m))$ be a primitive Deligne
class and $\omega=\oplus_{ij}\,\omega_{ij}$ denote the component
of $\omega={\rm pr}^{*}(\tilde{\zeta})$ with respect to $E_{ij}(m)[-i]$
in accord with the previous equation. Then, in order to prove the
commutativity of (\ref{eqn PRVdiagram}) it is sufficient to show
that $(\omega)_{q}=(\omega_{00})_{q}$ for all $q\in P$. By Theorem
\ref{thm pervWL}, we know that $\omega_{ij}=0$ unless $ij=0$. Furthermore,
by \cite[Lemma 5.5]{BFNP}, $(\omega_{0j})_{q}=0$ for $j>1$. Likewise,
by Theorem \ref{thm vanishing-cycles}, $(\omega_{01})_{q}=0$ for
$q\in P$ since $E_{01}$ is supported on $\hat{X}$.

Thus, in order to prove the commutativity of (\ref{eqn PRVdiagram}),
it is sufficient to show that $(\omega_{i0})_{q}=0$ for $i>0$. However,
as a consequence of the second part of Theorem \ref{thm pervWL},
$E_{i0}(m)=K[d_{\bar{P}}]$ where $K$ is a constant variation of
Hodge structure on $\bar{P}$; and hence \begin{eqnarray*}
H^{1-d_{\bar{P}}}(\mathcal{X},E_{i0}(m)[-i]) & = & Ext_{\DbMHM(\bar{P})}^{1-d_{\bar{P}}}(\QQ_{\bar{P}},K[d_{\bar{P}}-i])\\
 & = & Ext_{\DbMHM(\bar{P})}^{1-i}(\QQ_{\bar{P}},K).\end{eqnarray*}
 Therefore, $(\omega_{i0})_{q}=0$ for $i>1$ while $(\omega_{10})_{q}$
corresponds to an element of $Hom(\QQ(0),K_{q})$ where $K$ is the
constant variation of Hodge structure with fiber $H^{2m}(X_{q},\QQ(m))$
over $q\in P$. It therefore follows from the fact that $\tilde{\zeta}$
is primitive that $(\omega_{10})_{q}=0$. Splicing diagram (\ref{eqn PRVdiagram})
together with (\ref{eqn BFNPdiagram}) (and replacing $f:X\to Y$
by $\pi:\mathcal{X}\to\bar{P}$, etc.) now gives the diagram (\ref{eqn dottedBETAp}).
\begin{rem}
The effect of passage from $\H$ to $\H^{{\rm van}}$ in the above
constructions is to annihilate $J^{m}(X)\subseteq H_{\D}^{2m}(X,\ZZ(m))_{prim}$.
Therefore, in (\ref{eqn PRVdiagram}) we can replace $H_{\D}^{2m}(X,\ZZ(m))_{prim}^{{\rm }}$
by $H_{{\rm prim}}^{m,m}(X,\ZZ(m))$.
\end{rem}
Finally, if all the vanishing cycles are non-vanishing, $E_{01}=0$.
Using this fact, we then get the injectivity of $\beta_{p}$ on the
image of $\alpha_{p}$.

Returning to the beginning of this section, we now see that although
extending normal functions along Lefschetz pencils is insufficient
to prove the Hodge conjecture for higher codimension cycles, the Hodge
conjecture is equivalent to a statement about the behavior of normal
functions on the complement of the dual variety of $X$ inside $|L|$
for $L\gg0$.

\section{Zeroes of Normal Functions}

\subsection{Algebraicity of the zero locus}

Some of the deepest evidence to date in support of the Hodge conjecture
is the following result of Cattani, Deligne and Kaplan on the algebraicity
of the Hodge locus:
\begin{thm}
\label{thm CDK}\cite{CDK} Let $\H$ be a variation of pure Hodge
structure of weight $0$ over a smooth complex algebraic variety $S$.
Let $\alpha_{s_{o}}$ be an integral Hodge class of type $(0,0)$
on the fiber of $\H$ at $s_{o}$. Let $U$ be a simply connected
open subset of $S$ containing $s_{o}$ and $\alpha$ be the section
of $\HH_{\ZZ}$ over $U$ defined by parallel translation of $\alpha_{s_{o}}$.
Let $T$ be the locus of points in $U$ such that $\alpha(s)$ is
of type $(0,0)$ on the fiber of $\H$ over $s$. Then, the analytic
germ of $T$ at $p$ is the restriction of a complex algebraic subvariety
of $S$.
\end{thm}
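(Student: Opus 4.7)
The plan is to reduce the problem to a statement about the boundary behavior of the period map after choosing a good compactification, and then invoke Chow's theorem. Choose a smooth projective compactification $\bar{S}\supset S$ with $D:=\bar{S}\setminus S$ a simple normal crossing divisor. Since the statement is about the analytic germ of $T$ at $s_o$, it suffices to show that $T$ propagates to a \emph{closed analytic} subvariety $\bar{T}\subset\bar{S}$: by Chow's theorem any such $\bar{T}$ is algebraic, and $T$ is then the germ at $s_o$ of the algebraic set $\bar{T}\cap S$. Locally on $U$ the set $T$ is manifestly analytic (it is cut out by the vanishing of the $(p,q)$-components of $\alpha$ for $(p,q)\neq(0,0)$), so the entire difficulty is concentrated at the boundary points of $D$ and at the monodromy of $\alpha$ as one analytically continues $T$ around loops in $S$.

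First I would localize near a boundary point $p\in D$, choosing coordinates $(s_1,\ldots,s_n)$ with $D=\{s_1\cdots s_m=0\}$ near $p$, and (after a finite base change, harmless for algebraicity) arrange that the local monodromies $T_j$ of $\HH_{\ZZ}$ are unipotent with logarithms $N_j$. Schmid's nilpotent orbit theorem, in its several-variable form due to Cattani--Kaplan, then approximates the Hodge filtration $F^\bullet(s)$ by the nilpotent orbit $\exp\!\bigl(\tfrac{1}{2\pi i}\sum_j \log(s_j) N_j\bigr)F^\bullet_\infty$, with the period map converging rapidly to this orbit as $s\to p$ in a Siegel-type region. Given any integral class $v$, the locus $\{s: v\in F^0_s\cap\overline{F^0_s}\}$ is analytic on $S$, and I want to extend it across $D$.

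The core of the argument is a norm estimate plus a finiteness statement, and this is where the main difficulty lies. Using the several-variable $SL_2$-orbit theorem of Cattani--Kaplan--Schmid, one controls the Hodge norm $\|v\|_s$ of a flat integral section $v$ in terms of the weight of $v$ with respect to the monodromy weight filtration $M(N)_\bullet$ associated to $N=\sum a_j N_j$ (any $a_j>0$). The crucial consequence is: if $v$ is Hodge of type $(0,0)$ along a sequence $s_\nu\to p$, then $\|v\|_{s_\nu}$ stays bounded, which in turn forces $v$ to lie in the weight-zero piece of the limit MHS and satisfy $N_j v=0$ for each $j$. Polarization positivity then yields a Northcott-type finiteness: only finitely many integral classes $v$ (modulo local monodromy) can be $(0,0)$ on any sequence converging to $p$. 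For each such $v$ the conditions defining the Hodge locus extend, via the nilpotent orbit approximation, to holomorphic equations on the entire polydisk, giving a local closed analytic extension of $T$ across $D$ near $p$.

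Finally, I would patch: by compactness of $\bar{S}$ and the local finiteness just established, the analytic continuations of $T$ form a locally finite family of local analytic subvarieties that glue to a closed analytic subvariety $\bar{T}\subset\bar{S}$. Chow's theorem (or equivalently GAGA applied to the structure sheaf of $\bar{T}$) upgrades $\bar{T}$ to an algebraic subvariety, and its germ at $s_o$ is the required algebraic extension of $T$. The hard part is unquestionably the asymptotic analysis of the third paragraph: without the several-variable $SL_2$-orbit theorem and the resulting sharp control over the behavior of the Hodge norm on integral classes near $D$, neither the finiteness of the classes $v$ producing branches of $T$ through $p$ nor the holomorphic extension of the defining equations across $D$ is accessible, and both ingredients are indispensable for the gluing that yields $\bar{T}$.
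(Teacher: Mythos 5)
The paper does not actually prove this theorem --- it is quoted from \cite{CDK} --- but your outline faithfully reproduces the strategy of the original Cattani--Deligne--Kaplan argument (compactify, control the Hodge norm of integral classes near the boundary via the several-variable nilpotent and $SL_2$-orbit theorems, deduce finiteness and analytic extension across $D$, then apply Chow/GAGA), which matches the paper's own remark in $\S4$ that the proof \emph{depends heavily on the several variable $SL_2$-orbit theorem}. The one point to make explicit is that your Northcott-type finiteness requires a uniform bound on the self-intersection $Q(v,v)$ of the competing classes --- without it there may be infinitely many integral $(0,0)$-classes at a given fiber --- and this bound holds here precisely because the relevant $v$ are the monodromy translates of the single class $\alpha$ and $Q$ is monodromy-invariant.
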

More precisely, as explained in the introduction of \cite{CDK}, in
the case where $\H$ arises from the cohomology of a family of smooth
projective varieties $f:X\to S$, the algebraicity of the germ of
$T$ follows from the Hodge conjecture. A natural analogue of this
result for normal functions is:
\begin{thm}
\label{conj CalgZL} Let $S$ be a smooth complex algebraic variety,
and $\nu:S\to J(\H)$ be an admissible normal function, where $\H$
is a variation of pure Hodge structure of weight $-1$. Then, the
zero locus \[
\mathcal{Z}(\nu)=\{\, s\in S\mid\nu(s)=0\,\}\]
is a complex algebraic subvariety of $S$.
\end{thm}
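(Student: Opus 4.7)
The plan is to first establish that $\mathcal{Z}(\nu)$ is a closed complex analytic subvariety of $S$, then choose a smooth compactification $\bar{S} \supset S$ with $D := \bar{S} \setminus S$ a simple normal crossing divisor, and finally show that the closure $\overline{\mathcal{Z}(\nu)} \subset \bar{S}$ is analytic across $D$. Once this is done, Chow's theorem (applied in a projective compactification) upgrades analyticity to algebraicity. The analyticity of $\mathcal{Z}(\nu)$ in $S$ is essentially formal: using the extension interpretation of $\S 2.12$, the condition $\nu(s)=0$ translates to the existence of an integral splitting of the short exact sequence
\[
0 \to \H \to \V \to \ZZ_S(0) \to 0
\]
at $s$; locally on $S$ this can be rephrased as the vanishing of a single holomorphic section of $\H/\F^0\H$ after fixing a local integral lift, so the zero locus is cut out analytically.

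The essential content is therefore the extension of $\mathcal{Z}(\nu)$ across the boundary. I would work locally near a point $p \in D$, identifying a neighborhood with $\Delta^n$ so that $S \cap \Delta^n = (\Delta^*)^k \times \Delta^{n-k}$ and $p$ is the origin, and assuming unipotent monodromy (otherwise pass to the finite cover supplied by Definition~\ref{def admissibility}(II)). By admissibility, $\V$ has a Deligne extension $\V_e$ over $\Delta^n$ together with a well-defined LMHS $\psi_{\underline{s}}\V$ at $p$ fitting into a short exact sequence $0 \to \psi_{\underline{s}}\H \to \psi_{\underline{s}}\V \to \QQ(0) \to 0$. The period map into the appropriate classifying space of MHS is approximated, by the nilpotent orbit theorem for admissible VMHS, by a nilpotent orbit constructed from $\psi_{\underline{s}}\V$ and the commuting monodromy logarithms $N_i$; the several-variable $SL_2$-orbit theorem of \cite{KNU,Pe2} refines this approximation and provides sharp asymptotic control on Hodge norms of flat sections.

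The heart of the argument is to use this asymptotic control to prove that, \emph{locally} on $\Delta^n$, the set $\overline{\mathcal{Z}(\nu) \cap \Delta^n}$ is a complex analytic germ at $p$. The idea, following the CDK strategy behind Theorem~\ref{thm CDK} adapted to the mixed setting, is to show that integral sections $\alpha$ of $\V_e$ which represent a splitting at some $s$ close to $p$ are bounded in a suitable Hodge-theoretic norm, hence lie (after multivalued parallel transport) in a finite subset of the reference fiber $V_\QQ$; for each such candidate $\alpha$, the locus where $\alpha$ actually gives an MHS-splitting is visibly complex analytic, and the finiteness reduces $\overline{\mathcal{Z}(\nu)}$ locally to a finite union of such loci. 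The bound on the norm of candidate splittings requires exactly the full force of the $SL_2$-orbit estimates, together with a positivity argument that uses polarizability of the pure part $\H$ and the admissibility constraints on $\V$ (the relative weight filtration and the lift $\nu_F$ of $\S 2.12$).

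The main obstacle is therefore this boundary analysis: controlling the behavior of integral splittings as one approaches a point of $D$ where the LMHS can have arbitrarily complicated Deligne type. One technical wrinkle is that, unlike in the pure case of \cite{CDK}, the Hodge metric on $J(\H)$ degenerates at the boundary and the natural maps may fail to be proper, so the boundedness of candidate splittings has to be extracted directly from the $SL_2$-orbit approximation rather than from a compactness/Bishop-type argument. Once this local extension statement is proved uniformly along $D$, gluing it with the analyticity of $\mathcal{Z}(\nu)$ on $S$ yields an analytic subvariety of $\bar{S}$, and Chow's theorem then concludes.
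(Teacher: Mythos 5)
Your scaffolding (analyticity on $S$, extension of the closure across a normal crossing boundary, Chow's theorem) is sound, and reducing to a local statement at a boundary point with unipotent monodromy is the right first move. But the route you take at the boundary is genuinely different from the paper's, and the step you yourself flag as ``the heart of the argument'' is exactly where the plan has a gap. You propose a CDK-style finiteness argument: bound the Hodge norm of the integral splittings occurring at points of $\mathcal{Z}(\nu)$ near $p$, conclude that only finitely many candidates in the reference fiber are relevant, and write the closure locally as a finite union of analytic loci. In \cite{CDK} the analogous bound comes from positivity: along the Hodge locus the self-pairing $Q(\alpha,\alpha)$ is a fixed integer, and the second Hodge--Riemann relation converts this into a bound on the Hodge components of $\alpha$. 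Nothing of the sort is available here. The integral lifts of $1\in\ZZ(0)$ form a torsor under $H_{\ZZ}$; since $\H$ has weight $-1$ there is no class whose self-pairing is constant along $\mathcal{Z}(\nu)$, and ``polarizability of the pure part'' supplies only a Hodge metric that degenerates at the boundary, not an a priori bound on the $H$-component of a splitting as $s\to p$. So the finiteness cannot be extracted from positivity; it must come from the asymptotics of the period map itself, and you have not said how. This is precisely the difficulty that kept the statement a conjecture for some time (the paper proves in detail only the case where $D=\bar{S}\setminus S$ is smooth, Theorem \ref{thm BP2}, deferring the general case to \cite{BP3}).

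The paper's argument avoids candidate-counting altogether. One observes that $\mathcal{Z}(\nu)$ is exactly the locus where the Deligne grading $Y_{(F(s),W)}$ of the weight filtration is integral (integrality of the grading is equivalent to an integral splitting of the extension, since $Hg^{0}(H_s)=0$ in weight $-1$). The mixed $SL_2$-orbit theorem of \cite{Pe2} shows that $Y_{(F(z),W)}$ converges, as $\mathrm{Im}(z)\to\infty$, to Deligne's canonical grading $Y$ attached to the nilpotent orbit; hence if $\mathcal{Z}(\nu)$ accumulates at $p$ then $Y$ is integral, and the local normal form $F(z)=e^{zN}e^{\Gamma(s)}.F_{\infty}$ turns the splitting condition near $p$ into the holomorphic equation $\mathrm{Ad}(e^{\Gamma(s)})Y=Y$. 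If you prefer to pursue your finiteness route, it is closer in spirit to Schnell's construction of a Hausdorff N\'eron model over $\bar{S}$ to which $\nu$ extends continuously (cf.\ \cite{Sl4} and the addendum to $\S5.2$); but there, too, the required boundedness is not a positivity statement --- it is itself a consequence of the $SL_2$-orbit estimates, and proving it is the entire content of the theorem rather than a technical wrinkle.
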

This theorem was still a conjecture when the present article was submitted,
and has just been proved by the second author in work with P. Brosnan
\cite{BP3}. It is of particular relevance to the Hodge conjecture,
due to the following relationship between the algebraicity of $\mathcal{Z}(\nu)$
and the existence of singularities of normal functions. Say $\dim(X)=2m$,
and let $(X,L,\zeta)$ be a triple consisting of a smooth complex
projective variety $X$, a very ample line bundle $L$ on $X$ and
a primitive integral Hodge class $\zeta$ of type $(m,m)$. Let $\nu_{\zeta}$
(assumed nonzero) be the associated normal function on the complement
of the dual variety $\hat{X}$ constructed in $\S3$, and $\mathcal{Z}$
be its zero locus. Then, assuming that $\mathcal{Z}$ is algebraic
and positive dimensional, the second author conjectured that $\nu$
should have singularities along the intersection of the closure of
$\mathcal{Z}$ with $\hat{X}$.
\begin{thm}
\cite{Sl1} Let $(X,L,\zeta)$ be a triple as above, and assume that
$L$ is sufficiently ample that, given any point $p\in\hat{X}$, the
restriction of $\beta_{p}$ to the image of $\alpha_{p}$ in diagram
\emph{(}\ref{eqn dottedBETAp}\emph{)} is injective. Suppose that
$\mathcal{Z}$ contains an algebraic curve. Then, $\nu_{\zeta}$ has
a non-torsion singularity at some point of the intersection of the
closure of this curve with $\hat{X}$.\end{thm}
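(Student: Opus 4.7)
My plan is to argue by contradiction: assume $\sing_p(\nu_\zeta)$ is torsion (equivalently, zero after $\otimes\QQ$) for every $p\in\bar{C}\cap\hat{X}$, and derive an incompatibility with $\zeta$ being non-torsion primitive.

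First I would establish that $\bar{C}\cap\hat{X}$ is a non-empty finite set. Since $\hat{X}$ is an (irreducible) hypersurface in the projective space $\bar{P}=|L^{\otimes d}|$, the complement $P$ is affine, and since a complete curve cannot lie in an affine variety, the projective closure $\bar{C}$ meets $\hat{X}$ in a non-empty $0$-dimensional set $\{p_1,\ldots,p_N\}$. Under the contradiction hypothesis, the commutativity of diagram (\ref{eqn dottedBETAp}) and the injectivity of $\beta_{p_i}|_{\mathrm{im}\,\alpha_{p_i}}$ (guaranteed by the ampleness assumption on $L$) force $\alpha_{p_i}(\zeta)=0$ in $H^{2m}(X_{p_i},\QQ(m))$ for every $i$.

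Next I would encode the assumption $\nu_\zeta|_C=0$ at the level of the admissible VMHS extension $0\to\H\to\V\to\QQ(0)\to 0$ underlying $\nu_\zeta$. Let $f:\tilde{C}\to\bar{C}\hookrightarrow\bar{P}$ be the normalization, write $\tilde{C}^\circ:=f^{-1}(P)$ and $\Sigma:=\tilde{C}\setminus\tilde{C}^\circ$. Because $f^*\nu_\zeta\equiv 0$, the pulled-back extension $f^*\V|_{\tilde{C}^\circ}$ is split, so its admissible Deligne extension across $\Sigma$ is also split; in particular $\sing_{\tilde{p}}(f^*\nu_\zeta)=0$ for every $\tilde{p}\in\Sigma$. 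Via the factorization of the singularity map through local intersection cohomology (Theorem \ref{thm BFNP2.11}) and the functoriality of the intermediate extension $j_{!*}$ under the proper map $f$, this translates into the vanishing along $\Sigma$ of the pulled-back local IC class attached to $\nu_\zeta$ at each $p_i$.

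The hard part, and the main obstacle, is to convert this local data into a genuine global contradiction with $\zeta$ non-torsion. The plan is to apply Saito's decomposition (\ref{eqn mhm1})--(\ref{eqn mhm2}) together with the perverse weak Lefschetz theorem (Theorem \ref{thm pervWL}) to $\pi:\X\to\bar{P}$: the Deligne lift $\mathrm{pr}^*\tilde{\zeta}\in H^{2m}_{\AH}(\X,\QQ(m))$, which projects to $\nu_\zeta$ via the commutative square (\ref{eqn PRVdiagram}), decomposes into summands whose germs at each $p_i$ are controlled either by $\alpha_{p_i}(\zeta)$ or by $\sing_{p_i}(\nu_\zeta)$, both now known to vanish. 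Combined with the algebraicity of the zero locus $\mathcal{Z}\supseteq C$ (Theorem \ref{conj CalgZL}) --- which allows one, if necessary, to vary $C$ within an irreducible component of $\mathcal{Z}$ to achieve generic intersection with the strata of $\hat{X}$ --- this vanishing should force $\zeta$ itself to be torsion in $H^{m,m}_{\mathrm{prim}}(X,\ZZ(m))$, giving the desired contradiction. The subtle point in this last step is ensuring that the local vanishing data along $\bar{C}\cap\hat{X}$ is rich enough, via the decomposition theorem and the orbit theorems at each $p_i$, to detect all of the class $\zeta$; this is where the bulk of the Hodge-module bookkeeping will live.
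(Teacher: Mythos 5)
Your setup is sound: the observation that $\bar{C}\cap\hat{X}$ is a non-empty finite set (because $P$ is affine and $\bar{C}$ is complete) is exactly the right starting point, and your diagram chase in (\ref{eqn dottedBETAp}) --- torsion singularities at every $p_i\in\bar{C}\cap\hat{X}$ plus injectivity of $\beta_{p_i}$ on $\mathrm{im}(\alpha_{p_i})$ forces $\alpha_{p_i}(\zeta)=0$ --- is the same reduction the paper makes, run in the contrapositive direction. The gap is precisely the step you flag as ``the hard part.'' Germs of the decomposition of $\mathrm{pr}^*\tilde{\zeta}$ at finitely many points of $\hat{X}$, together with vanishing of $\nu_\zeta$ along $C$, cannot ``detect all of $\zeta$'': a Hodge class on $X$ is not determined by its restrictions to finitely many hypersurface sections, and the decomposition theorem over all of $\bar{P}$ does not localize to the curve in the way you need. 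No amount of mixed-Hodge-module bookkeeping at the $p_i$ will close that argument, and varying $C$ inside $\mathcal{Z}$ does not help.

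The missing idea is to restrict the incidence variety to the curve itself. Let $C$ be the normalization of the closure of the curve, $W=\mathcal{X}\times_{\bar{P}}C$ with projection $\pi:W\to C$, and $U\subset C$ the locus of smooth fibers, so $U\subseteq\mathcal{Z}$ and $C\setminus U$ is finite and non-empty. The Leray spectral sequence of $\pi$ over $U$ kills $\zeta|_{W_U}$: the $H^0(U,R^{2m}\pi_*\QQ)$ graded piece vanishes because $\zeta$ is primitive; the $H^1(U,R^{2m-1}\pi_*\QQ)$ piece is the cohomology class of $\nu_\zeta|_U$, which vanishes because $\nu_\zeta\equiv 0$ on $U$; and $H^{\geq 2}(U,-)=0$ because $U$ is an affine curve. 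On the other hand, $W\to X$ is surjective and generically finite (the complete curve $f(C)\subset\bar{P}$ meets every hyperplane $H_x$ of sections through a given $x\in X$), so $\zeta$ pulls back nontrivially to $W$. Hence $\zeta$ must restrict nontrivially to some singular fiber $\pi^{-1}(c)$ with $c\in C\setminus U$, i.e.\ $\alpha_p(\zeta)\neq 0$ for some $p\in\bar{C}\cap\hat{X}$ --- contradicting what you derived, or, run forwards, producing the non-torsion singularity via $\beta_p$. This Leray-over-the-affine-curve argument is both more elementary than the machinery you propose and the actual place where the hypothesis that $\mathcal{Z}$ contains an algebraic curve enters.
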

\begin{proof}
(Sketch) Let $C$ be the normalization of the closure of the curve
in $\mathcal{Z}$. Let $\mathcal{X}\to\bar{P}$ be the universal family
of hyperplane sections of $X$ over $\bar{P}=|L|$ and $W$ be the
pullback of $X$ to $C$. Let $\pi:W\to C$ be the projection map,
and $U$ be set of points $c\in C$ such that $\pi^{-1}(c)$ is smooth,
and $W_{U}=\pi^{-1}(U)$. Then, via the Leray spectral sequence for
$\pi$, it follows that restriction of $\zeta$ to $W_{U}$ is zero
because $U\subseteq\mathcal{Z}$ and $\zeta$ is primitive. On the
other hand, since $W\twoheadrightarrow X$ is finite, $\zeta$ must
restrict (pull back) non-trivially to $W$, and hence $\zeta$ must
restrict non-trivially to the fiber $\pi^{-1}(c)$ for some point
$c\in C$ in the complement of $U$.
\end{proof}
Unfortunately, crude estimates for the expected dimension of the zero
locus $\mathcal{Z}$ arising in this context appear to be negative.
For instance, take $X$ to be an abelian surface in the following:
\begin{thm}
\label{thm dimESTIMATE}Let $X$ be a surface and $L=\mathcal{O}_{X}(D)$
be an ample line bundle on $X$. Then, for $n$ sufficiently large,
the expected dimension of the zero locus of the normal function $\nu_{\zeta}$
attached to the triple $(X,L^{\otimes n},\zeta)$ as above is \[
h^{2,0}-h^{1,0}-n(D.K)-1\]
 where $K$ is the canonical divisor of $X$.\end{thm}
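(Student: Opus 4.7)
The plan is a direct dimension count. The normal function $\nu_\zeta$ is a holomorphic section of the Jacobian bundle $\J \to P = |L^{\otimes n}| \setminus \hat{X}$ whose fiber over $s$ is the intermediate Jacobian $J^1(X_s)$ of the smooth curve $X_s \subset X$ cut out by $s$. Since $J^1(X_s)$ is a complex torus of dimension equal to the genus $g = g(X_s)$, a section meeting the zero section transversely cuts out a locus of codimension $g$; the \emph{expected} dimension of $\mathcal{Z}(\nu_\zeta)$ is therefore $\dim P - g$, and the theorem reduces to evaluating $\dim P$ and $g$ in terms of the invariants $h^{i,0}(X)$, $D^2$, and $D \cdot K$.

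First I would compute $\dim P$. For $n$ sufficiently large, Serre vanishing gives $h^i(X, L^{\otimes n}) = 0$ for $i > 0$, so $h^0(L^{\otimes n}) = \chi(L^{\otimes n})$, and Riemann--Roch on a surface yields
\[
\chi(L^{\otimes n}) = \chi(\mathcal{O}_X) + \tfrac{1}{2}(nD)\cdot(nD - K) = \bigl(1 - h^{1,0} + h^{2,0}\bigr) + \tfrac{1}{2}\bigl(n^2 D^2 - n(D\cdot K)\bigr),
\]
where I have used $\chi(\mathcal{O}_X) = 1 - h^{0,1} + h^{0,2} = 1 - h^{1,0} + h^{2,0}$ by Hodge symmetry. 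Hence $\dim P = h^0(L^{\otimes n}) - 1 = h^{2,0} - h^{1,0} + \tfrac{1}{2}\bigl(n^2 D^2 - n(D\cdot K)\bigr)$.

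Next, by the adjunction formula, the smooth curve $X_s \in |L^{\otimes n}|$ of class $nD$ has genus
\[
g = 1 + \tfrac{1}{2}(K + nD)\cdot nD = 1 + \tfrac{1}{2}\bigl(n(D\cdot K) + n^2 D^2\bigr),
\]
and subtracting the two expressions gives $\dim P - g = h^{2,0} - h^{1,0} - n(D\cdot K) - 1$, as claimed.

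The only substantive point --- and the reason for the qualifier ``expected'' --- is transversality of $\nu_\zeta$ to the zero section. At a point $s_0 \in \mathcal{Z}(\nu_\zeta)$ the differential of $\nu_\zeta$ factors as
\[
T_{s_0} P \xrightarrow{\mathrm{KS}} H^0(X_{s_0}, N_{X_{s_0}/X}) \xrightarrow{\,\cup\, \zeta|_{X_{s_0}}\,} H^1(X_{s_0}, \mathcal{O}_{X_{s_0}}) = T_0 J^1(X_{s_0}),
\]
and the naive codimension-$g$ bound is achieved precisely when this composition is surjective. For $n$ large the Kodaira--Spencer map is surjective (this is where ``$n$ sufficiently large'' is used in an essential way), and surjectivity of the second arrow is a genericity statement on the restricted class $\zeta|_{X_{s_0}}$ (compare Remark~\ref{rem BOUND}). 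The hard part would be to upgrade this to a genuine transversality theorem --- which one would not expect without further hypotheses on $(X,\zeta)$, since the quantity $h^{2,0} - h^{1,0} - n(D\cdot K) - 1$ is frequently negative and the zero locus may simply be empty --- but for the statement as given one only needs the formal count above.
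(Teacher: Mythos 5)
Your proposal is correct and follows exactly the route the paper sketches: the "expected dimension" is $\dim|L^{\otimes n}| - g$, with the first term computed by Riemann--Roch plus Serre vanishing and the second by adjunction, and your arithmetic reproduces $h^{2,0}-h^{1,0}-n(D\cdot K)-1$. Your closing remark on transversality is a sensible gloss on what the paper compresses into "Griffiths's horizontality is trivial in this setting," and is consistent with the paper's own caveat that the formal count is crude and frequently negative.
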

\begin{proof}
(Sketch) Since Griffiths's horizontality is trivial in this setting,
computing the expected dimension boils down to computing the dimension
of $|L|$ and genus of a smooth hyperplane section of $X$ with respect
to $L$.\end{proof}
\begin{rem}
In Theorem \ref{thm dimESTIMATE}, we construct $\nu_{\zeta}$ from
a choice of lift to Deligne cohomology (or an algebraic cycle) to
get an element of $ANF(P,\H)$. But this is disingenuous, since we
are starting with a Hodge class. It is more consistent to work with
$\nu_{\zeta}\in ANF(P,\H)/J^{1}(X)$ (as in equation \eqref{eqn nf2}),
and then the dimension estimate improves by $\dim(J^{1}(X))=h^{1,0}$
to $h^{2,0}-n(D.K)-1$. Notice that this salvages at least the abelian
surface case (though it is still a crude estimate). For surfaces of
general type, one is still in trouble without more information, like
the constant $C$ in Remark \eqref{rem BOUND}.
\end{rem}
We will not attempt to describe the proof of Theorem \ref{conj CalgZL}
in general, but we will explain the following special case:
\begin{thm}
\label{thm BP2}\cite{BP2} Let $S$ be a smooth complex algebraic
variety which admits a projective completion $\bar{S}$ such that
$D=\bar{S}-S$ is a smooth divisor. Let $\H$ be a variation of pure
Hodge structure of weight $-1$ on $S$ and $\nu:S\to J(\H)$ be an
admissible normal function. Then, the zero locus $\mathcal{Z}$ of
$\nu$ is an complex algebraic subvariety of $S$.\end{thm}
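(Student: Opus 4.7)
The plan is to prove algebraicity of $\mathcal{Z}$ in three steps: analyticity of $\mathcal{Z}$ in $S$; extension of $\mathcal{Z}$ across $D$ to a closed analytic subvariety of $\bar{S}$; and algebraicity via Chow's theorem. The first step is immediate since $\nu$ is a holomorphic section of $J(\H)\to S$ and the zero section is a complex submanifold, so $\mathcal{Z}=\nu^{-1}(0)$ is a closed analytic subvariety of $S$. The third step is also immediate once $\bar{\mathcal{Z}}$ is known to be analytic in the smooth projective variety $\bar{S}$: Chow makes it algebraic, and $\mathcal{Z}=\bar{\mathcal{Z}}\cap S$ is a Zariski open subset of an algebraic set. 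Thus the heart of the matter is the analytic extension across the smooth divisor $D$.

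For this I work locally near a point $p\in D$. Fix a polydisk $U\cong\Delta^n\subset\bar{S}$ about $p$ with coordinates $(s_1,\dots,s_n)$ so that $D\cap U=\{s_1=0\}$. After a finite base change ramified along $D$, assume the monodromy $T$ of $\H$ around $D$ is unipotent with $N=\log T$. Let $\H_e$ be the Deligne canonical extension to $U$, with extended Hodge bundles $\F^\bullet_e$. The admissible VMHS
\begin{equation*}
0\to\H\to\V\to\ZZ_S(0)\to 0
\end{equation*}
corresponding to $\nu$ yields, by Definition \ref{def admissibility}(I), a global holomorphic lift $\nu_F\in\Gamma(U,\V_e)$ of $1\in\QQ_U(0)$ whose restriction to $U\cap S$ lies in $F^0\V$, together with a rational lift $\nu_\QQ$ of $1$ in the limit MHS $(\psi_{s_1}\V)_p$ with $N\nu_\QQ\in M_{-2}(\psi_{s_1}\H)$. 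A point $s\in U\cap S$ lies in $\mathcal{Z}$ precisely when there is a local branch $\gamma$ of the integral lattice $\HH_\ZZ$ with $\nu_F(s)-\gamma(s)\in F^0\H_s$. For each fixed branch this is the vanishing locus of a holomorphic section of $\H_e/F^0_e\H_e$, which by the nilpotent orbit theorem applied to $\V$ extends holomorphically to all of $U$ and so cuts out a closed analytic subvariety of $U$. The union of these analytic loci, as $\gamma$ ranges over the relevant integral translates, should realize $\bar{\mathcal{Z}}\cap U$ as a closed analytic subset of $U$.

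The main obstacle, and the real content of the theorem, is the local finiteness of this union of analytic loci. Verifying it requires the full multivariable asymptotic apparatus of Cattani-Kaplan-Schmid, combined with Kashiwara's admissibility theory. Concretely, given a sequence $s^{(k)}\to p$ with $\nu(s^{(k)})=0$, one must show that the untwisted integral lifts $\exp\bigl(-(\log s_1^{(k)}/2\pi i)N\bigr)\gamma^{(k)}$ stay in a compact subset of the fiber of $\tilde{\H}$ at $p$: the growth bounds on $\nu_F$ coming from admissibility, transported through the $SL_2$-orbit theorem and the polarization into Hodge-norm bounds on the $\gamma^{(k)}$, combined with the discreteness of $\HH_\ZZ$, are what force only finitely many $\gamma$ to contribute on any relatively compact neighborhood of $p$. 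Once this finiteness is in place, stitching the local analytic extensions together and invoking Chow delivers the algebraicity of $\mathcal{Z}$.
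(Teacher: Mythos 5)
Your steps 1 and 3 are fine, and you have correctly located the difficulty in the extension of $\mathcal{Z}$ across $D$; but that step, as written, contains two gaps, and the second one is exactly where the entire content of the theorem lives. First, the claim that for a fixed integral branch $\gamma$ the locus $\{s:\nu_F(s)-\gamma(s)\in F^0\H_s\}$ extends holomorphically to all of $U$ ``by the nilpotent orbit theorem'' is unjustified: a flat integral section $\gamma$ extends to a holomorphic section of the canonical extension $\H_e$ only when it is monodromy invariant. For $T\gamma\neq\gamma$ the branch locus is not even a well-defined subset of $U\cap S$ (it lives on sectors of the universal cover, and the monodromy permutes the branches), so one cannot speak of its analytic closure branch by branch. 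Second, the local finiteness --- the heart of the matter, as you yourself say --- is not proved but only asserted to follow from ``growth bounds transported through the $SL_2$-orbit theorem into Hodge-norm bounds.'' Making that precise \emph{is} the theorem: the Hodge norm of the untwisted lift $\exp\bigl(-(\log s_1^{(k)}/2\pi i)N\bigr)\gamma^{(k)}$ is measured against a degenerating metric as $s^{(k)}\to p$, and extracting a uniform bound from admissibility requires exactly the asymptotic analysis you have left implicit.

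The paper's proof ($\S4.4$) runs on a different mechanism that bypasses both issues. One observes that $\mathcal{Z}$ is precisely the locus where the Deligne grading $Y_{(\F,\W)}$ of the extension $0\to\H\to\V\to\ZZ(0)\to0$ is integral. The mixed $SL_2$-orbit theorem of \cite{Pe2}, together with Deligne's grading $Y=Y(N,Y_{(\hat{F}_\infty,M)})$ of Theorem \ref{thm DELIGNEresult}, gives $\lim_{{\rm Im}(z)\to\infty}Y_{(F(z),W)}=Y$; hence if $\mathcal{Z}$ accumulates at $p\in D$ then $Y$ itself is integral, and an explicit computation with the local normal form $F(z)=e^{zN}e^{\Gamma(s)}.F_{\infty}$ shows that near $p$ the zero locus is cut out by the single holomorphic equation ${\rm Ad}(e^{\Gamma(s)})Y=Y$ on a neighborhood of $p$ in $\bar{S}$ (with $F_\infty$, hence $Y$, depending holomorphically on the coordinates along the smooth divisor $D$). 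This produces the analytic extension in one stroke, with no union over branches and no separate finiteness argument. To repair your approach you would need to show that accumulation of $\mathcal{Z}$ at $p$ forces the contributing branches to be, up to a bounded and hence finite ambiguity, monodromy invariant --- which is essentially equivalent to the convergence statement for the Deligne grading that the paper proves.
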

\begin{rem}
This result was obtained contemporaneously by Morihiko Saito in \cite{S5}.

In analogy with the proof of Theorem \ref{thm CDK} on the algebraicity
of the Hodge locus, which depends heavily on the several variable
$SL_{2}$-orbit theorem for nilpotent orbits of pure Hodge structure
\cite{CKS1}, the proof of Theorem \ref{conj CalgZL} depends upon
the corresponding result for nilpotent orbits of mixed Hodge structure.
For simplicity of exposition, we will now review the $1$-variable
$SL_{2}$-orbit theorem in the pure case (which is due to Schmid \cite{Sc})
and a version of the $SL_{2}$-orbit theorem in the mixed case \cite{Pe2}
sufficient to prove Theorem \ref{thm BP2}. For the proof of Theorem
\ref{conj CalgZL}, we need the full strength of the several variable
$SL_{2}$-orbit theorem of Kato, Nakayama and Usui \cite{KNU}.
\end{rem}

\subsection{The classical nilpotent and $SL_{2}$ orbit theorems}

To outline the proof of Theorem \ref{thm BP2}, we now recall the
theory of degenerations of Hodge structure: Let $\H$ be a variation
of pure Hodge structure of weight $k$ over a simply connected complex
manifold $S$. Then, via parallel translation back to a fixed reference
fiber $H=H_{s_{o}}$ we obtain a period map \begin{equation}\label{eqn periodmap}\varphi : S \to \mathcal{D} \\
\end{equation} where $\D$ is Griffiths's classifying space of pure Hodge structures
on $H$ with fixed Hodge numbers $\{h^{p,k-p}\}$ which are polarized
by the bilinear form $Q$ of $H$. The set $\D$ is a complex manifold
upon which the Lie group \[
G_{\RR}={\rm Aut}_{\RR}(Q)\]
acts transitively by biholomorphisms, and hence $\D\cong G_{\RR}/G_{\RR}^{F_{o}}$
where $G_{\RR}^{F_{o}}$ is the isotropy group of $F_{o}\in\D$. The
compact dual of $\D$ is the complex manifold \[
\check{\D}\cong G_{\mathbb{C}}/G_{\mathbb{C}}^{F_{o}}\]
where $F_{o}$ is any point in $\D$. (In general, $F=F^{\bullet}$
denotes a Hodge filtration.) If $S$ is not simply connected, then
the period map (\ref{eqn periodmap}) is replaced by \begin{equation}\label{eqn pureperiodmap}\varphi : S \to \Gamma \backslash \D \\
\end{equation} where $\Gamma$ is the monodromy group of $\HH\to S$ acting on the
reference fiber $H$.

For variations of Hodge structure of geometric origin, $S$ will typically
be a Zariski open subset of a smooth projective variety $\bar{S}$.
By Hironaka's resolution of singularities theorem, we can assume $D=\bar{S}-S$
to be a divisor with normal crossings. The period map (\ref{eqn pureperiodmap})
will then have singularities at the points of $D$ about which $\HH$
has non-trivial local monodromy. A precise local description of the
singularities of the period map of a variation of Hodge structure
was obtained by Schmid \cite{Sc}: Let $\varphi:(\Delta^{*})^{r}\to\Gamma\backslash\D$
be the period map of variation of pure polarized Hodge structure over
the product of punctured disks. First, one knows that $\varphi$ is
locally liftable with quasi-unipotent monodromy. After passage to
a finite cover, we therefore obtain a commutative diagram \begin{equation}
\xymatrix{U^r \ar [r]_{F} \ar [d] & \D \ar [d] \\ (\Delta^*)^r \ar [r]_{\varphi} & \Gamma \backslash \D } \\
\end{equation} where $U^{r}$ is the $r$-fold product of upper half-planes and
$U^{r}\to(\Delta^{*})^{r}$ is the covering map \[
s_{j}=e^{2\pi iz_{j}},\qquad j=1,\dots,r\]
with respect to the standard Euclidean coordinates $(z_{1},\dots,z_{r})$
on $U^{r}\subset\CC^{r}$ and $(s_{1},\dots,s_{r})$ on $(\Delta^{*})^{r}\subset\CC^{r}$.

Let $T_{j}=e^{N_{j}}$ denote the monodromy of $\H$ about $s_{j}=0$.
Then, \[
\psi(z_{1},\dots,z_{r})=e^{-\sum_{j}\, z_{j}N_{j}}.F(z_{1},\dots,z_{r})\]
is a holomorphic map from $U^{r}$ into $\check{\D}$ which is invariant
under the transformation $z_{j}\mapsto z_{j}+1$ for each $j$, and
hence drops to a map $(\Delta^{*})^{r}\to\check{\D}$ which we continue
to denote by $\psi$.
\begin{defn}
Let $\D$ be a classifying space of pure Hodge structure with associated
Lie group $G_{\RR}$. Let $\mathfrak{g}_{\RR}$ be the Lie algebra
of $G_{\RR}$. Then, a holomorphic, horizontal map $\theta:\CC^{r}\to\check{\D}$
is a nilpotent orbit if 

(a) there is a constant $\alpha>0$ such that $\theta(z_{1},\dots,z_{r})\in\D$
if ${\rm Im}(z_{j})>\alpha$ $\forall\, j$; and

(b) there exist commuting nilpotent endomorphisms $N_{1},\dots,N_{r}\in\mathfrak{g}_{\RR}$
and a point $F\in\check{\D}$ such that $\theta(z_{1},\dots,z_{r})=e^{\sum_{j}\, z_{j}N_{j}}.F$. \end{defn}
\begin{thm}
\emph{(Nilpotent Orbit Theorem, \cite{Sc})} Let $\varphi:(\Delta^{*})^{r}\to\Gamma\backslash\D$
be the period map of a variation of pure Hodge structure of weight
$k$ with unipotent monodromy. Let $d_{\D}$ be a $G_{\RR}$-invariant
distance on $\D$. Then, 

(a) $F_{\infty}=\lim_{s\to0}\,\psi(s)$ exists, i.e. $\psi(s)$ extends
to a map $\Delta^{r}\to\check{\D}$; 

(b) $\theta(z_{1},\dots,z_{r})=e^{\sum_{j}\, z_{j}N_{j}}.F_{\infty}$
is a nilpotent orbit; and

(c) there exist constants $C$, $\alpha$ and $\beta_{1},\dots,\beta_{r}$
such that if ${\rm Im}(z_{j})>\alpha$ $\forall\, j$ then $\theta(z_{1},\dots,z_{r})\in\D$
and \[
d_{\D}(\theta(z_{1},\dots,z_{r}),F(z_{1},\dots,z_{r}))<C\sum_{j}\,{\rm Im}(z_{j})^{\beta_{j}}e^{-2\pi{\rm Im}(z_{j})}.\]
\end{thm}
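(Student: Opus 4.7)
The proof splits naturally along the three assertions, and the logical order is (a) $\Rightarrow$ (c) $\Rightarrow$ (b). The crucial technical ingredient, from which almost everything flows, is a Schwarz-type curvature lemma for horizontal maps into $\D$.

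\textbf{Stage 1 (extension of $\psi$, part (a)).} First I would establish an a priori growth bound on the lifted period map $F:U^r\to\D$. Griffiths's curvature computation shows that, along horizontal tangent directions, the $G_{\RR}$-invariant metric on $\D$ has holomorphic sectional curvature bounded above by a negative constant. An Ahlfors--Schwarz argument, applied to $F$ restricted to lines $z_j\mapsto z_j+it$ in $U^r$, then bounds $d_{\D}(F(z),F(z'))$ by a constant multiple of the Poincar\'e distance on $U^r$. Concretely, $F(z)$ has moderate growth as $\Im(z_j)\to\infty$: in any local chart on $\check{\D}$, its coordinates grow at most polynomially in the $\Im(z_j)$. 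Since each $N_j$ is nilpotent, $e^{-\sum z_jN_j}$ is polynomial in the $z_j$, so $\psi(z)=e^{-\sum z_jN_j}F(z)$ also has polynomial growth in $\Im(z_j)$. But $\psi$ is invariant under $z_j\mapsto z_j+1$, so it descends to a holomorphic map $(\Delta^{*})^r\to\check{\D}$, and the polynomial growth in $\Im(z_j)=-\tfrac{1}{2\pi}\log|s_j|$ translates to bounds of the form $|s_j|^{-\epsilon}$ for every $\epsilon>0$ in any projective embedding $\check{\D}\hookrightarrow\PP^N$ (after choosing bounded homogeneous representatives). Riemann's removable singularities theorem, applied one coordinate at a time, then produces the holomorphic extension $\psi:\Delta^r\to\check{\D}$, and we set $F_\infty=\psi(0)$.

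\textbf{Stage 2 (quantitative comparison, part (c)).} Having extended $\psi$, I would expand it as a convergent power series on $\Delta^r$:
\[
\psi(s)=F_\infty+\sum_{|\alpha|\geq 1}a_\alpha s^\alpha,
\]
with coefficients $a_\alpha$ in the tangent space of $\check{\D}$ at $F_\infty$ (working in a local chart). Then
\[
F(z)-\theta(z)=e^{\sum z_jN_j}\bigl(\psi(s)-F_\infty\bigr).
\]
The nilpotent prefactor contributes a polynomial factor in the $\Im(z_j)$, while the bracketed term is $O(\sum_j|s_j|)=O(\sum_j e^{-2\pi\Im(z_j)})$. Measuring in the $G_{\RR}$-invariant distance and absorbing the polynomial factors into constants $\beta_j$ gives the estimate claimed in (c).

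\textbf{Stage 3 (the orbit lies in $\D$, part (b)).} Horizontality of $\theta$ is automatic: horizontality of $F$, written in terms of $\psi$, forces the relations $N_j F_\infty^p\subset F_\infty^{p-1}$ (take the limit $s\to 0$ of the transversality condition on $F$), and then $\theta$ is manifestly horizontal by its definition as the orbit of $F_\infty$ under commuting horizontal flows. That $\theta(z)\in\D$ for $\Im(z_j)\gg 0$ follows from (c): $\D$ is open in $\check{\D}$, $F(z)\in\D$ by hypothesis, and the estimate of (c) shows $d_{\D}(\theta(z),F(z))\to 0$, so $\theta(z)$ eventually enters $\D$.

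\textbf{Main obstacle.} The heart of the argument is Stage 1, and within it the curvature/Schwarz lemma giving moderate growth of $F$. Without this, one has no handle on the behavior of $\psi$ near $s=0$, and the removable singularity step collapses. The Riemann extension and the computation in Stage 2 are comparatively formal, and Stage 3 is essentially a corollary of Stage 2. The delicate combinatorial point hidden in Stage 1 is keeping track of how polynomial growth in $\Im(z_j)$ interacts with the multi-variable $\mathbb{Z}^r$-periodicity to force genuine boundedness on $(\Delta^{*})^r$; one must either argue coordinate-wise (freezing all but one $s_j$ and invoking the one-variable extension, then iterating) or embed $\check{\D}$ projectively and apply Riemann extension to the homogeneous coordinates.
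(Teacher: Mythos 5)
The paper itself gives no proof of this statement---it is quoted directly from Schmid [Sc]---so your sketch can only be measured against Schmid's argument, whose overall architecture (curvature/Schwarz lemma $\Rightarrow$ moderate growth of $\psi$ $\Rightarrow$ Riemann extension, then power-series comparison of $F$ with $\theta$) you have correctly reproduced. Stage 1 is essentially right, modulo the step you compress into the word ``Concretely'': passing from the bound $d_{\D}(F(z),F(i,\dots,i))=O(\sum_j\log {\rm Im}(z_j))$ to polynomial growth of coordinates uses the comparison between the invariant metric on $\D\cong G_{\RR}/V$ and operator norms on $G_{\RR}$, and extending a map into the projective variety $\check{\D}$ (rather than into affine space) from bounds on homogeneous coordinates requires the usual division by the coordinate of lowest vanishing order; both deserve a sentence.

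The genuine gap is in Stages 2--3, in the derivation of ``$\theta(z)\in\D$''. First, there is a circularity: $d_{\D}$ is a distance on $\D$, so the inequality in (c) is not even meaningful until one knows $\theta(z)\in\D$; you cannot then deduce membership in $\D$ ``from (c)''. Second, and more seriously, the argument ``$\D$ is open in $\check{\D}$, $F(z)\in\D$, and $\theta(z)$ is close to $F(z)$, so $\theta(z)\in\D$'' fails as stated, because $F(z)$ itself tends to the boundary $\partial\D$ as ${\rm Im}(z_j)\to\infty$: openness gives a neighborhood of each $F(z)$ inside $\D$, but the radius of that neighborhood shrinks to zero. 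What is actually needed---and what is the hard analytic core of Schmid's proof---is quantitative: the second Hodge--Riemann positivity at $F(z)$ degenerates at worst polynomially in the ${\rm Im}(z_j)$ (since $F(z)=g(z).F(i,\dots,i)$ with $\|g(z)\|^{\pm1}$ polynomially bounded, by Stage 1), while $|\theta(z)-F(z)|$ in a chart decays like $({\rm Im}\,z)^{\beta}e^{-2\pi{\rm Im}\,z}$; the exponential beats the polynomial, whence $\theta(z)$ satisfies the open positivity condition for ${\rm Im}(z_j)\gg0$. Only after this can the chart estimate be converted into the $d_{\D}$-estimate of (c), with the polynomially degrading comparison constants absorbed into the $\beta_j$. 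You should also record the reduction, via the joint $T_j$-equivariance of $F$ and $\theta$ and the $G_{\RR}$-invariance of $d_{\D}$, to ${\rm Re}(z_j)\in[0,1]$, without which ``polynomial in $z_j$'' is not ``polynomial in ${\rm Im}(z_j)$''.
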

\begin{rem}
Another way of stating part $(a)$ of the Nilpotent Orbit Theorem
is the Hodge bundles $\mathcal{F}^{p}$ of $\H_{\mathcal{O}}$ extend
to a system of holomorphic subbundles of the canonical extension of
$\H_{\mathcal{O}}$. Indeed, recall from $\S2.7$ that one way of
constructing a model of the canonical extension in the unipotent monodromy
case is to take a flat, multivalued frame $\{\sigma_{1},\dots,\sigma_{m}\}$
of $\HH_{\ZZ}$ and twist it to form a single valued holomorphic frame
$\{\tilde{\sigma}_{1},\dots,\tilde{\sigma}_{m}\}$ over $(\Delta^{*})^{r}$
where $\tilde{\sigma}_{j}=e^{-\frac{1}{2\pi i}\sum_{j}\,\log(s_{j})N_{j}}\sigma_{j}$,
and then declaring this twisted frame to define the canonical extension.
\end{rem}
Let $N$ be a nilpotent endomorphism of a finite dimensional vector
space over a field $k$. Then, $N$ can be put into Jordan canonical
form, and hence (by considering a Jordan block) it follows that there
is a unique, increasing filtration $\mathsf{W}(N)$ of $V$, such
that 

(a) $N(\mathsf{W}(N)_{j})\subseteq\mathsf{W}(N)_{j-2}$ and

(b) $N^{j}:Gr_{j}^{\mathsf{W}(N)}\to Gr_{-j}^{\mathsf{W}(N)}$ is
an isomorphism

for each index $j$. If $\ell$ is an integer then $(\mathsf{W}(N)[\ell])_{j}=\mathsf{W}(N)_{j+\ell}$.
\begin{thm}
\label{thm schmid}Let $\varphi:\Delta^{*}\to\Gamma\backslash\D$
be the period map of a variation of pure Hodge structure of weight
$k$ with unipotent monodromy $T=e^{N}$. Then, the limit Hodge filtration
$F_{\infty}$ of $\varphi$ pairs with the \emph{weight monodromy
filtration} $M(N):=\mathsf{W}(N)[-k]$ to define a mixed Hodge structure
relative to which $N$ is a $(-1,-1)$-morphism.\end{thm}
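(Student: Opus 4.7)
The strategy is classical: reduce via the Nilpotent Orbit Theorem to the associated nilpotent orbit $\theta(z)=e^{zN}.F_\infty$, and then use Schmid's $SL_2$-orbit theorem to compare this nilpotent orbit to an orbit of a real representation of $SL_2(\RR)$, from which the Hodge-theoretic structure on the graded pieces of $M(N)$ can be read off.

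First, I would observe that the existence and uniqueness of the filtration $\mathsf{W}(N)$ is a piece of pure linear algebra: given a nilpotent endomorphism $N$, decomposing $V$ into Jordan blocks and declaring the standard basis of each block to sit in weights $-(j-1),-(j-3),\ldots,(j-1)$ (for a block of size $j$) produces the unique increasing filtration satisfying conditions (a) and (b). Set $M(N):=\mathsf{W}(N)[-k]$, which is centered at $k$. The horizontality of the period map $F$ implies $\frac{d}{dz}F^p\subset F^{p-1}$, and passing to the limit (via the extension from the nilpotent orbit theorem) gives $NF_\infty^p\subset F_\infty^{p-1}$. Thus, \emph{if} $(F_\infty,M(N))$ is a MHS of weight $k$, then $N$ shifts $F^\bullet_\infty$ by $-1$ and $M_\bullet$ by $-2$, so $N$ will automatically be a morphism $(F_\infty,M(N))\to (F_\infty,M(N))(-1)$ of MHS, i.e.\ a $(-1,-1)$-morphism.

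The core of the theorem is therefore the assertion that $F_\infty$ induces, on each $Gr^{M(N)}_\ell$, a pure Hodge structure of weight $k+\ell$, polarized in the primitive parts by $Q(\cdot,N^\ell\cdot)$. This is the hard step. I would prove it by invoking Schmid's $SL_2$-orbit theorem: after shifting $F_\infty$ by a carefully chosen element of $\exp(\mathfrak{g}_\CC)$ depending on $\mathrm{Im}(z)$, the nilpotent orbit $e^{zN}.F_\infty$ is asymptotic, as $\mathrm{Im}(z)\to\infty$, to an orbit $e^{zN}.\tilde F$ arising from a homomorphism $\rho:SL_2(\RR)\to G_\RR$ whose differential sends the standard nilpotent to $N$. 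The semisimple element $H=d\rho\bigl(\bigl(\begin{smallmatrix}1&0\\0&-1\end{smallmatrix}\bigr)\bigr)$ then acts on $V_\RR$ with integer eigenvalues, and its $\ell$-eigenspace maps isomorphically onto $Gr^{\mathsf{W}(N)}_\ell$, splitting $\mathsf{W}(N)$. Because $\tilde F$ is a point of $\D$, it defines an honest pure Hodge structure of weight $k$ on $V$, and the $H$-eigenspace decomposition is compatible with this Hodge structure, producing pure Hodge structures of the claimed weights on each $Gr^{M(N)}_\ell$. Finally, since the asymptotic approximation between $F_\infty$ and $\tilde F$ preserves the filtration $\mathsf{W}(N)$ modulo terms that vanish on the associated graded, the induced Hodge filtrations of $F_\infty$ and $\tilde F$ on $Gr^{M(N)}_\ell$ coincide, so $(F_\infty,M(N))$ is a MHS.

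The main obstacle is the $SL_2$-orbit comparison itself: one must produce the semisimple splitting $H$ of $M(N)$ together with the auxiliary nilpotent $N^+$ completing $(N,H,N^+)$ to an $\mathfrak{sl}_2$-triple in $\mathfrak{g}_\RR$, and show that $\tilde F$ actually lies in $\D$. This is delicate because $F_\infty\in\check\D\setminus\D$ in general, and the construction of $\tilde F$ proceeds by solving an ODE on the space of filtrations, with the required convergence controlled by the distance estimate in part (c) of the Nilpotent Orbit Theorem. Granting this (the content of Schmid's $SL_2$-orbit theorem in one variable), the rest of the argument — verifying the Hodge and weight filtration axioms on $Gr^{M(N)}_\bullet$, the polarization on primitive parts, and the $(-1,-1)$-morphism property of $N$ — is a formal consequence.
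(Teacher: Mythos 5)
Your outline is correct and follows the same route the paper takes: the paper likewise reduces Theorem \ref{thm schmid}, via the Nilpotent Orbit Theorem, to Schmid's $SL_{2}$-orbit theorem, whose associated splitting of $(\hat{F},M(N))$ over $\RR$ yields the pure structures on $Gr^{M(N)}_{\bullet}$ and the $(-1,-1)$-property of $N$ as formal consequences. The only points to watch are bookkeeping: with $M(N)=\mathsf{W}(N)[-k]$ the piece $Gr^{M(N)}_{\ell}$ is pure of weight $\ell$ (equivalently $Gr^{\mathsf{W}(N)}_{\ell}$ has weight $k+\ell$), and the honest polarized Hodge structure supplied by the $SL_{2}$-orbit is the point $e^{iN}.\hat{F}\in\D$ rather than $\hat{F}$ itself, which in general still lies only in $\check{\D}$.
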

\begin{rem}
The limit Hodge filtration $F_{\infty}$ depends upon the choice of
local coordinate $s$, or more precisely on the value of $(ds)_{0}$.
Therefore, unless one has a preferred coordinate system (e.g. if the
field of definition matters), in order to extract geometric information
from the limit mixed Hodge structure $H_{\infty}=(F_{\infty},M(N))$
one usually has to pass to the mixed Hodge structure induced by $H_{\infty}$
on the kernel or cokernel of $N$. In particular, if $X\to\Delta$
is a semistable degeneration the the local invariant cycle theorem
asserts that we have an exact sequence \[
H^{k}(X_{0})\to H_{\infty}\stackrel{N}{\to}H_{\infty}\]
 where the map $H^{k}(X_{0})\to H_{\infty}$ is obtained by first
including the reference fiber $X_{s_{o}}$ into $X$ and then retracting
$X$ onto $X_{0}$. 
\end{rem}
The proof of Theorem \ref{thm schmid} depends upon Schmid's $SL_{2}$-orbit
theorem. Informally, this result asserts that any 1-parameter nilpotent
orbit is asymptotic to a nilpotent orbit arising from a representation
of $SL_{2}(\RR)$. In order to properly state Schmid's results we
need to discuss splittings of mixed Hodge structures.
\begin{thm}
\label{thm Ipq's} \emph{(Deligne, \cite{De1})} Let $(F,W)$ be a
mixed Hodge structure on $V$. Then, there exists a unique, functorial
bigrading \[
V_{\CC}=\bigoplus_{p,q}\, I^{p,q}\]
 such that 

(a) $F^{p}=\bigoplus_{a\geq p}\, I^{a,b}$; 

(b) $W_{k}=\bigoplus_{a+b\leq k}\, I^{a,b}$; 

(c) $\overline{I^{p,q}}=I^{q,p}\mod\bigoplus_{r<q,s<p}\, I^{r,s}$.
\end{thm}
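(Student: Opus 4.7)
The plan is to construct the bigrading explicitly and then verify (a)--(c) and uniqueness, with the pure Hodge decompositions on $\mathrm{Gr}^W_k V_\CC$ serving as the ``model'' that we lift to $V_\CC$ itself.

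First I would define, for each pair $(p,q)$, the explicit subspace
\[
I^{p,q} := (F^p\cap W_{p+q})\cap\left(\overline{F^q\cap W_{p+q}} \;+\; \sum_{j\geq 1}\overline{F^{q-j}\cap W_{p+q-j-1}}\right).
\]
The intuition is that the first factor enforces (a) and the ``$\leq$'' half of (b) by construction; the second factor is the correct ``conjugate'' condition in the mixed setting, where strict conjugate-Hodge position holds only modulo lower weights. In the pure case ($W_{k}=V$, $W_{k-1}=0$) the correction terms vanish and one recovers the classical $H^{p,q}=F^p\cap\overline{F^q}$.

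Next I would prove $V_\CC=\bigoplus_{p,q} I^{p,q}$ by induction on the length of $W_\bullet$. The key point is that the composition
\[
I^{p,q}\hookrightarrow F^p\cap W_{p+q}\twoheadrightarrow \mathrm{Gr}^W_{p+q}V_\CC
\]
lands in $F^p\mathrm{Gr}^W_{p+q}\cap\overline{F^q\mathrm{Gr}^W_{p+q}}$ (using that the correction sum on the right-hand factor is killed modulo $W_{p+q-1}$), and I would show it is a bijection onto this pure Hodge piece of weight $p+q$. The kernel of projection, $I^{p,q}\cap W_{p+q-1}$, is then identified by the inductive hypothesis applied to $W_{p+q-1}$ (equipped with the restricted $W$ and $F$, still a MHS). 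Spanning and directness both reduce to the corresponding statements for pure HS on each $\mathrm{Gr}^W_k$ together with the induction.

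Then (a) and (b) are immediate from the definition plus the bigrading property: containment $\bigoplus_{a\geq p}I^{a,b}\subseteq F^p$ is obvious, and equality follows because both sides project onto $F^p\mathrm{Gr}^W_k$ with the same kernel by induction; likewise for (b). Property (c) follows by taking the complex conjugate of the defining formula: $\overline{I^{p,q}}$ lies in $(\overline{F^p}\cap W_{p+q})\cap(F^q\cap W_{p+q}+\dots)$, and a short computation shows this differs from $I^{q,p}$ only by terms in $W_{p+q-1}$, which by (b) lie in $\bigoplus_{r+s<p+q}I^{r,s}$; refining with the explicit ``$j\geq 1$'' correction terms gives precisely the bound $r<q,\,s<p$. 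For uniqueness, any bigrading satisfying (a)--(c) must induce the canonical Hodge decomposition on each $\mathrm{Gr}^W_k$ (by uniqueness of Hodge decomposition in the pure case, which is forced by (a) and the complex-conjugation condition on graded pieces); then (c) recursively determines the lift from $\mathrm{Gr}^W_{p+q}$ to $W_{p+q}\subset V_\CC$, because the ambiguity at each stage lies in $\bigoplus_{r<q,s<p}I^{r,s}$ which is already fixed by the induction. Functoriality follows from uniqueness, since the explicit formula is manifestly preserved by morphisms of MHS.

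The main obstacle will be the inductive step establishing that the map $I^{p,q}\to F^p\mathrm{Gr}^W_{p+q}\cap\overline{F^q\mathrm{Gr}^W_{p+q}}$ is an \emph{isomorphism}: surjectivity requires lifting a class in the pure graded piece to an element of $F^p\cap W_{p+q}$ whose conjugate, after subtracting something in $\overline{F^q\cap W_{p+q}}$, lies in $\sum_{j\geq 1}\overline{F^{q-j}\cap W_{p+q-j-1}}$, and this requires a careful successive-approximation argument exploiting that $(F,W)$ induces a MHS on $W_{p+q-1}$. Once this is in place, the rest of the verification is formal.
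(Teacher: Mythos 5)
The paper does not prove this result; it simply cites Deligne's \emph{Th\'eorie de Hodge II} (and the statement is also proved in \cite{CKS1}, Thm.~2.13, with exactly your formula, since $\overline{F^q\cap W_{p+q}}=\overline{F^q}\cap W_{p+q}$ as $W$ is defined over $\QQ$). Your outline is the standard argument: the explicit $I^{p,q}$, induction on the length of $W_\bullet$, and the key lemma that projection to $Gr^W_{p+q}$ carries $I^{p,q}$ isomorphically onto the $(p,q)$-component of the pure Hodge structure there, with the successive-approximation lifting you flag being the genuinely nontrivial step. One small correction of phrasing: for the bijection claim the kernel $I^{p,q}\cap W_{p+q-1}$ must be shown to be \emph{zero} (not merely ``identified'' with something inductively), which is where the opposedness of $F$ and $\overline{F}$ on the lower graded pieces enters; with that understood, the proposal is correct and is essentially Deligne's proof.
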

In particular, if $(F,W)$ is a mixed Hodge structure on $V$ then
$(F,W)$ induces a mixed Hodge structure on $gl(V)\cong V\otimes V^{*}$
with bigrading \[
gl(V_{\CC})=\bigoplus_{r,s}\, gl(V)^{r,s}\]
 where $gl(V)^{r,s}$ is the subspace of $gl(V)$ which maps $I^{p,q}$
to $I^{p+r,q+s}$ for all $(p,q)$. In the case where $(F,W)$ is
graded-polarized, we have an analogous decomposition $\mathfrak{g}_{\CC}=\bigoplus_{r,s}\,\mathfrak{g}^{r,s}$
of the Lie algebra of $G_{\CC}(=Aut(V_{\CC},Q))$. For future use,
we define \begin{equation}\Lambda^{-1,-1}_{(F,W)} = \bigoplus_{r,s<0} gl(V)^{r,s} \\
\end{equation} and note that by properties $(a)$-$(c)$ of Theorem \ref{thm Ipq's}
\begin{equation}\lambda \in \Lambda_{(F,W)}^{-1,-1} \implies  I^{p,q}_{(e^{\lambda}.F,W)} = e^{\lambda}.I^{p,q}_{(F,W)}. \\
\end{equation} A mixed Hodge structure $(F,W)$ is split over $\RR$ if $\bar{I}^{p,q}=I^{q,p}$
for $(p,q)$. In general, a mixed Hodge structure $(F,W)$ is not
split over $\RR$. However, by a theorem of Deligne \cite{CKS1},
there is a functorial splitting operation \[
(F,W)\mapsto(\hat{F}_{\delta},W)=(e^{-i\delta}.F,W)\]
which assigns to any mixed Hodge structure $(F,W)$ a split mixed
Hodge structure $(\hat{F}_{\delta},W)$, such that 

(a) $\delta=\bar{\delta}$, 

(b) $\delta\in\Lambda_{(F,W)}^{-1,-1}$, and 

(c) $\delta$ commutes with all $(r,r)$-morphisms of $(F,W)$. 
\begin{rem}
$\Lambda_{(F,W)}^{-1,-1}=\Lambda_{(\hat{F}_{\delta},W)}^{-1,-1}$.
\end{rem}
A nilpotent orbit $\hat{\theta}(z)=e^{zN}.F$ is an $SL_{2}$-orbit
if there exists a group homomorphism $\rho:SL_{2}(\RR)\to G_{\RR}$
such that \[
\hat{\theta}(g.\sqrt{-1})=\rho(g).\hat{\theta}(\sqrt{-1})\]
for all $g\in SL_{2}(\RR)$. The representation $\rho$ is equivalent
to the data of an $sl_{2}$-triple $(N,H,N^{+})$ of elements in $G_{\RR}$
such that \[
[H,N]=-2N,\qquad[N^{+},N]=H,\qquad[H,N^{+}]=2N^{+}\]
We also note that, for nilpotent orbits of pure Hodge structure, the
statement that $e^{zN}.F$ is an $SL_{2}$-orbit is equivalent to
the statement that the limit mixed Hodge structure $(F,M(N))$ is
split over $\RR$ \cite{CKS1}.
\begin{thm}
\emph{($SL_{2}$-Orbit Theorem, \cite{Sc})} Let $\theta(z)=e^{zN}.F$
be a nilpotent orbit of pure Hodge structure. Then, there exists a
unique $SL_{2}$-orbit $\hat{\theta}(z)=e^{zN}.\hat{F}$ and a distinguished
real-analytic function \[
g(y):(a,\infty)\to G_{\RR}\]
(for some $a\in\RR$) such that: 

(a) $\theta(iy)=g(y).\hat{\theta}(iy)$ for $y>a$; and

(b) both $g(y)$ and $g^{-1}(y)$ have convergent series expansions
about $\infty$ of the form \[
g(y)=1+\sum_{k>0}\, g_{j}y^{-k},\qquad g^{-1}(y)=1+\sum_{k>0}\, f_{k}y^{-k}\]
 with $g_{k}$, $f_{k}\in\ker(\text{ad}\, N)^{k+1}$. 

Furthermore, the coefficients $g_{k}$ and $f_{k}$ can be expressed
in terms of universal Lie polynomials in the Hodge components of $\delta$
with respect to $(\hat{F},M(N))$ and $\text{ad}\, N^{+}$.\end{thm}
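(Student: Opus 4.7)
The plan is to first produce $\hat F$ by splitting the limit mixed Hodge structure $(F,M(N))$ over $\RR$, then to promote the resulting grading on $\mathfrak{g}_\RR$ to an $sl_2$-triple whose exponentiation gives the $SL_2$-orbit $\hat\theta$, and finally to extract $g(y)$ as the transition function $\theta(iy)\hat\theta(iy)^{-1}$, analyzing it through the ODE forced by Griffiths transversality.

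For the first step, apply Deligne's functorial splitting of Theorem \ref{thm Ipq's} to $(F,M(N))$ to obtain $\hat F = e^{-i\delta}.F$ with $\delta\in\Lambda^{-1,-1}_{(F,M(N))}\cap\mathfrak{g}_\RR$, and the associated $\RR$-split bigrading $V_\CC=\bigoplus I^{p,q}$. Let $H\in\mathfrak{g}_\RR$ act by the scalar $p+q-k$ on $I^{p,q}$; since $N$ is $(-1,-1)$ for the LMHS we get $[H,N]=-2N$ automatically. A Jacobson--Morozov argument internal to the real Lie algebra $\mathfrak{g}_\RR$, using semisimplicity of $\mathrm{ad}\,H$ on $\mathfrak{g}_\RR$ and the fact that $\mathrm{ad}\,N$ is nilpotent and commutes with the relevant weight pieces, produces a unique $N^+\in\mathfrak{g}_\RR^{1,1}$ with $[H,N^+]=2N^+$ and $[N^+,N]=H$. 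Integrating gives $\rho:SL_2(\RR)\to G_\RR$, and a direct check that $\hat F$ is stable under the isotropy $\rho(SO(2))$ shows $\hat\theta(z)=e^{zN}.\hat F$ is an $SL_2$-orbit.

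Next, set $g(y):=\theta(iy)\hat\theta(iy)^{-1}$; reality ($g(y)\in G_\RR$) follows from both $\theta(iy)$ and $\hat\theta(iy)$ lying in $\D=G_\RR/G_\RR^{F_o}$ for $y$ large. Horizontality of $\theta$ and the explicit $sl_2$-form of $\hat\theta$ together translate into an ODE $g'(y)=g(y)X(y)$ whose right-hand side has a formal expansion $X(y)\sim\sum_{k\geq 0}X_k y^{-k-2}$, where each $X_k$ is a universal Lie polynomial in $N^+$ and in the Hodge components $\delta^{-p,-q}$ of $\delta$. Solving formally with the ansatz $g(y)=1+\sum_{k>0}g_k y^{-k}$ and decomposing both sides under the semisimple action of $\mathrm{ad}\,H$ determines each $g_k$ recursively as an iterated Lie bracket in the same data, hence again as a universal Lie polynomial. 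A weight count using $[H,N]=-2N$ and the $\mathrm{ad}\,H$-eigenvalue bound on the recursion forces $g_k\in\ker(\mathrm{ad}\,N)^{k+1}$; inverting the ODE gives the analogous statement for $f_k$ in the expansion of $g^{-1}$.

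The main obstacle is genuine convergence of the formal series on some half-line $(a,\infty)$. This is the technical core of Schmid's original argument: one bootstraps from the nilpotent orbit theorem's distance estimate (bounding $d_\D(\theta(iy),\hat\theta(iy))$ on any Siegel set) to an a priori bound on $g(y)-1$ in a chosen norm on $\mathfrak{g}_\RR$, and then uses the $sl_2$-weight decomposition to decouple the ODE into finitely many scalar singular ODEs at infinity, each of which has a convergent solution by standard Frobenius-type analysis once $\mathrm{ad}\,N$ is known to be nilpotent and $\mathrm{ad}\,H$ to have integer eigenvalues. Uniqueness of the pair $(\hat\theta,g)$ then comes for free: $\hat F$ is the unique point of $\D$ for which $(\hat F,M(N))$ is $\RR$-split and $e^{zN}.\hat F$ is asymptotic to $e^{zN}.F$ in the strong sense that $g(y)\to 1$, while $N^+$ is uniquely pinned down inside $\mathfrak{g}_\RR^{1,1}$ by the $sl_2$ relations.
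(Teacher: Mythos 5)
First, a caveat: the paper does not prove this theorem --- it is quoted from Schmid \cite{Sc} (with the $\delta$-dependence of the coefficients coming from \cite{CKS1}) --- so there is no in-paper argument to compare yours against; I am assessing the sketch on its own terms. Its overall architecture (split the LMHS, build an $sl_{2}$-triple from the grading, study the transition function via a singular ODE at $\infty$, bootstrap convergence from the nilpotent orbit theorem) is the right shape. But the decisive gap is the very first step: you set $\hat{F}=e^{-i\delta}.F$, i.e.\ you identify the $SL_{2}$-orbit's filtration with Deligne's $\delta$-splitting of $(F,M(N))$. These are different objects. The paper's own remark immediately after the theorem is explicit: the filtration occurring in the $SL_{2}$-orbit is the \emph{$sl_{2}$-splitting} $\hat{F}=e^{-\xi}.F$, where $\xi$ is a nontrivial universal Lie polynomial in the Hodge components of $\delta$, and $\xi\neq i\delta$ in general. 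Both $e^{zN}.e^{-i\delta}F$ and $e^{zN}.e^{-\xi}F$ are $\RR$-split nilpotent orbits, hence $SL_{2}$-orbits, but only the latter satisfies (a)--(b): replacing $\hat{F}$ by $e^{\eta}.\hat{F}$ perturbs the transition element by $e^{iyN}e^{-\eta}e^{-iyN}=\exp\bigl(-\sum_{j}\tfrac{(iy)^{j}}{j!}(\text{ad}\, N)^{j}\eta\bigr)$, which grows polynomially in $y$ unless $\eta$ is severely constrained, destroying both the $y^{-k}$ expansion and the condition $g_{k}\in\ker(\text{ad}\, N)^{k+1}$. Determining the correct $\hat{F}$ --- equivalently the polynomials $\xi(\delta)$ --- is part of the content of the theorem and cannot be written down in advance by the linear-algebra splitting you use. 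This also makes your third paragraph circular: the claim that the ODE coefficients are universal Lie polynomials in the components of $\delta$ rests on having built $\delta$ into $\hat{F}$; with the correct $\hat{F}$ they involve $\xi$, whose expression in terms of $\delta$ is an output of the theorem, not an input.

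A secondary but real gap: $g(y):=\theta(iy)\hat{\theta}(iy)^{-1}$ is not well-defined, since $\theta(iy)$ and $\hat{\theta}(iy)$ are points of $\D\cong G_{\RR}/G_{\RR}^{F_{o}}$ rather than group elements; any transition element is determined only modulo the isotropy subgroup, and producing the \emph{distinguished} $g$ is part of what must be proved. Relatedly, the convergence step is deferred to ``standard Frobenius-type analysis,'' but the system one actually gets is nonlinear (through the adjoint action of $g$ on itself), and taming it via Siegel-set estimates and an inductive scheme is the analytic core of Schmid's proof; your uniqueness claim is fine, but it is the easy half.
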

\begin{rem}
The precise meaning of the statement that $g(y)$ is a distinguished
real-analytic function, is that $g(y)$ arises in a specific way from
the solution of a system of differential equations attached to $\theta$.
\end{rem}
$\vspace{-10mm}$
\begin{rem}
If $\theta$ is a nilpotent orbit of pure Hodge structures of weight
$k$ and $\hat{\theta}=e^{zN}.\hat{F}$ is the associated $SL_{2}$-orbit
then $(\hat{F},M(N))$ is split over $\RR$. The map $(F,M(N))\mapsto(\hat{F},M(N))$
is called the $sl_{2}$-splitting of $(F,M(N))$. Furthermore, $\hat{F}=e^{-\xi}.F$
where $\xi$ is given by universal Lie polynomials in the Hodge components
of $\delta$. In this way, one obtains an $sl_{2}$-splitting $(F,W)\mapsto(\hat{F},W)$
for any mixed Hodge structure $(F,W)$.
\end{rem}

\subsection{Nilpotent and $SL_{2}$ orbit theorems in the mixed case }

In analogy to the theory of period domains for pure HS, one can form
a classifying space of graded-polarized mixed Hodge structure $\M$
with fixed Hodge numbers. Its points are the decreasing filtrations
$F$ of the reference fiber $V$ which pair with the weight filtration
$W$ to define a graded-polarized mixed Hodge structure (with the
given Hodge numbers). Given a variation of mixed Hodge structure $\mathcal{V}$
of this type over a complex manifold $S$, one obtains a period map
\[
\phi:S\to\Gamma\backslash\M.\]
$\M$ is a complex manifold upon which the Lie group $G$, consisting
of elements of $GL(V_{\CC})$ which preserve $W$ and act by real
isometries on $Gr^{W}$, acts transitively. Next, let $G_{\CC}$ denote
the Lie group consisting of elements of $GL(V_{\CC})$ which preserve
$W$ and act by \emph{complex} isometries on $Gr^{W}$. Then, in analogy
with the pure case, the {}``compact dual'' $\check{\M}$ of $\M$
is the complex manifold \[
\check{\M}\cong G_{\CC}/G_{\CC}^{F_{o}}\]
 for any base point $F_{o}\in\M$. The subgroup $G_{\RR}=G\cap GL(V_{\RR})$
acts transitively on the real-analytic submanifold $\M_{\RR}$ consisting
of points $F\in\M$ such that $(F,W)$ is split over $\RR$.
\begin{example}
Let $\M$ be the classifying space of mixed Hodge structures with
Hodge numbers $h^{1,1}=h^{0,0}=1$. Then, $\M\cong\CC$.
\end{example}
The proof of Schmid's nilpotent orbit theorem depends critically upon
the fact that the classifying space $\D$ has negative holomorphic
sectional curvature along horizontal directions \cite{GS}. Thus,
although one can formally carry out all of the constructions leading
up to the statement of the nilpotent orbit theorem in the mixed case,
in light of the previous example it follows that one can not have
negative holomorphic sectional curvature in the mixed case, and hence
there is no reason to expect an analog of Schmid's Nilpotent Orbit
Theorem in the mixed case. Indeed, for this classifying space $\M$,
the period map $\varphi(s)=\exp(s)$ gives an example of a period
map with trivial monodromy which has an essential singularity at $\infty$.
Some additional condition is clearly required, and this is where admissibility
comes in.

In the geometric case of a degeneration of pure Hodge structure, Steenbrink
\cite{St} gave an alternative construction of the limit Hodge filtration
that can be extended to variations of mixed Hodge structure of geometric
origin \cite{SZ}. More generally, given an \emph{admissible} variation
of mixed Hodge structure $\V$ over a smooth complex algebraic variety
$S\subseteq\bar{S}$ such that $D=\bar{S}-S$ is a normal crossing
divisor, and any point $p\in D$ about which $\VV$ has unipotent
local monodromy, one has an associated nilpotent orbit $(e^{\sum_{j}\, z_{j}N_{j}}.F_{\infty},W)$
with limit mixed Hodge structure $(F_{\infty},M)$ where $M$ is the
\emph{relative weight filtration} of $N=\sum_{j}\, N_{j}$ and $W$.%
\footnote{Recall \cite{SZ} that in general the relative weight filtration $M=M(N,W)$
is the unique filtration (if it exists) such that $N(M_{k})\subset M_{k-2}$
and $M$ induces the monodromy weight filtration of $N$ on each $Gr_{i}^{W}$
(centered about $i$).%
} Furthermore, one has the following {}``group theoretic'' version
of the nilpotent orbit theorem: As in the pure case, a variation of
mixed Hodge structure $\V\to(\Delta^{*})^{r}$ with unipotent monodromy
gives a holomorphic map \[
\psi:(\Delta^{*})^{r}\to\check{\M}\]
\[
\mspace{20mu}\underline{z}\,\longmapsto\, e^{-\sum z_{j}N_{j}}F(\underline{z})\,,\]
and this extends to $\Delta^{r}$ if $\V$ is admissible. Let \[
\mathfrak{q}_{\infty}=\bigoplus_{r<0}\,\mathfrak{g}^{r,s}\]
 where $\mathfrak{g}_{\CC}={\rm Lie}(G_{\CC})=\oplus_{r,s}\,\mathfrak{g}^{r,s}$
relative to the limit mixed Hodge structure $(F_{\infty},M)$. Then
$\mathfrak{q}_{\infty}$ is a nilpotent Lie subalgebra of $\mathfrak{g}_{\CC}$
which is a vector space complement to the isotropy algebra $\mathfrak{g}_{\CC}^{F_{\infty}}$
of $F_{\infty}$. Consequently, there exists an open neighborhood
$\mathcal{U}$ of zero in $\mathfrak{g}_{\CC}$ such that\[
\mathcal{U}\to\check{\M}\]
 \[
u\mapsto e^{u}.F_{\infty}\]
is a biholomorphism, and hence after shrinking $\Delta^{r}$ as necessary
we can write \[
\psi(s)=e^{\Gamma(s)}.F_{\infty}\]
relative to a unique $\mathfrak{q}_{\infty}$-valued holomorphic function
$\Gamma$ on $\Delta^{r}$ which vanishes at $0$. Recalling the construction
of $\psi$ from the lifted period map $F$, it follows that \[
F(z_{1},\dots,z_{r})=e^{\sum_{j}\, z_{j}N_{j}}e^{\Gamma(s)}.F_{\infty}.\]
This is called the \emph{local normal form} of $\V$ at $p$ and will
be used in the calculations of $\S$5.4-5.

There is also a version of Schmid's $SL_{2}$-orbit theorem for admissible
nilpotent orbits. In the case of 1-variable and weight filtrations
of short length, the is due to the second author in \cite{Pe2}. More
generally, Kato, Nakayama and Usui proved a several variable $SL_{2}$-orbit
theorem with arbitrary weight filtration in \cite{KNU}. Despite the
greater generality of \cite{KNU}, in this paper we are going to stick
with the version of the $SL_{2}$-orbit theorem from \cite{Pe2} as
it is sufficient for our needs and has the advantage that for normal
functions, mutatis mutandis, it is identical to Schmid's result.

\subsection{Outline of proof of Theorem (\ref{thm BP2})}

Let us now specialize to the case of an admissible normal function
$\nu:S\to J(\H)$ over a curve and outline the proof \cite{BP1} of
Theorem \ref{thm BP2}. Before proceeding, we do need to address one
aspect of the $SL_{2}$-orbit theorem in the mixed case. Let $\hat{\theta}=(e^{zN}.F,W)$
be an admissible nilpotent orbit with limit mixed Hodge structure
$(F,M)$ which is split over $\RR$. Then, $\hat{\theta}$ induces
an $SL_{2}$-orbit on each $Gr_{k}^{W}$, and hence a corresponding
$sl_{2}$-representation $\rho_{k}$.
\begin{defn}
Let $W$ be an increasing filtration, indexed by $\ZZ$, of a finite
dimensional vector space $V$. A \emph{grading} of W is a direct sum
decomposition $W_{k}=V_{k}\oplus W_{k-1}$ for each index $k$.
\end{defn}
In particular, a mixed Hodge structure $(F,W)$ on $V$ gives a grading
of $W$ by the rule $V_{k}=\oplus_{p+q=k}\, I^{p,q}$. Furthermore,
if the ground field has characteristic zero, a grading of $W$ is
the same thing as a semisimple endomorphism $Y$ of $V$ which acts
as multiplication by $k$ on $V_{k}$. If $(F,W)$ is a mixed Hodge
structure we let $Y_{(F,W)}$ denote the grading of $W$ which acts
on $I^{p,q}$ as multiplication by $p+q$, the \emph{Deligne grading}
of $(F,W)$.

Returning to the admissible nilpotent orbit $\hat{\theta}$ considered
above, we now have a system of representations $\rho_{k}$ on $Gr_{k}^{W}$.
To construct an $sl_{2}$-representation on the reference fiber $V$,
we need to pick a grading $Y$ of $W$. Clearly for each Hodge flag
$F(z)$ in the orbit we have the Deligne grading $Y_{(F(z),W)}$;
but we are after something more canonical. Now we also have the Deligne
grading $Y_{(\hat{F},M)}$ of $M$ associated to the $sl_{2}$-splitting
of the LMHS. In the unpublished letter \cite{De3}, Deligne observed
that:
\begin{thm}
\label{thm DELIGNEresult}There exists a unique grading $Y$ of $W$
which commutes with $Y_{(\hat{F},M)}$ and has the property that if
$(N_{0},H,N_{0}^{+})$ denote the liftings of the $sl_{2}$-triples
attached to the graded representations $\rho_{k}$ via $Y$ then $[N-N_{0},N_{0}^{+}]=0$.
\end{thm}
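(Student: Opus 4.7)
The plan is to construct $Y$ by an iterative deformation argument built on $sl_2$-representation theory, in the spirit of Deligne's construction of the $\delta$-splitting recalled above. First, I would choose any initial grading $Y_{0}$ of $W$ that commutes with $Y_{(\hat F, M)}$; one such grading can be built by splicing the Deligne bigradings of the split MHS $(\hat F, M)$ induced on each $Gr_k^W$ through a bigrading-compatible linear section of $V \twoheadrightarrow Gr^W V$. Using $Y_0$ to identify $V$ with $\bigoplus_k Gr_k^W V$, I would lift the graded triples $(N_k, H_k, N_k^+)$ to an $sl_2$-triple $(N_0, H, N_0^+)$ on $V$. The set of gradings of $W$ commuting with $Y_{(\hat F, M)}$ is then a principal homogeneous space for the nilpotent group $\exp(\mathfrak{n})$, where
\[
\mathfrak{n} \;=\; W_{-1}\mathfrak{gl}(V) \,\cap\, \ker(\text{ad}(Y_{(\hat F, M)}))
\]
consists of endomorphisms strictly decreasing $W$ that commute with $Y_{(\hat F, M)}$; an element $u \in \mathfrak{n}$ sends $Y_0$ to $Y_u := e^{\text{ad}(u)}(Y_0)$ and deforms the lifted triple by $\text{Ad}(e^u)$.

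Setting $\xi := N - N_0 \in W_{-1}\mathfrak{gl}(V)$ (which has $Y_{(\hat F, M)}$-weight $-2$ since both $N$ and $N_0$ do), a direct rearrangement shows that the target condition $[N - N_0(Y_u), N_0^+(Y_u)] = 0$ is equivalent to
\[
e^{-\text{ad}(u)}(N) - N_0 \;\in\; \ker(\text{ad}(N_0^+)) .
\]
Filtering this by $W$-depth and expanding the exponential produces, at each level $k \geq 1$, an equation of the form
\[
-[u_k, N_0] + \xi_k + P_k(u_1, \ldots, u_{k-1}) \,\in\, \ker(\text{ad}(N_0^+)) \pmod{W_{-k-1}} ,
\]
with $P_k$ a universal Lie polynomial in the previous unknowns and the lower graded pieces of $\xi$. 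For existence I would solve these inductively using the Jacobson decomposition $\mathfrak{gl}(V) = \ker(\text{ad}(N_0^+)) \oplus \text{image}(\text{ad}(N_0))$ valid for any finite-dimensional $sl_2$-module: project the inhomogeneous term at level $k$ onto $\text{image}(\text{ad}(N_0))$, then choose $u_k \in \mathfrak{n}$ so that $[u_k, N_0]$ matches that projection. This last choice is possible because $N_0$ has $Y_{(\hat F, M)}$-weight $-2$, so $\text{ad}(N_0)$ carries $\mathfrak{n}$ onto the weight-$(-2)$ part of $\text{image}(\text{ad}(N_0))$ compatibly with the $W$-level.

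The main obstacle will be uniqueness. Given two solutions $u, u' \in \mathfrak{n}$, set $v := u - u'$ and subtract their defining conditions. At level one this forces $[v_1, N_0] \in \ker(\text{ad}(N_0^+)) \cap \text{image}(\text{ad}(N_0))$, and the latter vanishes by standard $sl_2$-theory: elements of $\ker(\text{ad}(N_0^+))$ sit in the top $H$-weight subspace of each $sl_2$-isotypic component of $\mathfrak{gl}(V)$, while elements of $\text{image}(\text{ad}(N_0))$ avoid those top-weight subspaces. Hence $v_1 \in \ker(\text{ad}(N_0)) \cap \mathfrak{n}$. Iterating, at each subsequent level $k$ the quadratic and higher contributions from $P_k$ reduce to commutators of $v_j \in \ker(\text{ad}(N_0))$ with the primitive parts of $\xi_{k-j}$; combined with the constraint that $v$ remain in $\mathfrak{n}$, these commutators land in restricted joint $(H, Y_{(\hat F, M)})$-weight spaces that force $v_k = 0$ level by level. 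Consequently $u = u'$, so $Y_u = Y_{u'}$, which gives the claimed uniqueness.
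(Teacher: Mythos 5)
The paper does not actually prove this statement: it is quoted from Deligne's unpublished letter \cite{De3}, with the details deferred to \cite{BP1}. Your strategy --- parametrizing the gradings of $W$ commuting with $Y_{(\hat{F},M)}$ as a torsor under $\exp(\mathfrak{n})$ with $\mathfrak{n}=W_{-1}\mathfrak{gl}(V)\cap\ker(\mathrm{ad}\,Y_{(\hat{F},M)})$, rewriting the condition as $e^{-\mathrm{ad}(u)}N-N_{0}\in\ker(\mathrm{ad}\,N_{0}^{+})$, and solving degree by degree via $\mathfrak{gl}(V)=\ker(\mathrm{ad}\,N_{0}^{+})\oplus\mathrm{im}(\mathrm{ad}\,N_{0})$ --- is essentially the argument of Deligne's letter, and your existence half is sound. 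In particular, the observation that $N-N_{0}$ (and hence each inhomogeneous term $P_{k}$, being built from brackets of weight-$0$ elements of $\mathfrak{n}$ with pieces of $\xi$) has $\mathrm{ad}(Y_{(\hat{F},M)})$-weight exactly $-2$ is precisely what lets you take $u_{k}$ of $Y_{(\hat{F},M)}$-weight $0$: since $\mathrm{ad}(N_{0})$ is homogeneous of degree $-2$ for $Y_{(\hat{F},M)}$ and of degree $0$ for the $W$-grading, the weight-$(-2)$ part of $\mathrm{im}(\mathrm{ad}\,N_{0})$ is $\mathrm{ad}(N_{0})$ applied to the weight-$0$ part.

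The uniqueness step, however, does not close as written. You correctly deduce $\mathrm{ad}(N_{0})(v_{1})=0$, but then leave open that $v_{1}$ might be a nonzero element of $\ker(\mathrm{ad}\,N_{0})\cap\mathfrak{n}$ and attempt to propagate it through the higher $P_{k}$ via unspecified ``restricted joint weight spaces''; that part is not substantiated, and it is also unnecessary. The clean point is that $\ker(\mathrm{ad}\,N_{0})\cap\mathfrak{n}$ already vanishes in every strictly negative $W$-graded degree: with $H=Y_{(\hat{F},M)}-Y_{0}$, a component of $v\in\mathfrak{n}$ of $Y_{0}$-weight $-k$ ($k\geq1$) has $\mathrm{ad}(Y_{(\hat{F},M)})$-weight $0$ and hence $\mathrm{ad}(H)$-weight $+k>0$, whereas $\ker(\mathrm{ad}\,N_{0})$ --- the span of the \emph{lowest}-weight vectors for the adjoint $sl_{2}$-action, $N_{0}$ being the lowering operator --- is concentrated in $\mathrm{ad}(H)$-weights $\leq0$. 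So $v_{k}=0$ outright at each level and the induction is immediate. (This is the same weight count the paper invokes just after the theorem to conclude $N=N_{0}$ for normal functions.) With that one line supplied, your argument is complete.
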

With this choice of $sl_{2}$-triple, and $\hat{\theta}$ an admissible
nilpotent orbit in 1-variable of the type arising from an admissible
normal function, the main theorem of \cite{Pe2} asserts that one
has a direct analog of Schmid's $SL_{2}$-orbit theorem as stated
above for $\hat{\theta}$.
\begin{rem}
More generally, given an admissible nilpotent orbit $(e^{zN}F,W)$
with relative weight filtration $M=M(N,W)$, Deligne shows that there
exists a grading $Y=Y(N,Y_{(F,M)})$ with similar properties (cf.
\cite{BP1} for details and further references).
\end{rem}
$\vspace{-5mm}$
\begin{rem}
In the case of a normal function, if we decompose $N$ according to
$\text{ad}\, Y$ we have $N=N_{0}+N_{-1}$ where $N_{-1}$ must be
either zero or a highest weight vector of weight $-1$ for the representation
of $sl_{2}(\RR)$ defined by $(N_{0},H,N_{0}^{+})$. Accordingly,
since there are no vectors of highest weight $-1$, we have $N=N_{0}$
and hence $[Y,N]=0$.
\end{rem}
The next thing that we need to recall is that if $\nu:S\to J(\H)$
is an admissible normal function which is represented by an extension
\[
0\to H\to\V\to\ZZ(0)\to0\]
in the category of admissible variations of mixed Hodge structure
on $S$ then the zero locus $\mathcal{Z}$ of $\nu$ is exactly the
set of points where the corresponding Deligne grading $Y_{(\F,\W)}$
is integral. In the case where $S\subset\bar{S}$ is a curve, in order
to prove the algebraicity of $\mathcal{Z}$, all we need to do is
show that $\mathcal{Z}$ cannot contain a sequence of points $s(m)$
which accumulate to a puncture $p\in\bar{S}-S$ unless $\nu$ is identically
zero. The first step towards the proof of Theorem \ref{thm BP2} is
the following result \cite{BP1}:
\begin{thm}
Let $\varphi:\Delta^{*}\to\Gamma\backslash\M$ denote the period map
of an admissible normal function $\nu:\Delta^{*}\to J(\mathcal{H})$
with unipotent monodromy, and $Y$ be the grading of $W$ attached
to the nilpotent orbit $\theta$ of $\varphi$ by Deligne's construction
\emph{(Theorem \ref{thm DELIGNEresult}}). Let $F:U\to\M$ denote
the lifting of $\varphi$ to the upper half-plane. Then, for ${\rm Re}(z)$
restricted to an interval of finite length, we have \[
\lim_{{\rm Im}(z)\to\infty}\, Y_{(F(z),W)}=Y\]
\end{thm}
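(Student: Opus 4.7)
The strategy is to combine the local normal form of the lifted period map with the $SL_{2}$-orbit theorem in the mixed case \cite{Pe2}, tracking how the Deligne grading transforms under each successive simplification. Write the local normal form
\[ F(z) = e^{zN}\,e^{\Gamma(s)}\,F_{\infty}, \qquad s = e^{2\pi i z}, \]
where $\Gamma\colon\Delta\to\mathfrak{q}_{\infty}$ is holomorphic with $\Gamma(0)=0$. For $\mathrm{Re}(z)$ in a bounded interval, $s\to 0$ uniformly as $\mathrm{Im}(z)\to\infty$, so $e^{\Gamma(s)}\to I$ in $G_{\CC}$; hence the asymptotics of $F(z)$ are governed by the nilpotent orbit $\theta(z)=e^{zN}F_{\infty}$. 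By functoriality of the Deligne bigrading, $Y_{(u\cdot F,\,W)}=\mathrm{Ad}(u)\,Y_{(F,W)}$ for $u\in G_{\CC}$ preserving $W$, so the task reduces to evaluating the limit of $\mathrm{Ad}\bigl(e^{zN}\,e^{\Gamma(s)}\bigr)\,Y_{(F_{\infty},W)}$ uniformly in $\mathrm{Re}(z)$.

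Next, pass to the $sl_{2}$-split limit by writing $F_{\infty}=e^{\xi}\hat F_{\infty}$ (with $\xi$ a universal Lie polynomial in the Hodge components of $\delta\in\Lambda^{-1,-1}_{(F_{\infty},M)}$), and apply the mixed $SL_{2}$-orbit theorem of \cite{Pe2}. This produces a distinguished real-analytic function $g(y)$ with convergent expansion $g(y)=I+\sum_{k>0}g_{k}\,y^{-k}$, $g_{k}\in\ker(\mathrm{ad}\,N)^{k+1}$, such that $e^{iyN}F_{\infty}=g(y)\cdot e^{iyN}\hat F_{\infty}$ for $y$ large. Since $g(y)\to I$ as $y\to\infty$, the problem reduces to evaluating
\[ \lim_{y\to\infty}\mathrm{Ad}(e^{iyN})\,Y_{(\hat F_{\infty},W)}. \]

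The core identification of this limit with Deligne's grading $Y$ uses Theorem~\ref{thm DELIGNEresult} in an essential way. For a normal function, $N=N_{0}$ and hence $[N,Y]=0$, so $\mathrm{Ad}(e^{iyN})\,Y=Y$ identically. Setting $X:=Y_{(\hat F_{\infty},W)}-Y$, the element $X$ preserves $W$ and vanishes on $\mathrm{Gr}^{W}$, so it lies in the unipotent radical of the $W$-stabilizer; decomposing $X$ under $\mathrm{ad}(Y_{(\hat F_{\infty},M)})$ and under the $sl_{2}$-triple $(N,H,N^{+})$ lifted via $Y$, the commutation and $sl_{2}$-lifting conditions that characterize $Y$ in Theorem~\ref{thm DELIGNEresult} constrain $X$ to sit in $\mathrm{ad}(H)$-eigenspaces on which $\mathrm{Ad}(e^{iyN})$ admits only controlled growth, killed by the $y^{H/2}$-rescaling implicit in the SL$_{2}$-orbit normalization. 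Combining with $\mathrm{Ad}(e^{iyN})\,Y=Y$ yields the desired $\lim_{y\to\infty}\mathrm{Ad}(e^{iyN})\,Y_{(\hat F_{\infty},W)}=Y$.

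The main obstacle is the careful matching of the correction factors $e^{\Gamma(s)}$ and $g(y)$ (each tending to the identity) against the \emph{unbounded} conjugation by $e^{iyN}$: one must exploit the refined expansion of $g(y)$ in powers of $y^{-1}$ with coefficients in $\ker(\mathrm{ad}\,N)^{k+1}$, together with the $\mathfrak{q}_{\infty}$-valuedness of $\Gamma$, to ensure that $\mathrm{Ad}(e^{iyN})$ applied to these corrections produces errors that still vanish uniformly in $\mathrm{Re}(z)$ as $y\to\infty$. Once this bookkeeping is in place, the weight-theoretic argument above identifies the limit as $Y$ and completes the proof.
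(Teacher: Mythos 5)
Your overall architecture---local normal form, $sl_2$-splitting of the LMHS, the mixed $SL_2$-orbit theorem, and the observation that the conjugate $e^{iyN}e^{\Gamma(s)}e^{-iyN}$ of the exponentially small correction tends to the identity---matches the paper's sketch (which defers the details to \cite{BP1} and Deligne's letter \cite{De3}). But the reduction you use to get to the core computation contains a genuine error. The identity $Y_{(u\cdot F,W)}=\mathrm{Ad}(u)\,Y_{(F,W)}$ does \emph{not} hold for arbitrary $u\in G_{\CC}$ preserving $W$: condition (c) of Theorem \ref{thm Ipq's} involves complex conjugation, so the Deligne bigrading is transported only by \emph{real} elements of $G$ (and by exponentials of $\Lambda^{-1,-1}_{(F,W)}$). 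Already in the weight $\{0,-1\}$ case one has $I^{0,0}(F,W)=F^0\cap\overline{F^0}$, and $(uF)^0\cap\overline{(uF)^0}=uF^0\cap\bar{u}\,\overline{F^0}\neq u\bigl(F^0\cap\overline{F^0}\bigr)$ for non-real $u$ such as $e^{iyN}$. Compounding this, $(F_\infty,W)$ and $(\hat F_\infty,W)$ are in general not mixed Hodge structures (the filtration induced on $Gr^W_{-1}$ by the limit Hodge filtration is not a pure Hodge structure unless $N$ acts trivially on $\H$), so the quantities $Y_{(F_\infty,W)}$ and $Y_{(\hat F_\infty,W)}$ to which you reduce are not even defined; and even formally, $\mathrm{Ad}(e^{iyN})X$ for a fixed $X$ grows polynomially in $y$ unless $X\in\ker(\mathrm{ad}\,N)$, and there is no rescaling in the statement of the theorem available to absorb that growth.

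The repair is precisely the factorization the paper's sketch is built around: write $F(z)=e^{xN}\cdot\bigl(e^{iyN}e^{\Gamma(s)}e^{-iyN}\bigr)\cdot\bigl(e^{iyN}.F_\infty\bigr)$, so that every group element peeled off is either real and bounded ($e^{xN}$, for which $Y_{(gF,W)}=\mathrm{Ad}(g)Y_{(F,W)}$ \emph{does} hold), or tends to the identity (the middle factor, since the exponential decay $|s|=e^{-2\pi y}$ beats the polynomial growth of $\mathrm{Ad}(e^{iyN})$; likewise the real factor $g(y)$ from the $SL_2$-orbit theorem), while the core $e^{iyN}.\hat F_\infty$ remains in $\M$ for $y\gg0$ and its grading is computed \emph{directly}: $e^{iyN}.\hat F_\infty=\rho\bigl(\mathrm{diag}(\sqrt{y},1/\sqrt{y})\bigr).e^{iN}.\hat F_\infty$ with $\rho\bigl(\mathrm{diag}(\sqrt{y},1/\sqrt{y})\bigr)\in G_{\RR}$ a power of the semisimple element of the lifted $sl_2$-triple, so real-equivariance together with the characterization of $Y$ in Theorem \ref{thm DELIGNEresult} (in particular $[Y,N]=0$ and the compatibility of $Y$ with $(N_0,H,N_0^+)$) yields $Y_{(e^{iyN}.\hat F_\infty,W)}\to Y$. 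That is the content of \cite{De3}; combined with continuity of $F\mapsto Y_{(F,W)}$ on $\M$ it closes the argument. As written, your proof asserts the conclusion of that computation but routes it through undefined or divergent intermediate objects.
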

\begin{proof}
(Sketch) Using \cite{De3}, one can prove this result in the case
where $\varphi$ is a nilpotent orbit with limit mixed Hodge structure
which is split over $\RR$. Let $z=x+iy$. In general, one writes
\[
F(z)=e^{zN}e^{\Gamma(s)}.F_{\infty}=e^{xN}e^{iyN}e^{\Gamma(s)}e^{-iyN}e^{iyN}.F_{\infty}\]
 where $e^{xN}$ is real, $e^{iyN}.F_{\infty}$ can be approximated
by an $SL_{2}$-orbit and $e^{iyN}e^{\Gamma(s)}e^{-iyN}$ decays to
$1$ very rapidly.
\end{proof}
In particular, if there exists a sequence $s(m)$ which converges
to $p$ along which $Y_{(\F,\W)}$ is integral it then follows from
the previous theorem that $Y$ is integral. An explicit computation
then shows that the equation of the zero locus near $p$ is given
by the equation \[
{\rm Ad}(e^{\Gamma(s)})Y=Y\]
 which is clearly holomorphic on a neighborhood of $p$ in $\bar{S}$.

That concludes the proof for $S$ a curve. In the case where $S$
has a compactification $\bar{S}$ such that $\bar{S}-S$ is a smooth
divisor, one can prove Theorem \ref{thm BP2} by the same techniques
by studying the dependence of the above constructions on holomorphic
parameters, i.e. at a point in $D$ we get a nilpotent orbit \[
\theta(z;s_{2},\dots,s_{r})=e^{zN}.F_{\infty}(s_{2},\dots,s_{r})\]
 where $F_{\infty}(s_{2},\dots,s_{r})$ depend holomorphically on
the parameters $(s_{2},\dots,s_{r})$.

\subsection{Zero loci and filtrations on Chow groups}

Returning now to the algebraicity of the Hodge locus discussed at
the beginning of this section, the Hodge Conjecture would further
imply that if $f:X\to S$ can be defined over an algebraically closed
subfield of $\CC$ then so can the germ of $T$. In \cite{Vo1}, C.
Voisin gave sufficient conditions for $T$ to be defined over $\bar{\QQ}$
if $f:X\to S$ is defined over $\QQ$. Very recently F. Charles \cite{Ch}
carried out an analogous investigation of the field of definition
of the zero locus $\Z$ of a normal function motivated over $\FF$.
We reprise this last notion (from $\S\S$1-2):
\begin{defn}
\label{Defn MOTnf}Let $S$ be a smooth quasiprojective variety defined
over a subfield $\FF_{0}\subset\CC$, and let $\FF\subset\CC$ be
a finitely generated extension of $\FF_{0}$. An admissible normal
function $\nu\in\ANF(S,\H)$ is \emph{motivated over $\FF$} if there
exists a smooth quasi-projective variety $\X$, a smooth projective
morphism $f:\X\to S$, and an algebraic cycle $\sZ\in Z^{m}(\X)_{prim}$,
all defined over $\FF$, such that $\H$ is a subVHS of $(R^{2m-1}f_{*}\ZZ)\otimes\mathcal{O}_{S}$
and $\nu=\nu_{\sZ}$.\end{defn}
\begin{rem}
Here $Z^{m}(\X)_{prim}$ denotes algebraic cycles with homologically
trivial restriction to fibres. One traditionally also assumes $\sZ$
is flat over $S$, but this can always be achieved by restricting
to $U\subset S$ sufficiently small (Zariski open); and then by \cite{S1}
\emph{(i) $\nu_{\sZ_{U}}$ is $\bar{S}$ admissible.} Next, for any
$s_{0}\in S$ one can move $\sZ$ by a rational equivalence to intersect
$X_{s_{0}}$ (hence the $\{X_{s}\}$ for $s$ in an analytic neighborhood
of $s_{0}$) properly, and then use the remarks at the beginning of
\cite{Ki} or \cite[sec. III.B]{GGK} to see that \emph{(ii) $\nu_{\sZ}$
is defined and holomorphic over all of $S$.} Putting \emph{(i)} and
\emph{(ii)} together with \cite[Lemma 7.1]{BFNP}, we see that $\nu_{\sZ}$
is itself admissible.
\end{rem}
Recall that the level of a VHS $\H$ is (for a generic fibre $H_{s}$)
the maximum difference $|p_{1}-p_{2}|$ for $H^{p_{1},q_{1}}$ and
$H^{p_{2},q_{2}}$ both nonzero. A fundamental open question about
motivic normal functions is then:
\begin{conjecture}
\label{Conj ZLfield}(i) \emph{{[}$\mathfrak{ZL}(D,E)${]}} For every
$\FF\subset\CC$ finitely generated over $\bar{\QQ}$, $S/\FF$ smooth
quasi-projective of dimension $D$, and $\H\to S$ VHS of weight $(-1)$
and level $\leq2E-1$, the following holds: $\nu$ motivated $/\FF$
$\implies$ $\Z(\nu)$ is an at most countable union of subvarieties
of $S$ defined over (possibly different) finite extensions of $\FF$.

(ii) \emph{{[}$\widetilde{\mathfrak{ZL}}(D,E)${]}} Under the same
hypotheses, $\Z(\nu)$ is an algebraic subvariety of $S$ defined
over an algebraic extension of $\FF$.
\end{conjecture}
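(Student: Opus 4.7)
The plan is to combine the now-established complex algebraicity of $\Z(\nu)$ (Theorem \ref{conj CalgZL}, proved in \cite{BP3}) with a Galois descent argument. Applying that theorem, $\Z(\nu) \subset S_{\CC}$ is a closed complex algebraic subvariety; the conjecture then reduces to a question about the action of $\mathrm{Aut}(\CC/\FF)$ on its components. If a component has countable (respectively finite) Galois orbit, then standard descent shows it is defined over an algebraic (respectively finite) extension of $\FF$, which would yield (i) (respectively (ii)).

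The Galois descent setup is natural. Because $\X$, $\bar{\pi}$, $S$, and $\sZ$ are all defined over $\FF$ (Definition \ref{Defn MOTnf}), any $\sigma \in \mathrm{Aut}(\CC/\FF)$ fixes the scheme-theoretic data and acts only on $\CC$-points. The content one needs to establish is that $\nu(s) = 0$ implies $\nu(\sigma(s)) = 0$ (up to permutation of irreducible components of $\Z(\nu)$). Granted this, $\Z(\nu)$ is stable under the $\mathrm{Aut}(\CC/\FF)$-action on subvarieties of $S_{\CC}$, and one descends each component to a suitable algebraic extension of $\FF$; finiteness of the orbit in (ii) would follow from a constructibility/bounded-degree argument using that $\Z(\nu)$ has only finitely many irreducible components. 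To implement this one would first reduce to a generic point of an irreducible component $\Z_0 \subseteq \Z(\nu)$, apply Theorem \ref{conj CalgZL} to $\nu$ restricted to a curve through that point defined over a finitely generated extension, and thereby reduce to controlling the Galois orbit of individual points where $AJ_{X_s}(\sZ_s)$ vanishes.

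The main obstacle, and the reason the conjecture remains open in this generality, is that the intermediate Jacobian $J(H_s)$ is transcendental and the Abel--Jacobi map is not algebraic; consequently $AJ_{X_s}(\sZ_s) = 0$ is not manifestly preserved by the (highly discontinuous) action of $\sigma$. One needs an \emph{absolute} reformulation of $AJ$-vanishing. Two routes suggest themselves. The first, pursued by Charles \cite{Ch}, is to impose a hypothesis on the VHS underlying $\Z(\nu)$ which forces vanishing to be detected by a Hodge-theoretic condition of Galois-equivariant nature; the task then becomes verifying that hypothesis in cases of interest. The second would reinterpret $\nu(s) = 0$ via a Bloch--Beilinson-type filtration on $CH^m(X_s)$: one seeks a lift of $\sZ_s$ to a deeper layer of the filtration over a finite extension of $\FF(s)$, a condition which is manifestly Galois-stable. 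The hypothesis $\mathrm{level}(\H) \leq 2E - 1$ in Conjecture \ref{Conj ZLfield} presumably bounds the depth of filtration one must control, permitting an induction on $E$. The hardest ingredient is producing such a Bloch--Beilinson lift from the purely Hodge-theoretic vanishing $\nu(s) = 0$; this in turn requires either substantial progress on the Bloch--Beilinson conjectures or, alternatively, a strengthening of the asymptotic Hodge-theoretic methods of \cite{BP3} that tracks fields of definition through the $SL_2$-orbit theorem of \cite{KNU} and Deligne's splitting \eqref{thm DELIGNEresult}. I would approach the weakest nontrivial case, $E = 1$ (so $J(\H)$ a family of abelian varieties), by combining Charles's strategy with Deligne's unconditional absolute-Hodge results for abelian varieties, where the Galois-equivariance of $AJ$-vanishing is essentially available, and then attempting to propagate to higher level by induction on the Hodge coniveau.
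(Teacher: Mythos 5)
The statement you were asked about is labeled a \emph{conjecture} in the paper, and the paper offers no proof of it; your write-up correctly recognizes this, so there is no complete argument to compare yours against. What can be compared is your strategic analysis versus the paper's, and here the match is close. You correctly isolate the central obstruction --- $AJ_{X_s}(Z_s)=0$ is a transcendental condition not visibly preserved by the discontinuous action of $\mathrm{Aut}(\CC/\FF)$ --- and both of your proposed escape routes are exactly the ones the paper pursues. Your ``Bloch--Beilinson reformulation'' is made precise in Proposition \ref{PropZlL2}, which shows that $\mathfrak{ZL}(1,E)$ for all $E$ is not merely implied by but \emph{equivalent to} the statement $\L^{2}CH^{m}(X)_{\QQ}=\ker(AJ_{X,\QQ})$ for all smooth projective $X/\CC$; so in the $D=1$ case the BB route is not a sufficient condition but the literal content of the conjecture. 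Your ``Charles route'' is the theorem from \cite{Ch} reproduced in $\S4.6$, with the caveat the paper itself notes: the no-global-sections hypothesis on $\HH_{\CC}$ over $\Z(\nu)$ is untenable over a point, so it sheds no light on $\mathfrak{ZL}(1,E)$.

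Where your proposal and the paper diverge is in how the one unconditional case is actually proved. You suggest attacking $E=1$ via Deligne's absolute-Hodge results for abelian varieties and then inducting on coniveau. The paper instead proves $\widetilde{\mathfrak{ZL}}(D,1)_{alg}$ (cycles algebraically equivalent to zero, which forces the relevant sub-VHS to have level one) by directly \emph{algebraizing} the $AJ$-triviality: writing $Z=W_{*}(q-0)$ for a correspondence $W$ from a curve $C$, composing with Jacobi inversion to get $\Theta\in Z^{m}(J(C)\times X)$, observing that $[\Theta]_{*}:J(C)\to J^{m}(X)_{alg}$ is a morphism of abelian varieties defined over the field of definition $L$, and restricting to the identity component $B$ of its kernel (defined over $\bar{L}$). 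Then $N.Z=\theta_{*}b$ with $\theta=\Theta|_{B}$ and $cl(\theta)\in\L^{2}$, a condition stable under conjugation by motivicity of the Leray filtration \cite{Ar}; conjugating the tuple $(s_{0},Z,X,B,\theta,b)$ gives $N.AJ(Z^{\sigma})=0$, which yields your Galois-stability of $\Z(\nu)$ without any appeal to absolute-Hodge machinery. The countability bookkeeping you sketch (finitely many components, spreads of points defined over algebraic extensions of a finitely generated field) is then exactly how the paper concludes. In short: your diagnosis of the open problem is accurate, but for the accessible case the paper's mechanism is more elementary and more specific than your proposed one, and it is worth internalizing because it is the only currently known way to make ``$AJ$-vanishing'' Galois-equivariant.
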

Clearly Theorem \ref{conj CalgZL} and Conjecture $\mathfrak{ZL}(D,E)$
together imply $\widetilde{\mathfrak{ZL}}(D,E)$, but it is much more
natural to phrase some statements (especially Prop. \ref{PropZlL2}
below) in terms of $\mathfrak{ZL}(D,E)$. If true even for $D=1$
(but general $E$), Conj. \ref{Conj ZLfield}(i) would resolve a longstanding
question on the structure of Chow groups of complex projective varieties.
To wit, the issue is whether the second Bloch-Beilinson filtrand and
the kernel of the $AJ$ map must agree; we now wish to describe this
connection. We shall write $\overset{(\sim)}{\mathfrak{ZL}}(D,1)_{alg}$
for the case when $\nu$ is motivated by a family of cycles algebraically
equivalent to zero.

Let $X$ be smooth projective and $m\in\NN$. Denoting {}``$\otimes\QQ$''
by a subscript $\QQ$, we have the two {}``classical'' invariants
$cl_{X,\QQ}:CH^{m}(X)_{\QQ}\to Hg^{m}(X)_{\QQ}$ and $AJ_{X,\QQ}:\ker(cl_{X,\QQ})\to J^{m}(X)_{\QQ}$.
It is perfectly natural both to ask for further Hodge-theoretic invariants
for cycle-classes in $\ker(AJ_{X,\QQ})$, and inquire as to what sort
of filtration might arise from their successive kernels. The idea
of a (conjectural) \emph{system} of decreasing filtrations on the
rational Chow groups of \emph{all} smooth projective varieties $/\CC$,
compatible with the intersection product, morphisms induced by correspondences,
and the algebraic K\"unneth components of the diagonal $\Delta_{X}$,
was introduced by A. Beilinson in \cite{Be}, and independently by
S. Bloch. (One needs to assume something like the Hard Lefschetz Conjecture
so that these K\"unneth components exist; the compatibility roughly
says that $Gr^{i}CH^{m}(X)_{\QQ}$ is {}``controlled by $H^{2m-i}(X)$''.)
Such a filtration $F_{\text{BB}}^{\bullet}$ is unique if it exists
and is universally known as a \emph{Bloch-Beilinson filtration} (BBF);
there is a wide variety of constructions which yield a BBF under the
assumption of various more-or-less standard conjectures. The one which
is key for the filtration (due to Lewis \cite{Le2}) we shall consider
is the \emph{arithmetic Bloch-Beilinson Conjecture} (BBC):
\begin{conjecture}
Let $\X/\bar{\QQ}$ be a quasi-projective variety; then the absolute-Hodge
cycle-class map \begin{equation}
c_{\H}:CH^m(\X)_{\QQ} \to H^{2m}_{\H} (\X^{an}_{\CC},\QQ(m)) \\
\end{equation} is injective. (Here $CH^{m}(\X)[\neq CH^{m}(\X_{\CC})]$ denotes
$\equiv_{rat}$-classes of cycles $/\bar{\QQ}$.)
\end{conjecture}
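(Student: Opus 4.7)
The plan is to translate the stated injectivity into the vanishing of the top piece of a Bloch--Beilinson filtration on $CH^m(\X_{\CC})_{\QQ}$, and then attack that vanishing using the machinery of admissible normal functions. More precisely, one can assemble a Lewis-style filtration $F_{\BB}^{\bullet}$ inductively so that $Gr_{\BB}^{j}$ is controlled by a ``$j$-th iterated absolute-Hodge invariant'': for $j=0$ this is $c_{\H}$ itself, and for $j\geq 1$ it takes values in Ext groups built from the weight-monodromy pieces of $H^{2m-j}$ of a generic fibre of a spread, of the type encountered in \S2.5 and \S2.13. Injectivity of $c_{\H}$ on $CH^m(\X)_{\QQ}$ for $\X/\QB$ is then equivalent to the arithmetic vanishing $F_{\BB}^{m+1}CH^m(\X)_{\QQ}=0$, i.e.\ to the tower of invariants detecting rational equivalence of $\QB$-cycles.

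First I would spread. Given $\xi\in\ker(c_{\H})$, descend to a finitely generated extension $\FF/\QQ$ with $\FF\subset\QB$, choose a smooth $\QQ$-variety $S$ with function field $\FF$, and realize $\X$ as the generic fibre $\mathcal{Y}_{\eta}$ of a smooth projective morphism $f:\mathcal{Y}\to S$ defined over $\QQ$, together with a cycle $\sZ\in Z^m(\mathcal{Y})_{prim}$ over $\QB$ restricting to a representative of $\xi$. The hypothesis $c_{\H}(\xi)=0$ forces both $[\sZ_{\eta}]=0$ in $\mathit{Hg}^m$ and $AJ(\sZ_{\eta})=0$ in $J^m$, and hence (cf.\ the remark after Definition \ref{Defn MOTnf}) the associated $\QB$-motivated admissible normal function $\nu_{\sZ}\in\ANF(S,\H)$ satisfies $\nu_{\sZ}(\eta)=0$.

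Now invoke Theorem \ref{conj CalgZL} for the algebraicity of $\Z(\nu_{\sZ})\subset S$, and --- assuming the full strength of Conjecture \ref{Conj ZLfield}(ii) --- its definability over a finite extension of $\QQ$. Since $\Z(\nu_{\sZ})$ contains the $\QB$-generic point $\eta$, its Zariski closure must be all of $S$, so $\nu_{\sZ}\equiv 0$ on $S$. This records the vanishing of the $j=1$ invariant of $\xi$. For $j\geq 2$ I would iterate: the identical vanishing of $\nu_{\sZ}$ allows one, on a dense open $U\subset S$, to lift $\sZ|_{U}$ to a higher Chow precycle whose regulator defines a higher admissible normal function in the sense of Definition \ref{def HNF}, again $\QB$-motivated; one then repeats the spread plus zero-locus argument, using the residues and higher-Chow limits from \S2.13(iv) to extract vanishing of successive $Gr_{\BB}^{j}$.

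The main obstacle, and the reason the reduction is only conditional, is threefold. First, one needs Conjecture \ref{Conj ZLfield} in full generality; beyond the cases in \cite{BP2,BP3,S5} (and under the hypotheses of \cite{Ch}) this remains open, and $\QB$-definability of the zero locus is exactly the arithmetic content one is trying to extract. Second, one needs a functorial construction of higher normal-function invariants realizing each graded piece of a BBF, together with a proof that their simultaneous vanishing implies $\equiv_{rat}0$; at present only the first few levels are available. Third, and in my view the deepest issue, the implication ``$\nu_{\sZ}\equiv 0 \Rightarrow \sZ_{\eta}\equiv_{rat}0$'' is a specialization statement for Chow groups of essentially the same strength as the BBC itself, so the procedure is genuinely circular unless new input --- say an arithmetic-motivic comparison between $CH^m(\X)_{\QQ}$ for $\X/\QB$ and continuous \'etale or syntomic cohomology of an integral model --- is introduced. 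That is where the novel ideas will have to enter.
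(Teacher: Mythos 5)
You are attempting to prove the arithmetic Bloch--Beilinson Conjecture (BBC). The paper states this as a \emph{Conjecture} precisely because it is open; it is never proved there, only used as a hypothesis (e.g.\ ``Assuming BBC, $\L^{\bullet}$ satisfies (b)'' in Lewis's theorem in $\S4.5$). So there is no proof to compare against, and your proposal does not supply one --- as you partly concede. Beyond the conditionality and circularity you acknowledge, however, there is a structural reason the strategy cannot even get started. The spread philosophy of $\S4.5$ trades transcendence degree over $\QB$ for geometric dimensions of the base: a cycle defined over $K$ with $\mathrm{trdeg}(K/\QB)=t$ spreads out over a $t$-dimensional variety, and the BBC is exactly the base case $t=0$ of that reduction. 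In your setup $\X$ is already defined over $\QB$, so any finitely generated field of definition $\FF\subset\QB$ is a number field, and your ``smooth $\QQ$-variety $S$ with function field $\FF$'' is zero-dimensional. The normal function $\nu_{\sZ}$ then lives over a finite set of points, its zero locus is trivially algebraic and trivially defined over $\QB$, and Theorem \ref{conj CalgZL} and Conjecture \ref{Conj ZLfield} carry no information. The normal-function apparatus is designed to detect the \emph{transcendental} part of a cycle's field of definition, which is precisely what a $\QB$-cycle lacks; applied to the BBC it is vacuous, not merely insufficient.

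The residual content of your proposal --- that injectivity of $c_{\H}$ is equivalent to $F_{\text{BB}}^{m+1}CH^{m}(\X)_{\QQ}=0$ together with the detectability of each $Gr_{\text{BB}}^{j}$ by an iterated Hodge-theoretic invariant --- is a fair restatement of the Bloch--Beilinson philosophy, but rewriting one open conjecture as the conjunction of several others (full $\mathfrak{ZL}(D,E)$, the existence of the complete tower of higher invariants, and the specialization statement you yourself flag as circular) is a reformulation, not a proof. The implication ``all Hodge-theoretic invariants of a $\QB$-cycle vanish $\implies$ the cycle is $\equiv_{rat}0$'' \emph{is} the conjecture, and the expected new input is arithmetic ($\ell$-adic or syntomic regulators on integral models) rather than anything accessible to the admissible-normal-function technology surveyed here.
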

Now for $X/\CC$, $c_{\H}$ on $CH^{m}(X)_{\QQ}$ is far from injective
(the kernel usually not even being parametrizable by an algebraic
variety); but any given cycle $Z\in Z^{m}(X)$ (a priori defined $/\CC$)
is in fact defined over a subfield $K\subset\CC$ finitely generated
$/\bar{\QQ}$, say of transcendence degree $t$. Considering $X,Z$
over $K$, the $\bar{\QQ}$-spread then provides
\begin{itemize}
\item a smooth projective variety $\bar{S}/\bar{\QQ}$ of dimension $t$,
with $\bar{\QQ}(\bar{S})\overset{\cong}{\to}K$ and $s_{0}:\text{Spec}(K)\to\bar{S}$
the corresponding generic point;
\item a smooth projective variety $\bar{\X}$ and projective morphism $\bar{\pi}:\bar{\X}\to\bar{S}$,
both defined $/\bar{\QQ}$, such that $X=X_{s_{0}}:=\X\times_{s_{0}}\text{Spec}(K)$;
and
\item an algebraic cycle $\bar{\sZ}\in Z^{m}(\X_{(\bar{\QQ})})$ with $Z=\bar{\sZ}\times_{s_{0}}\text{Spec}(K)$.
\end{itemize}
Writing $\bar{\pi}^{sm}=:\pi:\X\to S$ (and $\sZ:=\bar{\sZ}\cap\X$),
we denote by $U\subset S$ any affine Zariski open subvariety defined
$/\bar{\QQ}$, and put $\X_{U}:=\pi^{-1}(U)$, $\sZ_{U}:=\bar{\sZ}\cap\X_{U}$;
note that $s_{0}$ factors through all such $U$.

The point is that exchanging the field of definition for additional
geometry allows $c_{\H}$ to {}``see'' more; in fact, since we are
over $\bar{\QQ}$, it should now (by BBC) see everything. Now $c_{\H}(\sZ_{U})$
packages cycle-class and Abel-Jacobi invariants together, and the
idea behind Lewis's filtration (and filtrations of M. Saito and Green/Griffiths)
is to split the whole package up into Leray graded pieces with respect
to $\pi$. Miraculously, the $0^{\text{th}}$ such piece turns out
to agree with the fundametal class of $Z$, and the next piece is
the normal function generated by $\sZ_{U}$. The pieces folllowing
that define the so-called \emph{higher} cycle-class and $AJ$ maps. 

More precisely, we have \begin{equation}
\xymatrix{
& CH^m(X_{(K)})_{\QQ} \ar [d]^{\text{spread}}_{\cong} \ar @/_3pc/ [ldd]_{\Psi:=} \\
& \text{im}\{ CH^m(\bar{\X})_{\QQ} \to \underset{{}^U}{\underrightarrow{\lim}} \; CH^m(\X_U)_{\QQ}  \} \ar [d]^{c_{\H}} \\
\underline{H}_{\H}^{2m} : \ar @{=} [r] & \text{im} \left\{  H_{\D}^{2m}(\bar{\X}^{an}_{\CC},\QQ(m)) \to \underset{U}{\underrightarrow{\lim}} \; H^{2m}_{\H}((\X_U)^{an}_{\CC},\QQ(m)) \right\}
}\\
\end{equation} with $c_{\H}$ (hence $\Psi$) conjecturally injective. Lewis \cite{Le2}
defines a Leray filtration $\L^{\bullet}\underline{H}_{\H}^{2m}$
with graded pieces \begin{equation}\label{LewisGRs}
\xymatrix{ 0 \ar [d] \\  \frac{J^0\left( \underset{{}^U}{\underrightarrow{\lim}} \; W_{-1}H^{i-1}(U,R^{2m-i}\pi_*\QQ(m)) \right) }{ \text{im} \left\{ \underset{{}^U}{\underrightarrow{\lim}} \; Hg^0\left( Gr^W_0H^i(U,R^{2m-i}\pi_*\QQ(m))  \right) \right\} } \ar [d]^{\beta} \\  Gr^i_{\L}\underline{H}^{2m}_{\H} \ar [d]^{\alpha} \\ Hg^0 \left( \underset{{}^U}{\underrightarrow{\lim}} \; W_0H^i(U,R^{2m-i}\pi_* \QQ(m)) \right) \ar [d] \\ 0 } \\ 
\normalsize
\end{equation} and sets $\L^{i}CH^{m}(X_{K})_{\QQ}:=\Psi^{-1}(\L^{i}\underline{H}_{\H}^{2m})$.
For $Z\in\L^{i}CH^{m}(X_{K})_{\QQ}$, we put $cl_{X}^{i}(Z):=\alpha(Gr_{\L}^{i}\Psi(Z))$;
if this vanishes then $Gr_{\L}^{i}\Psi(Z)=:\beta(aj_{X}^{i-1}(Z))$,
and vanishing of $cl^{i}(Z)$ and $aj^{i-1}(Z)$ implies membership
in $\L^{i+1}$. One easily finds that $cl_{X}^{0}(Z)$ identifies
with $cl_{X,\QQ}(Z)\in Hg^{0}(X)_{\QQ}$.
\begin{rem}
The arguments of $Hg^{0}$ and $J^{0}$ in (\ref{LewisGRs}) have
canonical and functorial MHS by \cite{Ar}. One should think of the
top term as $Gr_{\L}^{i-1}$ of the lowest-weight part of $J^{m}(\X_{U})$
and the bottom as $Gr_{\L}^{i}$ of the lowest-weight part of $Hg^{m}(\X_{U})$
(both in the limit over $U$).
\end{rem}
Now to get a candidate BBF, Lewis takes \[
\L^{i}CH^{m}(X_{\CC})_{\QQ}:=\underset{\tiny{K\subset\CC\atop \text{f.g.}/\bar{\QQ}}}{\underrightarrow{\lim}}\L^{i}CH^{m}(X_{K})_{\QQ}.\]
Some consequences of the definition of a BBF mentioned above, specifically
the compatibility with the K\"unneth components of $\Delta_{X}$,
include

(a) $F_{\text{BB}}^{0}CH^{m}(X)_{\QQ}=CH^{m}(X)_{\QQ}$, $F_{\text{BB}}^{1}CH^{m}(X)_{\QQ}=CH_{hom}^{m}(X)_{\QQ}$,
\linebreak $F_{BB}^{2}CH^{m}(X)_{\QQ}\subseteq$$\ker(AJ_{X,\QQ})$,
and

(b) $F_{\text{BB}}^{m+1}CH^{m}(X)=\{0\}$;

these are sometimes stated as additional requirements for a BBF.
\begin{thm}
\emph{\cite{Le2}} $\L^{\bullet}$ is intersection- and correspondence-compatible,
and satisfies (a). Assuming BBC, $\L^{\bullet}$ satisfies (b); and
additionally assuming HLC, $\L^{\bullet}$ is a BBF.
\end{thm}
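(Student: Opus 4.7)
The plan is to establish the four clauses in turn --- intersection- and correspondence-compatibility, property (a), property (b) under BBC, and the remaining BBF axioms under HLC --- by transporting the corresponding structural properties of the Leray filtration $L^\bullet$ on absolute-Hodge cohomology $\underline{H}^{2m}_{\H}$ through the spread map $\Psi$ to $\L^\bullet$ on $CH^m$. First I would verify that $L^\bullet$ on $\underline{H}^{\bullet}_{\H}$ is multiplicative ($L^i \cdot L^j \subset L^{i+j}$) and strictly functorial for morphisms of smooth varieties, so that spread intersections and spread correspondences land in the expected Leray levels. Combined with the fact that $\Psi$ is compatible with intersection products and with the action of algebraic correspondences (which themselves spread), this transports both compatibilities to $\L^\bullet$ on $CH^m(X_{\CC})_{\QQ}$.

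Property (a) then splits into three verifications. Trivially $\L^0 = CH^m(X_\CC)_{\QQ}$. For $\L^1 = CH^m_{\mathrm{hom},\QQ}$ one identifies $cl^0$ with the classical topological cycle class: at Leray level $0$, the bottom $Hg^0$ piece of \eqref{LewisGRs} is $\varinjlim_U Hg^0\bigl(W_0 H^0(U,R^{2m}\pi_*\QQ(m))\bigr)$, which under the generic-point specialization $s_0^*$ recovers $Hg^m(X)_\QQ$; tracing the definitions yields $\alpha(Gr^0_{\L}\Psi(Z)) = cl_{X,\QQ}(Z)$. The inclusion $\L^2 \subseteq \ker(AJ_{X,\QQ})$ proceeds by the analogous identification of $aj^0$ with Griffiths's $AJ_{X,\QQ}$, via Carlson's realization of $J^m(X)_\QQ$ as $\mathrm{Ext}^1_{\MHS}(\QQ(0),H^{2m-1}(X,\QQ(m)))$ and the extension class computed by the $J^0$-summand of $Gr^1_{\L}\underline{H}^{2m}_{\H}$.

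For (b) under BBC, the conjecture yields injectivity of $c_{\H}$ on cycles defined over $\bar{\QQ}$ and hence (via the spreading diagram) of $\Psi$; it then suffices to show $L^{m+1}\underline{H}^{2m}_{\H} = 0$, i.e.\ $Gr^i_{\L}\underline{H}^{2m}_{\H} = 0$ for $i \geq m+1$. In both the $J^0$ and $Hg^0$ terms of \eqref{LewisGRs} the controlling object is (after the Tate twist by $m$) the $F^0$-piece of a MHS on $H^{i-1}$ or $H^i$ of an affine open $U \subset S$ with coefficients in $R^{2m-i}\pi_*\QQ(m)$. Since the fibres $X_s$ are smooth projective, their Hodge filtration satisfies $F^m H^{2m-i}(X_s,\CC) = 0$ whenever $2m-i<m$, i.e.\ $i>m$; equivalently, the twisted coefficient VHS has vanishing $F^0$ for $i \geq m+1$, collapsing both summands of the graded to zero. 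Finally, under HLC the algebraic Künneth projectors $\pi^X_{2m-i} \in CH^{d_X}(X\times X)_\QQ$ of the diagonal exist as algebraic correspondences; by the compatibility from the first step they preserve $\L^\bullet$, and the identification of $Gr^i_{\L}\Psi$ with data in Leray level $i$ (which factors through $R^{2m-i}\pi_*$) forces $\pi^X_{2m-i}$ to act as the identity on $Gr^i_{\L}CH^m_\QQ$ and as zero on the other gradeds --- which is precisely the BBF compatibility with the Künneth components.

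The main technical obstacle is the meticulous identification of $cl^0$ and especially $aj^0$ in (a) with the classical fundamental class and Griffiths Abel--Jacobi map: one must simultaneously juggle the Leray spectral sequence of the spread $\pi$, specialization along the generic point $s_0$, and the absolute-Hodge realization, and check that the extension class extracted by the bottom Leray graded really recovers $AJ_{X,\QQ}$ on $\ker(cl_{X,\QQ})$. This is the technical core of \cite{Le2}; once it is in hand, the vanishing argument for (b) is conceptually clean (granted BBC), and the BBF property under HLC is formal consequence of the earlier compatibilities together with the algebraicity of the Künneth projectors.
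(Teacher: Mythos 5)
A preliminary remark: the paper does not actually prove this statement---it is quoted from \cite{Le2}---so your sketch can only be measured against the framework set up in $\S4.5$ and against plausibility. On those terms, most of your architecture is right: transporting multiplicativity and functoriality of the Leray filtration through $\Psi$, identifying $cl^0$ with $cl_{X,\QQ}$ and $aj^0$ with $AJ_{X,\QQ}$ via Carlson's extension-class description (this is indeed the technical core), and using the algebraic K\"unneth projectors under HLC to get the grading condition. Clause (a) and the HLC clause are fine as sketched.

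The genuine gap is in your argument for (b). You correctly observe that $F^0$ of the coefficient variation $R^{2m-i}\pi_*\QQ(m)$ equals $F^mH^{2m-i}(X_s)$ and vanishes for $i\geq m+1$, but you then infer that both terms of \eqref{LewisGRs} ``collapse to zero.'' That inference is false: the Hodge filtration on $H^i(U,\VV)$ is computed from the subcomplexes whose degree-$q$ term is $\Omega^q_{\bar{U}}(\log D)\otimes F^{p-q}\VV_e$, so $F^0H^i(U,\VV)$ receives contributions from $F^{-q}\VV$ for $1\leq q\leq i$ and is generally nonzero even when $F^0\VV=0$. A minimal counterexample to your inference: $H^1(\GG_m,\QQ(1))\cong\QQ(0)$ has a nonzero $(0,0)$-part although $F^0\QQ(1)=0$. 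Closer to the situation at hand, for $m=1$, $i=2$ the bottom term of \eqref{LewisGRs} is, before passing to the limit over $U$, the group of Hodge $(1,1)$-classes in $\mathrm{im}\{H^2(\bar{S})\to H^2(U)\}$, which is nonzero in general; it dies only in the limit because such classes are divisor classes (Lefschetz $(1,1)$) whose supports get removed. For general $m$ and $i\geq m+1$ the analogous vanishing-in-the-limit of Hodge classes and extension classes in $H^{\bullet}(U,R^{2m-i}\pi_*\QQ(m))$ is a Beilinson--Hodge/BBC-type assertion, not a formal consequence of fiberwise Hodge theory. So your proof of (b) cannot rest on ``injectivity of $\Psi$ plus unconditional vanishing of the target''; the conjectural input has to enter the vanishing step itself, and as written that step fails.
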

The limits in (\ref{LewisGRs}) inside $J^{0}$ and $Hg^{0}$ stabilize
for sufficiently small $U$; replacing $S$ by such a $U$, we may
consider the normal function $\nu_{\sZ}\in\ANF(S,\H_{\X/S}^{2m-1})$
attached to the $\bar{\QQ}$-spread of $Z$.
\begin{prop}
(i) For $i=1$, $(3)$ becomes \[
0\to J_{fix}^{m}(\X/S)_{\QQ}\to Gr_{\L}^{1}\underline{H}_{\H}^{2m}\to\left(H^{1}(S,R^{2m-1}\pi_{*}\QQ)\right)^{(0,0)}\to0.\]

(ii) For $Z\in CH_{hom}^{m}(X_{K})_{\QQ}$, $cl_{X}^{1}(Z)=[\nu_{\sZ}]_{\QQ}$;
if this vanishes, $aj_{X}^{0}(Z)=AJ_{X}(Z)_{\QQ}\in J_{fix}^{m}(\X/S)_{\QQ}\subset J^{m}(X)_{\QQ}$
$[\implies\L^{2}\subset\ker AJ_{\QQ}]$.
\end{prop}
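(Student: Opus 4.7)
The plan is to identify each piece of Lewis's graded-piece formula (\ref{LewisGRs}) at $i=1$ with a standard normal-function invariant and, for (ii), to decompose $\Psi(Z)$ according to the Leray filtration on absolute Hodge cohomology.

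For (i), after shrinking $S$ so that the limits in (\ref{LewisGRs}) stabilize, set $\VV := R^{2m-1}\pi_{*}\QQ(m)$, a pure $\QQ$-VHS of weight $-1$ on $S$. The Leray spectral sequence of $\pi\colon\X_{S}\to S$ in absolute Hodge cohomology identifies $Gr^{1}_{\L}\underline{H}^{2m}_{\H}$ with a subquotient of $H^{1}_{\H}(S,\VV)$, which in turn sits in the canonical extension
\begin{equation}\label{eqn prop-ES}
0\to\text{Ext}^{1}_{\MHS}(\QQ(0),H^{0}(S,\VV))\to H^{1}_{\H}(S,\VV)\to\text{Hom}_{\MHS}(\QQ(0),H^{1}(S,\VV))\to 0.
\end{equation}
Since $\VV$ is pure of weight $-1$, the invariants $H^{0}(S,\VV)$ are the underlying MHS of $\H_{fix}(m)_{\QQ}$, which is pure of weight $-1$; hence the left-hand term of (\ref{eqn prop-ES}) is $J^{0}(H^{0}(S,\VV))=J^{m}_{fix}(\X/S)_{\QQ}$, while the right-hand term is $Hg^{0}(H^{1}(S,\VV))=(H^{1}(S,R^{2m-1}\pi_{*}\QQ))^{(0,0)}$ (weights automatic: any $(0,0)$-Hodge class lies in $W_{0}$). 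The auxiliary image quotient appearing in (\ref{LewisGRs}) accounts for Leray differentials; a bidegree count on $E_{2}^{p,q}$ shows that for $i=1$ no differential alters $E_{\infty}^{1,2m-1}$ modulo the filtration on it just described, so the image term vanishes and we recover the claimed short exact sequence.

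For (ii): the spread class $\Psi(Z)\in\underline{H}^{2m}_{\H}$ has $cl^{0}_{X}(Z)=\alpha(Gr^{0}_{\L}\Psi(Z))$ equal to the fibrewise fundamental class $[Z]_{\QQ}$, which vanishes because $Z\in CH^{m}_{hom}(X_{K})_{\QQ}$. Hence $\Psi(Z)\in\L^{1}$, and its image in $Gr^{1}_{\L}$ maps via $\alpha$ to $cl^{1}_{X}(Z)\in(H^{1}(S,R^{2m-1}\pi_{*}\QQ))^{(0,0)}$. By construction of the normal function attached to a family of cycles (\S1.3), this edge-map image is precisely the topological invariant $[\nu_{\sZ}]_{\QQ}\in H^{1}(S,\HH_{\QQ})^{(0,0)}$, proving $cl^{1}_{X}(Z)=[\nu_{\sZ}]_{\QQ}$. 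If this vanishes, then by (i) the class of $\Psi(Z)$ in $Gr^{1}_{\L}$ lies in the image of $\beta$, producing $aj^{0}_{X}(Z)\in J^{m}_{fix}(\X/S)_{\QQ}$; restricting to the fiber $X_{s_{0}}$ via $i_{s_{0}}^{*}$ and using functoriality of $\Psi$ equates this value with $AJ_{X}(Z)_{\QQ}\in J^{m}(X)_{\QQ}$, which therefore lies in the natural image of $J^{m}_{fix}(\X/S)_{\QQ}$. The implication $\L^{2}\subset\ker AJ_{\QQ}$ is now immediate: $Z\in\L^{2}$ forces $cl^{1}_{X}(Z)=0$ and $aj^{0}_{X}(Z)=0$, and the latter equals $AJ_{X}(Z)_{\QQ}$ by (ii).

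The main obstacle is the identification $cl^{1}_{X}(Z)=[\nu_{\sZ}]_{\QQ}$, which requires carefully unwinding the Leray edge map on absolute Hodge cohomology and comparing with the cycle-theoretic definition of $\nu_{\sZ}$. Formally this is the absolute-Hodge rephrasing of the classical fact (going back to Zucker's theorem on normal functions) that the topological class of a normal function coming from a cycle $\sZ$ is the $(1,2m-1)$-Leray component of the fundamental class of $\sZ$; making everything compatible at the level of derived categories and spectral sequences is nontrivial but does not require new ideas.
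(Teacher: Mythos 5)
The paper states this proposition without proof, so there is no argument of record to compare against; your write-up is the natural unwinding of (\ref{LewisGRs}) at $i=1$ and is correct in substance. The one point to repair is your treatment of the quotient term in (\ref{LewisGRs}): it has nothing to do with Leray differentials (the Leray spectral sequence of the smooth projective $\pi$ degenerates at $E_{2}$ by Deligne, and motivically by \cite{Ar}), but is the correction required because $J^{0}(W_{-1}M)$ computes $\text{Ext}^{1}_{\text{MHS}}(\QQ(0),M)$ only after quotienting by the image of the weight-zero Hodge classes under the connecting homomorphism. It vanishes at $i=1$ for exactly the reason you already invoke for the left-hand term: by the theorem of the fixed part, $H^{0}(U,R^{2m-1}\pi_{*}\QQ(m))$ is pure of weight $-1$, so its $Gr^{W}_{0}$ is zero; a ``bidegree count on $E_{2}^{p,q}$'' is not the relevant mechanism. (For the right-hand term you should also note that $W_{-1}H^{1}(U,\VV)=0$ for $U$ smooth quasi-projective and $\VV$ of weight $-1$, which is why $Hg^{0}(W_{0}H^{1})$ is just $(H^{1})^{(0,0)}$.) For (ii), you correctly isolate the crux, $cl^{1}_{X}(Z)=[\nu_{\sZ}]_{\QQ}$, as the statement that the topological invariant of $\nu_{\sZ}$ is the Leray $(1,2m-1)$-component of the class of $\sZ_{U}$ --- the same fact the paper already uses in $\S1.3$ --- and deferring its verification is consistent with the intended level of detail. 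Your final step is fine provided you make explicit that restriction to the very general fiber $X_{s_{0}}$ annihilates $\L^{2}$ (since $H^{\geq1}(U,-)$ dies at a point) and carries $\beta\left(J^{m}_{fix}(\X/S)_{\QQ}\right)$ into $J^{m}(X)_{\QQ}$ by the natural inclusion, which is what equates $aj^{0}_{X}(Z)$ with $AJ_{X}(Z)_{\QQ}$ and yields $\L^{2}\subset\ker AJ_{\QQ}$.
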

So for $Z\in CH_{hom}^{m}(X_{K})$ with $\bar{\QQ}$-spread $\sZ$
over $S$, the information contained in $Gr_{\L}^{1}\Psi(Z)$ is (up
to torsion) precisely $\nu_{\sZ}$. Working over $\CC$, $\sZ\cdot X_{s_{0}}=Z$
is the fiber of the spread at a \emph{very general point} $s_{0}\in S(\CC)$:
$\text{trdeg}(\bar{\QQ}(s_{0})/\bar{\QQ})$ is maximal, i.e. equal
to the dimension of $S$. Since $AJ$ is a \emph{transcendental} (rather
than algebraic) invariant, there is no outright guarantee that vanishing
of $AJ_{X}(Z)\in J^{m}(X)$ --- or equivalently, of the normal function
at a very general point --- implies the identical vanishing of $\nu_{\sZ}$
or even $[\nu_{\sZ}]$. To display explicitly the depth of the question:
\begin{prop}
\label{PropZlL2}(i) $\mathfrak{ZL}(1,E)$ $(\forall E\in\NN)$ $\iff$
$\L^{2}CH^{m}(X)_{\QQ}=\ker(AJ_{X,\QQ})$ $(\forall$ sm. proj. $X/\CC)$.

(ii) $\mathfrak{ZL}(1,1)_{alg}$ $\iff$ $\L^{2}CH^{m}(X)_{\QQ}\cap CH_{alg}^{m}(X)_{\QQ}=\ker(AJ_{X,\QQ})\cap CH_{alg}^{m}(X)_{\QQ}$
$(\forall$ sm. proj. $X/\CC)$.
\end{prop}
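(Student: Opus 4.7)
The first step is to translate both biconditionals into a single implication using the preceding proposition. Indeed, the exact sequence
\[ 0\to J^{m}_{\text{fix}}(\X/S)_{\QQ}\to Gr^{1}_{\L}\underline{H}^{2m}_{\H}\to\bigl(H^{1}(S,R^{2m-1}\pi_{*}\QQ)\bigr)^{(0,0)}\to 0 \]
shows that $Z\in\L^{2}$ iff $[\nu_{\sZ}]_{\QQ}=0$ and $AJ_{X,\QQ}(Z)=0$, so since $\L^{2}\subseteq\ker AJ_{X,\QQ}$ holds automatically, part (i) reduces to the equivalence of $\mathfrak{ZL}(1,E)\,(\forall E)$ with the implication $(\star)\colon AJ_{X,\QQ}(Z)=0\Rightarrow[\nu_{\sZ}]_{\QQ}=0$ (for every smooth projective $X/\CC$ and $Z\in CH^{m}_{\text{hom}}(X)_{\QQ}$). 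Part (ii) is the same implication restricted to cycles in $CH^{m}_{\text{alg}}$, whose $\bar{\QQ}$-spread has VHS of level~$1$ and therefore matches the single input $\mathfrak{ZL}(1,1)_{\text{alg}}$.

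For the $(\Rightarrow)$ direction of (i), assume $\mathfrak{ZL}(1,E)$ for every $E$, and let $Z\in\ker AJ_{X,\QQ}$ with $\bar{\QQ}$-spread $\sZ\subset\X\to S$, $n:=\dim S$, and $X=X_{s_{0}}$ for a very general $s_{0}\in S(\CC)$. Then $\nu_{\sZ}(s_{0})=0$, i.e.\ $s_{0}\in\Z(\nu_{\sZ})$; by Theorem \ref{conj CalgZL}, $\Z(\nu_{\sZ})\subseteq S_{\CC}$ is complex algebraic, so it suffices to show $\Z(\nu_{\sZ})=S_{\CC}$. The plan is to pick a dominant $\bar{\QQ}$-morphism $f:S\to B$ with $\dim B=n-1$ and one-dimensional generic fibres, put $\FF:=\bar{\QQ}(f(s_{0}))$ (of transcendence degree $n-1$ over $\bar{\QQ}$), and consider the fibre $C:=f^{-1}(f(s_{0}))$, defined over $\FF$ with $\text{trdeg}(s_{0}/\FF)=1$. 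Then $\nu_{\sZ}|_{C}$ is motivated over $\FF$ with VHS of level $\leq 2E-1$ for $E$ bounded in terms of $\H$, so $\mathfrak{ZL}(1,E)$ makes $\Z(\nu_{\sZ}|_{C})$ a countable union of $\bar{\FF}$-subvarieties of $C$; since the $\FF$-transcendental $s_{0}$ cannot lie on a $0$-dimensional such subvariety, this locus must be $C$ itself, whence $\nu_{\sZ}|_{C}\equiv 0$ and $C\subseteq\Z(\nu_{\sZ})$. I then iterate: picking a new $\bar{\QQ}$-pencil $g_{1}:S\to B_{1}$ and a very general point $s_{1}\in C$ (which has $\text{trdeg}(s_{1}/\bar{\QQ})=n$ and hence $\text{trdeg}(s_{1}/\bar{\QQ}(g_{1}(s_{1})))\geq 1$) yields vanishing on $g_{1}^{-1}(g_{1}(s_{1}))$, and after $n-1$ such iterations the zero locus fills $S_{\CC}$, giving $\nu_{\sZ}\equiv 0$ and $[\nu_{\sZ}]_{\QQ}=0$.

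For the $(\Leftarrow)$ direction, assume $\L^{2}=\ker AJ_{\QQ}$ universally, let $\nu=\nu_{\sW}$ be an ANF on a smooth $\FF$-curve $S$ motivated over $\FF$, and take $s\in\Z(\nu)\cap S(\CC)$ with $\text{trdeg}(s/\FF)=1$ (the case $\text{trdeg}=0$ already places $s$ in a finite extension of $\FF$). Then $(X_{s},W_{s})$ is defined over $\FF(s)=\FF(S)$ and $AJ_{X_{s},\QQ}(W_{s})=\nu(s)=0$, so $W_{s}\in\L^{2}CH^{m}(X_{s})_{\QQ}$ by hypothesis. Unwinding Lewis's definition, the $\bar{\QQ}$-spread of $(X_{s},W_{s})$ is precisely the $\bar{\QQ}$-spread of the $\FF$-family $(\X\to S,\sW)$, and $W_{s}\in\L^{2}$ asserts that the associated spread normal function has vanishing rational topological class; restricting to the original $\FF$-base gives $[\nu]_{\QQ}=0$ on $S_{\CC}$. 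Hence $\nu$ equals, modulo a torsion ANF, a constant section of $J^{m}_{\text{fix}}$, and the vanishing at $s$ forces both this constant and $\nu$ itself to be torsion; as $\nu$ then takes values in a local system of finite sets on the connected curve $S$, it is determined by its value at $s$, so $\nu\equiv 0$. Thus either $\Z(\nu)=S$ or every zero of $\nu$ has $\text{trdeg}(\cdot/\FF)=0$ and lies in a finite extension of $\FF$, which is $\mathfrak{ZL}(1,E)$. I expect the main obstacle to lie in the iteration step of the forward direction --- specifically, in verifying that the family of pencil fibres through successive very general points of the growing zero locus genuinely sweeps out all of $S_{\CC}$ in $n-1$ stages; a cleaner alternative is to combine the fibrewise $\bar{\FF}_{b}$-algebraicity of $\Z(\nu_{\sZ})\cap C_{b}$ (from $\mathfrak{ZL}(1,E)$) with Theorem \ref{conj CalgZL} in a relative-spread argument to descend the field of definition of $\Z(\nu_{\sZ})$ to $\bar{\QQ}$, whereupon containment of the very general $s_{0}$ forces $\Z(\nu_{\sZ})=S_{\CC}$.
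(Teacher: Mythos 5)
Your reduction of both parts to the single implication $(\star)$, and your entire $(\Leftarrow)$ direction, are correct and essentially identical to the paper's argument (the paper phrases the backward direction by spreading the $\FF$-curve out over $\bar{\QQ}$ and observing that $s$ becomes a very general point of the spread; this is the same identification $\FF(s)\cong\FF(S)$ that you make). The divergence is in the $(\Rightarrow)$ direction, and it occurs exactly at the step you flag. The paper never tries to show $\Z(\nu_{N\sZ})=S_{\CC}$: it only needs $\nu_{N\sZ}$ to vanish on a \emph{single} very general one-dimensional \emph{multiple hyperplane section} $S_{0}\ni s_{0}$ of $S$ (i.e.\ the fibre of a generic linear projection, so essentially your first step with $f$ chosen linearly), because by affine weak Lefschetz the restriction map on $Gr_{\L}^{1}\underline{H}_{\H}^{2m}$ --- concretely on $H^{1}(U,R^{2m-1}\pi_{*}\QQ)$ for the affine opens $U$ in Lewis's limit, together with the constant $J_{fix}^{m}$ part --- is injective. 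Hence $Gr_{\L}^{1}c_{\H}(\sZ)\neq0$ would force $Gr_{\L}^{1}c_{\H}(\sZ_{0})\neq0$, contradicting $\nu_{N\sZ_{0}}\equiv0$ on $S_{0}$; a single application of $\mathfrak{ZL}(1,E)$ on that one curve finishes the proof. This weak Lefschetz observation is the idea missing from your write-up: it converts the global vanishing you are chasing into a statement on one curve.

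Without it, your iterated sweep is doing real work that you have not completed, and your proposed fallback does not repair it: descending the field of definition of $\Z(\nu_{\sZ})$ over the $n$-dimensional spread base to $\bar{\QQ}$ is essentially $\widetilde{\mathfrak{ZL}}(n,E)$, which is not among your hypotheses (only $D=1$ is assumed), so that route is circular. The sweep itself can likely be completed --- at each stage one must verify that the locus already known to lie in $\Z(\nu_{N\sZ})$ is not contained in any proper $\bar{\QQ}$-subvariety of $S$, so that it carries points of transcendence degree $n$ to feed into the next pencil, and one must use closedness of the zero locus to pass from the co-countable union of fibres to the full preimage $g^{-1}(g(\Sigma))$ --- but this bookkeeping is precisely what weak Lefschetz renders unnecessary. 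Two smaller corrections: $AJ(Z)_{\QQ}=0$ only makes $AJ(Z)$ torsion, so you should place $s_{0}$ in $\Z(\nu_{N\sZ})$ for suitable $N\in\NN$ rather than in $\Z(\nu_{\sZ})$; and since $\mathfrak{ZL}(1,E)$ only controls the component of the zero locus through $s_{0}$, you should either arrange the fibre $C$ to be irreducible or argue with that component alone.
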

Roughly speaking, these statements say that {}``sensitivity of the
zero locus (of a cycle-generated normal function) to field of definition''
is equivalent to {}``spreads of homologically- and $AJ$-trivial
cycles give trivial normal functions''. In (ii), the cycles in both
statements are assumed algebraically equivalent to zero.
\begin{proof}
We first remark that for any variety $S$ with field of definition
$\FF$ of minimal transcendence degree, no proper $\bar{\FF}$-subvariety
of $S$ contains (in its complex points) a very general point of $S$.

(i) $(\Longrightarrow):$ Let $\sZ$ be the $\bar{\QQ}$-spread of
$Z$ with $AJ(Z)_{\QQ}=0$, and suppose $Gr_{\L}^{1}\Psi(Z)=Gr_{\L}^{1}c_{\H}(\sZ)$
does not vanish. Taking a $1$-dimensional very general multiple hyperplane
section $S_{0}\subset S$ through $s_{0}$ ($S_{0}$ is {}``minimally''
defined over $k\overset{\Tiny\text{trdeg. }1}{\subseteq}K$), the
restriction $Gr_{\L}^{1}c_{\H}(\sZ_{0})\neq0$ by weak Lefschetz.
Since each $\Z(\nu_{N\sZ_{0}})\subseteq S_{0}$ is a union of subvarieties
defined $/\bar{k}$ and contains $s_{0}$ for some $N\in\NN$, one
of these is all of $S_{0}$ ($\implies Gr_{\L}^{1}\Psi(Z)=0$), a
contradiction. So $Z\in\L^{2}$.

$(\Longleftarrow):$ Let $\X_{0}\to S_{0}$, $\sZ_{0}\in Z^{m}(\X_{0})_{prim}$,
$\dim(S_{0})=1$, all be defined $/k$ and suppose $\Z(\nu_{\sZ_{0}})$
contains a point $s_{0}$ not defined $/\bar{k}$. Spreading this
out over $\bar{\QQ}$ to $\sZ,\X,S\supset S_{0}\ni s_{0}$, we have:
$s_{0}\in S$ is very general, $\sZ$ is the $\bar{\QQ}$-spread of
$Z=\sZ_{0}\cdot X_{s_{0}}$, and $AJ(Z)_{\QQ}=0$. So $Z\in\L^{2}$
$\implies$ $\nu_{\sZ}$ is torsion $\implies$ $\nu_{\sZ_{0}}$ is
torsion. But then $\nu_{\sZ_{0}}$ is zero since it is zero somewhere
(at $s_{0}$). So $\Z(\nu_{\sZ_{0}})$ is either $S_{0}$ or a (necessarily
countable) union of $\bar{k}$-points of $S_{0}$.

(ii) The spread $\sZ$ of $Z_{(s_{0})}\equiv_{alg}0$ has every fiber
$Z_{s}\equiv_{alg}0$, hence $\nu_{\sZ}$ is a section of $J(\H)$,
$\H\subset(R^{2m-1}\pi_{*}\QQ(m))\otimes\mathcal{O}_{S}$ subVHS of
level one (which can be taken to satisfy $H_{s}=(H^{2m-1}(X_{s}))_{h}$
for a.e. $s\in S$). The rest is as in (i).\end{proof}
\begin{rem}
A related candidate BBF which occurs in work of the first author with
J. Lewis \cite[sec. 4]{KL}, is defined via successive kernels of
\emph{generalized} normal functions (associated to the $\bar{\QQ}$-spread
$\sZ$ of a cycle). These take values on very general $(i-1)$-dimensional
subvarieties of $S$ (rather than at points), and have the above $cl^{i}(Z)$
as their topological invariants.
\end{rem}

\subsection{Field of definition of the zero locus}

We shall begin by showing that the equivalent conditions of Prop.
\ref{PropZlL2}(ii) are satisfied; the idea of the argument is due
in part to S. Saito \cite{Sa}. The first paragraph in the following
in fact gives more:
\begin{thm}
$\widetilde{\mathfrak{ZL}}(D,1)_{alg}$ holds for all $D\in\NN$.
That is, the zero locus of any normal function motivated by a family
of cycles $/\FF$ algebraically equivalent to zero, is defined over
an algebraic extension of $\FF$.

Consequently, cycles algebraically- and Abel-Jacobi-equivalent to
zero on a smooth projective variety $/\CC$, lie in the $2^{\text{nd}}$
Lewis filtrand.\end{thm}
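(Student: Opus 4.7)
The plan is to exploit the fact that when the fibers of $\sZ$ are algebraically equivalent to zero, the Abel--Jacobi invariants land in an abelian scheme over $S$ rather than in a merely analytic bundle of complex tori, and moreover the resulting section is itself algebraic. Combined with Lieberman's theorem \eqref{eqn lieberman}, this reduces both the algebraicity and the field of definition of $\Z(\nu)$ to standard facts about morphisms of abelian schemes and pullback of the zero section.

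To carry this out, let $\nu = \nu_{\sZ}$ with $\sZ \in Z^{m}(\X)_{prim}$ and $\bar\pi : \X \to S$ all defined over $\FF$, and assume each smooth fiber satisfies $Z_s \equiv_{alg} 0$. Inside $\H = (R^{2m-1}\pi_{*}\QQ(m))\otimes \mathcal{O}_S$ isolate the maximal sub-VHS $\H_{h}$ of pure type $(0,-1) + (-1,0)$: this is defined over $\FF$ because the characterization is Hodge-theoretic and $\bar\pi$-equivariant, and its relative Jacobian $\J_h = J^0(\H_h) \to S$ is an abelian $S$-scheme over $\FF$. By \eqref{eqn lieberman}, $\nu_{\sZ}$ factors through a section $\tilde{\nu}: S \to \J_h$. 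After possibly shrinking $S$ to a Zariski open $U/\FF$ and passing to a finite \'etale cover $U'/\FF'$ with $\FF'/\FF$ finite, standard Hilbert-scheme spreading produces a smooth projective family of curves $\mathcal{C} \to U'$, sections $p_1, p_2 : U' \to \mathcal{C}$, and an $\FF'$-correspondence $\mathcal{W} \in Z^{m}(\X_{U'} \times_{U'} \mathcal{C})$ such that $\sZ|_{U'} = (\pi_\X)_{*}(\mathcal{W} \cdot \pi_\mathcal{C}^{*}(p_1 - p_2))$. Then $\tilde{\nu}|_{U'}$ equals the composite
\[
U' \xrightarrow{\,p_1 - p_2\,} \mathrm{Pic}^{0}(\mathcal{C}/U') \xrightarrow{\,\mathcal{W}_{*}\,} \J_h|_{U'},
\]
in which the first map is the algebraic relative Abel--Jacobi of a family of curves and the second is a morphism of abelian $U'$-schemes induced by a correspondence; hence $\tilde{\nu}|_{U'}$ is an $\FF'$-morphism of $U'$-schemes into an abelian scheme. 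Its scheme-theoretic zero locus is the pullback of the zero section, hence a closed $\FF'$-subscheme of $U'$; descending through the \'etale cover and taking Zariski closure in $S$ realizes $\Z(\nu)$ as a closed algebraic subvariety of $S$ defined over the algebraic extension $\FF'$ of $\FF$. This is $\widetilde{\mathfrak{ZL}}(D,1)_{alg}$.

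For the consequence, let $Z \in CH^{m}_{alg}(X)_{\QQ} \cap \ker(AJ_X)_{\QQ}$ and let $\sZ$ be its $\bar\QQ$-spread over some smooth $S/\bar\QQ$, with $X = X_{s_0}$ for $s_0 \in S(\CC)$ very general. By construction $\nu_{\sZ}(s_0) = AJ_X(Z)_{\QQ} = 0$, so $s_0 \in \Z(\nu_{\sZ})$. The first part exhibits $\Z(\nu_{\sZ})$ as an algebraic subvariety of $S$ defined over an algebraic extension of $\bar\QQ$, i.e.\ over $\bar\QQ$; since $s_0$ is very general it cannot lie in any proper $\bar\QQ$-subvariety, so $\Z(\nu_{\sZ}) = S$. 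Hence $\nu_{\sZ}$ is torsion, which yields $cl^{1}_{X}(Z) = [\nu_{\sZ}]_{\QQ} = 0$ and $aj^{0}_{X}(Z) = AJ_X(Z)_{\QQ} = 0$, so $Z \in \L^{2} CH^{m}(X)_{\QQ}$ by definition of the filtration (compare Proposition \ref{PropZlL2}).

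The hardest point will be the construction of the witnessing family $(\mathcal{C}, \mathcal{W}, p_1, p_2)$ globally over an open of $S$ with control of the field of definition: fiberwise algebraic equivalence to zero a priori produces such data only pointwise, so one must first observe that the generic fiber is algebraically equivalent to zero over $\FF(S)$, then spread the witnessing curve and correspondence via a relative Hilbert or Chow scheme, and finally perform a finite \'etale base change to rigidify the sections. Dealing with irreducible components of $\Z(\nu)$ potentially supported in the complement $S \setminus U$ is a secondary issue that can be absorbed either by inducting on $\dim S$ (restricting the argument to $S \setminus U$) or by invoking Theorem \ref{conj CalgZL} to see that $\Z(\nu)$ is already algebraic in $S$ and then matching its components with those found over $U'$.
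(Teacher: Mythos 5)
Your overall strategy --- algebraize the normal function itself as a morphism into an abelian scheme over $\FF'$ and pull back the zero section --- is genuinely different from the paper's, which never globalizes. The paper fixes a single point $s_{0}\in\Z(\nu_{\sZ})$, writes $Z_{s_0}=W_*(q-0)$ for a curve $C$ and correspondence $W$ over the field of definition $L$ of that one fiber, composes with a Jacobi-inversion correspondence to get $\Theta\in Z^m(J(C)\times X)$, and encodes the vanishing $AJ(Z_{s_0})=0$ as the algebraic condition that $b=N\cdot AJ_C(q-0)$ lie in the identity component $B$ of $\ker([\Theta]_*)$, an abelian subvariety of $J(C)$ defined over $\bar L$. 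That condition is then transported to every $\mathrm{Gal}(\CC/K)$-conjugate of $s_0$ via the motivicity of the Leray filtration (so $cl(\theta^{\sigma})\in\L^2$ still forces $[\theta^{\sigma}]_*b^{\sigma}=0$); hence the whole $K$-spread of $s_0$ lies in $\Z(\nu_{N\sZ})$, countability of $\bar K$-subvarieties gives $\mathfrak{ZL}(D,1)_{alg}$, and Theorem \ref{conj CalgZL} upgrades this to $\widetilde{\mathfrak{ZL}}(D,1)_{alg}$.

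The genuine gap in your version is the assertion that $\H_h$ and $\J_h=J^0(\H_h)\to S$ are ``defined over $\FF$ because the characterization is Hodge-theoretic and $\bar\pi$-equivariant.'' A Hodge-theoretic characterization does not confer a field of definition --- descent of Hodge-theoretically defined data to the base field is exactly the kind of statement that is conjectural in general --- and this is the crux of your argument, since everything downstream (that $\mathcal{W}_*$ is an $\FF'$-morphism of abelian $U'$-schemes, that the zero section pulls back to an $\FF'$-subscheme) presupposes it. Even granting the pointwise fact quoted in $\S1.5$ that each $J_h^m(X_s)$ is an abelian variety defined over the field of definition of $X_s$, you still need the family to be an abelian \emph{scheme} over $\FF$, and the rank of $\H_h$ can jump on special subloci. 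The natural repair --- replacing $\J_h$ by the image of $\mathcal{W}_*$ --- runs into the same problem, because $\ker(\mathcal{W}_{u,*})$ also jumps in families; controlling it fiber by fiber and propagating by Galois conjugation is precisely what the paper's pointwise argument does. Your spreading of the witnessing data $(\mathcal{C},\mathcal{W},p_1,p_2)$, your handling of components over $S\setminus U$, and your deduction of the Lewis-filtration consequence are all fine as sketched, but as written the proof of the first assertion rests on an unproved descent claim.
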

\begin{proof}
Consider $\sZ\in Z^{m}(\X)_{prim}$, $f:\X\to S$ defined $/K$ ($K$
f.g.$/\bar{\QQ}$), with $Z_{s}\equiv_{alg}0$ $\forall s\in S$;
and let $s_{0}\in\Z(\nu_{\sZ})$. (Note: $s_{0}$ is just a complex
point of $S$.) We need to show: \begin{equation}\label{GalConjZL}\exists N \in \NN \text{ such that for any } \sigma \in Gal(\CC/K),\; \sigma(s_0)\in\Z(\nu_{N\sZ}). \\
\end{equation} 

Here is why (\ref{GalConjZL}) will suffice: the analytic closure
of the set of all conjugate points is simply the point's $K$-spread
$S_{0}(\subset S)$, a (possibly reducible) algebraic subvariety defined
$/K$. Clearly, on the $s_{0}$-connected component of $S_{0}$, $\nu_{\sZ}$
itself then vanishes; and this component is defined over an algebraic
extension of $K$. Trivially, $\Z(\nu_{\sZ})$ is the union of such
connected spreads of its points $s_{0}$; and since $K$ is finitely
generated $/\bar{\QQ}$, there are only countably many subvarieties
of $S_{0}$ defined $/K$ or algebraic extensions thereof. This proves
$\mathfrak{ZL}(D,1)_{alg}$, hence (by Theorem \ref{conj CalgZL})
$\widetilde{\mathfrak{ZL}}(D,1)_{alg}$.

To show (\ref{GalConjZL}), write $X=X_{s_{0}}$, $Z=Z_{s_{0}}$,
and $L(/K)$ for their field of definition. There exist $/L$

$\bullet$ a smooth projective curve $C$ and points $0,q\in C(L)$;

$\bullet$ an algebraic cycle $W\in Z^{m}(C\times X)$ such that $Z=W_{*}(q-0)$;
and

$\bullet$ another cycle $\Gamma\in Z^{1}(J(C)\times C)$ defining
Jacobi inversion.

Writing $\Theta:=W\circ\Gamma\in Z^{m}(J(C)\times X)$, the induced
map\[
[\Theta]_{*}:\, J(C)\to J^{m}(X)_{alg}\left(\subseteq J^{m}(X)_{h}\right)\]
is necessarily a morphism of abelian varieties $/L$; hence the identity
connected component of $\ker([\Theta]_{*})$ is a subabelian variety
of $J(C)$ defined over an algebraic extension $L'\supset L$. Define
$\theta:=\Theta|_{B}\in Z^{m}(B\times X)$, and observe that $[\theta]_{*}:B\to J^{m}(X)_{alg}$
is zero by construction, so that $cl(\theta)\in\L^{2}H^{2m}(B\times X)$.

Now, since $AJ_{X}(Z)=0$, a multiple $b:=N.AJ_{C}(q-0)$ belongs
to $B$, and then $N.Z=\theta_{*}b$. This {}``algebraizes'' the
$AJ$-triviality of $N.Z$: conjugating the $6$-tuple $(s_{0},Z,X,B,\theta,b)$
to $(\sigma(s_{0}),Z^{\sigma}[=Z_{\sigma(s_{0})}],X^{\sigma}[=X_{\sigma(s_{0})}],B^{\sigma},\theta^{\sigma},b^{\sigma}),$
we still have $N.Z^{\sigma}=\theta_{*}^{\sigma}b^{\sigma}$ and $cl(\theta^{\sigma})\in\L^{2}H^{2m}(B^{\sigma}\times X^{\sigma})$
by motivicity of the Leray filtration \cite{Ar}, and this implies
$N.AJ(Z^{\sigma})=[\theta^{\sigma}]_{*}b^{\sigma}=0$ as desired.
\end{proof}
We now turn to the result of \cite{Ch} indicated at the outset of
$\S4.5$. While interesting, it sheds no light on $\mathfrak{ZL}(1,E)$
or filtrations, since the hypothesis that the VHS $\H$ have no global
sections is untenable over a point.
\begin{thm}
\cite[Thm. 3]{Ch} Let $\mathcal{Z}$ be the zero locus of a $k$-motivated
normal function $\nu:S\to J(\H)$. Assume that $\mathcal{Z}$ is algebraic
and $\HH_{\CC}$ has no non-zero global sections over $\mathcal{Z}$.
Then $\mathcal{Z}$ is defined over a finite extension of $k$. \end{thm}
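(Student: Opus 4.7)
The plan is a Galois-descent argument in the style of Voisin's treatment of the field of definition of Hodge loci, adapted to the normal-function setting, with the hypothesis $H^{0}(\mathcal{Z},\HH_{\CC})=0$ providing the crucial rigidity. For each $\sigma\in\text{Aut}(\CC/k)$, the conjugate $\sigma(\mathcal{Z})$ is again algebraic by Theorem~\ref{conj CalgZL}; the goal is to show that the $\text{Aut}(\CC/k)$-orbit of $\mathcal{Z}$ in the Hilbert scheme of $S/k$ is finite, which is equivalent to being defined over a finite extension of $k$.

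First I would reinterpret $\mathcal{Z}$ as a Hodge locus. Presenting $\nu$ as an admissible extension
\[
0\to\H\to\V\to\QQ_{S}(0)\to 0,
\]
$\mathcal{Z}$ is, up to torsion, the locus of $s\in S$ where the fiber extension splits, i.e.\ where a distinguished flat lift $\nu_{\QQ}$ of $1\in\QQ(0)$ acquires Hodge type $(0,0)$. Because $\nu$ is $k$-motivated, $\V$ itself is $k$-motivated: it is constructed from the relative cohomology of a family obtained from $f:\X\to S$ by removing (a tubular neighborhood of) the $k$-rational cycle $\sZ$, so both the Hodge filtration $\F^{\bullet}\V_{e}$ on the Deligne extension and the underlying $\QQ$-local system are algebraically and arithmetically defined over $k$. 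In these terms, $\mathcal{Z}$ is cut out inside $S$ by an algebraic condition over $k$ — once one specifies \emph{which} integral class of $\VV_{\QQ}$ is being asked to become Hodge.

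The hypothesis $H^{0}(\mathcal{Z},\HH_{\CC})=0$ then pins down that integral class. Because the fixed part of $\H|_{\mathcal{Z}}$ vanishes, the lift $\nu_{\QQ}|_{\mathcal{Z}}$ is \emph{rigid}: it cannot be altered by a global section of $\HH_{\QQ}$ over $\mathcal{Z}$ (there are none). Hence $\mathcal{Z}$ is canonically associated to a single integral class rather than a coset. Conjugating by $\sigma\in\text{Aut}(\CC/k)$ sends this class to one of only finitely many Galois conjugates — boundedness coming from the CDK-style finiteness of integral Hodge classes of bounded self-intersection in a polarized motivated VMHS — and the orbit of $\mathcal{Z}$ is in bijection with the orbit of the class. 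Thus the orbit is finite, and $\mathcal{Z}$ is defined over a finite extension of $k$.

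The main obstacle is the step that replaces the \emph{transcendental} zero-locus condition $\nu(s)=0\in J(\H)_{s}$ by a purely Hodge-theoretic condition on $\V$ amenable to Galois descent: $AJ$ itself is not Galois-equivariant, so there is no tautological identification $\mathcal{Z}(\nu^{\sigma})=\sigma(\mathcal{Z})$ via cycle-level conjugation, and one is forced to work at the level of the extension class of $\V$ and its absolute-Hodge/Deligne-extension incarnation. The no-global-sections hypothesis is precisely the lever that converts this reformulation into a finiteness statement; without it the integral class representing $\nu_{\QQ}|_{\mathcal{Z}}$ could be modified by nontrivial fixed-part sections and the Galois orbit of the locus could a priori become infinite, which is exactly the phenomenon that makes the unconditional version of $\widetilde{\mathfrak{ZL}}$ so delicate.
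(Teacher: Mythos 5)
Your proposal correctly identifies the overall strategy (Galois descent on components of $\mathcal{Z}$, with the hypothesis $H^{0}(\mathcal{Z},\HH_{\CC})=0$ supplying rigidity), but it has a genuine gap at exactly the point you yourself flag as ``the main obstacle'': you never actually show that $\sigma(\mathcal{Z}_{0})$ is again contained in $\mathcal{Z}$ for $\sigma\in\mathrm{Aut}(\CC/k)$. Asserting that $\mathcal{Z}$ is ``cut out by an algebraic condition over $k$'' begs the question: the splitting condition on the extension $0\to\H\to\V\to\QQ_{S}(0)\to0$ involves the rational (Betti) lift $\nu_{\QQ}$, and the Betti structure does not conjugate algebraically. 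Moreover, the appeal to CDK-style finiteness of Hodge classes of bounded self-intersection is a red herring here: by the no-global-sections hypothesis the rational lift over $\mathcal{Z}_0$ is already \emph{unique}, so there is no finiteness of classes to establish; the missing content is the Galois-equivariance of the vanishing condition itself, not a bound on an orbit of classes.

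The paper closes this gap by replacing the transcendental condition $\nu|_{\mathcal{Z}_0}=0$ with its de Rham avatar. Restricting the cycle to a component $\mathcal{Z}_{0}$, the Leray-graded pieces $Gr^{j}_{\L}[\sZ_{\mathcal{Z}_0}]_{dR}$ for $j=0,1$ (the topological and infinitesimal invariants in de Rham guise) vanish because $\mathcal{Z}_{0}$ lies in the zero locus. Since the Gauss--Manin connection is algebraic, these de Rham classes conjugate compatibly, so $Gr^{j}_{\L}[\sZ_{\mathcal{Z}_0^{\sigma}}]_{dR}=0$ for $j=0,1$; by comparison the Betti graded pieces vanish too, which forces $AJ(Z_{s})$ to take values in the fixed part of $J(\H)$ for $s\in\mathcal{Z}_{0}^{\sigma}$. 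The hypothesis that $\HH_{\CC}$ has no global sections over $\mathcal{Z}_{0}$ transports to $\mathcal{Z}_{0}^{\sigma}$ (again because conjugation preserves $\nabla$-flat sections), so the fixed part is trivial and $\nu$ vanishes on $\mathcal{Z}_{0}^{\sigma}$, i.e.\ $\mathcal{Z}_{0}^{\sigma}\subseteq\mathcal{Z}$. Finiteness of the orbit then follows simply from $\mathcal{Z}$ being algebraic with finitely many components. If you want to salvage your write-up, this passage through the infinitesimal invariant and the algebraicity of $\nabla$ is the step you must supply; the rest of your outline then goes through.
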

\begin{proof}
Charles's proof of this result uses the $\ell$-adic Abel--Jacobi
map. Alternatively, we can proceed as follows (using, with $\FF=k$,
the notation of Defn. \ref{Defn MOTnf}): take $\Z_{0}\subset\Z(\nu)$
to be an irreducible component (without loss of generality assumed
smooth), and $\sZ_{\zl}$ the restriction of $\sZ$ to $\zl$. Let
$[\sZ_{\zl}]$ and $[\sZ_{\zl}]_{dR}$ denote the Betti and de Rham
fundamental classes of $\sZ_{\zl}$, and $\mathcal{L}$ the Leray
filtration. Then, $Gr_{\mathcal{L}}^{1}[\sZ_{\zl}]$ is the topological
invariant of $[\sZ_{\zl}]$ in $H^{1}(\zl,R^{2m-1}f_{*}\ZZ)$, whereas
$Gr_{\mathcal{L}}^{1}[\sZ_{\zl}]_{dR}$ is the infinitesimal invariant
of $\nu_{\sZ}$ over $\Z_{0}$. In particular, since $\Z_{0}$ is
contained in the zero locus of $\nu_{\sZ}$, \begin{equation}\label{Eqn Charlespf}Gr^j_{\L}[\sZ_{\zl}]_{dR}=0,\; \; j=0,1. \\
\end{equation} Furthermore, by the algebraicity of the Gauss-Manin connection, (\ref{Eqn Charlespf})
is invariant under conjugation: \[
Gr_{\mathcal{L}}^{j}[\sZ_{\zl^{\sigma}}]_{dR}=(Gr_{\mathcal{L}}^{j}[\sZ_{\zl}]_{dR})^{\sigma}\]
 and hence $Gr_{\mathcal{L}}^{j}[\sZ_{\zl^{\sigma}}]_{dR}=0$ for
$j=0$, $1$. Therefore, $Gr_{\mathcal{L}}^{j}[\sZ_{\zl^{\sigma}}]=0$
for $j=0$, $1$, and hence $AJ(Z_{s})$ takes values in the fixed
part of $J(\H)$ for $s\in\mathcal{Z}_{0}^{\sigma}$. By assumption,
$\HH_{\CC}$ has no fixed part over $\mathcal{Z}_{0}$, and hence
no fixed part over $\mathcal{Z}_{0}^{\sigma}$ (since conjugation
maps $\nabla$-flat sections to $\nabla$-flat sections by virtue
of the algebraicity of the Gauss-Manin connection). As such, conjugation
must take us to another component of $\mathcal{Z}$, and hence (since
$\mathcal{Z}$ is algebraic over $\CC$ $\implies$ $\mathcal{Z}$
has only finitely many components), $\mathcal{Z}_{0}$ must be defined
over a finite extension of $k$.
\end{proof}
We conclude with a more direct analogue of Voisin's result \cite[Thm. 0.5(2)]{Vo1}
on the algebraicity of the Hodge locus. If $\V$ is a variation of
mixed Hodge structure over a complex manifold and \[
\alpha\in(\F^{p}\cap\W_{2p}\cap V_{\QQ})_{s_{o}}\]
for some $s_{o}\in S$, then the Hodge locus $T$ of $\alpha$ is
the set of points in $S$ where some parallel translate of $\alpha$
belongs to $\F^{p}$.
\begin{rem}
If $(F,W)$ is a mixed Hodge structure on $V$ and $v\in F^{p}\cap W_{2p}\cap V_{\QQ}$
then $v$ is of type $(p,p)$ with respect to Deligne's bigrading
of $(F,W)$. \end{rem}
\begin{thm}
\label{thm ChVOgeneralization}Let $S$ be a smooth complex algebraic
variety defined over a subfield $k$ of $\CC$, and $\V$ be an admissible
variation of mixed Hodge structure of geometric origin over $S$.
Suppose that $T$ is an irreducible subvariety of $S$ over $\CC$
such that: 

(a) $T$ is an irreducible component of the Hodge locus of some \[
\alpha\in(\F^{p}\cap\W_{2p}\cap\VV_{\QQ})_{t_{o}};\]

(b) $\pi_{1}(T,t_{o})$ fixes only the line generated by $\alpha$. 

Then, $T$ is defined over $\bar{k}$.\end{thm}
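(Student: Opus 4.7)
My plan is to adapt Voisin's strategy from \cite{Vo1} to the admissible mixed setting, following the template of the Charles-style argument used in the immediately preceding theorem. First, I would fix a $k$-model $f:X\to S$ realizing the geometric origin of $\V$, so that the Hodge bundles $\F^{\bullet}$, the weight filtration $\W_{\bullet}$, and the Gauss--Manin connection $\nabla$ are all algebraic and defined over $k$. Via the Betti--de Rham comparison at $t_{o}$, the flat section $\alpha$ corresponds to an algebraic de Rham class satisfying $\alpha\in\F^{p}\cap\W_{2p}$ and $\nabla\alpha=0$ along $T$; this is an algebraic condition on the pair $(T,\alpha)$. By the admissible-VMHS extension of Theorem \ref{thm CDK} (i.e.\ the Brosnan--Pearlstein/M.~Saito algebraicity results of this section, in the Hodge-locus guise), the total Hodge locus of $\V$ is a countable union of algebraic subvarieties of $S$, and in particular $T$ is algebraic over $\CC$.

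Next, I would show that each Galois conjugate $T^{\sigma}$, for $\sigma\in\mathrm{Aut}(\CC/k)$, is again an irreducible component of the Hodge locus in $S$. Since $S$ and the tuple $(\F^{\bullet},\W_{\bullet},\nabla)$ are defined over $k$, conjugation by $\sigma$ sends $T$ to an algebraic subvariety $T^{\sigma}\subset S$, and sends $\alpha$ to a class $\alpha^{\sigma}\in(\F^{p}\cap\W_{2p}\cap\VV_{\QQ})_{\sigma(t_{o})}$ via the étale (equivalently, algebraic de Rham) comparison that transports the rational structure across conjugation. Hypothesis (b) conjugates to the statement that $\pi_{1}(T^{\sigma},\sigma(t_{o}))$ fixes only the line $\CC\alpha^{\sigma}$, so each $T^{\sigma}$ is rigidly labelled by the projective class of $\alpha^{\sigma}$, with no continuous family of nearby Hodge classes supported on it.

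The hardest step, and the crux of the argument, is to prove that the orbit $\{T^{\sigma}\}_{\sigma\in\mathrm{Aut}(\CC/k)}$ is finite. All the $T^{\sigma}$ share the Hilbert polynomial $P$ of $T$ and hence lie in a single quasi-projective component $\mathrm{Hilb}^{P}(S)_{/k}$ of finite type over $k$. Let $\mathcal{H}\subset\mathrm{Hilb}^{P}(S)$ be the locus of subvarieties of $S$ that arise as components of the Hodge locus of \emph{some} class in $\F^{p}\cap\W_{2p}\cap\VV_{\QQ}$ satisfying the monodromy-rigidity (b). This $\mathcal{H}$ is constructible over $k$: the Hodge-locus condition is cut out algebraically in families by the admissible CDK theorem, while condition (b), being the statement that a fibrewise invariant subspace has dimension $\leq 1$, is Zariski-open on the parameter space. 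On the other hand, $\mathcal{H}$ is contained in the countable family $\mathcal{N}$ of components of the total Hodge locus (one for each rational Hodge class). A constructible subset of a finite-type $k$-scheme which is at the same time countable must be finite. Therefore $\mathrm{Aut}(\CC/k)$ acts with finite orbits on $\mathcal{H}$, the stabilizer of $T$ has finite index, and $T$ is defined over a finite algebraic extension of $k$.

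The main obstacle is making the constructibility of $\mathcal{H}$ rigorous: one must spread out the admissible CDK algebraicity in families over $\mathrm{Hilb}^{P}(S)$, and verify that hypothesis (b) really cuts out an open subfunctor of the resulting Hodge-locus family. Once this is set up, the combination of countability (from CDK) with constructibility closes the argument via the standard dichotomy that a countable constructible set is finite.
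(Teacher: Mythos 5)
There is a genuine gap at the crux of your argument: the assertion that conjugation sends $\alpha$ to a class $\alpha^{\sigma}\in(\F^{p}\cap\W_{2p}\cap\VV_{\QQ})_{\sigma(t_{o})}$ ``via the \'etale comparison that transports the rational structure across conjugation.'' It does not. An element $\sigma\in{\rm Aut}(\CC/k)$ acts on the algebraic de Rham bundle with its Gauss--Manin connection, Hodge filtration and weight filtration (all defined over $k$), so all you get for free is that $\alpha^{\sigma}$ is a \emph{flat de Rham} section over $T^{\sigma}$ lying in $\F^{p}\cap\W_{2p}$; the Betti $\QQ$-structure is a transcendental datum and is not carried to itself by $\sigma$. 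Recovering rationality of (a multiple of) $\alpha^{\sigma}$ is precisely where hypothesis (b) must do its work, and this is how the paper argues: the space of flat sections of $\VV_{\CC}$ over $T^{\sigma}$ is one-dimensional (a second flat family would conjugate back by $\sigma^{-1}$ to contradict (b)), and since $\pi_{1}$-invariants commute with extension of scalars this line is the complexification of a rational line $\QQ\beta$; hence $\alpha^{\sigma}=\lambda\beta$, and the $k$-rational flat polarization gives $Q(\alpha,\alpha)=Q(\alpha^{\sigma},\alpha^{\sigma})=\lambda^{2}Q(\beta,\beta)$, leaving only countably many possibilities for $\alpha^{\sigma}$. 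Each $T^{\sigma}$ is then a component of the Hodge locus of one of countably many rational classes, so the orbit $\{T^{\sigma}\}$ is countable and $T$ is defined over $\bar{k}$. Without some version of this $\beta$-plus-polarization step you cannot even place $T^{\sigma}$ in your set $\mathcal{H}$, since you do not know it is a component of the Hodge locus of \emph{any} rational class.

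The Hilbert-scheme finiteness argument is also both unnecessary and unsound as stated. Unnecessary, because countability of the ${\rm Aut}(\CC/k)$-orbit of $T$ already implies definability over $\bar{k}$; no constructibility is needed. Unsound, because (i) the ``admissible-VMHS extension of Theorem \ref{thm CDK}'' you invoke --- both to re-prove the algebraicity of $T$ (which is already part of the hypotheses) and to cut out the Hodge locus algebraically in families over $\mathrm{Hilb}^{P}(S)$ --- is exactly the generalization of \cite{CDK} and \cite{BP3} to arbitrary VMHS which the paper points out, in the sentence immediately following this theorem, is still lacking; and (ii) your locus $\mathcal{H}$ is defined by requiring membership in $\VV_{\QQ}$, a transcendental condition with no reason to be constructible over $k$ (or even over $\CC$) on any finite-type parameter space. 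Finally, you should dispose of the degenerate cases at the outset as the paper does: if $\V\cong\QQ(p)$ then $T=S$, and otherwise $\dim T>0$, since an isolated point has trivial $\pi_{1}$ and would violate (b).
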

\begin{proof}
If $\V\cong\QQ(p)$ for some $p$ then $T=S$. Otherwise, $T$ cannot
be an isolated point without violating $(b)$. Assume therefore that
$\dim T>0$. Over $T$, we can extend $\alpha$ to a flat family of
de Rham classes. By the algebraicity of the Gauss--Manin connection,
the conjugate $\alpha^{\sigma}$ is flat over $T^{\sigma}$. Furthermore,
if $T^{\sigma}$ supports any additional flat families of de Rham
classes, conjugation by $\sigma^{-1}$ gives a contradiction to $(b)$.
Therefore, $\alpha^{\sigma}=\lambda\beta$ where $\beta$ is a $\pi_{1}(T^{\sigma})$-invariant
Betti class on $T^{\sigma}$ which is unique up to scaling. Moreover,
\[
Q(\alpha,\alpha)=Q(\alpha^{\sigma},\alpha^{\sigma})=\lambda^{2}Q(\beta,\beta)\]
 and hence there are countably many Hodge classes that one can conjugate
$\alpha$ to via ${\rm Gal}(\CC/k)$. Accordingly, $T$ must be defined
over $\bar{k}$. 
\end{proof}

\section{The N\'eron Model and Obstructions to Singularities}

The unifying theme of the previous sections is the study of algebraic
cycles via degenerations using the Abel--Jacobi map. In particular,
in the case of a semistable degeneration $\pi:X\to\Delta$ and a \emph{cohomologically
trivial} cycle $Z$ which properly intersects the fibers, we have
\[
\lim_{s\to0}\AJ_{X_{s}}(Z_{s})=\AJ_{X_{0}}(Z_{0})\]
as explained in detail in $\S2$. In general however, the existence
of the limit Abel--Jacobi map is obstructed by the existence of the
singularities of the associated normal function. Nonetheless, using
the description of the asymptotic behavior provided by the nilpotent
and $SL_{2}$-orbit theorems, we can define the limits of admissible
normal functions along curves and prove the algebraicity of the zero
locus.

\subsection{N\'eron models in 1 parameter}

In this section we consider the problem of geometrizing these constructions
(ANF's and their singularities, limits and zeroes) by constructing
a N?on model which graphs admissible normal functions. The quest
to construct such objects has a long history which traces back to
the work of N?on on minimal models for abelian varieties $A_{K}$
defined over the field of fractions $K$ of a discrete valuation ring
$R$. In \cite{Na}, Nakamura proved the existence of an analytic
N?on model for a family of abelian varieties $\mathcal{A}\to\Delta^{*}$
arising from a variation of Hodge structure $\H\to\Delta^{*}$ of
level 1 with unipotent monodromy. With various restrictions, this
work was then extended to normal functions arising from higher codimension
cycles in the work of Clemens \cite{Cl2}, El Zein and Zucker \cite{EZ},
and Saito \cite{S1}.
\begin{rem}
Unless otherwise noted, throughout this section we assume that the
local monodromy of the variation of Hodge structure $\mathcal{H}$
under consideration is unipotent, and the local system $\HH_{\ZZ}$
is torsion free.
\end{rem}
A common feature in all of these analytic constructions of N?on models
for variations of Hodge structure over $\Delta^{*}$ is that the fiber
over $0\in\Delta$ is a complex Lie group which has only finitely
many components. Furthermore, the component into which a given normal
function $\nu$ extends is determined by the value of $\sigma_{\ZZ,0}(\nu)$.
Using the methods of the previous section, one way to see this is
as follows: Let \[
0\to\H\to\V\to\ZZ(0)\to0\]
represent an admissible normal function $\nu:\Delta^{*}\to J(\H)$
and $F:U\to\M$ denote the lifting of the period map of $\V$ to the
upper half-plane, with monodromy $T=e^{N}$. Then, using the $SL_{2}$-orbit
theorem of the previous section, it follows (cf. Theorem $(4.15)$
of \cite{Pe2}) that \[
Y_{{\rm Hodge}}=\lim_{{\rm Im}(z)\to\infty}\, e^{-zN}.Y_{(F(z),W)}\]
exists, and is equal to the grading $Y(N,Y_{(F_{\infty},M)})$ constructed
in the previous section; moreover, recall that $Y(N,Y_{(F_{\infty},M)})\in\ker(\mbox{\text{ad}}\, N)$
due to the short length of the weight filtration. Suppose further
that there exists an integral grading $Y_{{\rm Betti}}\in\ker(\text{ad}\, N)$
of the weight filtration $W$. Let $j:\Delta^{*}\to\Delta$ and $i:\{0\}\to\Delta$
denote the inclusion maps. Then, $Y_{{\rm Hodge}}-Y_{{\rm Betti}}$
defines an element in \begin{equation}\label{eqn kerMHS}J(H_0) = Ext^1_{\text{MHS}}(\ZZ(0),H^0(i^*R_{j_*}\H)) \\
\end{equation}by simply applying $Y_{{\rm Hodge}}-Y_{{\rm Betti}}$ to any lift
of $1\in\ZZ(0)=Gr_{0}^{W}$. Reviewing \S2 and \S3, we see that
the obstruction to the existence of such a grading $Y_{{\rm Betti}}$
is exactly the class $\sigma_{\ZZ,0}(\nu)$.
\begin{rem}
More generally, if $\H$ is a variation of Hodge structure of weight
$-1$ over a smooth complex algebraic variety $S$ and $\bar{S}$
is a good compactification of $S$, given a point $s\in\bar{S}$ we
define \begin{equation}\label{eqn neronSEQ2}J(H_s)=Ext^1_{\text{MHS}}(\ZZ,H_s) \\
\end{equation} where $H_{s}=H^{0}(i_{s}^{*}R_{j*}\H)$ and $j:S\to\bar{S}$, $i_{s}:\{s\}\to\bar{S}$
are the inclusion maps. In case $\bar{S}\backslash S$ is a NCD in
a neighborhood of $S$, with $\{N_{i}\}$ the logarithms of the unipotent
parts of the local monodromies, then $H_{s}\cong\cap_{j}\ker(N_{j})$.
\end{rem}
In general, except in the classical case of degenerations of Hodge
structure of level 1, the dimension of $J(H_{0})$ is usually strictly
less than the dimension of the fibers of $J(\H)$ over $\Delta^{*}$.
Therefore, any generalized N?on model $J_{\Delta}(\H)$ of $J(\H)$
which graphs admissible normal functions cannot be a complex analytic
space. Rather, in the terminology of Kato and Usui \cite{KU}\cite{GGK},
we obtain a {}``slit analytic fiber space''. In the case where the
base is a curve, the above observations can be combined into the following
result:
\begin{thm}
Let $\H$ be a variation of pure Hodge structure of weight $-1$ over
a smooth algebraic curve $S$ with projective completion $\bar{S}$.
Let $j:S\to\bar{S}$ denote the inclusion map. Then, there exists
a N?on model for $J(\H)$, i.e. a topological group $J_{\bar{S}}(\H)$
over $\bar{S}$ such that: 

(i) $J_{\bar{S}}(\H)$ restricts to $J(\H)$ over $S$;

(ii) There is a 1-1 correspondence between the set of admissible normal
functions $\nu:S\to J(\H)$ and the set of continuous sections $\bar{\nu}:\bar{S}\to J_{\bar{S}}(\H)$
which restrict to holomorphic, horizontal sections of $J(\H)$ over
$S$; 

Furthermore, 

(iii) There is a short exact sequence of topological groups \[
0\to J_{\bar{S}}(\H)^{0}\to J_{\bar{S}}(\H)\to G\to0\]
 where $G_{s}$ is the torsion subgroup of $(R_{j*}^{1}\H_{\ZZ})_{s}$
for any $s\in\bar{S}$;

(iv) $J_{\bar{S}}(\H)^{0}$ is a slit analytic fiber space, with fiber
$J(H_{s})$ over $s\in\bar{S}$;

(v) If $\nu:S\to J(\H)$ is an admissible normal function with extension
$\bar{\nu}$ then the image of $\bar{\nu}$ in $G_{s}$ at the point
$s\in\bar{S}-S$ is equal to $\sigma_{\ZZ,s}(\nu)$. Furthermore,
if $\sigma_{\ZZ,s}(\nu)=0$ then the value of $\bar{\nu}$ at $s$
is given by the class of $Y_{{\rm Hodge}}-Y_{{\rm Betti}}$ as in
\emph{(\ref{eqn kerMHS})}. Equivalently, in the geometric setting,
if $\sigma_{\ZZ,s}(\nu)=0$ then the value of $\bar{\nu}$ at $s$
is given by the limit Abel--Jacobi map. 
\end{thm}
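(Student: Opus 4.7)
The construction is local at each puncture $p \in \bar S \setminus S$, so I fix a coordinate disk $p \in \Delta \subset \bar{S}$ with $\Delta^* \subset S$ and $T = e^N$ the unipotent monodromy. First I would build the identity component $J_\Delta(\H)^0$ by extending the sequence $0 \to \HH_\ZZ \to \H/\F^0 \to J(\H) \to 0$ across $0$: extend $\H$ and $\F^0$ to the canonical extension $\H_e,\F^0_e$ via the nilpotent orbit theorem, and take $j_*\HH_\ZZ$ whose stalk at $0$ is $\ker(T-I)\cap\HH_\ZZ$, naturally sitting inside $H_0 := \ker(N)$ which carries a sub-MHS of the LMHS. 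Define the fiber
\[ J_\Delta(\H)^0_0 := J(H_0) = \frac{H_0}{F^0H_0 + (j_*\HH_\ZZ)_0}, \]
topologized so that the quotient $\H_e/\F^0_e \twoheadrightarrow J_\Delta(\H)^0$ is continuous. Note the fiber has strictly smaller dimension than a generic fiber (since $\ker N$ is a proper subspace when $N \neq 0$), and this is what forces the slit-analytic rather than analytic structure asserted in (iv).

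Next I incorporate the component group. By the theorem of M. Saito reproduced in \S3, the singularity $\sigma_{\ZZ,0}(\nu)$ of any admissible normal function lies in the finite group
\[ G_0 := \frac{(T-I)\HH_\QQ \cap \HH_\ZZ}{(T-I)\HH_\ZZ}, \]
which is exactly the torsion subgroup of $(R^1j_*\HH_\ZZ)_0$. I then form $J_\Delta(\H) := J_\Delta(\H)^0 \times_{\Delta^*} \coprod_{g \in G_0}(\text{copy})$, a topological group over $\Delta$ whose fiber at $0$ is an extension of $G_0$ by $J(H_0)$. Globalize by gluing these constructions over a cover of $\bar S$ by disks at punctures and open subsets of $S$, and (iii) follows by construction.

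For the key property (v), given $\V$ representing an admissible $\nu$ with lift $F:U \to \M$, one needs to interpret $\bar\nu(0)$. When $\sigma_{\ZZ,0}(\nu)=0$, admissibility (Definition \ref{def admissibility}) provides an $N$-invariant integral grading $Y_{\text{Betti}}$ of $W$, and the Deligne grading $Y_{(F(z),W)}$ of the lifted period map converges: by the mixed $SL_2$-orbit theorem of Pearlstein,
\[ Y_{\text{Hodge}} := \lim_{\text{Im}(z)\to\infty} e^{-zN}.Y_{(F(z),W)} \]
exists and equals the grading $Y(N,Y_{(F_\infty,M)})$ from Theorem \ref{thm DELIGNEresult}, which lies in $\ker(\text{ad}\,N)$. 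Then $Y_{\text{Hodge}} - Y_{\text{Betti}} \in \ker(N)$ applied to a lift of $1 \in \ZZ(0)$ lands in $H_0$ and defines the class of $\bar\nu(0) \in J(H_0)$; when $\sigma_{\ZZ,0}(\nu)\neq 0$, the obstruction to choosing $Y_{\text{Betti}}$ integrally is exactly $\sigma_{\ZZ,0}(\nu) \in G_0$, selecting the correct non-identity component. The continuity of $\bar\nu$ at $0$ follows from the decay estimate on Schmid's distinguished function $g(y)$ in the $SL_2$-orbit theorem. Finally, (ii) reduces to showing that any continuous section of $J_{\bar S}(\H)$ restricting to a holomorphic horizontal section over $S$ is admissible, which by the one-variable criterion of Steenbrink--Zucker amounts to polynomial growth of the Hodge norm at punctures—automatic from continuity into the extension.

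The main obstacle I anticipate is property (iv): giving a definition of the topology on $J_{\bar S}(\H)^0$ that makes continuity of sections correspond \emph{exactly} to admissibility, neither more nor less. The delicate point is that the analytic/algebraic limit coming from $\H_e/\F^0_e$ and the topological limit forcing $Y_{\text{Hodge}}$ to arise as a limit of Deligne gradings are a priori different, and reconciling them requires the full strength of the $SL_2$-orbit theorem's decay estimate. The remainder of the argument is formal, once one has arranged the topology so that both behaviors agree.
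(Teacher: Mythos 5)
Your construction matches the paper's: the identity component is the monodromy-invariant part of the Zucker/canonical extension, the component group is the torsion subgroup of $(R^{1}j_{*}\HH_{\ZZ})_{s}$ identified via M.~Saito's finiteness theorem for $\sigma_{\ZZ,s}(\nu)$, and the boundary value in (v) is read off from $Y_{\mathrm{Hodge}}-Y_{\mathrm{Betti}}$ using the mixed $SL_{2}$-orbit theorem, exactly as in the text (which otherwise defers to the constructions of Nakamura, Clemens, El Zein--Zucker, and Saito). The reconciliation of the analytic and group-theoretic limits that you flag is likewise handled there by the $SL_{2}$-orbit decay estimate, so your proposal is essentially the paper's argument.
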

Regarding the topology of the N?on model, let us consider more generally
the case of smooth complex variety $S$ with good compactification
$\bar{S}$, and recall from \S2 that we have also have the Zucker
extension $J_{\bar{S}}^{Z}(\H)$ obtained by starting from the short
exact sequence of sheaves \[
0\to\HH_{\ZZ}\to\H_{\mathcal{O}}/F^{0}\to J(\H)\to0\]
 and replacing $\HH_{\ZZ}$ by $j_{*}\HH_{\ZZ}$ and $\H_{\mathcal{O}}/F^{0}$
by its canonical extension. Following \cite{S5}, let us suppose that
$D=\bar{S}-S$ is a smooth divisor and $J_{\bar{S}}^{Z}(\H)_{D}^{{\rm Inv}}$
be the subset of $J_{\bar{S}}^{Z}(\H)$ defined by the local monodromy
invariants.
\begin{thm}
\cite{S5} The Zucker extension $J_{\bar{S}}^{Z}(\H)$ has the structure
of a complex Lie group over $\bar{S}$, and it is a Hausdorff topological
space on a neighborhood of $J_{\bar{S}}^{Z}(\H)_{D}^{{\rm Inv}}$.
\end{thm}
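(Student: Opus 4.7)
The plan is to establish the two assertions in sequence: first the complex Lie group structure on $J_{\bar S}^Z(\H)$ globally, then Hausdorffness in a neighborhood of $J_{\bar S}^Z(\H)_D^{\mathrm{Inv}}$ via asymptotic estimates coming from the nilpotent orbit theorem.

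For the Lie group structure, I would use Schmid's nilpotent orbit theorem (valid since $\H$ has unipotent monodromy along the smooth divisor $D$) to extend the Hodge filtration: $\H_e$ is a holomorphic vector bundle on $\bar S$ with holomorphic subbundle $F^0_e$, so $\H_e/F^0_e$ is a holomorphic vector bundle and its total space is a complex abelian Lie group over $\bar S$. The natural map $\HH_{\ZZ}\to(\H_{\mathcal O}/F^0)|_S$ extends, via $j_*$ and the inclusion $\H_{\mathcal O}/F^0\hookrightarrow\H_e/F^0_e$, to a morphism of abelian sheaves $j_*\HH_{\ZZ}\to\H_e/F^0_e$ on $\bar S$, and $J_{\bar S}^Z(\H)$ is by definition the espace \'etal\'e of the cokernel. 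Stalkwise at $s\in D$, $(j_*\HH_{\ZZ})_s$ is a finitely generated abelian group mapping to the complex vector space $(\H_e/F^0_e)_s$, and the fiber of the cokernel is the abelian complex Lie group quotient. The group law and inversion, inherited from the vector bundle structure of $\H_e/F^0_e$, descend from holomorphic operations and therefore equip $J_{\bar S}^Z(\H)$ with a complex Lie group structure over $\bar S$.

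For Hausdorffness I would work in a polydisc neighborhood $U$ of a point $s_0\in D$ with coordinates $(s_1,\dots,s_n)$ and $D\cap U=\{s_1=0\}$, and use the untwisting $\tilde{\VV}_{\QQ}=\exp\!\bigl(-\tfrac{\log s_1}{2\pi i}N\bigr)\HH_{\QQ}$, where $N$ is the monodromy logarithm. Failure of Hausdorffness at a point $x_0$ over $s_0$ would produce two distinct limits $x_0\ne x_0'$ in the same fiber, arising from a single convergent sequence in $J(\H)|_{U\setminus D}$; this translates into sequences $s_m\to s_0$, $f_m\in(F^0_e)_{s_m}$, and $\gamma_m\in\HH_{\ZZ,s_m}$ whose sum $f_m+\gamma_m$ converges in $(\H_e)_{s_0}$ to a nonzero class not lying in $(F^0_e)_{s_0}+(j_*\HH_{\ZZ})_{s_0}$. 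Writing $\gamma_m=\exp\!\bigl(\tfrac{\log s_1(s_m)}{2\pi i}N\bigr)\tilde\gamma_m$ with $\tilde\gamma_m$ in a fixed integral lattice, nilpotency of $N$ expresses this as a polynomial of bounded degree in $\log s_1(s_m)$ applied to $\tilde\gamma_m$. The hypothesis that $x_0$ lies in (a neighborhood of) $J_{\bar S}^Z(\H)_D^{\mathrm{Inv}}$ furnishes a canonical monodromy-invariant lift, which pins down and hence bounds $f_m$; so $\gamma_m$ itself must converge, and by discreteness of the integral lattice together with the polynomial-growth identity this forces $N\tilde\gamma_m=0$ for $m\gg0$. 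Hence $\tilde\gamma_m\in\ker N$ and $\gamma_\infty\in(j_*\HH_{\ZZ})_{s_0}$, contradicting the assumed splitting of limits.

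The main obstacle is the control of the $F^0$ direction: without the restriction to the invariant locus, the $f_m$ can be chosen to diverge in a way that cancels the polynomial growth of $\gamma_m$, so the naive separation argument breaks down -- this is precisely the source of the generic non-Hausdorff behavior of the Zucker extension. Making the separation uniform in the transverse coordinates $s_2,\dots,s_n$, so as to yield an \emph{open neighborhood} of $J_{\bar S}^Z(\H)_D^{\mathrm{Inv}}$ rather than pointwise Hausdorffness, requires applying the several-variable nilpotent orbit theorem with estimates uniform in parameters, together with the fact that admissibility guarantees holomorphic variation of the relevant splittings across $D$ in the canonical extension.
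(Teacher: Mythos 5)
First, note that the paper you are working from does not actually prove this theorem; it is quoted verbatim from Saito's paper [S5], so your argument has to stand entirely on its own. Your proposal does have the right skeleton (canonical extension, quotient by $j_*\HH_{\ZZ}$, untwisting by $\exp(-\tfrac{\log s_1}{2\pi i}N)$, integrality versus polynomial growth in $\log s_1$), but each half has a gap at exactly the point where the theorem's content lives. For the Lie group structure: your argument is purely stalkwise at a boundary point $s_0$, showing only that $(j_*\HH_{\ZZ})_{s_0}$ is a finitely generated group mapping to the fiber $(\H_e/F^0_e)_{s_0}$. To make the fiber space (not the espace \'etal\'e of the cokernel sheaf, whose points are germs of sections and which carries the wrong topology) into a complex Lie group over $\bar{S}$, you must show that the union of the images of \emph{all} local sections of $j_*\HH_{\ZZ}$ --- which over nearby points $s\in S$ is the full lattice $\HH_{\ZZ,s}$, not just the monodromy invariants --- is locally finite in the total space of $\H_e/F^0_e$ near the boundary fiber, so that the quotient map is a local biholomorphism. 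You also need that the invariants inject into $(\H_e/F^0_e)_{s_0}$ with discrete image, which uses that $\ker(T-I)$ underlies a sub-MHS of the LMHS of weights $\leq -1$. Neither point is addressed; a group quotient with good stalk data need not be a Lie group.

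For the Hausdorff assertion, the pivotal sentence --- that lying near $J_{\bar{S}}^{Z}(\H)_{D}^{{\rm Inv}}$ ``furnishes a canonical monodromy-invariant lift, which pins down and hence bounds $f_m$'' --- is an assertion, not an argument. Nothing in your setup bounds $f_m\in(F^0_e)_{s_m}$; you correctly identify the uncontrolled $F^0$-direction as the main obstacle and then dispose of it by fiat. The genuine content of the theorem is precisely the estimate showing that the divergent part of $\exp(\tfrac{\log s_1}{2\pi i}N)\tilde\gamma_m$ cannot be absorbed into $F^0_e$ when the limit point lies over the invariant locus; this requires playing the limit Hodge filtration off against the weight monodromy filtration $M(N)_{\bullet}$ and invoking the polarization (positivity on primitive graded pieces), and it is also where the smoothness of $D$ enters irreplaceably --- for normal crossing boundary the analogous statement fails, which is why the multivariable N\'eron model of \S5.2 is only a ``slit analytic'' space. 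Until that step is supplied, the proof is incomplete; once it is supplied, the same estimate is what repairs the first half as well.
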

Specializing this result to the case where $S$ is a curve, we then
recover the result of the first author together with Griffiths and
Green that $J_{\bar{S}}(\H)^{0}$ is Hausdorff, since in this case
we can identify $J_{\bar{S}}(\H)^{0}$ with $J_{\bar{S}}^{Z}(\H)_{D}^{{\rm Inv}}$.
\begin{rem}
Using this Hausdorff property, Saito was able to prove \cite{S5}
the algebraicity of the zero locus of an admissible normal function
in this setting (i.e., $D$ smooth). 
\end{rem}

\subsection{N\'eron models in many parameters}

To extend this construction further, we have to come to terms with
the fact that unless $S$ has a compactification $\bar{S}$ such that
$D=\bar{S}-S$ is a smooth divisor, the normal functions that we consider
may have non-torsion singularities along the boundary divisor. This
will be reflected in the fact that the fibers $G_{s}$ of $G$ need
no longer be finite groups. The first test case is when $\H$ is a
Hodge structure of level 1. In this case, a N?on model for $J(\H)$
was constructed in the thesis of Andrew Young \cite{Yo}. More generally,
in joint work with Patrick Brosnan and Morihiko Saito, the second
author proved the following result:
\begin{thm}
\label{thm neronBPS}\cite{BPS} Let $S$ be a smooth complex algebraic
variety and $\H$ be a variation of Hodge structure of weight $-1$
over $S$. Let $j:S\to\bar{S}$ be a good compactification of $\bar{S}$
and $\{S_{\alpha}\}$ be a Whitney stratification of $\bar{S}$ such
that: 

(a) $S$ is one of the strata of $\bar{S}$; 

(b) $R^{k}j_{*}\HH_{\ZZ}$ are locally constant on each stratum. 

Then, there exists a generalized N?on model for $J(\H)$, i.e. a
topological group $J_{\bar{S}}(\H)$ over $\bar{S}$ which extends
$J(\H)$ such that: 

(i) The restriction of $J_{\bar{S}}(\H)$ to $S$ is $J(\H)$;

(ii) Any admissible normal function $\nu:S\to J(\H)$ has a unique
extension to a continuous section $\bar{\nu}$ of $J_{\bar{S}}(\H)$;

Furthermore, 

(iii) There is a short exact sequence of topological groups \[
0\to J_{\bar{S}}(\H)^{0}\to J_{\bar{S}}(\H)\to G\to0\]
 over $\bar{S}$ such that $G_{s}$ is a discrete subgroup of $(R^{1}j_{*}\HH_{\ZZ})_{s}$
for any point $s\in\bar{S}$;

(iv) The restriction of $J_{\bar{S}}(\H)^{0}$ to any stratum $S_{\alpha}$
is a complex Lie group over $S_{\alpha}$ with fiber $J(H_{s})$ over
$s\in\bar{S}$.

(v) If $\nu:S\to J(\H)$ is an admissible normal function with extension
$\bar{\nu}$ then the image of $\bar{\nu}(s)$ in $G_{s}$ is equal
to $\sigma_{\ZZ,s}(\nu)$ for all $s\in\bar{S}$. If $\sigma_{\ZZ,s}(\nu)=0$
for all $s\in\bar{S}$ then $\bar{\nu}$ restricts to a holomorphic
section of $J_{\bar{S}}(\H)^{0}$ over each strata.\end{thm}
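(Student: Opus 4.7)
The plan is to build $J_{\bar{S}}(\H)$ fiber by fiber from the limit mixed Hodge theory of $\S4.3$, then patch the fibers into a topological group using the Whitney stratification. For $s\in\bar{S}$, Saito's theory of mixed Hodge modules equips $H_{s}:=H^{0}(i_{s}^{*}Rj_{*}\H)$ with a canonical polarizable $\QQ$-MHS, so I define the identity component fiber
\[
J_{\bar{S}}(\H)^{0}_{s}\;:=\;J(H_{s})\;=\;\text{Ext}^{1}_{\MHS}(\ZZ(0),H_{s}).
\]
For the component group, let $G_{s}\subseteq(R^{1}j_{*}\HH_{\ZZ})_{s}$ be the subgroup generated by the classes $\sigma_{\ZZ,s}(\nu)$ as $\nu$ ranges over $\ANF(S,\H)$; in a local polydisk chart where $D$ is a NCD and monodromies are unipotent this is the integral lattice in the image of the $\QQ$-valued $\sing_{s}$ of $\S2$, and hence is finitely generated. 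Set $J_{\bar{S}}(\H)_{s}$ to be the extension of $G_{s}$ by $J(H_{s})$ in which the class attached to an admissible normal function $\nu$ lifts its singularity via the Deligne grading difference $Y_{\mathrm{Hodge}}(\nu)-Y_{\mathrm{Betti}}(\nu)$, exactly as in the one-parameter discussion of $\S5.1$.

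I next topologize the total space by declaring a subbasis of opens to consist of graphs of locally defined admissible normal functions, translated by opens in the identity component. Hypothesis (b) that $R^{k}j_{*}\HH_{\ZZ}$ is locally constant on each stratum $S_{\alpha}$ organizes the fiberwise LMHS's $H_{s}$ into a variation of polarizable MHS along $S_{\alpha}$ (the continuity of the Hodge filtration along the stratum uses the several-variable nilpotent orbit theorem applied transversally). In particular $J_{\bar{S}}(\H)^{0}|_{S_{\alpha}}$ inherits a complex Lie group structure, giving (iv), and (iii) becomes tautological with $G$ locally a sheaf of finitely generated abelian groups. Discreteness of $G$ inside $R^{1}j_{*}\HH_{\ZZ}$ follows because in each local normal form the $\QQ$-singularities live in the finite-dimensional $(H^{1}\K^{\bullet}_{red})^{(0,0)}_{\QQ}$ of $\S2$, and integrality cuts out a discrete lattice.

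For (ii), given $\nu\in\ANF(S,\H)$ represented by $0\to\H\to\V\to\ZZ(0)\to 0$, admissibility forces a unique candidate $\bar\nu(s)$ at each $s\in\bar{S}$: its image in $G_{s}$ is $\sigma_{\ZZ,s}(\nu)$ by construction, and its lift to $J_{\bar{S}}(\H)_{s}$ is the class in $J(H_{s})$ obtained by applying $Y_{\mathrm{Hodge}}(\nu)-Y_{\mathrm{Betti}}(\nu)$ to a lift of $1\in\ZZ(0)=\mathrm{Gr}^{W}_{0}\V$. Continuity of $\bar\nu$ as one approaches a boundary stratum reduces to proving that $Y_{(F(z),W)}$ converges to $Y_{\mathrm{Hodge}}$ along any sequence going to the boundary; this is exactly the content of the multivariable $SL_{2}$-orbit theorem of Kato--Nakayama--Usui \cite{KNU}, extending the one-parameter limit $\lim_{\mathrm{Im}(z)\to\infty}Y_{(F(z),W)}=Y$ recalled in $\S5.1$. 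Statements (i) and (v) then fall out of the construction.

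The hard part will be to check that this tautological topology actually makes $J_{\bar{S}}(\H)$ a topological group whose addition is continuous \emph{across strata of different codimension}, and that $G$ is globally a sheaf of discrete groups rather than merely fiberwise so. Unlike the Zucker extension of $\S5.1$, which was Hausdorff only on a neighborhood of the invariant locus, our space is necessarily non-Hausdorff at those boundary points where admissible normal functions sharing a singularity have distinct limits; proving that only admissible normal functions extend continuously, so the correspondence in (ii) is bijective and not merely surjective, requires systematic use of Deligne's canonical splittings together with the several-variable $SL_{2}$-orbit theorem to control the asymptotic behavior of $Y_{\mathrm{Hodge}}$ along every sequence of approach. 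This asymptotic control, together with the compatibility between the stratumwise complex analytic structures and the global component map, is the most delicate technical point of the construction.
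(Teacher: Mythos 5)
Your architecture coincides with that of \cite{BPS} as sketched in the paper: the fiber of the identity component at $s\in\bar{S}$ is $J(H_{s})=\text{Ext}^{1}_{\text{MHS}}(\ZZ(0),H^{0}(i_{s}^{*}Rj_{*}\H))$, the stratum-wise Lie group structure comes from the fact that the $H^{k}(i_{\alpha}^{*}Rj_{*}\H)$ are admissible variations of MHS on each $S_{\alpha}$ (hypothesis (b) plus the theory of mixed Hodge modules), and the non-identity components are glued in from locally defined admissible normal functions, indexed by their cohomological invariants $\sigma_{\ZZ,U}(\nu)=\partial(1)$. The value $\bar{\nu}(s)$ when $\sigma_{\ZZ,s}(\nu)=0$ is likewise the same extension class in both treatments, whether described via $Y_{\mathrm{Hodge}}-Y_{\mathrm{Betti}}$ or via the exact sequence $0\to H_{s}\to H^{0}(i_{s}^{*}Rj_{*}\V)\to\ZZ(0)\to H^{1}(i_{s}^{*}Rj_{*}\H)$.

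The genuine gap is the one you flag yourself but do not close: the topology on $J_{\bar{S}}(\H)^{0}$ transverse to the strata. Your proposed subbasis --- graphs of local admissible normal functions translated by ``opens in the identity component'' --- is circular, since it presupposes a topology on the identity component across strata, and the stratum-wise complex Lie group structures do not by themselves supply one. The paper's device is to topologize $J_{\bar{S}}(\H)^{0}=\coprod_{s\in\bar{S}}J(H_{s})$ as a subspace of the Zucker extension $J_{\bar{S}}^{Z}(\H)$, which is a complex Lie group over all of $\bar{S}$ because it is built from the canonical extension of $\H_{\mathcal{O}}/F^{0}$ and $j_{*}\HH_{\ZZ}$. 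This ambient space also streamlines (ii): continuity of $\bar{\nu}$ reduces to showing it coincides with the corresponding section of the Zucker extension (cf.\ Prop.\ 2.3 of \cite{S1}), and the multivariable case is reduced to the one-variable case by restricting to the diagonal curve $t\mapsto(t,\dots,t)$ --- so the one-parameter $SL_{2}$-orbit theorem suffices where you invoke the full several-variable theorem of \cite{KNU}. (Note also that part (ii) here asserts only existence and uniqueness of the extension, not the bijective correspondence of the one-parameter theorem, so the ``only admissible normal functions extend'' direction you worry about is not required.) Without some such ambient space your construction does not yet produce a well-defined topological group, although every other ingredient is in place.
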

\begin{rem}
More generally, this is true under the following hypothesis: 

(1) $S$ is a complex manifold and $j:S\to\bar{S}$ is a partial compactification
of $S$ as an analytic space; 

(2) $\H$ is a variation of Hodge structure on $S$ of negative weight,
which need not have unipotent monodromy. 
\end{rem}
To construct the identity component $J_{\bar{S}}(\H)^{0}$, let $\nu:S\to J(\H)$
be an admissible normal function which is represented by an extension
\begin{equation}\label{eqn neronSEQ1}0 \to \H \to \V \to \ZZ(0) \to 0 \\
\end{equation} and $j:S\to\bar{S}$ denote the inclusion map. Also, given $s\in\bar{S}$
let $i_{s}:\{s\}\to\bar{S}$ denote the inclusion map. Then, the short
exact sequence \eqref{eqn neronSEQ1} induces an exact sequence of
mixed Hodge structures \begin{equation}\label{eqn neronSEQ3}0 \to H_s \to H^0(i_s^* R j_* \V) \to \ZZ(0) \to H^1(i_s^* R j_* \H) \\
\end{equation}where the arrow $\ZZ(0)\to H^{1}(i_{s}^{*}Rj_{*}\H)$ is given by
$1\mapsto\sigma_{\ZZ,s}(\nu)$. Accordingly, if $\sigma_{\ZZ,s}(\nu)=0$
then \eqref{eqn neronSEQ3} determines a point $\bar{\nu}(s)\in J(H_{s})$.
Therefore, as a set, we define \[
J_{\bar{S}}(\H)^{0}=\coprod_{s\in\bar{S}}\, J(H_{s})\]
 and topologize by identifying it with a subspace of the Zucker extension
$J_{\bar{S}}^{Z}(\H)$.

Now, by condition $(b)$ of Theorem \eqref{thm neronBPS} and the
theory of mixed Hodge modules\cite{S4}, it follows that if $i_{\alpha}:S_{\alpha}\to\bar{S}$
are the inclusion maps then $H^{k}(i_{\alpha}^{*}Rj_{*}\H)$ are admissible
variations of mixed Hodge structure over each stratum $S_{\alpha}$.
In particular, the restriction of $J_{\bar{S}}(\H)^{0}$ to $S_{\alpha}$
is a complex Lie group.

Suppose now that $\nu:S\to J(\H)$ is an admissible normal function
with extension $\bar{\nu}:\bar{S}\to J_{\bar{S}}(\H)$ such that $\sigma_{\ZZ,s}(\nu)=0$
for each $s\in\bar{S}$. Then, in order to prove that $\bar{\nu}$
is a continuous section of $J_{\bar{S}}(\H)^{0}$ which restricts
to a holomorphic section over each stratum, is is sufficient to prove
that $\bar{\nu}$ coincides with the section of the Zucker extension
(cf. \cite[Prop. 2.3]{S1}). For this, it is in turn sufficient to
consider the curve case by restriction to the diagonal curve $\Delta\to\Delta^{r}$
by $t\mapsto(t,\dots,t)$; see \cite[sec. 1.4]{BPS}.

It remains now to construct $J_{\bar{S}}(\H)$ via the following gluing
procedure: Let $U$ be an open subset of $\bar{S}$ and $\nu:U\to J(\H)$
be an admissible normal function with cohomological invariant \[
\sigma_{\ZZ,U}(\nu)=\partial(1)\in H^{1}(U,\HH_{\ZZ})\]
 defined by the map $\partial:H^{0}(U,\ZZ(0))\to H^{1}(U,\HH_{\ZZ})$
induced by the short exact sequence \eqref{eqn neronSEQ1} over $U$.
Then, we declare $J_{U}(\H_{U\cap S})^{\nu}$ to be the component
of $J_{\bar{S}}(\H)$ over $U$, and equip $J_{U}(\H_{U\cap S})^{\nu}$
with a canonical morphism \[
J_{U}(\H_{U\cap S})^{\nu}\to J_{U}(\H_{U\cap S})^{0}\]
 which sends $\nu$ to the zero section. If $\mu$ is another admissible
normal function over $U$ with $\sigma_{\ZZ,U}(\nu)=\sigma_{\ZZ,U}(\mu)$
then there is a canonical isomorphism \[
J_{U}(\H_{U\cap S})^{\nu}\cong J_{U}(\H_{U\cap S})^{\mu}\]
 which corresponds to the section $\nu-\mu$ of $J_{U}(\H_{U\cap S})^{0}$
over $U$.

\subsection*{Addendum to 5.2}

Since the submission of this article, there have been several important
developments in the theory of N\'eron models for admissible normal
functions on which we would like to report here. To this end, let
us suppose that $\mathcal{H}$ is a variation of Hodge structure of
level 1 over a smooth curve $S\subset\bar{S}$. Let $\mathcal{A}_{S}$
denote the corresponding abelian scheme with N\'eron model $\mathcal{A}_{\bar{S}}$
over $\bar{S}$. Then, we have a canonical morphism \[
\mathcal{A}_{\bar{S}}\to J_{\bar{S}}(\mathcal{H})\]
 which is an isomorphism over $S$. However, unless $\mathcal{H}$
has unipotent local monodromy about each point $s\in\bar{S}-S$, this
morphism is not an isomorphism \cite{BPS}. Recently however, building
upon his work on local duality and mixed Hodge modules \cite{Sl2},
Christian Schnell has found an alternative construction of the identity
component of a N\'eron model which contains the construction of \cite{BPS}
in the case of unipotent local monodromy and agrees \cite{SS} with
the classical N\'eron model for VHS of level 1 in the case of non-unipotent
monodromy. In the paragraphs below, we reproduce a summary of this
construction which has been generously provided by Schnell for inclusion
in this article.

{}``The genesis of the new construction is in unpublished work of
Clemens on normal functions associated to primitive Hodge classes.
When $Y$ is a smooth hyperplane section of a smooth projective variety
$X$ of dimension $2n$, and $H_{\ZZ}=H^{2n-1}(Y,\ZZ)_{van}$ its
vanishing cohomology modulo torsion, the intermediate Jacobian $J(Y)$
can be embedded into a bigger object, $K(Y)$ in Clemens's notation,
defined as \[
K(Y)=\frac{\bigl(H^{0}\bigl(X,\Omega_{X}^{2n}(nY)\bigr)^{\vee}}{H^{2n-1}(Y,\ZZ)_{van}}.\]
 The point is that the vanishing cohomology of $Y$ is generated by
residues of meromorphic $2n$-forms on $X$, with the Hodge filtration
determined by the order of the pole (provided that $\mathcal{O}_{X}(Y)$
is sufficiently ample). Clemens introduced $K(Y)$ with the hope of
obtaining a weak, topological form of Jacobi inversion for its points,
and because of the observation that the numerator in its definition
makes sense \emph{even when $Y$ becomes singular}. In his Ph.D.~thesis
\cite{Sl3}, Schnell proved that residues and the pole order filtration
actually give a filtered holonomic $\mathcal{D}$-module on the projective
space parametrizing hyperplane sections of $X$; and that this $\mathcal{D}$-module
underlies the polarized Hodge module corresponding to the vanishing
cohomology by Saito's theory. At least in the geometric case, therefore,
there is a close connection between the question of extending intermediate
Jacobians, and filtered $\mathcal{D}$-modules (with the residue calculus
providing the link).

{}``The basic idea behind Schnell's construction is to generalize
from the geometric setting above to arbitrary bundles of intermediate
Jacobians. As before, let $\H$ be a variation of polarized Hodge
structure of weight $-1$ on a complex manifold $S$, and $M$ its
extension to a polarized Hodge module on $\Sbar$. Let $(\Mmod,F)$
be its underlying filtered left $\Dmod$-module: $\Mmod$ is a regular
holonomic $\Dmod$-module, and $F=F_{\bullet}\Mmod$ a good filtration
by coherent subsheaves. In particular, $F_{0}\Mmod$ is a coherent
sheaf on $\Sbar$ that naturally extends the Hodge bundle $F^{0}\shHO$.
Now consider the analytic space over $\Sbar$, given by \[
T=T(F_{0}\Mmod)=\Spec_{\Sbar}\bigl(\Sym_{\shO_{\bar{S}}}(F_{0}\Mmod)\bigr),\]
 whose sheaf of sections is $(F_{0}\Mmod)^{\vee}$. (Over $S$, it
is nothing but the vector bundle corresponding to $(F^{0}\shHO)^{\vee}$.)
It naturally contains a copy $\TZ$ of the \'etal\'e space of the
sheaf $j_{\ast}\HH_{\ZZ}$; indeed, every point of that space corresponds
to a local section of $\HH_{\ZZ}$, and it can be shown that every
such section defines a map of $\Dmod$-modules $\Mmod\to\shO_{\bar{S}}$
via the polarization.

{}``Schnell proves that $\TZ\subseteq T$ is a closed analytic subset,
discrete on fibers of $T\to\Sbar$. This makes the fiberwise quotient
space $\Jbar=T/\TZ$ into an analytic space, naturally extending the
bundle of intermediate Jacobians for $H$. He also shows that admissible
normal functions with no singularities extend uniquely to holomorphic
sections of $\Jbar\to\Sbar$. To motivate the extension process, note
that the intermediate Jacobian of a polarized Hodge structure of weight
$-1$ has two models, \[
\frac{H_{\CC}}{F^{0}H_{\CC}+H_{\ZZ}}\simeq\frac{(F^{0}H_{\CC})^{\vee}}{H_{\ZZ}},\]
 with the isomorphism coming from the polarization. An extension of
mixed Hodge structure of the form \begin{equation}\label{eqn SCHNELL}0 \to H \to V \to \ZZ(0) \to 0 \\
\end{equation} \\ gives a point in the second model in the following manner. Let $H^{\ast}=\Hom\bigl(H,\ZZ(0)\bigr)$
be the dual Hodge structure, isomorphic to $H(-1)$ via the polarization.
After dualizing, we have \[
0\to\ZZ(0)\to V^{\ast}\to H^{\ast}\to0,\]
 and thus an isomorphism $F^{1}V_{\CC}^{\ast}\simeq F^{1}H_{\CC}^{\ast}\simeq F^{0}H_{\CC}$.
Therefore, any $v\in V_{\ZZ}$ lifting $1\in\ZZ$ gives a linear map
$F^{0}H_{\CC}\to\CC$, well-defined up to elements of $H_{\ZZ}$;
this is the point in the second model of $J(H)$ that corresponds
to the extension in \eqref{eqn SCHNELL}.

{}``It so happens that this second construction is the one that extends
to all of $\Sbar$. Given a normal function $\nu$ on $S$, let \[
0\to\HH_{\ZZ}\to\VV_{\ZZ}\to\ZZ_{S}\to0\]
 be the corresponding extension of local systems. By applying $j_{\ast}$,
it gives an exact sequence \[
0\to j_{\ast}\HH_{\ZZ}\to j_{\ast}\VV_{\ZZ}\to\ZZ_{\Sbar}\to R^{1}j_{\ast}\HH_{\ZZ},\]
 and when $\nu$ has no singularities, an extension of sheaves \[
0\to j_{\ast}\HH_{\ZZ}\to j_{\ast}\VV_{\ZZ}\to\ZZ_{\Sbar}\to0.\]
 Using duality for filtered $\Dmod$-modules, one obtains local sections
of $(F_{0}\Mmod)^{\vee}$ from local sections of $j_{\ast}\VV_{\ZZ}$,
just as above, and thus a well-defined holomorphic section of $\Jbar\to\Sbar$
that extends $\nu$.

{}``As in the one-variable case, where the observation is due to
Green-Griffiths-Kerr, horizontality constrains such extended normal
functions to a certain subset of $\Jbar$; Schnell proves that this
subset is precisely the identity component of the N\'eron model constructed
by Brosnan-Pearlstein-Saito. With the induced topology, the latter
is therefore a Hausdorff space, as expected. This provides an additional
proof for the algebraicity of the zero locus of an admissible normal
function, similar in spirit to the one-variable result in Saito's
paper, in the case when the normal function has no singularities.''

The other advance, is the recent construction \cite{KNU2} of log
intermediate Jacobians by Kato, Nakayama and Usui. Although a proper
exposition of this topic would take us deep into logarthmic Hodge
theory \cite{KU}, the basic idea is as follows: Let $\mathcal{H}\to\Delta^{*}$
be a variation of Hodge structure of weight $-1$ with unipotent monodromy.
Then, we have a commutative diagram\begin{equation}
\xymatrix{J(\H) \ar [r]^{\tilde{\varphi}} \ar [d] & {\tilde{\Gamma}} \setminus \M \ar [d]^{Gr^W_{-1}} \\ {\Delta}^* \ar [r]^{\varphi} & {\Gamma} \setminus \mathcal{D} } \\
\end{equation} \\where $\tilde{\varphi}$ and $\varphi$ are the respective period
maps. In \cite{KU}, Kato and Usui explained how to translate the
bottom row of this diagram into logarithmic Hodge theory. More generally,
building on the ideas of \cite{KU} and the several variable $SL_{2}$-orbit
theorem \cite{KNU}, Kato, Nakayama and Usui claim to be able to be
able construct a theory of logarthmic mixed Hodge structures which
they can then apply to the top row of the previous diagram. In this
way, they obtain a log intermediate Jacobian which serves the role
of a N\'eron model and allows them to give an alternate proof of
Theorem \ref{conj CalgZL} \cite{KNU3}.

\subsection{Singularities of normal functions overlying nilpotent orbits}

We now consider the group of components $G_{s}$ of $J_{\bar{S}}(\H)$
at $s\in\bar{S}$. For simplicity, we first consider the case where
$\H$ is a nilpotent orbit $\H_{nilp}$ over $(\Delta^{*})^{r}$.
To this end, we recall that in the case of a variation of Hodge structure
$\H$ over $(\Delta^{*})^{r}$ with unipotent monodromy, the intersection
cohomology of $\HH_{\QQ}$ is computed by the cohomology of a complex
$(B^{\bullet}(N_{1},\dots,N_{r}),d)$ (cf. $\S3.4$). Furthermore,
the short exact sequence of sheaves \[
0\to\HH_{\QQ}\to\VV_{\QQ}\to\QQ(0)\to0\]
 associated to an admissible normal function $\nu:(\Delta^{*})^{r}\to J(\H)$
with unipotent monodromy gives a connecting homomorphism \[
\partial:\IH^{0}(\QQ(0))\to\IH^{1}(\HH_{\QQ})\]
 such that \[
\partial(1)=[(N_{1}(e_{o}^{\QQ}),\dots,N_{r}(e_{o}^{\QQ})]=\sing_{0}(\nu)\]
 where $e_{o}^{\QQ}$ is an element in the reference fiber $V_{\QQ}$
of $\VV_{\QQ}$ over $s_{o}\in(\Delta^{*})^{r}$ which maps to $1\in\QQ(0)$.
After passage to complex coefficients, the admissibility of $\V$
allows us to to pick an alternate lift $e_{o}\in V_{\CC}$ to be of
type $(0,0)$ with respect to the limit MHS of $\V$. It also forces
$h_{j}=N_{j}(e_{o})$ to equal $N_{j}(f_{j})$ for some element $f_{j}\in H_{\CC}$
of type $(0,0)$ with respect to the limit MHS of $\H$. Moreover,
$e_{0}^{\QQ}-e_{0}=:h$ maps to $0\in Gr_{0}^{W}$ hence lies in $H_{\CC}$,
so that $(N_{1}(e_{0}^{\QQ}),\ldots,N_{r}(e_{0}^{\QQ}))\equiv(N_{1}(e_{0}),\ldots,N_{r}(e_{0}))$
modulo $d(B^{0})=\text{im}(\oplus_{j=1}^{r}N_{j})$ (i.e. up to $(N_{1}(h),\ldots,N_{r}(h))$).
\begin{cor}
$\sing_{0}(\nu)$ is a rational class of type $(0,0)$ in $\IH^{1}(\HH_{\QQ})$. \end{cor}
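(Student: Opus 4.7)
The plan is to exhibit an explicit cocycle representative of $\sing_0(\nu)$ in the complex $B^\bullet(N_1,\ldots,N_r)$ (equivalently $\K^\bullet_{\text{red}}$) that is simultaneously $\QQ$-rational and pure of Hodge type $(0,0)$. Recall from $\S3.4$ that the Hodge structure on $IH^1(\HH_\QQ)$ is computed by $B^\bullet$, with the $p$-th term carrying an effective Tate twist by $(-p)$ so that the $(-1,-1)$-character of each $N_j$ is absorbed into the complex and the resulting cohomology is pure of weight $0$ (relative to the chosen weight conventions on $\psi_{\underline{s}}\H$).

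Rationality is essentially built into the construction: by definition of the connecting homomorphism on $\QQ$-coefficients, $\sing_0(\nu)=\partial(1)=[(N_1(e_0^\QQ),\ldots,N_r(e_0^\QQ))]$, and since the $N_j$ are defined over $\QQ$ this cocycle already lies in $B^1(H_\QQ)$; hence the class is rational. So the only substantive point is the Hodge-type statement.

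For the type, my plan is to pass to the complex lift $e_0\in V_\CC$ of type $(0,0)$ in $\psi_{\underline{s}}\V$ furnished by admissibility. The difference $h:=e_0^\QQ-e_0$ maps to $0$ in $Gr^W_0 V=\QQ(0)$ and thus lies in $H_\CC$; consequently the cocycles $(N_j(e_0^\QQ))_j$ and $(N_j(e_0))_j$ differ by the coboundary $d(h)=(N_j(h))_j\in d(B^0(H_\CC))$ and represent the same class in $H^1(B^\bullet)$. Admissibility further provides, for each $j$, an element $f_j\in H_\CC$ of type $(0,0)$ in $\psi_{\underline{s}}\H$ with $N_j(e_0)=N_j(f_j)$. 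Since $N_j$ is a morphism of MHS of type $(-1,-1)$, $N_j(f_j)$ has type $(-1,-1)$ in $\psi_{\underline{s}}\H$; after the Tate twist built into $\K^1_{\text{red}}$ (that is, viewing it inside $\mathrm{im}(N_j)(-1)$), the cocycle has pure type $(0,0)$, which together with the rational representative above proves the corollary.

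The main obstacle, such as it is, is purely the bookkeeping around Tate twists and weight filtrations: one must check that the isomorphism $H^1(B^\bullet)\cong IH^1(\HH_\QQ)$ of \cite{GGM,CKS2} is compatible with the conventions of $\S3.4$, so that a $(-1,-1)$-class in $\psi_{\underline{s}}\H$ entering via $N_j$ genuinely maps to a $(0,0)$-class in $IH^1$. Once this compatibility is recorded, the substitution $e_0^\QQ\rightsquigarrow e_0\rightsquigarrow(f_j)$ supplied by admissibility does all of the work.
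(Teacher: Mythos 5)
Your argument is correct and is essentially identical to the paper's: the paper also exhibits the rational cocycle $(N_1(e_o^{\QQ}),\dots,N_r(e_o^{\QQ}))$ for rationality, replaces $e_o^{\QQ}$ by the type-$(0,0)$ lift $e_o$ modulo the coboundary $d(h)$ with $h=e_o^{\QQ}-e_o\in H_{\CC}$, and uses the admissibility-supplied $f_j\in H_{\CC}$ of type $(0,0)$ with $N_j(e_o)=N_j(f_j)$, deferring the compatibility of Tate twists and the MHS on $H^\bullet(B^\bullet)$ to \cite{CKS2} exactly as you do.
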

\begin{proof}
(Sketch) This follows from the previous paragraph together with the
explicit description of the mixed Hodge structure on the cohomology
of $B^{\bullet}(N_{1},\dots,N_{r})$ given in \cite{CKS2}.
\end{proof}
Conversely, we have the following result:
\begin{lem}
Let $\H_{nilp}=e^{\sum_{j}\, z_{j}N_{j}}.F_{\infty}$ be a nilpotent
orbit of weight $-1$ over $\Delta^{*r}$ with rational structure
$\HH_{\QQ}$. Then, any class $\beta$ of type $(0,0)$ in $\IH^{1}(\HH_{\QQ})$
is representable by a $\QQ$-normal function $\nu$ which is an extension
of $\QQ(0)$ by $\H_{nilp}$ such that $\sing_{0}(\nu)=\beta$.\end{lem}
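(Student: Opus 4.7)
The plan is to build $\V$ as an admissible nilpotent orbit extending $\QQ(0)$ by $\H_{nilp}$ whose singularity at the origin recovers $\beta$, by specifying its underlying local system and its limit MHS and then propagating via the nilpotent orbit formula.

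First I would choose two cocycle representatives of $\beta$. The class $\beta\in\IH^{1}(\HH_{\QQ})$ is computed by the Cattani--Kaplan--Schmid complex $B^\bullet(H_\QQ)$ (with $B^1=\bigoplus_{i}N_i H_\QQ$ and differential $(dh)_{ij}=N_i h_j-N_j h_i$ for $i<j$), so there is a rational cocycle $\vec h=(h_1,\dots,h_r)$ with $h_i\in N_i H_\QQ$ and $N_i h_j=N_j h_i$ representing $\beta$. Since $\beta$ is of type $(0,0)$ in the pure weight-$0$ Hodge structure on $\IH^{1}$, the MHS description of \cite{CKS2} also furnishes a complex cocycle $\vec u=(u_1,\dots,u_r)$ representing $\beta$ with $u_i=N_i(f_i)$ for elements $f_i\in H_\CC$ of type $(0,0)$ with respect to the limit MHS of $H$. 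The two cocycles being cohomologous gives $y\in H_\CC$ with $u_i-h_i=N_i y$ for all $i$.

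Next I would build the extension. Put $V_{\QQ}:=H_\QQ\oplus\QQ\cdot e$ with $W_{-1}V=H_\QQ$ and $W_0V=V_\QQ$, and extend $N_i|_H$ to a nilpotent $N_i^V$ on $V$ by $N_i^V e:=h_i$; commutativity $[N_i^V,N_j^V]=0$ is exactly the cocycle condition $N_i h_j=N_j h_i$, so the monodromies $T_j^V=\exp(N_j^V)$ define a local system $\VV_\QQ$ on $(\Delta^*)^r$. Set $e_F:=e+y\in V_\CC$, define the limit Hodge filtration by $F_{V,\infty}^p:=F_\infty^p$ for $p>0$ and $F_{V,\infty}^p:=F_\infty^p\oplus\CC\cdot e_F$ for $p\leq 0$, and declare $F^\bullet\V(s):=\exp(\sum z_j N_j^V)F_{V,\infty}^\bullet$. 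In the Deligne bigrading one has $I^{p,q}(V)=I^{p,q}(H)$ for $(p,q)\neq(0,0)$ and $I^{0,0}(V)=I^{0,0}(H)\oplus\CC\cdot e_F$, so $(F_{V,\infty},M(N^V,W))$ is a mixed Hodge structure; and $N_j^V e_F=u_j=N_j f_j$ with $f_j\in I^{0,0}(H)$ exhibits each $N_j^V$ as a morphism of LMHS of type $(-1,-1)$, making $\V$ a nilpotent orbit of MHS.

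Finally, the admissibility conditions of Definition~\ref{def admissibility} are verified as follows: (a) is polarizability of $\H$; (b) uses $\nu_{\QQ}:=e$ (after a rational adjustment by an element of $H_\QQ$ chosen so that each $N_j^V\nu_\QQ$ lands in $M_{-2}^{(j)}H_\QQ$, which is possible because $\beta$ admits a type-$(0,0)$ representative); and (c) uses $\nu_F(s):=\exp(\sum z_j N_j^V)e_F$, which is the constant $e_F$ in the Deligne frame and therefore extends holomorphically across $\underline 0$ as a section of $F_e^0\V_e$. The singularity formula then reads $\sing_0(\nu)=[(N_1^V e,\dots,N_r^V e)]=[\vec h]=\beta$, as required. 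The main obstacle is the first step: producing the type-$(0,0)$ cocycle representative $\vec u=(N_j f_j)$ from the purely cohomological datum of $\beta$. This requires a genuine use of the \cite{CKS2} MHS on $\IH^\bullet(\HH_\QQ)$, specifically that the $F^0$-part of $H^1(B^\bullet(H_\CC))$ is realised by cocycles of the form $(N_j f_j)$ with $f_j\in I^{0,0}(H)$; once such a representative is in hand, reconciliation with the rational $\vec h$ (producing $y$) and the remaining MHS verifications are formal.
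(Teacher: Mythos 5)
Your construction is, up to relabeling, exactly the one in the paper: the paper takes the type-$(0,0)$ generator $e_{o}$ with $N_{j}e_{o}=h_{j}$ of type $(-1,-1)$ and twists the rational structure to $\QQ(e_{o}+h)\oplus H_{\QQ}$, while you start from the rational generator $e$ and correct to the Hodge generator $e_{F}=e+y$ — the same two cocycle representatives ($\vec h$ and $\vec u$) reconciled by the same coboundary. Both arguments defer the same key inputs (the type-$(-1,-1)$ representative from the \cite{CKS2} mixed Hodge structure on $\IH^{1}$, and the admissibility of the resulting nilpotent orbit via Kashiwara's criteria), so the proposal is correct and essentially identical to the paper's proof.
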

\begin{proof}
By the above remarks, $\beta$ corresponds to a collection of elements
$h_{j}\in N_{j}(H_{\CC})$ such that 

(a) $h_{1},\dots,h_{r}$ are of type $(-1,-1)$ with respect to the
limit mixed Hodge structure of $\H_{nilp}$; 

(b) $d(h_{1},\dots,h_{r})=0$, i.e. $N_{j}(h_{k})-N_{k}(h_{j})=0$; 

(c) There exists $h\in H_{\CC}$ such that $N_{j}(h)+h_{j}\in H_{\QQ}$
for each $j$, i.e. the class of $(h_{1},\dots,h_{r})$ in $\IH^{1}(\HH_{\CC})$
belongs to the image $\IH^{1}(\HH_{\QQ})\to\IH^{1}(\HH_{\CC})$. 

We now define the desired nilpotent orbit by formally setting $V_{\CC}=\CC e_{o}\oplus H_{\CC}$
where $e_{o}$ is of type $(0,0)$ with respect to the limit mixed
Hodge structure and letting $V_{\QQ}=\QQ(e_{o}+h)\oplus H_{\QQ}$.
We define $N_{j}(e_{o})=h_{j}$. Then, following Kashiwara \cite{Ka}:

(a) The resulting nilpotent orbit $\V_{nilp}$ is pre-admissible; 

(b) The relative weight filtration of \[
W_{-2}=0,\qquad W_{-1}=H_{\QQ},\qquad W_{0}=V_{\QQ}\]
 with respect to each $N_{j}$ exists. 

Consequently $\V_{nilp}$ is admissible, and the associated normal
function $\nu$ has singularity $\beta$ at $0$. % In order to add a compatible 
% integral structure to $\V_{nilp}$, we need to be able to pick $h\in H_{\C}$
% so that 
% $$
%         (T_j-1)(e_o+h) = \frac{T_j - 1}{N_j} h_j + (T_j - 1)h
% $$
% is integral.

\end{proof}

\subsection{Obstructions to the existence of normal functions with prescribed
singularity class}

Thus, in the case of a nilpotent orbit, we have a complete description
of the group of components of the Neron model $\otimes\QQ$. In analogy
with nilpotent orbits, one might expect that given a variation of
Hodge structure $\H$ of weight $-1$ over $(\Delta^{*})^{r}$ with
unipotent monodromy, the group of components of the Neron model $\otimes\QQ$
to equal the classes of type $(0,0)$ in $\IH^{1}(\HH_{\QQ})$. However,
Saito \cite{S6} has managed to construct examples of variations of
Hodge structure over $(\Delta^{*})^{r}$ which do not admit any admissible
normal functions with non-torsion singularities. We now want to describe
Saito's class of examples. We begin with a discussion of the deformations
of an admissible nilpotent orbit into an admissible variation of mixed
Hodge structure over $(\Delta^{*})^{r}$.

Let $\varphi:(\Delta^{*})^{r}\to\Gamma\backslash\D$ be the period
map of a variation of pure Hodge structure with unipotent monodromy.
Then, after lifting the period map of $\H$ to the product of upper
half-planes $U^{r}$, the work of Cattani, Kaplan and Schmid on degenerations
of Hodge structure gives us a local normal form of the period map
\[
F(z_{1},\dots,z_{r})=e^{\sum_{j}\, z_{j}N_{j}}e^{\Gamma(s)}.F_{\infty}.\]
 Here, $(s_{1},\dots,s_{r})$ are the coordinates on $\Delta^{r}$,
$(z_{1},\dots,z_{r})$ are the coordinates on $U^{r}$ relative to
which the covering map $U^{r}\to(\Delta^{*})^{r}$ is given by $s_{j}=e^{2\pi iz_{j}}$;
\[
\Gamma:\Delta^{r}\to\mathfrak{g}_{\CC}\]
 is a holomorphic function which vanishes at the origin and takes
values in the subalgebra \[
\mathfrak{q}=\bigoplus_{p<0}\,\mathfrak{g}^{p,q};\]
 and $\oplus_{p,q}\,\mathfrak{g}^{p,q}$ denotes the bigrading of
the MHS induced on $\mathfrak{g}_{\CC}$ (cf. $\S4.2$) by the limit
MHS $(F_{\infty},W(N_{1}+\cdots N_{r})[1])$ of $\H$. The subalgebra
$\mathfrak{q}$ is graded nilpotent \[
\mathfrak{q}=\oplus_{a<0}\,\mathfrak{q}_{a},\qquad\mathfrak{q}_{a}=\oplus_{b}\,\mathfrak{g}^{a,b}\]
 with $N_{1},\dots,N_{r}\in\mathfrak{q}_{-1}$. Therefore, \[
e^{\sum_{j}\, z_{j}N_{j}}e^{\Gamma(s)}=e^{X(z_{1},\dots,z_{r})}\]
 where $X$ takes values in $\mathfrak{q}$, and hence the horizontality
of the period map becomes \[
e^{-X}\pd e^{X}=\pd X_{-1}\]
 where $X=X_{-1}+X_{-2}+\cdots$ relative to the grading of $\mathfrak{q}$.
Equality of mixed partial derivatives then forces \[
\pd X_{-1}\wedge\pd X_{-1}=0\]
 Equivalently, \begin{equation}\label{eqn obs1}\left[ N_j + 2 \pi i s_j \frac{\pd\Gamma_{-1}}{\pd s_j}, N_k + 2\pi i s_k \frac{\pd\Gamma_{-1}}{\pd s_k}\right] = 0 \\
\end{equation}
\begin{rem}
The function $\Gamma$ and the local normal form of the period map
appear in \cite{CK}. 
\end{rem}
In his letter to Morrison \cite{De4}, Deligne showed that for VHS
over $(\Delta^{*})^{r}$ with maximal unipotent boundary points, one
could reconstruct the VHS from data equivalent to the nilpotent orbit
and the function $\Gamma_{-1}$. More generally, one can reconstruct
the function $\Gamma$ starting from $\pd X_{-1}$ using the equation
\[
\pd e^{X}=e^{X}\pd X_{-1}\]
 subject to the integrability condition $\pd X_{-1}\wedge\pd X_{-1}=0$.
This is shown by Cattani and Javier Fernandez in \cite{CF}.

The above analysis applies to VMHS over $(\Delta^{*})^{r}$ as well:
As discussed in the previous section, a VMHS is given by a period
map from the parameter space into the quotient of an appropriate classifying
space of graded-polarized mixed Hodge structure $\M$. As in the pure
case, we have a Lie group $G$ which acts on $\M$ by biholomorphisms
and a complex Lie group $G_{\CC}$ which acts on the {}``compact
dual'' $\check{\M}$.

As in the pure case (and also discussed in $\S4$), an admissible
VMHS with nilpotent orbit $(e^{\sum_{j}\, z_{j}N_{j}}.F_{\infty},W)$
will have a local normal form \[
F(z_{1},\dots,z_{r})=e^{\sum_{j}\, z_{j}N_{j}}e^{\Gamma(s)}.F_{\infty}\]
 where $\Gamma:\Delta^{r}\to\mathfrak{g}_{\CC}$ takes values in the
subalgebra \[
\mathfrak{q}=\bigoplus_{p<0}\,\mathfrak{g}^{p,q}\]
Conversely (given an admissible nilpotent orbit), subject to the integrability
condition \eqref{eqn obs1} above, any function $\Gamma_{-1}$ determines
a corresponding admissible VMHS (cf. \cite[Thm. 6.16]{Pe1}). 

Returning to Saito's examples (which for simplicity we only consider
in the two dimensional case), let $\H$ be a variation of Hodge structure
of weight $-1$ over $\Delta^{*}$ with local normal form $F(z)=e^{zN}e^{\Gamma(s)}.F_{\infty}$.
Let $\pi:\Delta^{2}\to\Delta$ by $\pi(s_{1},s_{2})=s_{1}s_{2}$.
Then for $\pi^{*}(\H)$, we have \[
\Gamma_{-1}(s_{1},s_{2})=\Gamma_{-1}(s_{1}s_{2})\]

In order to construct a normal function, we need to extend $\Gamma_{-1}(s_{1},s_{2})$
and $N_{1}=N_{2}=N$ on the reference fiber $H_{\CC}$ of $\H$ to
include a new class $u_{0}$ of type $(0,0)$ which projects to $1$
in $\ZZ(0)$. Let \[
N_{1}(u_{0})=h_{1},\qquad N_{2}(u_{0})=h_{2},\qquad\Gamma_{-1}(s_{1},s_{2})u_{0}=\alpha(s_{1},s_{2})\]
Note that $(h_{1},h_{2})$ determines the cohomology class of the
normal function so constructed, and that $h_{2}-h_{1}$ depends only
on the cohomology class, and not the particular choice of representative
$(h_{1},h_{2})$.

In order to construct a normal function in this way, we need to check
horizontality. This amounts to checking the equation \begin{eqnarray*}
N\left(s_{2}\frac{\partial\alpha}{\partial s_{2}}-s_{1}\frac{\partial\alpha}{\partial s_{1}}\right) & + & s_{1}s_{2}\Gamma_{-1}'(s_{1}s_{2})(h_{2}-h_{1})\\
 & \hphantom{} & +2\pi is_{1}s_{2}\Gamma_{-1}'(s_{1}s_{2})\left(s_{2}\frac{\partial\alpha}{\partial s_{2}}-s_{1}\frac{\partial\alpha}{\partial s_{1}}\right)=0\end{eqnarray*}

Computation shows that the coefficient of $(s_{1}s_{2})^{m}$ of the
left hand side is \begin{equation}\label{eqn obs4}\frac{1}{(m-1)!} \Gamma_{-1}^{(m)}(0)(h_2-h_1) \\
\end{equation}Therefore, a necessary condition for the cohomology class represented
by $(h_{1},h_{2})$ to arise from an admissible normal function is
for $h_{2}-h_{1}$ to belong to the kernel of $\Gamma_{-1}(t)$. This
condition is also sufficient since under this hypothesis, one can
simply set $\alpha=0$.
\begin{example}
Let $\X\overset{\rho}{\to}\Delta$ be a family of Calabi-Yau $3$-folds
(smooth over $\Delta^{*}$, smooth total space) with Hodge numbers
$h^{3,0}=h^{2,1}=h^{1,2}=h^{0,3}=1$ and central singular fiber having
an ODP. Setting $\H:=\H_{\X^{*}/\Delta^{*}}^{3}(2)$, the LMHS has
as its nonzero $I^{p,q}$'s $I^{-2,1},\, I^{-1,-1},\, I^{0,0},$ and
$I^{1,-2}$. Assume that the Yukawa coupling $(\nabla_{\delta_{s}})^{3}\in Hom_{\mathcal{O}_{\Delta}}(\H_{e}^{3,0},\H_{e}^{0,3})$
is nonzero ($\delta_{s}=s\frac{d}{ds}$), and thus the restriction
of $\Gamma_{-1}(s)$ to \linebreak  $Hom_{\mathcal{O}_{\Delta}}(I^{-1,-1},I^{-2,1})$,
does not vanish identically. Then for any putative singularity class
$0\neq h_{2}-h_{1}\in(I^{-1.-1})_{\QQ}\cong\ker(N)_{\QQ}^{(-1,-1)}$($\cong$(\ref{eqn singtarget codim2})
in this case, which is just one dimensional) for admissible normal
functions overlying $\pi^{*}\H$, non-vanishing of $\Gamma_{-1}(s)(h_{2}-h_{1})$
on $\Delta$ $\implies$ (\ref{eqn obs4}) cannot be zero for every
$m$.
\end{example}

\subsection{Implications for the Griffiths-Green conjecture}

Returning now to the work of Griffiths and Green on the Hodge conjecture
via singularities of normal functions, it follows using the work of
Richard Thomas that for a sufficiently high power of $L$, the Hodge
conjecture implies that one can force $\nu_{\zeta}$ to have a singularity
at a point $p\in\hat{X}$ such that $\pi^{-1}(p)$ has only ODP singularities.
In general, on a neighborhood of such a point $\hat{X}$ need not
be a normal crossing divisor. However, the image of the monodromy
representation is nevertheless abelian. Using a result of Steenbrink
and Nemethi \cite{NS}, it then follows from the properties of the
monodromy cone of a nilpotent orbit of pure Hodge structure that $\text{sing}_{p}(\nu_{\zeta})$
persists under blowup. Therefore, it is sufficient to study ODP degenerations
in the normal crossing case (cf. \cite[sec. 7]{BFNP}). What we will
find below is that the {}``infinitely many'' conditions above (vanishing
of \eqref{eqn obs4} for all $m$) are replaced by surjectivity of
a single logarithmic Kodaira-Spencer map at each boundary component.
Consequently, as suggested in the introduction, it appears that M.
Saito's examples are not a complete show-stopper for existence of
singularities for Griffiths-Green normal functions.

The resulting limit mixed Hodge structure is of the form \begin{eqnarray*}
 & I^{0,0}\\
\cdots\quad I^{-2,1}\qquad I^{-1,0} & \qquad & I^{0,-1}\qquad I^{1,-2}\quad\cdots\\
 & I^{-1,-1}\end{eqnarray*}
 and $N^{2}=0$ for every element of the monodromy cone $\mathcal{C}$.
The weight filtration is given by \[
M_{-2}(N)=\sum_{j}\, N_{j}(H_{\CC}),\qquad M_{-1}(N)=\cap_{j}\,\ker(N_{j}),\qquad M_{0}(N)=H_{\CC}\]

For simplicity of notation, let us restrict to a two parameter version
of such a degeneration, and consider the obstruction to constructing
an admissible normal function with cohomology class represented by
$(h_{1},h_{2})$ as above. As in Saito's example, we need to add a
class $u_{o}$ of type $(0,0)$ such that $N_{j}(u_{o})=h_{j}$ and
construct $\alpha=\Gamma_{-1}(u_{o})$. Then, the integrability condition
$\pd X_{-1}\wedge\pd X_{-1}=0$ becomes \begin{equation} \label{eqn obs3}{
\begin{matrix}
-(2\pi is_2)\frac{\pd\Gamma_{-1}}{\pd s_2}(h_1)       &+&(2\pi is_1)\frac{\pd\Gamma_{-1}}{\pd s_1}(h_2)
\\
&+&(2\pi is_1)(2\pi is_2)\left(        \frac{\pd\Gamma_{-1}}{\pd s_1}\frac{\pd\alpha}{\pd s_2} -         \frac{\pd\Gamma_{-1}}{\pd s_2}\frac{\pd\alpha}{\pd s_1}\right) =0
\end{matrix}
}
\\
\end{equation} since $\alpha=\Gamma_{-1}(u_{o})$ takes values in $M_{-1}(N)$.

Write $\alpha=\sum_{j,k}\, s_{1}^{j}s_{2}^{k}\alpha_{jk}$ and $\Gamma_{-1}=\sum_{p,q}\, s_{1}^{p}s_{2}^{q}\gamma_{pq}$
on $H_{\CC}$. Then, for $ab\neq0$, the coefficient of $s_{1}^{a}s_{2}^{b}$
on the left hand side of equation (\eqref{eqn obs3}) is \[
-2\pi ib\gamma_{ab}(h_{2})+2\pi ia\gamma_{ab}(h_{1})+(2\pi i)^{2}\sum_{p+j=a,q+k=b}\,(pk-qj)\gamma_{pq}(\alpha_{jk})\]
 Define \[
\zeta_{ab}=2\pi ib\gamma_{ab}(h_{2})-2\pi ia\gamma_{ab}(h_{1})-(2\pi i)^{2}\sum_{p+j=a,q+k=b,pq\neq0}\,(pk-qj)\gamma_{pq}(\alpha_{jk})\]
 Then, equation \eqref{eqn obs3} is equivalent to \[
(2\pi i)^{2}b\gamma_{10}(\alpha_{(a-1)b})-(2\pi i)^{2}a\gamma_{01}(\alpha_{a(b-1)})=\zeta_{ab}\]
 where $\alpha_{jk}$ occurs in $\zeta_{ab}$ only in total degree
$j+k<a+b-1$. Therefore, \emph{provided} that \[
\gamma_{10},\gamma_{01}:F_{\infty}^{-1}/F_{\infty}^{0}\to F_{\infty}^{-2}/F_{\infty}^{-1}\]
 are surjective, we can always solve (non-uniquely!) for the coefficients
$\alpha_{jk}$, and hence formally (i.e. modulo checking convergence
of the resulting series) construct the required admissible normal
function with given cohomology class.
\begin{rem}
(i) Of course, it is not necessary to have only ODP singularities
for the above analysis to apply. It is sufficient merely that the
limit mixed Hodge structure have the stated form. In particular, this
is always true for degenerations of level 1. Furthermore, in this
case $Gr_{F_{\infty}}^{-2}=0$, and hence ($\otimes\QQ$) the group
of components of the N?on model surjects onto the Tate-classes of
type $(0,0)$ in $\IH^{1}(\HH_{\QQ})$. 

(ii) In Saito's examples from $\S5.4$, even if $\Gamma_{-1}'(0)\neq0$,
we will have $\gamma_{01}=0=\gamma_{10}$, since the condition of
being a pullback via $(s_{1},s_{2})\mapsto s_{1}s_{2}$ means $\Gamma_{-1}(s_{1},s_{2})=\sum_{p,q}s_{1}^{p}s_{2}^{q}\gamma_{pq}=\sum_{r}s_{1}^{r}s_{2}^{r}\gamma_{rr}$.\end{rem}
\begin{example}
In the case of a degeneration of Calabi--Yau threefolds with limit
mixed Hodge structure on the middle cohomology (shifted to weight
$-1$) \begin{eqnarray*}
 & I^{0,0}\\
I^{-2,1}\qquad I^{-1,0} & \qquad & I^{0,-1}\qquad I^{1,-2}\\
 & I^{-1,-1}\end{eqnarray*}
 the surjectivity of the partial derivatives of $\Gamma_{-1}$ are
related to the Yukawa coupling as follows: Let \[
F(z)=e^{\sum_{j}\, z_{j}N_{j}}e^{\Gamma(s)}.F_{\infty}\]
 be the local normal form of the period map as above. Then, a global
non-vanishing holomorphic section of the canonical extension of $\F^{1}$
(i.e. of $\F^{3}$ before we shift to weight $-1$) is of the form
\[
\Omega=e^{\sum_{j}\, z_{j}N_{j}}e^{\Gamma(s)}\sigma_{\infty}(s)\]
 where $\sigma_{\infty}:\Delta^{r}\to I^{1,-2}$ is holomorphic and
non-vanishing. Then, the Yukawa coupling of $\Omega$ is given by
\[
Q(\Omega,D_{j}D_{k}D_{\ell}\,\Omega),\qquad D_{a}=\frac{\pd}{\pd z_{a}}.\]
In keeping with the above notation, let $e^{X}=e^{\sum_{j}\, z_{j}N_{j}}e^{\Gamma(s)}$
and $A_{j}=D_{j}\, X_{-1}$. Then, using the 1st Hodge--Riemann bilinear
relation and the fact that $e^{X}$ is an automorphism of $Q$, it
follows that \[
Q(\Omega,D_{j}D_{k}D_{\ell}\,\Omega)=Q(\sigma_{\infty}(s),A_{j}A_{k}A_{\ell}\,\sigma_{\infty}(s))\]
 Moreover (cf. \cite{CK},\cite{Pe1}), the horizontality of the period
map implies that \[
\left[\left.\Gamma_{-1}\right|_{s_{k}=0},N_{k}\right]=0\]
 Using this relation, it then follows that \[
\lim_{s\to0}\,\frac{Q(\Omega,D_{j}D_{k}D_{\ell}\,\Omega)}{(2\pi is_{j})(2\pi is_{k})(2\pi is_{\ell})}=Q(\sigma_{\infty}(0),G_{j}G_{k}G_{\ell}\sigma_{\infty}(0))\]
 for $j\neq k$, where $G_{a}=\frac{\pd\Gamma_{-1}}{\pd s_{a}}(0)$.
In particular, if for each index $j$ there exist indices $k$ and
$\ell$ with $k\neq\ell$ such that the left-hand side of the previous
equation is non-zero then $G_{j}:(F_{\infty}^{-1}/F_{\infty}^{0})\to(F_{\infty}^{-2}/F_{\infty}^{-1})$
is surjective.
\end{example}

\section{Global Considerations: Monodromy of Normal Functions}

Returning to a normal function $\V\in NF^{1}(S,\H)_{\bar{S}}^{ad}$
over a $complete$ base, we want to speculate a bit about how one
might {}``force'' singularities to exist. The (inconclusive) line
of reasoning we shall pursue rests on two basic principles:

(i) maximality of the geometric (global) monodromy group of $\VV$
may be deduced from hypotheses on the torsion locus of $\V$; and

(ii) singularities of $\V$ can be interpreted in terms of the local
monodromy of $\VV$ being sufficiently large.

While it is unclear what hypotheses (if any) would allow one to pass
from global to local monodromy-largeness, the proof of the first principle
is itself of interest as a first application of algebraic groups (the
algebraic variety analogue of Lie groups, originally introduced by
Picard) to normal functions.

\subsection{Background}

Mumford-Tate groups of Hodge structures were introduced by Mumford
\cite{Mu} for pure HS and by Andr\'e \cite{An} in the mixed setting.
Their power and breadth of applicability is not well-known, so we
will first attempt a brief summary. They were first brought to bear
on $H^{1}(A)$ for $A$ an abelian variety, which has led to spectacular
results:
\begin{itemize}
\item Deligne's theorem \cite{De2} that $\QQ$-Bettiness of a class in
$F^{p}H_{dR}^{2p}(A_{k})$ ($k=\bar{k}$) is independent of the embedding
of $k$ into $\CC$ ({}``Hodge $\implies$ absolute Hodge'');
\item the proofs by Hazama \cite{Ha} and Murty \cite{Mr} of the HC for
$A$ {}``nondegenerate'' (MT of $H^{1}(A)$ is maximal in a sense
to be defined below); and
\item the density of special (Shimura) subvarieties in Shimura varieties
and the partial resolution of the Andr\'e-Oort Conjecture by Klingler-Yafaev
\cite{KY}.
\end{itemize}
More recently, MT groups have been studied for higher weight HS's;
one can still use them to define special $\bar{\QQ}$-subvarieties
of (non-Hermitian-symmetric) period domains $D$, which classify polarized
HS's with fixed Hodge numbers (and polarization). In particular, the
$0$-dimensional subdomains --- still dense in $D$ --- correspond
to HS with CM (complex multiplication); that is, with abelian MT group.
One understands these HS well: their irreducible subHS may be constructed
directly from primitive CM types (and have endomorphism algebra equal
to the underlying CM field), which leads to a complete classification;
and their Weil and Griffiths intermediate Jacobians are CM abelian
varieties \cite{Bo}. Some further applications of MT groups include:
\begin{itemize}
\item Polarizable CM-HS are motivic \cite{Ab}; when they come from a CY
variety, the latter often has good modularity properties;
\item Given $H^{*}$ of a smooth projective variety, the level of the MT
Lie algebra furnishes an obstruction to the variety being dominated
by a product of curves \cite{Sc};
\item Transcendence degree of the space of periods of a VHS (over a base
$S$), viewed as a field extension of $\CC(S)$ \cite{An};
\end{itemize}
and specifically in the mixed case:
\begin{itemize}
\item the recent proof \cite{AK} of a key case of the Beilinson-Hodge Conjecture
for semiabelian varieties and products of smooth curves.
\end{itemize}
The latter paper, together with \cite{An} and \cite{De2}, are the
best references for the definitions and properties we now summarize.

To this end, recall that an algebraic group $G$ over a field $k$
is an algebraic variety $/k$ together with $k$-morphisms of varieties
$1_{G}:\, Spec(k)\to G$, {}``multiplication'' $\mu_{G}:G\times G\to G$,
and {}``inversion'' $\imath_{G}:G\to G$ satisfying obvious compatibility
conditions. The latter ensure that for any extension $K/k$, the $K$-points
$G(K)$ form a group.
\begin{defn}
(i) A ($k$-)closed algebraic subgroup $M\leq G$ is one whose underlying
variety is ($k$-)Zariski closed.

(ii) Given a subgroup $\M\leq G(K)$, the $k$-closure of $\M$ is
the smallest $k$-closed algebraic subgroup $M$ of $G$ with $K$-points
$M(K)\geq\M$. 

If $\M:=M(K)$ for an algebraic $k$-subgroup $M\leq G$, then the
$k$-closure of $\M$ is just the $k$-Zariski closure of $\M$ (i.e.
the algebraic variety closure).

But in general, this is not true: instead, $M$ may be obtained as
the $k$-Zariski (algebraic variety) closure of the group generated
by the $k$-spread of $\M$.
\end{defn}
We refer the reader to \cite{Sp} (esp. Chap. 6) for the definitions
of reductive, semisimple, unipotent, etc. in this context (which are
less crucial for the sequel). We will write $DG:=[G,G]\,(\trianglelefteq G)$
for the derived group.

\subsection{Mumford-Tate and Hodge groups}

Let $V$ be a (graded-polarizable) mixed Hodge structure with dual
$V^{\vee}$ and tensor spaces $T^{m,n}V:=V^{\otimes m}\otimes(V^{\vee})^{\otimes n}$
($n,m\in\ZZ_{\geq0}$). These carry natural MHS, and any $g\in GL(V)$
acts naturally on $T^{m,n}V$.
\begin{defn}
(i) A \emph{Hodge $(p,p)$-tensor }is any $\tau\in(T^{m,n}V)_{\QQ}^{(p,p)}$.

(ii) The \emph{MT group} $M_{V}$ (resp. \emph{Hodge group}%
\footnote{\emph{In an unfortunate coincidence of terminology, these are completely
different objects from (though not unrelated to) the finitely generated
abelian groups $Hg^{m}(H)$ discussed in $\S1$.}%
} $M_{V}^{\circ}$) of $V$ is the (largest) $\QQ$-algebraic subgroup
of $GL(V)$ fixing%
\footnote{{}``fixing'' means fixing pointwise; the term for {}``fixing as
a set'' is {}``stabilizing''%
} the Hodge $(0,0)$-tensors $\forall m,n$ (resp. Hodge $(p,p)$-tensors
$\forall m,n,p$). $M_{V}$ respects the weight filtration $W_{\bullet}$
on $V$.

(iii) The weight filtration on $V$ induces one on MT/Hodge: \[
W_{-i}M_{V}^{(\circ)}:=\left\{ \left.g\in M_{V}^{(\circ)}\right|(g-\text{id.})W_{\bullet}V\subset W_{\bullet-i}V\right\} \trianglelefteq M_{V}^{(\circ)}.\]
One has: $W_{0}M_{V}^{(\circ)}=M_{V}^{(\circ)}$; $W_{-1}M_{V}^{(\circ)}$
is unipotent; and $Gr_{0}^{W}M_{V}^{(\circ)}\cong M_{V^{\text{split}}}^{(\circ)}$
($V^{\text{split}}:=\oplus_{\ell\in\ZZ}Gr_{\ell}^{W}V$), cf. \cite{An}.
\end{defn}
Clearly $M_{V}^{\circ}\trianglelefteq M_{V}$; and unless $V$ is
pure of weight $0$, we have $M_{V}/M_{V}^{\circ}\cong\GG_{m}$. If
$V$ has polarization $Q\in Hom_{\text{MHS}}\left(V\otimes V,\QQ(-k)\right)$
for $k\in\ZZ\backslash\{0\}$, then $M_{V}^{\circ}$ is of finite
index in $M_{V}\cap GL(V,Q)$ (where $g\in GL(V,Q)$ means $Q(gv,gw)=Q(v,w)$),
and if in addition $V(=H)$ is pure (or at least split) then both
are reductive. One has in general that $W_{-1}M_{V}\subseteq DM_{V}\subseteq M_{V}^{\circ}\subseteq M_{V}$.
\begin{defn}
(i) If $M_{V}$ is abelian ($\Longleftrightarrow\, M_{V}(\CC)\cong(\CC^{*})^{\times a}$),
$V$ is called a \emph{CM-MHS}. (A subMHS of a CM-MHS is obviously
CM.)

(ii) The endomorphisms $End_{\text{MHS}}(V)$ can be interpreted as
the $\QQ$-points of the algebra $(End(V))^{M_{V}}=:E_{V}$. One always
has $M_{V}\subset GL(V,E_{V})$$\linebreak$($=$centralizer of $E_{V}$);
if this is an equality, then $V$ is said to be \emph{nondegenerate}.
\end{defn}
Neither notion implies the other; however: any CM or nondegenerate
MHS is ($\QQ$-)\emph{split}, i.e. $V\,(=V^{\text{split}})$ is a
direct sum of pure HS in different weights.
\begin{rem}
(a) We point out why CM-MHS are split. If $M_{V}$ is abelian, then
$M_{V}\subset E_{V}$ and so $M_{V}(\QQ)$ consists of morphisms of
MHS. But then any $g\in W_{-1}M_{V}(\QQ)$, hence $g-\text{id.}$,
is a morphism of MHS with $(g-\text{id.})W_{\bullet}\subset W_{\bullet-1}$;
so $g=\text{id.}$, and $M_{V}=M_{V^{\text{split}}}$, which implies
$V=V^{\text{split}}$.

(b) For an arbitrary MHS $V$, the subquotient tensor representations
of $M_{V}$ killing $DM_{V}$ (i.e., factoring through the abelianization)
are CM-MHS. By (a), they are split, so that $W_{-1}M_{V}$ acts trivially;
this gives the inclusion $W_{-1}M_{V}\subseteq DM_{V}$.
\end{rem}
Now we turn to the representation-theoretic point of view on MHS.
Define the algebraic $\QQ$-subgroups $U\subset S\subset GL_{2}$
via their complex points \small
\begin{equation} 
\xymatrix{S(\CC) : \ar @{=} [r] & \left\{ \left. 
{\begin{pmatrix} \alpha & \beta \\ -\beta & \alpha \end{pmatrix} }
\right| {\begin{matrix} \alpha, \beta \in \CC \\ (\alpha,\beta)\neq (0,0) \end{matrix}} \right\}  \ar [r]^{\mspace{120mu} \cong}_{\mspace{120mu} \tiny \text{eigenvalues}} & \CC^* \times \CC^* & \left( z, \frac{1}{z} \right) \\ U(\CC) : \ar @{^(->} [u] \ar @{=} [r] & \left\{ \left. 
{\begin{pmatrix} \alpha & \beta \\ -\beta & \alpha \end{pmatrix} }
\right| \alpha, \beta \in \CC ; \, \alpha^2 + \beta^2 = 1 \right\} \ar [r]^{\mspace{180mu} \cong}  & \CC^* \ar @{^(->} [u] & z \ar @{|->} [u] }
\end{equation} \normalsize where the top map sends $\left(\begin{array}{cc}
\alpha & \beta\\
-\beta & \alpha\end{array}\right)\mapsto(\alpha+i\beta,\alpha-i\beta)=:(z,w)$. (Points in $S(\CC)$ will be represented by the {}``eigenvalues''
$(z,w)$.) Let \[
\varphi:\, S(\CC)\to GL(V_{\CC})\]
 be given by \[
\varphi(z,w)|_{I^{p,q}(H)}:=\text{multiplication by }z^{p}w^{q}\,\,\,(\forall p,q).\]
Note that this map is in general only defined $/\CC$, though in the
pure case it is defined $/\RR$ (and as $S(\RR)\subset S(\CC)$ consists
of tuples $(z,\bar{z})$, one tends not to see precisely the above
approach in the literature). The following useful result%
\footnote{Proof of this, and of Prop. \ref{prop SD} below, will appear in a
work of the first author with P. Griffiths and M. Green.%
} allows one to compute MT groups in some cases.
\begin{prop}
$M_{V}$ is the $\QQ$-closure of $\varphi(S(\CC))$ in $GL(V)$.\end{prop}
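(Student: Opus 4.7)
The plan is to establish the two inclusions separately, with the forward inclusion immediate from the definitions and the reverse inclusion resting on a standard tannakian/Chevalley-type fact about linear algebraic groups.

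Write $M$ for the $\QQ$-closure of $\varphi(S(\CC))$ in $GL(V)$. The map $\varphi$ extends diagonally to each tensor space $(T^{m,n}V)_{\CC}$, acting on the Deligne summand $I^{p,q}(T^{m,n}V)$ as multiplication by $z^p w^q$. Since $S(\CC)\cong\CC^*\times\CC^*$ (the eigenvalue description), the requirement $z^p w^q=1$ for \emph{all} $(z,w)\in\CC^*\times\CC^*$ forces $p=q=0$. Thus, the key observation is:
\begin{equation}
\bigl\{\,v\in (T^{m,n}V)_{\CC}\,:\,\varphi(s)v=v\ \forall s\in S(\CC)\,\bigr\}\;=\;I^{0,0}(T^{m,n}V),
\end{equation}
and consequently the $\QQ$-rational tensors fixed by $\varphi(S(\CC))$ are precisely the Hodge $(0,0)$-tensors.

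For $M\subseteq M_V$: every Hodge $(0,0)$-tensor $\tau\in(T^{m,n}V)_{\QQ}$ lies in $I^{0,0}$, hence is fixed pointwise by $\varphi(S(\CC))$. The simultaneous stabilizer $\bigcap_\tau\mathrm{Stab}(\tau)=M_V$ is a $\QQ$-closed subgroup of $GL(V)$ containing $\varphi(S(\CC))$ in its $\CC$-points, so by minimality of the $\QQ$-closure, $M\subseteq M_V$.

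For $M_V\subseteq M$: I would invoke the standard lemma (essentially Chevalley; cf.\ Deligne--Milne, \emph{Tannakian Categories}) that any $\QQ$-closed subgroup $H\subseteq GL(V)$ is recovered as $\bigcap_{\tau}\mathrm{Stab}(\tau)$, where $\tau$ ranges over the $\QQ$-rational $H$-invariant tensors in the collection $\{T^{m,n}V\}_{m,n\geq 0}$. Apply this to $H=M$. Since $\varphi(S(\CC))\subseteq M(\CC)$, every $M$-invariant rational tensor is a fortiori $\varphi(S(\CC))$-invariant, hence by the observation above is a Hodge $(0,0)$-tensor, and is therefore fixed by $M_V$. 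Thus $M_V\subseteq\bigcap_{\tau}\mathrm{Stab}(\tau)=M$, completing the proof.

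The main (and really only) obstacle is the lemma invoked in the reverse inclusion; everything else is bookkeeping around the eigenvalue description of $\varphi$. This lemma is standard but nontrivial: it is proved by combining Chevalley's theorem (presenting $M$ as the stabilizer of a line $L$ in some representation $W$ of $GL(V)$ constructible from $V$ by tensor operations) with the reductive-case trick of passing from $L\subset W$ to the pointwise fixed tensor $L\otimes L^{\vee}\subset W\otimes W^{\vee}$ (plus a small argument to handle the unipotent radical in the mixed setting via $W_{\bullet}M$). One could alternatively note that the coordinate ring $\mathcal{O}(GL(V)/M)$ is, by the theorem of the highest weight / Peter--Weyl type considerations, generated by $M$-invariants in tensor constructions on $V$, which is the tannakian formulation of the same statement.
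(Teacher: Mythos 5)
The paper itself offers no proof of this proposition---a footnote defers it to forthcoming work of the first author with Green and Griffiths---so your proposal can only be judged on its own terms. Your identification of the $\varphi(S(\CC))$-fixed vectors in $(T^{m,n}V)_{\CC}$ with $I^{0,0}(T^{m,n}V)$ (via compatibility of the Deligne bigrading with tensor products and duals) is correct, and with it the inclusion $M\subseteq M_V$ is complete: every Hodge $(0,0)$-tensor lies in $I^{0,0}$, hence is fixed by $\varphi(S(\CC))$, hence its stabilizer is a $\QQ$-closed subgroup containing $\varphi(S(\CC))$ and therefore containing $M$.

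The reverse inclusion is where there is a genuine gap. The ``standard lemma'' you invoke---that an arbitrary $\QQ$-closed subgroup $H\leq GL(V)$ equals the pointwise fixer of its rational invariant tensors---is false: only \emph{observable} subgroups have this property (a Borel subgroup of $GL_2$ stabilizes a line but is not cut out by fixed tensors). Chevalley only presents $H$ as the \emph{stabilizer of a line} $L$ in a tensor construction $W$, and your proposed upgrade via $L\otimes L^{\vee}\subset W\otimes W^{\vee}$ requires $L^{\vee}$ to embed $H$-equivariantly into $W^{\vee}$, i.e.\ requires an $H$-stable complement to $L$---which is exactly what reductivity buys and exactly what fails here: $M$ contains the unipotent group $W_{-1}M_V$ and is non-reductive whenever $V$ is not $\QQ$-split, so the ``small argument to handle the unipotent radical'' you defer is in fact the crux of the whole proposition. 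Worse, for this particular $H=M$ the lemma is not a reduction at all: by your own (correct) computation the rational $M$-invariant tensors are precisely the Hodge $(0,0)$-tensors, so their common fixer is $M_V$ by definition, and the assertion ``$M$ is the fixer of its invariant tensors'' is verbatim the equality $M=M_V$ you are trying to prove. A complete argument must use more structure: for instance, show that the Chevalley line for $M$ is spanned by a rational vector of some $I^{p,p}(T^{m,n}V)$ (your eigenvalue analysis plus rationality of $L$ does give this), and then use the graded polarizations, or the weight filtration $W_{\bullet}M$ together with reductivity of $Gr_0^WM\cong M_{V^{\mathrm{split}}}$, to manufacture an honest fixed rational $(0,0)$-tensor whose stabilizer is still contained in the stabilizer of $L$; equivalently, run the Tannakian reconstruction inside the category of graded-polarizable MHS generated by $V$. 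As it stands, the hard half of your proof is circular.
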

\begin{rem}
In the pure ($V=H$) case, this condition can be replaced by $M_{H}(\RR)\supset\varphi(S(\RR))$,
and $M_{H}^{\circ}$ defined similarly as the $\QQ$-closure of $\varphi(U(\RR))$;
unfortunately, for $V$ a non-$\QQ$-split MHS the $\QQ$-closure
of $\varphi(U(\CC))$ is smaller than $M_{H}^{\circ}$.
\end{rem}
Now let $H$ be a pure polarizable HS with Hodge numbers $h^{p,q}$,
and take $D$ (with compact dual $\check{D}$) to be the classifying
space for such. We may view $\check{D}$ as a quasi-projective variety
$/\QQ$ in a suitable flag variety. Consider the subgroup $M_{H,\varphi}^{\circ}\subset M_{H}^{\circ}$
with real points $M_{H,\varphi}^{\circ}(\RR):=(M_{H}^{\circ}(\RR))^{\varphi(S(\RR))}$.
If we view $M_{H}^{\circ}$ as acting on a Hodge flag of $H_{\CC}$
with respect to a (fixed) basis of $H_{\QQ},$then $M_{H,\varphi}^{\circ}$
is the stabilizer of the Hodge flag. This leads to a Noether-Lefschetz-type
substratum in $D$:
\begin{prop}
\label{prop SD}The MT domain \[
D_{H}:=\frac{M_{H}^{\circ}(\RR)}{M_{H,\varphi}^{\circ}(\RR)}\,\left(\subset\frac{M_{H}^{\circ}(\CC)}{M_{H,\varphi}^{\circ}(\CC)}=:\check{D}_{H}\right)\]
classifies HS with Hodge group contained in $M_{H}$, or equivalently
with Hodge-tensor set containing that of $H$. The action of $M_{H}^{\circ}$
upon $H$ embeds $\check{D}_{H}\hookrightarrow\check{D}$ as a quasi-projective
subvariety, defined over an algebraic extension of $\QQ$. The $GL(H_{\QQ},Q)$-translates
of $\check{D}_{H}$ give isomorphic subdomains (with conjugate MT
groups) dense in $\check{D}$.
\end{prop}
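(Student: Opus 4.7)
The plan is to establish the three assertions separately, relying on the representation-theoretic description of points of $D$ via Deligne tori $\varphi\colon S\to GL(H_\RR)$, together with reductivity of $M_H^\circ$ in the pure polarized case.

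\textbf{Classification of $D_H$.} First I would show that $D_H$ coincides with the set of $F'^\bullet\in D$ whose Hodge group is contained in $M_H^\circ$. If $F'^\bullet=gF^\bullet$ with $g\in M_H^\circ(\RR)$, then $\varphi_{H'}(\cdot)=g\varphi_H(\cdot)g^{-1}$, so $\varphi_{H'}(S(\RR))\subset M_H^\circ(\RR)$; taking $\QQ$-closure yields $M_{H'}^\circ\subseteq M_H^\circ$, which is tautologically equivalent to the reverse containment of Hodge-tensor sets (by the definition of $M_H^\circ$ as fixer of its Hodge tensors on all $T^{m,n}H$). For the converse I would invoke Cartan-type conjugacy for maximal compact subtori of the connected real reductive group $M_H^\circ(\RR)$: both $\varphi_H(U(\RR))$ and $\varphi_{H'}(U(\RR))$ are compact subtori of $M_H^\circ(\RR)$ inducing identical weight decompositions on every tensor representation, hence are conjugate, and any conjugator $g\in M_H^\circ(\RR)$ transports $F^\bullet$ to $F'^\bullet$.

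\textbf{Algebraicity and field of definition.} The orbit map $M_H^\circ(\CC)\to\check D$, $g\mapsto gF^\bullet$, is a morphism of $\CC$-algebraic varieties whose fibers are cosets of $M_{H,\varphi}^\circ(\CC)$, inducing a locally closed immersion $M_H^\circ/M_{H,\varphi}^\circ\hookrightarrow\check D$; the image is quasi-projective because $\check D$ is. Since both $M_H^\circ$ and the $GL(H,Q)$-action on $\check D$ are defined over $\QQ$, the orbit $\check D_H$ is cut out from $\check D$ by equations over the field of definition of the stabilizer $M_{H,\varphi}^\circ$, which by a descent argument for the $\QQ$-algebraic $M_H^\circ$-action on $\check D$ can be taken to be an algebraic extension of $\QQ$.

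\textbf{Density of translates.} For $\gamma\in GL(H_\QQ,Q)$, the map $F'^\bullet\mapsto\gamma F'^\bullet$ is a $\QQ$-algebraic automorphism of $\check D$ carrying $H$ to an isomorphic HS with Mumford--Tate group $\gamma M_H^\circ\gamma^{-1}$, hence sending $\check D_H$ isomorphically onto $\check D_{\gamma H}$. The Zariski closure of $\bigcup_\gamma\gamma\check D_H$ is $GL(H_\QQ,Q)$-stable, hence (by Zariski density of $\QQ$-points in the connected reductive $\QQ$-group $GL(H,Q)$) is $GL(H,Q)(\CC)$-stable, and by transitivity of $GL(H,Q)(\CC)$ on the connected flag variety $\check D$ it must equal $\check D$. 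Since a Zariski-dense constructible subset of an irreducible smooth complex variety is Hausdorff-dense, intersecting with $D\subset\check D$ yields density of the real translates. The main obstacle in the whole argument is the converse in the classification step, where one must translate the algebraic inclusion $M_{H'}^\circ\subseteq M_H^\circ$ into real-Lie-theoretic conjugacy of the associated circles; this relies essentially on reductivity of $M_H^\circ$, a feature specific to the pure polarized setting.
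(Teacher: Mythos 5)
The paper gives no proof of this proposition --- a footnote defers it (together with the preceding proposition) to forthcoming work of the first author with Griffiths and Green --- so your argument must be judged on its own terms, and there is a genuine gap in the classification step. The converse you assert, namely that every $F'^{\bullet}\in D$ with $M_{H'}^{\circ}\subseteq M_{H}^{\circ}$ lies in the single orbit $M_{H}^{\circ}(\RR)\cdot F^{\bullet}$, is false in general: the locus $\{F'^{\bullet}\in D\,:\,M_{H'}^{\circ}\subseteq M_{H}^{\circ}\}$ (the Noether--Lefschetz locus of $M_{H}$) is a union of $M_{H}^{\circ}(\RR)$-orbits of which $D_{H}$ is only one, and in general there are several. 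The cleanest counterexample is the CM case: if $M_{H}$ is a torus $T$, then $M_{H}^{\circ}(\RR)$ is abelian, commutes with $\varphi_{H}(S(\CC))$, hence preserves each $H^{p,q}$, so $D_{H}=\{F^{\bullet}\}$ is a single point; yet there are in general several distinct Hodge structures in $D$ with Hodge group contained in $T$ (arising from different CM types). Correspondingly, the precise step that fails is the claim that $\varphi_{H}(U(\RR))$ and $\varphi_{H'}(U(\RR))$ "induce identical weight decompositions on every tensor representation": equality of the sets of rational Hodge $(p,p)$-tensors says nothing about the finer eigenspace decompositions of the two circles on the individual $T^{m,n}V$, and in the abelian case the conjugacy you want would force $F^{\bullet}=F'^{\bullet}$ outright. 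The word "classifies" in the proposition must therefore be read one-directionally (every point of $D_{H}$ has Hodge group contained in $M_{H}$, with equality at the generic point), not as a bijection with the full locus; a proof of the bijective version cannot be repaired.

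Two smaller points. Your field-of-definition argument asserts the conclusion via an unspecified "descent argument"; the actual mechanism is that the induced weight-$0$ Hodge structure on $\mathrm{Lie}(M_{H}^{\circ})$ makes the stabilizer $M_{H,\varphi}^{\circ}$ (whose Lie algebra is the $F^{0}$ piece) a parabolic subgroup of $M_{H,\CC}^{\circ}$, so that $\check{D}_{H}$ is a flag variety of the $\QQ$-group $M_{H}^{\circ}$ and is defined over the finite extension of $\QQ$ over which the conjugacy class of that parabolic is defined (this also shows $\check{D}_{H}$ is closed, not merely locally closed, in $\check{D}$). In the density step, a countable union of translates is not constructible, so that sentence does not parse as written; the intended argument is simply that $GL(H_{\QQ},Q)$ is dense in $GL(H_{\RR},Q)$ (real approximation for the connected $\QQ$-rational group $GL(H,Q)$), which acts transitively on $D$. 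The forward inclusion $M_{H}^{\circ}(\RR)\cdot F^{\bullet}\subseteq\{F'^{\bullet}:M_{H'}^{\circ}\subseteq M_{H}^{\circ}\}$ and its reformulation in terms of Hodge tensors are correct.
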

A similar definition works for certain kinds of MHS. The trouble with
applying this in the variational setting (which is our main concern
here), is that the {}``tautological VHS'' (or VMHS) over such domains
(outside of a few classical cases in low weight or level) violate
Griffiths transversality hence are not actually VHS. Still, it can
happen that MT domains in non-Hermitian symmetric period domains are
themselves Hermitian symmetric. For instance, taking $Sym^{3}$ of
HS's embeds the classifying space ($\cong\mathfrak{H}$) of (polarized)
weight $1$ HS with Hodge numbers (1,1) into that for weight 3 HS
with Hodge numbers (1,1,1,1).

\subsection{MT groups in the variational setting}

Let $S$ be a smooth quasi-projective variety with good compactification
$\bar{S}$, and $\V\in VMHS(S)_{\bar{S}}^{\text{ad}}$; assume $\V$
is graded-polarized, which means we have $Q\in$\linebreak  $\oplus_{i}Hom_{\text{VMHS}(S)}\left((Gr_{i}^{W}\V)^{\otimes2},\QQ(-i)\right)$
satisfying the usual positivity conditions. The Hodge flag embeds
the universal cover $\hat{S}(\twoheadrightarrow S)$ in a flag variety;
let the \emph{image-point} of $\hat{s}_{0}(\mapsto s_{0})$ be of
maximal transcendence degree. (One might say $s_{0}\in S(\CC)$ is
a {}``very general point in the sense of Hodge''; we are $not$
saying $s_{0}$ is of maximal transcendence degree.) Parallel translation
along the local system $\VV$ gives rise to the monodromy representation
$\rho:\,\pi_{1}(S,s_{0})\to GL(V_{s_{0},\QQ},W_{\bullet},Q)$. Moreover,
taking as basis for $V_{s,\QQ}$ the parallel translate of one for
$V_{s_{0},\QQ}$, $M_{V_{s}}$ is constant on paths (from $s_{0})$
avoiding a countable union $T$ of proper analytic subvarieties of
$S$, where in fact $S^{\circ}:=S\backslash T$ is pathwise connected.
(At points $t\in T$, $M_{V_{t}}\subset M_{V_{s}}$; and even the
MT group of the LMHS $\psi_{\underline{s}}\V$ at $x\in\bar{S}\backslash S$
naturally includes in $M_{V_{s}}$.)
\begin{defn}
(i) $M_{V_{s_{0}}}^{(\circ)}=:M_{\V}^{(\circ)}$ is called the \emph{MT
(Hodge) group of} $\V$. One has $End_{\text{MHS}}(V_{s_{0}})\cong End_{\text{VMHS}(S)}(\V)$,
cf. \cite{PS2}.

(ii) The identity connected component $\Pi_{\V}$ of the $\QQ$-closure
of $\rho(\pi_{1}(S,s_{0}))$ is the geometric monodromy group of $\V$;
it is invariant under finite covers $\tilde{S}\twoheadrightarrow S$
(and semisimple in the split case). \end{defn}
\begin{prop}
(Andr\'e) $\Pi_{\V}\trianglelefteq DM_{\V}$.\end{prop}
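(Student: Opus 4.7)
The plan is to prove the proposition in three steps: (a) the containment $\Pi_{\V} \subseteq M_{\V}^{\circ}$; (b) normality $\Pi_{\V} \trianglelefteq M_{\V}$; and (c) the refined containment $\Pi_{\V} \subseteq DM_{\V}$. Since $DM_{\V} \subseteq M_{\V}$, items (b) and (c) together yield the stated $\Pi_{\V} \trianglelefteq DM_{\V}$.

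For step (a), fix any Hodge tensor $\tau \in (T^{m,n} V_{s_{0}})_{\QQ}^{(p,p)}$. Because $s_{0}$ was chosen so that its image in the Hodge flag variety has maximal transcendence degree, the Hodge locus of the multivalued flat extension of $\tau$ must be all of $S$: by Cattani--Deligne--Kaplan (Theorem \ref{thm CDK}) this locus is an algebraic subvariety of $S$, so if it were proper it would impose a nontrivial algebraic condition on the periods of $s_{0}$, contradicting maximality of the transcendence degree. Hence $\tau$ is fixed by a finite-index subgroup of $\pi_{1}(S,s_{0})$. Since $\Pi_{\V}$ is the identity component of the $\QQ$-closure of $\rho(\pi_{1})$, it is unaffected by passage to finite-index subgroups, and so fixes every Hodge $(p,p)$-tensor in every $T^{m,n}V_{s_{0}}$. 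This is the defining property of $M_{\V}^{\circ}$.

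For step (b), the theorem of the fixed part (Schmid, Deligne, extended to admissible VMHS by Kashiwara and Steenbrink--Zucker) says that for each $m,n$ the space of monodromy-invariants $H^{0}(S,T^{m,n}\VV_{\QQ})\hookrightarrow T^{m,n}V_{s_{0}}$ carries a sub-MHS of the fiber. Sub-MHS of a tensor construction are precisely the $M_{\V}$-stable $\QQ$-subspaces, so $M_{\V}$ preserves these invariant spaces set-wise. On the other hand, by Deligne's semisimplicity theorem the $\QQ$-Zariski closure of $\rho(\pi_{1})$ is reductive on each $\mathit{Gr}^{W}$; Chevalley's tensor-invariant theorem then identifies $\Pi_{\V}$ (as a connected reductive $\QQ$-subgroup of $GL(V_{s_{0}})$) with the identity component of the fixer of its $\pi_{1}$-invariant tensor collection. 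It follows that $M_{\V}$ normalises $\Pi_{\V}$.

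For step (c), the abelianization $M_{\V}/DM_{\V}$ is a $\QQ$-torus, and its representations appear inside the $T^{m,n}V_{s_{0}}$ as direct sums of CM sub-MHS. The key input is the rigidity of CM-Hodge structures: the MT domain of a CM-HS is a point, so any CM sub-VMHS of $\V$ becomes constant on a finite \'etale cover of $S$. Consequently the composition $\pi_{1}(S,s_{0})\to M_{\V}(\QQ)\to (M_{\V}/DM_{\V})(\QQ)$ has finite image, and the connected group $\Pi_{\V}$ lies in its kernel $DM_{\V}$. Combined with (b), this gives $\Pi_{\V}\trianglelefteq DM_{\V}$, cf. \cite{An}. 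I expect the hardest part to be step (a): making the identification \emph{maximal transcendence degree of the period point} $\Leftrightarrow$ \emph{every Hodge locus through $s_{0}$ is all of $S$} fully precise in the admissible \emph{mixed} setting requires the mixed analogue of Cattani--Deligne--Kaplan and attention to which tensor constructions produce algebraic (as opposed to merely analytic) Hodge loci.
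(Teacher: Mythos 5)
Your three-step skeleton (containment in $M_{\V}^{\circ}$, normality via the fixed-part theorem plus Chevalley, killing the abelianization) is exactly the paper's, but step (a) as written has a genuine gap. Even granting that the flat multivalued translate of a Hodge tensor $\tau$ remains a Hodge tensor everywhere, you have only shown that the monodromy orbit $\{\rho(\gamma)\tau\}$ consists of Hodge $(p,p)$-tensors at $s_{0}$; the conclusion ``fixed by a finite-index subgroup'' requires this orbit to be \emph{finite}, and that is precisely where the (graded-)polarization must enter: the orbit lies in a lattice, the value $Q(\tau,\tau)$ is constant along it, and the induced form on the rational $(p,p)$-classes of the relevant graded piece of $T^{m,n}V_{s_{0}}$ is definite, so the orbit is a bounded set of lattice points. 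You omit this entirely. Moreover the appeal to Cattani--Deligne--Kaplan is both unnecessary and, as you yourself note, unavailable in the mixed setting: the paper's route uses only the choice of $s_{0}$ --- $M_{V_{s}}$ is constant along paths avoiding the countable union $T$ of proper analytic subvarieties, and since $T$ has real codimension at least $2$ every loop can be pushed into $S\setminus T$ --- so monodromy automatically permutes the Hodge tensors, with no algebraicity of Hodge loci needed. Relatedly, in (b) Deligne semisimplicity only makes the image of $\Pi_{\V}$ on $\oplus_{i}Gr_{i}^{W}\VV$ reductive; in the mixed case $\Pi_{\V}$ itself can have a nontrivial unipotent radical, so ``a connected reductive group is the identity component of the fixer of its invariant tensors'' does not apply to $\Pi_{\V}$ verbatim (the paper's sketch is admittedly equally brisk on this point).

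Step (c), by contrast, is a legitimately different route from the paper's. The paper identifies, on each CM-irreducible piece of a faithful representation of $M_{\V}^{\mathrm{ab}}$, the integer points of the image with units $\mathcal{O}_{L}^{*}$ of a CM field $L$ and the $\QQ$-points of the Hodge-group part with norm-one elements, then quotes Kronecker's theorem that norm-one units are roots of unity. You instead use rigidity of CM-(M)HS: the period maps of the CM subquotient variations are constant, so their monodromy lands in the automorphism group of a graded-polarized lattice MHS, which is finite by the usual compactness argument. Both yield finiteness of the abelianized monodromy and hence triviality of the connected group $\Pi_{\V}^{\mathrm{ab}}$; yours trades the algebraic number theory for Hodge-theoretic rigidity. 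Two small repairs are needed: the composite $\pi_{1}(S,s_{0})\to M_{\V}(\QQ)$ is only defined after passing to the finite-index subgroup $\rho^{-1}(\Pi_{\V}(\QQ))$, since $\rho(\pi_{1})$ need not lie in $M_{\V}(\QQ)$ (harmless for the conclusion); and the CM representations of $M_{\V}^{\mathrm{ab}}$ occur as sub\emph{quotients} of the tensor spaces rather than as sub-MHS, so the constancy argument should be run on subquotient variations.
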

\begin{proof}
(Sketch) By a theorem of Chevalley, any closed $\QQ$-algebraic subgroup
of $GL(V_{s_{0}})$ is the stabilizer, for some multitensor $\mathfrak{t}\in\oplus_{i}T^{m_{i},n_{i}}(V_{s_{0},\QQ})$
of $\QQ\left\langle \mathfrak{t}\right\rangle $. For $M_{\V}$, we
can arrange this $\mathfrak{t}_{\V}$ to be $itself$ fixed and lie
in $\oplus_{i}\left(T^{m_{i},n_{i}}(V_{s_{0}})\right)_{\QQ}^{(0,0)}$.
By genericity of $s_{0}$, $\QQ\left\langle \mathfrak{t}_{\V}\right\rangle $
extends to a subVMHS with (again by $\exists$ of $Q$) finite monodromy
group, and so $\mathfrak{t}_{\V}$ is fixed by $\Pi_{\V}$. This proves
$\Pi_{\V}\subset M_{\V}$ (in fact, $\subset M_{\V}^{\circ}$ since
monodromy preserves $Q$). Normality of this inclusion then follows
from the {}``Theorem of the Fixed Part'': the largest constant sublocal
system of any $T^{m,n}(\VV)$ (stuff fixed by $\Pi_{\V}$) is a subVMHS,
hence subMHS at $s_{0}$ and stable under $M_{\V}$.

Now let $M_{\V}^{\text{ab}}:=\frac{M_{\V}}{DM_{\V}}$, $\Pi_{\V}^{\text{ab}}:=\frac{\Pi_{\V}}{\Pi_{\V}\cap DM_{\V}}\subset M_{\V}^{\text{ab}}$
(which is a connected component of the $\QQ$-closure of some $\pi^{\text{ab}}\subset M_{\V}^{\text{ab},\circ}(\ZZ)$),
and (taking a more exotic route than Andr\'e) $V^{\text{ab}}$ be
the (CM)MHS corresponding to a faithful representation of $M_{\V}^{\text{ab}}$.
For each irreducible $H\subset V^{\text{ab}}$, the image $\overline{M_{\V}^{\text{ab}}}$
has integer points $\cong\mathcal{O}_{L}^{*}$ for some CM field $L$,
and $\overline{M_{\V}^{\text{ab},\circ}}(\QQ)\subset L$ consists
of elements of norm $1$ under any embedding. The latter generate
$L$ (a well-known fact for CM fields) but, by a theorem of Kronecker,
have finite intersection with $\mathcal{O}_{L}^{*}$: the roots of
unity. It easily follows from this that $\overline{\Pi_{\V}^{\text{ab}}}$,
hence $\Pi_{\V}^{\text{ab}}$, is trivial.\end{proof}
\begin{defn}
\label{def MT1}Let $x\in\bar{S}$ with neighborhood $(\Delta^{*})^{k}\times\Delta^{n-k}$
in $S$ and local (commuting) monodromy logarithms $\{N_{i}\}$;%
\footnote{Though this has been suppressed so far throughout this paper, one
has $\{N_{i}\}$ and LMHS even in the general case where the local
monodromies $T_{i}$ are only quasi-unipotent, by writing $T_{i}=:(T_{i})_{ss}(T_{i})_{u}$
uniquely as a product of semisimple and unipotent parts (Jordan decomposition)
and setting $N_{i}:=\log((T_{i})_{u})$.%
} define the weight monodromy filtration $M_{\bullet}^{x}:=M(N,W)_{\bullet}$
where $N:=\sum_{i=1}^{k}N_{i}$. In the following we assume a choice
of path from $s_{0}$ to $x$:

(a) Write $\pi_{\V}^{x}$ for the \emph{local monodromy group} in
$GL(V_{s_{0},\ZZ},W_{\bullet},Q)$ generated by the $T_{i}=(T_{i})_{ss}e^{N_{i}}$,
and $\rho^{x}$ for the corresponding representation.

(b) We say that $\V$ is \emph{nonsingular at $x$} if $V_{s_{0}}\cong\oplus_{j}Gr_{j}^{W}V_{s_{0}}$
as $\rho^{x}$-modules. In this case, the condition that $\psi_{\underline{s}}\V\cong\oplus_{j}\psi_{\underline{s}}Gr_{j}^{W}\V$
is independent of the choice of local coordinates $(s_{1},\ldots,s_{n})$
at $x$, and $\V$ is called \emph{semi-split (nonsingular)} at $x$
when this is satisfied.

(c) The $Gr_{i}^{M^{x}}\psi_{\underline{s}}\V$ are always independent
of $\underline{s}$. We say that $\V$ is \emph{totally degenerate
(TD)} at $x$ if these $Gr_{i}^{M}$ are (pure) Tate and \emph{strongly
degenerate (SD)} at $x$ if they are CM-HS. Note that the SD condition
is interesting already for the non-boundary points ($x\in S,\, k=0$).
\end{defn}
We can now generalize results of Andr\'e \cite{An} and Mustafin
\cite{Ms}.
\begin{thm}
\label{thm MT1}If $\V$ is semi-split TD (resp. SD) at a point $x\in\bar{S}$,
then $\Pi_{\V}=M_{\V}^{\circ}$ (resp. $DM_{V}^{\circ}$). \end{thm}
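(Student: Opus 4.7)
The plan is to bracket $\Pi_\V$ between Andr\'e's upper bound $\Pi_\V\trianglelefteq DM_\V$ (recalled just above) and a lower bound obtained by realizing enough of $\mathrm{Lie}(M_\V^\circ)$ inside $\mathrm{Lie}(\Pi_\V)$ using the local monodromy at $x$. From $\Pi_\V\trianglelefteq DM_\V\subseteq M_\V^\circ$ together with the connectedness of $\Pi_\V$, one already has $\Pi_\V\subseteq DM_\V^\circ$, which is the upper half of the SD assertion. For the TD case, I would additionally identify $DM_\V^\circ$ with $M_\V^\circ$: the Levi-quotient torus of $M_\V^\circ$ is controlled by CM-subquotients of $V^{\mathrm{ab}}$, and any such subquotient would limit at $x$ to a non-Tate CM summand of some $Gr^{M^x}\psi_{\underline{s}}\V$, contradicting the TD hypothesis.

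For the lower bound, after passing to a finite \'etale cover of $S$ (which does not change $\Pi_\V$) we may assume $T_i=\exp(N_i)$ is unipotent. By Cattani--Kaplan--Schmid each $N_i$ is a $(-1,-1)$-morphism of the LMHS $\psi_{\underline{s}}\V$ and hence lies in $\mathrm{Lie}(M_{\psi_{\underline{s}}\V}^\circ)_\QQ$; the inclusion $M_{\psi_{\underline{s}}\V}\hookrightarrow M_\V$ noted in \S6.3 then places $N_i\in\mathrm{Lie}(M_\V^\circ)_\QQ$. Since $\{\exp(tN_i)\}_{t\in\QQ}\subseteq\Pi_\V$ by construction, $N_i\in\mathrm{Lie}(\Pi_\V)$, and Andr\'e's normality $\Pi_\V\trianglelefteq M_\V$ promotes this to $\mathrm{Ad}(M_\V^\circ).N_i\subseteq\mathrm{Lie}(\Pi_\V)$ for each $i$.

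The remaining step is to show that these $\mathrm{Ad}$-orbits of $N_1,\ldots,N_k$ Zariski-generate $\mathfrak{g}:=\mathrm{Lie}(DM_\V^\circ)$ as a $\QQ$-Lie algebra. Semi-splitness reduces the problem, piece-by-piece along $Gr^W_\bullet\V$, to the case of a pure polarized VHS whose LMHS has mixed Tate (resp. CM) monodromy-weight gradeds. There, Jacobson--Morozov together with the $SL_2$-orbit theorem completes each $N_i$ to a Hodge-theoretic $sl_2$-triple $(N_i,H_i,N_i^+)$ sitting inside $\mathfrak{g}$; the weight decomposition of $\mathfrak{g}$ under the commuting operators $\mathrm{ad}(H_i)$, combined with the CM/Tate constraint on $Gr^{M^x}\psi\V$, forces any $\mathrm{Ad}$-stable $\QQ$-subalgebra of $\mathfrak{g}$ that contains all $N_i$ to exhaust $\mathfrak{g}$. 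The main obstacle is precisely this generation step: it is the mixed-Hodge analogue of the pure-VHS arguments of Mustafin and Andr\'e, and its verification requires a careful simple-factor-by-simple-factor representation-theoretic analysis, ensuring that admissibility together with semi-splitness allow the pure arguments to be transported across the associated graded without the nilpotent orbit $\{N_i\}$ losing rank. Once this is in hand, the SD conclusion $\Pi_\V=DM_\V^\circ$ is immediate, and the TD refinement $\Pi_\V=M_\V^\circ$ follows by combining it with the triviality of the Levi-quotient torus established in the first paragraph.
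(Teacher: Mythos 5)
Your upper bound $\Pi_{\V}\subseteq DM_{\V}^{\circ}$ is fine, but the lower-bound strategy cannot work, and the ``generation step'' you flag as the main obstacle is not a technical verification to be completed later --- it is the wrong mechanism. The theorem allows $x\in S$ (the paper explicitly notes that the SD condition is already interesting at non-boundary points, where $k=0$): there are then no local monodromy logarithms at all, so the subalgebra of $\mathrm{Lie}(\Pi_{\V})$ you build from the $N_{i}$ and their $\mathrm{Ad}(M_{\V}^{\circ})$-orbits is zero, yet the conclusion $\Pi_{\V}=DM_{\V}^{\circ}$ must still hold (this is exactly Andr\'e's case of a CM fibre). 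Even at a boundary point, the local monodromy at a single $x$ need not see more than a small piece of the global monodromy group --- $\Pi_{\V}$ is a global invariant, and $\pi_{\V}^{x}$ can be trivial while $\Pi_{\V}$ is large --- so no amount of $sl_{2}$-theory or $\mathrm{Ad}$-orbit analysis at $x$ can supply the missing input. The hypotheses at $x$ are Hodge-theoretic degeneracy conditions on the LMHS, not largeness conditions on local monodromy, and they must be exploited as such.

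The actual argument runs in the opposite direction. By Chevalley it suffices to show that every $\Pi_{\V}$-invariant tensor $\mathfrak{t}\in T^{m,n}V_{s_{0},\QQ}$ is fixed by $M_{\V}^{\circ}$ (resp. $DM_{\V}^{\circ}$). Since $\Pi_{\V}\trianglelefteq M_{\V}^{\circ}$, the span of $M_{\V}^{\circ}\cdot\mathfrak{t}$ is monodromy-invariant, and the Theorem of the Fixed Part extends it to a \emph{constant} sub-VMHS $\mathcal{U}\subset T^{m,n}\V$ --- this is where the global input enters, replacing your attempted generation step. The point $x$ is then used only to pin down the Hodge type of $\mathcal{U}$: semi-splitness forces $\mathcal{U}$ to split as a VMHS, and the TD (resp. SD) hypothesis forces it to be split Hodge--Tate (resp. CM). In the first case $\mathcal{U}$ consists of Hodge tensors, so $M_{\V}^{\circ}$ fixes $\mathfrak{t}$; in the second, $M_{\V}^{\circ}$ acts on $\mathcal{U}$ through an abelian quotient, so $DM_{\V}^{\circ}$ fixes $\mathfrak{t}$. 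Your first-paragraph observation about the abelianization in the TD case is in the right spirit, but it is subsumed by this tensor-by-tensor argument.
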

\begin{rem}
Note that semi-split SD at $x\in S$ simply means that $V_{x}$ is
a CM-MHS (this case is done in \cite{An}). Also, if $\Pi_{\V}=M_{\V}^{\circ}$
then in fact $\Pi_{\V}=DM_{\V}^{\circ}=M_{\V}^{\circ}$.\end{rem}
\begin{proof}
Passing to a finite cover to identify $\Pi_{\V}$ and $\overline{\rho(\pi_{1})}$,
if we can show that any invariant tensor $\mathfrak{t}\in\left(T^{m,n}V_{s_{0},\QQ}\right)^{\Pi_{\V}}$
is also fixed by $M_{\V}^{\circ}$ (resp. $DM_{\V}^{\circ}$), we
are done by Chevalley. Now the span of $M_{\V}^{\circ}\mathfrak{t}$
is (since $\Pi_{\V}\trianglelefteq M_{\V}^{\circ}$) fixed by $\rho(\pi_{1})$,
and (using the Theorem of the Fixed Part) extends to a constant subVMHS
$\mathcal{U}\subset T^{m,n}\V=:\mathcal{T}$. Now the hypotheses on
$\V$ carry over to $\T$ and taking LMHS at $x$, $\mathcal{U}=\psi_{\underline{s}}\mathcal{U}=\oplus_{i}\psi_{\underline{s}}Gr_{i}^{W}\mathcal{U}=\oplus_{i}Gr_{i}^{W}\mathcal{U}$,
we see that $\mathcal{U}$ splits (as VMHS). As $\mathcal{T}$ is
TD (resp. SD) at $x$, $\mathcal{U}$ is split Hodge-Tate (resp. CM-MHS).

If $\mathcal{U}$ is H-T then it consists of Hodge tensors; so $M_{\V}^{\circ}$
acts trivially on $\mathcal{U}$ hence on $\mathfrak{t}$.

If $\mathcal{U}$ is CM then $M_{\V}^{\circ}|_{\mathcal{U}}=M_{\mathcal{U}}^{\circ}$
is abelian; and so the action of $M_{\V}^{\circ}$ on $\mathcal{U}$
factors through $M_{\V}^{\circ}/DM_{\V}^{\circ}$, so that $DM_{\V}^{\circ}$
fixes $\mathfrak{t}$.
\end{proof}
A reason why one would want this {}``maximality'' result $\Pi_{\V}=M_{\V}^{\circ}$
is to satisfy the hypothesis of the following interpretation of Theorem
\eqref{thm ChVOgeneralization} (which was a partial generalization
of results of \cite{Vo1} and \cite{Ch}). Recall that a VMHS $\V/S$
is $k$-motivated if there is a family $\mathcal{X}\to S$ of quasiprojective
varieties defined $/k$ with $V_{s}=$ the canonical (Deligne) MHS
on $H^{r}(X_{s})$ for each $s\in S$.
\begin{prop}
Suppose $\V$ is motivated over $k$ with trivial fixed part, and
let $T_{0}\subset S$ be a connected component of the locus where
$M_{V_{s}}^{\circ}$ fixes some vector (in $V_{s}$). If $T_{0}$
is algebraic (over $\CC$), $M_{\V_{T_{0}}}^{\circ}$ has \emph{only}
one fixed line, and $\Pi_{\V_{T_{0}}}=M_{\V_{T_{0}}}^{\circ}$, then
$T_{0}$ is defined over $\bar{k}$.
\end{prop}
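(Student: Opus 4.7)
The plan is to adapt the Galois-theoretic template used to prove Theorem \ref{thm ChVOgeneralization} to the present hypothesis-set.

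First I would construct a distinguished line of flat Hodge classes on $T_0$. Since $\Pi_{\V_{T_0}} \trianglelefteq M_{\V_{T_0}}^{\circ}$, invariants under the smaller (monodromy) group contain invariants under the larger (Mumford--Tate) group; the equality $\Pi_{\V_{T_0}} = M_{\V_{T_0}}^{\circ}$ combined with the ``only one fixed line'' hypothesis therefore forces the $\pi_1(T_0)$-invariant subspace of a generic fiber to be exactly one-dimensional, spanned by some $\alpha$. By the Theorem of the Fixed Part, $\alpha$ underlies a constant sub-VMHS of $\V|_{T_0}$ and so may be regarded as a flat de Rham class of type $(0,0)$ in an appropriate tensor power of the canonical extension.

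Next I apply Galois conjugation. Because $\V$ is motivated over $k$, it is cut from the cohomology of a smooth projective $k$-morphism $f:\X\to S$, and the algebraicity of the Gauss--Manin connection makes the formation of flat de Rham classes compatible with the action of $\mathrm{Gal}(\CC/k)$. For any $\sigma \in \mathrm{Gal}(\CC/k)$, conjugation sends the pair $(T_0,\alpha)$ to a pair $(T_0^\sigma,\alpha^\sigma)$ in which $T_0^\sigma\subset S$ is again irreducible algebraic (by hypothesis on $T_0$) and $\alpha^\sigma$ is a flat de Rham class on $T_0^\sigma$. If $T_0^\sigma$ carried a second, linearly independent flat de Rham class, applying $\sigma^{-1}$ would contradict the one-dimensionality over $T_0$; so the flat de Rham sections on $T_0^\sigma$ again span a line, and $\alpha^\sigma=\lambda_\sigma\beta_\sigma$ for some $\lambda_\sigma\in\CC^{*}$ and a $\pi_1(T_0^\sigma)$-invariant rational Betti class $\beta_\sigma$ that is unique up to scale.

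To bound the Galois orbit of $T_0$ I use the polarization $Q$. Since $Q$ is $\mathrm{Gal}(\CC/k)$-equivariant and takes rational values on Hodge classes, $Q(\alpha,\alpha)=Q(\alpha^\sigma,\alpha^\sigma)=\lambda_\sigma^{2}Q(\beta_\sigma,\beta_\sigma)$ with $Q(\beta_\sigma,\beta_\sigma)\in\QQ$, so $\lambda_\sigma^{2}$ is confined to a countable set; and since each $\beta_\sigma$ lies in the countable $\QQ$-lattice of $\pi_1(T_0^\sigma)$-invariant Hodge classes in the relevant tensor power, only countably many distinct pairs $(T_0^\sigma,\alpha^\sigma)$ arise. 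In particular $\{T_0^\sigma\mid\sigma\in\mathrm{Gal}(\CC/k)\}$ is countable. But an irreducible complex algebraic subvariety of $S$ whose minimal field of definition $L\supset k$ has positive transcendence degree has uncountably many Galois conjugates (vary the transcendental parameters in $L$); hence $L/k$ is algebraic, and $T_0$ is defined over $\bar{k}$.

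The main obstacle will be the conjugation step: one must spread $(T_0,\alpha)$ over a finitely generated extension $k'/k$, transport that spread by $\sigma$, and verify that $\alpha^\sigma$ remains a flat de Rham class of Hodge type on $T_0^\sigma$. This is precisely where motivicity is essential, via the algebraic comparison between Gauss--Manin and the Hodge filtration that lets the Hodge type survive conjugation. The assumption of trivial fixed part for $\V$ on $S$ enters only to rule out the trivial case $T_0=S$ (where the conclusion is automatic); the substantive work lies in leveraging $\Pi_{\V_{T_0}} = M_{\V_{T_0}}^{\circ}$ together with uniqueness of the fixed line to rigidify the whole Galois action on the pair $(T_0,\alpha)$.
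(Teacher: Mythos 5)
Your argument is correct and is essentially the paper's: the Proposition is stated there precisely as an interpretation of Theorem \ref{thm ChVOgeneralization}, whose proof is exactly the template you reproduce (flat de Rham classes via the Theorem of the Fixed Part, Galois conjugation via algebraicity of the Gauss--Manin connection, uniqueness of the fixed line to control $\alpha^{\sigma}$, and the polarization identity $Q(\alpha,\alpha)=\lambda^{2}Q(\beta,\beta)$ to force countability of the conjugates). Your reduction of the hypothesis $\Pi_{\V_{T_{0}}}=M_{\V_{T_{0}}}^{\circ}$ plus the unique fixed line to condition (b) of that theorem is exactly the intended content.
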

Of course, to be able to use this one also needs a result on algebraicity
of $T_{0}$, i.e. a generalization of the theorems of \cite{CDK}
and \cite{BP3} to arbitrary VMHS.

\subsection{MT groups of (higher) normal functions}

We now specialize to the case where $\V\in NF^{r}(S,\H)_{\bar{S}}^{\text{ad}}$,
with $\H\to S$ the underlying VHS of weight $-r$. $M_{\V}^{\circ}$
is then an extension of $M_{\H}^{\circ}\cong M_{\V^{\text{split}}(=\H\oplus\QQ_{S}(0))}^{\circ}$
by (in fact, semi-direct product with) an additive (unipotent) group
$U:=W_{-r}M_{\V}^{\circ}\cong\GG_{a}^{\times\mu}$, with $\mu\leq\text{rank}\HH$.
Since $M_{\V}^{\circ}$ respects weights, there is a natural map $M_{\V}^{\circ}\overset{\eta}{\twoheadrightarrow}M_{\H}^{\circ}$
and one might ask when this is an isomorphism.
\begin{prop}
\label{prop MT1}$\mu=0$ $\Longleftrightarrow$ $\V$ is torsion.\end{prop}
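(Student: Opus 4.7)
The plan is to transfer both directions of the equivalence to a statement about the fiber MHS $V_{s_0}$ at a Hodge-generic basepoint $s_0 \in S$, using the identification $NF^r(S,\H)_{\bar{S}}^{ad} \otimes \QQ \cong \mathrm{Ext}^1$ in the category of $\bar{S}$-admissible $\QQ$-VMHS (cf.\ \S2.12) together with the Theorem of the Fixed Part. These combine to give that $\V$ is torsion if and only if $\V \otimes \QQ$ is split as a $\QQ$-VMHS, equivalently $V_{s_0}$ is split as a $\QQ$-MHS, i.e.\ $V_{s_0} \cong H_{s_0} \oplus \QQ(0)$.

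Next I would pin down $U = W_{-r}M_\V^\circ$ explicitly. Since the weight filtration on $V_{s_0}$ jumps only at $-r$ and $0$, one has $W_{-j}V_{s_0} = H_{s_0}$ for every $1 \leq j \leq r$. Consequently, the defining relation $(g - \mathrm{id}) W_\bullet V_{s_0} \subseteq W_{\bullet - j} V_{s_0}$ for $g \in W_{-j}M_{V_{s_0}}^\circ$ collapses, independently of $j$, to the two conditions $g|_{H_{s_0}} = \mathrm{id}$ and $(g - \mathrm{id}) V_{s_0} \subseteq H_{s_0}$. Hence $W_{-r}M_\V^\circ = W_{-1}M_\V^\circ$, which by the general structural properties of MT groups of MHS recalled in \S6.2 is the unipotent radical of $M_\V^\circ$. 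Thus $\mu = 0$ if and only if $M_\V^\circ$ is reductive.

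The two directions then follow quickly. For $(\Leftarrow)$, when $V_{s_0}$ is $\QQ$-split, the splitting $\QQ(0) \hookrightarrow V_{s_0}$ is a rational $(0,0)$-tensor fixed by $M_{V_{s_0}}$; this forces $M_\V^\circ$ to embed in the reductive group $M_{H_{s_0}}^\circ \times \GG_m$, and so $U = 0$. For $(\Rightarrow)$, reductivity of $M_\V^\circ$ (equivalently of $M_\V$, the $\GG_m$-quotient being central) implies, via Tannakian formalism, that the category of $\QQ$-MHS generated by $V_{s_0}$ under tensor, dual, sub and quotient is semisimple, so the sub-MHS inclusion $H_{s_0} \hookrightarrow V_{s_0}$ splits as a morphism of $\QQ$-MHS and yields $V_{s_0} \cong H_{s_0} \oplus L$ with $L \xrightarrow{\sim} \QQ(0)$.

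The main obstacle is the descent from a fiberwise $\QQ$-splitting of $V_{s_0}$ at a single point to a global splitting of $\V \otimes \QQ$ as admissible VMHS (and hence to torsion of $\V$), since the splitting at $s_0$ is a priori not monodromy-invariant. This is resolved by choosing $s_0$ very general in the Hodge sense: by algebraicity of Hodge loci \cite{CDK}, any Hodge class at such an $s_0$ extends (possibly after a finite \'etale cover of $S$) to a $\pi_1$-invariant global section of $\VV_\QQ$ that underlies a sub-VMHS, globalising the splitting.
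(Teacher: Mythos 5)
Your proof is correct in its core and takes a genuinely different route from the paper's. The paper argues by brute force: it identifies $\mathrm{Hom}_{\text{VMHS}}(\QQ_S(0),\V)$ with the $M_{\V}^{\circ}$-invariants of $\mathrm{Hom}_{\QQ}(V_{s_0}/H_{s_0},V_{s_0})$, writes elements of $M_{\V}^{\circ}$ as block matrices $\left(\begin{smallmatrix}1 & 0\\ \underline{u} & M\end{smallmatrix}\right)$, and observes that an invariant vector $\underline{w}$ with $(\mathrm{id}-M)\underline{w}=\underline{u}$ can exist only if $\underline{u}$ is a function of $M$ (i.e.\ $\eta:M_{\V}^{\circ}\to M_{\H}^{\circ}$ is injective, i.e.\ $\mu=0$), and conversely constructs $\underline{w}$ from the crossed-homomorphism identity satisfied by $\tilde{\eta}^{-1}$. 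You instead recognize $U=W_{-r}M_{\V}^{\circ}=W_{-1}M_{\V}^{\circ}$ as the unipotent radical, so that $\mu=0$ is equivalent to reductivity of $M_{\V}^{\circ}$, and then invoke semisimplicity of $\mathrm{Rep}$ of a reductive group to split $H_{s_0}\hookrightarrow V_{s_0}$. This is cleaner conceptually and explains \emph{why} the paper's cocycle is a coboundary (vanishing of $H^1$ of a reductive group in a vector representation); the paper's computation buys an explicit formula for the splitting vector and avoids any appeal to Tannakian machinery. Your observation that $W_{-r}M_{\V}^{\circ}=W_{-1}M_{\V}^{\circ}$ because the weight filtration of $\V$ has only the two jumps $0$ and $-r$ is correct and worth making explicit.

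The one step I would repair is the final descent paragraph. Invoking \cite{CDK} is both unnecessary and not directly licensed: Theorem \ref{thm CDK} concerns pure weight-zero VHS, whereas your splitting class lives in the mixed object $\mathrm{Hom}(\QQ(0),V_{s_0})\cong V_{s_0}$, and the extension of Cattani--Deligne--Kaplan to VMHS is exactly one of the nontrivial points this paper discusses. Moreover algebraicity of the Hodge locus is not by itself the statement you need (monodromy-invariance of the class up to finite index). But the descent is already immediate from what you quoted at the outset: the identification $End_{\text{MHS}}(V_{s_0})\cong End_{\text{VMHS}(S)}(\V)$ (equivalently, Andr\'e's theorem $\Pi_{\V}\trianglelefteq DM_{\V}\subseteq M_{\V}^{\circ}$, so that after a finite cover the monodromy lands in $M_{\V}^{\circ}(\QQ)$ and preserves any $M_{\V}^{\circ}$-module decomposition). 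The projector onto the complement of $H_{s_0}$ is an idempotent in $End_{\text{MHS}}(V_{s_0})$, hence extends to an idempotent of $\V$ as VMHS, whose image is the desired copy of $\QQ_{\tilde{S}}(0)$. With that substitution your argument is complete.
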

\begin{proof}
First we note that $\V$ torsion $\Longleftrightarrow$ after a finite
cover $\tilde{S}\twoheadrightarrow S$, \[
\{0\}\neq Hom_{\text{VMHS}(\tilde{S})}(\QQ_{S}(0),\V)=End_{\text{VMHS}(\tilde{S})}(\V)\cap\text{ann}(\H)=\]
\[
End_{\text{MHS}}(V_{s_{0}})\cap\text{ann}(H_{s_{0}})=\left(Hom_{\QQ}\left((V_{s_{0}}/H_{s_{0}}),V_{s_{0}}\right)\right)^{M_{\V}^{\circ}}.\]
 The last expression can be interpreted as vectors $\underline{w}\in H_{s_{0},\QQ}$
satisfying $(\text{id.}-M)\underline{w}=\underline{u}$ $\forall\left(\begin{array}{cc}
1 & 0\\
\underline{u} & M\end{array}\right)\in M_{\V}^{\circ}$. This is possible only if there is one $\underline{u}$ for each
$M$, i.e. if $M_{\V}^{\circ}\overset{\eta}{\to}M_{\H}^{\circ}$ is
an isomorphism. Conversely, assuming this, write $\underline{u}=\eta^{-1}(M)$
{[}noting $\tilde{\eta}^{-1}(M_{1}M_{2})\overset{(*)}{=}\tilde{\eta}^{-1}(M_{1})+M_{1}\tilde{\eta}^{-1}(M_{2})${]}
and set $\underline{w}:=\tilde{\eta}^{-1}(0)$. Taking $M_{2}=0$,
$M_{1}=M$ in $(*)$, we get $(\text{id.}-M)\underline{w}=(\text{id.}-M)\eta^{-1}(0)\overset{(*)}{=}\tilde{\eta}^{-1}(M)\,(=\underline{u})$
$\forall M\in M_{\H}^{\circ}$.
\end{proof}
We can now address the problem which lies at the heart of this section:
what can one say about the monodromy of the normal function above
and beyond that of the underlying VHS --- for example, about the kernel
of the natural map $\Pi_{\V}\overset{\Theta}{\twoheadrightarrow}\Pi_{\H}$?
One can make some headway simply by translating Definition \ref{def MT1}
and Theorem \ref{thm MT1} into the language of normal functions;
all vanishing conditions are $\otimes\QQ$.
\begin{prop}
\label{prop MT2}Let $\V$ be an admissible higher normal function
over $S$, and let $x\in\bar{S}$ with local coordinate system $\underline{s}$.

(i) $\V$ is non-singular (as AVMHS) at $x$ $\iff$ $sing_{x}(\V)=0$.
Assuming this, $\V$ is semi-simple at $x$ $\iff$ $lim_{x}(\V)=0$.
(In case $x\in S$, $sing_{x}(\V)=0$ is automatic and $lim_{x}(\V)=0$
$\iff$ $x\in$torsion locus of $\V$.)

(ii) $\V$ is TD (resp. SD) at $x$ $\iff$ the underlying VHS $\H$
is. (For $x\in S$, this just means that $H_{x}$ is CM.)

(iii) If $sing_{x}(\V)$, $lim_{x}(\V)$ vanish and $\psi_{\underline{s}}\H$
is graded CM, then $\Pi_{\V}=DM_{\V}$. (For $x\in S$, we are just
hypothesizing that the torsion locus of $\V$ contains a CM point
of $\H$.)

(iv) Let $x\in\bar{S}\backslash S$. If $sing_{x}(\V)$, $lim_{x}(\V)$
vanish and $\psi_{\underline{s}}\H$ is Hodge-Tate, then $\Pi_{\V}=M_{\V}^{\circ}$.

(v) Under the hypotheses of (iii) and (iv), $\dim(\ker(\Theta))=\mu$.
(In general one has $\leq$.)\end{prop}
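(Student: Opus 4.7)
The plan is to deduce (i) and (ii) directly from the definitions of $sing_x$, $lim_x$ (as formalized in $\S2.12$) and of TD/SD (Definition \ref{def MT1}), and then to obtain (iii)--(v) by feeding them into Theorem \ref{thm MT1} together with the chain $W_{-1}M_\V \subseteq DM_\V \subseteq M_\V^\circ$ recorded in $\S6.2$. For (i), I would represent $\V$ by a local extension $0 \to \HH_\QQ \to \VV_\QQ \to \QQ_S(0) \to 0$ and choose a lift $e_\QQ$ of $1$: then $sing_x(\V)$ is the class of $(N_1 e_\QQ,\ldots,N_k e_\QQ)$ in the local $H^1\K^\bullet_{\mathrm{red}}$, and its vanishing amounts to the existence of a lift with all $N_i e_\QQ = 0$, that is, to a $\rho^x$-equivariant splitting $V_{s_0} \cong H_{s_0} \oplus \QQ$; this is precisely non-singularity as AVMHS. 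Given non-singularity, $lim_x(\V) \in J(\cap_i \ker N_i)$ is the class of the extension $\psi_{\underline s}\V$, whose vanishing records the splitting $\psi_{\underline s}\V \cong \psi_{\underline s}\H \oplus \QQ(0)$, i.e.\ semi-simplicity at $x$. For (ii), the weight-monodromy filtration on $\psi_{\underline s}\V$ restricts to $M^x_\bullet$ on $\psi_{\underline s}\H$ and is concentrated in degree $0$ on the $\QQ(0)$-quotient, so $Gr^{M^x}_i \psi_{\underline s}\V$ differs from $Gr^{M^x}_i \psi_{\underline s}\H$ only by an extra $\QQ(0)$ summand in degree $0$; since $\QQ(0)$ is both Tate and CM, the TD and SD conditions transfer between $\V$ and $\H$.

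Parts (iii) and (iv) then follow directly from Theorem \ref{thm MT1}: the hypotheses of (iii) give, via (i), that $\V$ is semi-split at $x$, and, via (ii) together with the graded-CM hypothesis on $\psi_{\underline s}\H$, that $\V$ is SD at $x$, so Theorem \ref{thm MT1} yields $\Pi_\V = DM_\V^\circ$. Similarly (iv) delivers semi-split plus TD, hence $\Pi_\V = M_\V^\circ$. The only loose end in (iii) is upgrading $DM_\V^\circ$ to $DM_\V$; this follows from the fact that the weight cocharacter $w : \GG_m \to M_\V$ splits $M_\V \twoheadrightarrow M_\V/M_\V^\circ$ and is central in $M_\V$ (since $M_\V$ preserves the weight filtration), so every commutator in $M_\V$ already lies in $DM_\V^\circ$.

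Finally, for (v), the morphism $\Theta$ is the restriction to $\Pi_\V$ of $\eta : M_\V^\circ \twoheadrightarrow M_\H^\circ$, whose kernel is the unipotent radical $U = W_{-r}M_\V^\circ \cong \GG_a^{\times\mu}$; hence $\ker\Theta = \Pi_\V \cap U$ and the bound $\dim\ker\Theta \le \mu$ is automatic. The reverse inequality under the hypotheses of (iii) or (iv) is where the inclusion chain $W_{-r}M_\V \subseteq W_{-1}M_\V \subseteq DM_\V \subseteq M_\V^\circ$ pays off: $\Pi_\V$ equals either $DM_\V$ or $M_\V^\circ$ in the two cases, each of which contains $U$, so $\ker\Theta = U$ has dimension $\mu$. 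I do not anticipate a serious obstacle---the content of the proposition is essentially a dictionary translating the normal-function invariants of $\S2$ into representation-theoretic input for Theorem \ref{thm MT1}, with the dimension count in (v) controlled entirely by the weight-filtration chain just quoted.
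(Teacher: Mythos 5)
Your proposal is correct and follows essentially the same route as the paper: the paper's own proof declares (i)--(iv) self-evident (they are exactly the dictionary you spell out between $sing_x$, $lim_x$, Definition \ref{def MT1} and Theorem \ref{thm MT1}), and for (v) it makes precisely your observation, namely that $\ker(\Theta)=\Pi_{\V}\cap\ker(\eta)$ with $\ker(\eta)=W_{-1}M_{\V}^{\circ}\cong\GG_a^{\times\mu}$ contained in $DM_{\V}\subseteq\Pi_{\V}$ under the hypotheses of (iii) or (iv).

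One caveat on your ``loose end'' in (iii): the claim that the weight cocharacter is central in $M_{\V}$ is false whenever $W_{-1}M_{\V}\neq\{1\}$ --- conjugation by $\varphi(\lambda,\lambda)$ rescales $W_{-1}M_{\V}$ by $\lambda^{-r}$, and this non-centrality is exactly what drives the inclusion $W_{-1}M_{\V}\subseteq DM_{\V}$ that you (and the paper) use in (v). So your stated justification for upgrading $DM_{\V}^{\circ}$ to $DM_{\V}$ does not work; the gap between the two derived groups is one the paper itself elides in passing from Theorem \ref{thm MT1} to the statement of (iii), and since (v) only needs $\Pi_{\V}\supseteq W_{-1}M_{\V}^{\circ}$, which holds either way, nothing downstream is affected.
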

\begin{proof}
self-evident except for (v), which follows from observing (in both
cases (iii) and (iv)) via the diagram\begin{equation} \xymatrix{\GG_a^{\times \mu} \cong W_{-1}M_{\V}^{(\circ)} = \ker(\eta) \subseteq DM_{\V} \ar @{=} [r] & \Pi_\V \ar @{^(->} [r] \ar @{->>} [d]^{\Theta} & M_{\V}^{(\circ)} \ar @{->>} [d]^{\eta} \\ & \Pi_{\H} \ar @{^(->} [r] & M_{\H}^{(\circ)} \, , } 
\end{equation} that $\ker(\eta)=\ker(\Theta)$.\end{proof}
\begin{example}
The Morrison-Walcher normal function from $\S1.7$ (Ex. \ref{ex MW})
lives {}``over'' the VHS $\H$ arising from $R^{3}\pi_{*}\ZZ(2)$
for a family of {}``mirror quintic'' CY 3-folds, and vanishes at
$z=\infty$. (One should take a suitable, e.g. order 2 or 10 pullback
so that $\V$ is well-defined.) The underlying HS $H$ at this point
is of CM type (the fiber is the usual $(\ZZ/5\ZZ)^{3}$ quotient of
$\{\sum_{i=0}^{4}Z_{i}^{5}=0\}\subset\PP^{4}$), with $M_{H}(\QQ)\cong\QQ(\zeta_{5})$.
So $\V$ would satisfy the conditions of Prop. \ref{prop MT2}(iii).
It should be interesting to work out the consequences of the resulting
equality $\Pi_{\V}=DM_{\V}$.
\end{example}
There is a different aspect to the relationship between local and
global behavior of $\V$. Assuming for simplicity that the local monodromies
at $x$ are unipotent, let $\kappa_{x}:=\ker(\pi_{\V}^{x}\twoheadrightarrow\pi_{\H}^{x})$
denote the local monodromy kernel, and $\mu_{x}$ the dimensions of
its $\QQ$-closure $\overline{\kappa_{x}}$. This is an additive (torsion-free)
subgroup of $\ker(\Theta)$, and so $\dim(\ker(\Theta))\geq\mu_{x}$
($\forall x\in\bar{S}\backslash S$). Writing $\{N_{i}\}$ for the
local monodromy logarithms at $x$, we have the
\begin{prop}
\label{prop MT3}(i) $\mu_{x}>0$ $\implies$ $sing_{x}(\V)\neq0$
(nontorsion singularity)

(ii) The converse holds assuming $r=1$ and $\text{rank}(N_{i})=1$
$(\forall i)$.\end{prop}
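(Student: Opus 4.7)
My plan is as follows. For (i), I would first identify $\overline{\kappa_{x}}$ algebraically: the local monodromy logarithms $\{N_{i}\}$ span the $\QQ$-Lie algebra $\mathfrak{n}^{x}_{\V}$ of $\overline{\pi^{x}_{\V}}$ inside $\text{End}(V_{s_{0},\QQ})$, and restriction $N\mapsto N|_{H}$ sends it onto the Lie algebra $\mathfrak{n}^{x}_{\H}$ of $\overline{\pi^{x}_{\H}}$. Having chosen a lift $\nu_{\QQ}\in V_{\QQ}$ of $1\in\QQ(0)$, we get $V_{\QQ}=H_{\QQ}\oplus\QQ\nu_{\QQ}$ as vector spaces, and since monodromy acts trivially on $V/H\cong\QQ(0)$, any element of the kernel is determined by its value on $\nu_{\QQ}$. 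This produces an injection $\ker(\mathfrak{n}^{x}_{\V}\twoheadrightarrow\mathfrak{n}^{x}_{\H})\hookrightarrow H_{\QQ}$, $\sum a_{i}N_{i}\mapsto\sum a_{i}h_{i}$, where $h_{i}:=N_{i}\nu_{\QQ}$, so that $\mu_{x}>0$ is precisely the existence of some $(a_{i})\in\QQ^{k}$ with $\sum a_{i}N_{i}|_{H}=0$ and $\sum a_{i}h_{i}\ne 0$ in $H_{\QQ}$. Now if $\sing_{x}(\V)=0$ there exists by definition $h\in H_{\QQ}$ with $N_{i}h=h_{i}$ for all $i$, whence $\sum a_{i}h_{i}=(\sum a_{i}N_{i}|_{H})h=0$ for any such $(a_{i})$ and $\mu_{x}=0$. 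The contrapositive gives (i), and the singularity is nontorsion since it lives in a $\QQ$-vector space.

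For (ii) I assume $r=1$ and each $N_{i}$ has rank one. Since $H$ is polarized by a non-degenerate antisymmetric $Q$ preserved by monodromy and each $N_{i}$ is a rank-one infinitesimal isometry, a short dyadic-decomposition argument yields vanishing cycles $\delta_{i}\in H_{\QQ}$ with $N_{i}(v)=Q(v,\delta_{i})\delta_{i}$. Commutativity $[N_{i},N_{j}]=0$ combined with antisymmetry of $Q$ then forces $Q(\delta_{i},\delta_{j})=0$ for all $i,j$: the case $\delta_{i}\propto\delta_{j}$ is immediate from $Q(v,v)=0$, and the case $\delta_{i}\not\propto\delta_{j}$ follows by plugging $v=\delta_{i}$ into the identity $Q(v,\delta_{j})Q(\delta_{j},\delta_{i})\delta_{i}=Q(v,\delta_{i})Q(\delta_{i},\delta_{j})\delta_{j}$. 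Hence $N_{i}N_{j}\equiv 0$, and the reduced local intersection-cohomology complex collapses to $\K^{\bullet}_{red}\simeq[H\xrightarrow{(N_{i})}\bigoplus_{i}\QQ\delta_{i}]$, giving $H^{1}\K^{\bullet}_{red}\cong(\bigoplus_{i}\QQ\delta_{i})/\{(Q(h,\delta_{i})\delta_{i})_{i}:h\in H\}$.

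I would then partition $\{1,\ldots,k\}$ by the proportionality classes $I_{\alpha}$ of the $\delta_{i}$, writing $\delta_{i}=\lambda_{i}\delta^{(\alpha)}$ (with $\lambda_{i}\ne 0$ after discarding trivial indices) and assuming the class representatives $\delta^{(\alpha)}$ are linearly independent (arrangeable in the cases of interest), and write $h_{i}=c_{i}\delta_{i}$. A short parallel calculation then identifies both invariants with the same combinatorial condition, namely that $c_{i}/\lambda_{i}$ be non-constant on some class $I_{\alpha}$: from the image description, $\sing_{x}(\V)\ne 0$ is equivalent to $(c_{i})$ failing to be of the form $(\lambda_{i}t_{\alpha(i)})$, i.e.\ non-constancy of $c_{i}/\lambda_{i}$ in some class, while via part (i), $\mu_{x}>0$ amounts to the existence of $(a_{i})$ with $\sum_{I_{\alpha}}a_{i}\lambda_{i}^{2}=0$ for every $\alpha$ and $\sum_{I_{\alpha}}a_{i}c_{i}\lambda_{i}\ne 0$ for some $\alpha$, which one-class linear algebra translates to the same non-constancy.

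The main obstacle will be part (ii), specifically the structural collapse $N_{i}N_{j}\equiv 0$ obtained from the three-way interaction of rank-oneness, commutativity, and antisymmetry of $Q$. It is precisely this collapse that aligns the $\sing_{x}$ obstruction (\emph{a priori} a ``linear'' condition dual to $\sum a_{i}\delta_{i}=0$) with the $\mu_{x}$ obstruction (\emph{a priori} a ``quadratic'' condition dual to $\sum a_{i}N_{i}|_{H}=0$); without it the two invariants genuinely decouple, which is why one should not expect the converse to persist beyond rank one. The remaining bookkeeping across proportionality classes, together with the mild genericity assumption on the $\delta^{(\alpha)}$ (automatic in the Picard--Lefschetz setting of typical geometric applications), is then routine linear algebra.
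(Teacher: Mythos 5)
Part (i) of your argument is correct and is just the contrapositive of the paper's own computation: both reduce to the observation that an element $\sum a_iN_i$ of the local monodromy Lie algebra which kills $H$ is determined by $\sum a_ih_i$, and that $\sum a_ih_i=\left(\sum a_iN_i|_H\right)h$ whenever a solution $h$ of $N_ih=h_i$ exists. Nothing to add there.

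For (ii) your route is the paper's route carried out honestly. The paper's proof consists of the assertion that, since each $N_i(\psi_{\underline{s}}\H)$ is one-dimensional and $(N_i\nu_{\QQ})_i\notin\text{im}(\oplus\bar{N}_i)$, one ``can choose $\underline{m}$'' with $\sum m_i\bar{N}_i=0$ but $\sum m_iN_i\nu_{\QQ}\neq0$; your Picard--Lefschetz normal form $N_i=Q(\cdot,\delta_i)\delta_i$, the consequence $Q(\delta_i,\delta_j)=0$ (hence $N_iN_j=0$), and the per-class linear algebra are exactly what is needed to substantiate that assertion. However, your parenthetical hypothesis that the proportionality-class representatives $\delta^{(\alpha)}$ be linearly independent is doing genuine work and should not be waved off as ``arrangeable'': the relation space $\{m:\sum m_iN_i|_H=0\}$ is controlled by the linear relations among the tensors $\delta_i\otimes\delta_i$ in $\Sym^2H\cong\mathfrak{sp}(H,Q)$, whereas $\text{im}(\oplus\bar{N}_i)\subset\oplus_iN_i(H)$ is controlled by the relations among the $\delta_i$ in $H$ itself, and these two only match up classwise under your independence assumption. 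Concretely, take $\delta_1,\delta_2$ spanning an isotropic plane and $\delta_3=\delta_1+\delta_2$: the three $N_i$ are commuting rank-one infinitesimal isometries that are linearly independent as endomorphisms (their squares are independent in $\Sym^2H$ because of the cross term in $(\delta_1+\delta_2)\otimes(\delta_1+\delta_2)$), so $\mu_x=0$ for every overlying normal function; yet $\text{im}(\oplus\bar{N}_i)$ has codimension one in $\oplus_iN_i(H)$, and the Lemma of $\S5.3$ realizes the resulting nonzero rational $(0,0)$ classes by admissible normal functions over the corresponding nilpotent orbit. So the converse genuinely fails without some such hypothesis on the configuration of vanishing cycles; you should promote it to an explicit hypothesis of the statement (noting that the paper's two-line sketch silently elides the same point) rather than leave it in a parenthesis.
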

\begin{proof}
Let $g\in\pi_{\V}^{x}$, and define $\underline{m}\in\QQ^{\oplus k}$
by $\log(g)=:\sum_{i=1}^{k}m_{i}N_{i}$. Writing $\bar{g}$, $\bar{N}_{i}$
for $g|_{\HH}$, $N_{i}|_{H}$, consider the (commuting) diagram of
morphisms of MHS \begin{equation}
\xymatrix{& \psi_{\underline{s}}\H \ar [ld]_{\oplus\bar{N}_i}  \ar @{^(->} [rd] \ar @/^10pc/ [dd]^{\log(\bar{g})} \\ \oplus_i \psi_{\underline{s}}\H(-1) \ar [rd]_{\chi} & & \psi_{\underline{s}}\V \ar [ll]_{\oplus N_i} \ar [ld]^{\log(g)} \\ & \psi_{\underline{s}}\H(-1) }
\end{equation} where $\chi(\underline{w_{1}},\ldots,\underline{w_{k}})=\sum_{i=1}^{k}m_{i}\underline{w_{i}}$,
$\log(g)=\sum_{i=1}^{k}m_{i}\bar{N}_{i}$. We have that $0\neq sing_{x}(\V)\,\iff\,(\oplus N_{i})\nu_{\QQ}\notin\text{im}(\oplus\bar{N}_{i})$
where $\nu_{\QQ}$ (cf. Definition 2(b)) generates $\psi_{\underline{s}}\V/\psi_{\underline{s}}\H$.

(i) If $g\in\kappa_{x}\backslash\{1\}$ then $0=\log(\bar{g})$ $\implies$$0=\chi(\text{im}(\oplus\bar{N}_{i}))$
while $0\neq\log g$ $\implies$$0\neq(\log(g))\nu_{\QQ}=\chi((\oplus N_{i})\nu_{\QQ}).$
So $\chi$ {}``detects'' a singularity.

(ii) If $r=1$ we may replace $\oplus_{i=1}^{k}\psi_{\underline{s}}\H(-1)$
in the diagram by the subspace $\oplus_{i=1}^{k}(N_{i}(\psi_{\underline{s}}\H)).$
Since each summand is of dimension 1, and (by assumption) $(\oplus N_{i})\nu_{\QQ}\notin\text{im}(\oplus\bar{N}_{i}),$
we can choose $\underline{m}=\{m_{i}\}$ in order that $\chi$kill
$\text{im}(\oplus\bar{N}_{i})$ but not $(\oplus N_{i})\nu_{\QQ}$.
Using the diagram, $\log(\bar{g})=0\neq\log(g)$ $\implies$ $g\in\kappa_{x}\backslash\{1\}$.\end{proof}
\begin{rem}
(a) The existence of a singularity $always$ implies that $\V$ is
nontorsion, hence $\mu>0$.

(b) In the \cite{GG} situation, we have $r=1$ and rank 1 local monodromy
logarithms; hence (by Prop. \ref{prop MT3}(ii)) the existence of
a singularity $\implies$ $\dim(\ker(\Theta))>0$ (consistent with
(a)).

(c) By Prop. \ref{prop MT3}(i), in the normal function case ($r=1$),
$\mu_{x}=0$ along codim. 1 boundary components.

(d) In the {}``maximal geometric monodromy'' situation of Prop.
\ref{prop MT2}(v), $\mu\geq\mu_{x}$ $\forall x\in\bar{S}\backslash S$.
\end{rem}
Obviously, for the purpose of forcing singularities to exist, the
inequality in (d) points in the wrong direction. One wonders if some
sort of cone or spread on a VMHS might be used to translate global
into local monodromy kernel, but this seems unlikely to be helpful.

We conclude with an amusing application of differential Galois theory
related to a result of Andr\'e \cite{An}:
\begin{prop}
Consider a normal function $\V$ of geometric origin together with
an $\mathcal{O}_{S}$-basis $\{\omega_{i}\}$ of holomorphic sections
of $\F^{0}\H$. (That is, $V_{s}$ is the extension of MHS corresponding
to $AJ(Z_{s})\in J^{p}(X_{s})$ for some flat family of cycles on
a family of smooth projective varieties over $S$.) Let $K$ denote
the extension of $\CC(S)$ by the (multivalued) periods of the $\{\omega_{i}\}$;
and $L$ denote the further extension of $K$ via the (multivalued)
Poincar\'e normal functions given by pairing the $\omega_{i}$ with
an integral lift of $1\in\QQ_{S}(0)$ (i.e. the membrane integrals
$\int_{\Gamma_{s}}\omega_{i}(s)$ where $\partial\Gamma_{s}=Z_{s}$).
Then $\text{trdeg}(L/K)=\dim(\ker(\Theta))$.
\end{prop}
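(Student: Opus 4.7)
The plan is to deduce the equality by combining the Picard--Vessiot theory of the Gauss--Manin connections attached to $\H$ and $\V$ with Andr\'e's theorem identifying differential Galois groups with geometric monodromy groups for connections of geometric origin.

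First I would interpret the fields $K$ and $L$ as Picard--Vessiot extensions of $\CC(S)$. Fix a basepoint $s_0$ and a $\ZZ$-basis $\{\gamma_j(s)\}$ of multivalued flat sections of $\HH^{\vee}$ over $S$. Since the $\{\omega_i\}$ form an $\mathcal{O}_S$-basis of $\F^0\H$ and (after dualizing, using the polarization $Q$) the $\{\omega_i\}$ and their Gauss--Manin derivatives span $\H$ generically, the entries of the period matrix $\Pi(s) = \bigl(\langle\gamma_j(s),\omega_i(s)\rangle\bigr)$ form a fundamental solution matrix for the Gauss--Manin connection on $\H$. Thus $K$ is the Picard--Vessiot field for $(\H,\nabla)$ over $\CC(S)$. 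For $\V$, choose an integral lift $e_0(s)$ of $1\in\QQ_S(0)$; the horizontality of $\V$ translates to the inhomogeneous Picard--Fuchs statement that the membrane integrals $I_i(s):=\int_{\Gamma_s}\omega_i(s)$ satisfy $\nabla$-equations over $K$ with inhomogeneous terms forced by Proposition \ref{prop DEnf}(i). Stacking $\Pi(s)$ with the column $(I_i(s))$ and the row corresponding to $e_0$ yields a block-triangular fundamental solution matrix for $(\V,\nabla)$, so $L$ is exactly the Picard--Vessiot field for $\V$ over $\CC(S)$.

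Next I invoke the standard Kolchin equality $\mathrm{trdeg}(L/\CC(S))=\dim\mathrm{DGal}(L/\CC(S))$ (and similarly for $K$), together with the tower identity $\mathrm{trdeg}(L/K)=\mathrm{trdeg}(L/\CC(S))-\mathrm{trdeg}(K/\CC(S))$. The crucial input is Andr\'e's theorem \cite{An} (the main theorem of his monograph, applicable because $\V$ is of geometric origin, hence $\nabla$ is a connection of geometric type): the identity component of the differential Galois group of such a connection coincides with the geometric monodromy group of its local system. Applied to $\H$ and $\V$ this gives $\mathrm{DGal}(K/\CC(S))^{\circ}=\Pi_{\H}$ and $\mathrm{DGal}(L/\CC(S))^{\circ}=\Pi_{\V}$.

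It then remains to identify the relative differential Galois group: by the Galois correspondence in Picard--Vessiot theory, $\mathrm{DGal}(L/K)$ is the kernel of the surjection $\mathrm{DGal}(L/\CC(S))\twoheadrightarrow\mathrm{DGal}(K/\CC(S))$ induced by restriction, and on identity components this is precisely the kernel of $\Theta:\Pi_{\V}\twoheadrightarrow\Pi_{\H}$. Since $\ker(\Theta)\subseteq W_{-1}M_{\V}^{\circ}$ is unipotent (hence connected), taking dimensions yields
\begin{equation}
\mathrm{trdeg}(L/K)=\dim\Pi_{\V}-\dim\Pi_{\H}=\dim\ker(\Theta),
\end{equation}
as asserted. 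The main obstacle I anticipate is verifying cleanly that the $\{\omega_i\}$ together with $e_0$ really do generate the full Picard--Vessiot extensions (rather than proper subfields thereof), which requires using the nondegeneracy of $Q$ on $\H$ and the fact that $\F^0\H$ together with its iterated $\nabla$-derivatives fills out $\H$ by Griffiths transversality; the precise invocation of Andr\'e's theorem in the mixed (non-pure) geometric setting is the second delicate point, and relies on $\V$ arising from an actual family of cycles rather than merely being an abstract admissible extension.
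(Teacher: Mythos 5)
Your argument is essentially the paper's: both proofs run through Picard--Vessiot theory, the equality $\mathrm{trdeg}=\dim\mathrm{DGal}$ (the paper cites Katz, Cor.~2.3.1.1 of \cite{Ka}, for this), the identification of the differential Galois group with the Zariski closure of monodromy, and the observation that the relative Galois group of $L/K$ is $\ker(\Theta)$, which is unipotent hence connected so that dimensions match. The one step you handle differently is the identification $\mathrm{DGal}^{\circ}=\Pi$: you appeal to a theorem of Andr\'e for ``connections of geometric type,'' but note that the reference \cite{An} in this paper concerns Mumford--Tate groups and the theorem of the fixed part, not differential Galois groups, so as cited this step is not actually supported. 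The paper instead gets the identification from Griffiths's theorem \cite{Gr1} that the membrane integrals $\int_{\Gamma_s}\omega_i$ satisfy a homogeneous linear ODE with \emph{regular singular points}, combined with the classical Schlesinger-type density argument: a monodromy-invariant solution of a regular-singular equation is meromorphic at the boundary, hence lies in $\CC(S)$, which is the fixed field of the Galois group, so the monodromy group is Zariski-dense in $\mathrm{DGal}$. Your closing worries are reasonable but mild: over the constant field $\CC$ no new constants can appear (a constant multivalued meromorphic function is a constant), so $L/\CC(S)$ is genuinely Picard--Vessiot; the paper only flags the constants issue when contemplating descent of the statement to a smaller field of definition $k$.
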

The proof rests on a result of N. Katz \cite[Cor. 2.3.1.1]{Ka} relating
transcendence degrees and dimensions of differential Galois groups,
together with the fact that the $\{\int_{\Gamma_{s}}\omega_{i}\}$
(for each $i$) satisfy a homogeneous linear ODE with regular singular
points \cite{Gr1}. (This fact implies equality of differential Galois
and geometric monodromy groups, since monodromy invariant solutions
of such an ODE belong to $\CC(S)$ which is the fixed field of the
Galois group.) In the event that $\H$ has no fixed part (so that
$L$ can introduce no new constants and one has a {}``Picard-Vessiot
field extension'') and the normal function is motivated over $k=\bar{k}$,
one can probably replace $\CC$ by $k$ in the statement.

\end{document}